\documentclass[10pt,reqno]{amsart}
\RequirePackage[OT1]{fontenc}
\RequirePackage{amsthm,amsmath}
\RequirePackage[numbers]{natbib}
\RequirePackage[colorlinks,linkcolor=blue,citecolor=blue,urlcolor=blue]{hyperref}
\usepackage{amssymb}
\usepackage{anysize}
\usepackage{hyperref}
\usepackage{extarrows}
\usepackage{multirow}
\usepackage{natbib}
\usepackage{stmaryrd}
\usepackage{color}
\usepackage{amsmath}
\usepackage{enumitem}
\usepackage{mathtools} 
\usepackage{dsfont} 
\usepackage{comment}
\usepackage{soul}

\numberwithin{equation}{section}
\theoremstyle{plain} 
\newtheorem{theorem}{Theorem}[section]
\newtheorem{lemma}[theorem]{Lemma}

\newtheorem{proposition}[theorem]{Proposition}

\theoremstyle{definition}
\newtheorem{definition}[theorem]{Definition}
\newtheorem{assumption}[theorem]{Assumption}

\theoremstyle{remark}
\newtheorem{remark}[theorem]{Remark}

\renewcommand{\Im}{\mathrm{Im}\,}

\newcommand{\E}{{\mathbb E }}

\newcommand{\R}{{\mathbb R }}
\newcommand{\N}{{\mathbb N}}

\renewcommand{\P}{{\mathbb P}}
\newcommand{\C}{{\mathbb C}}

\newcommand{\ii}{\mathrm{i}}
\newcommand{\ee}{\mathrm{e}}
\newcommand{\deq}{\mathrel{\mathop:}=}
\newcommand{\dd}{\mathrm{d}}
\newcommand{\ie}{\emph{i.e., }}
\newcommand{\eg}{\emph{e.g., }}
\newcommand{\cf}{\emph{c.f., }}

\newcommand{\wt}{\widetilde}
\newcommand{\ud}{\underline}

\newcommand{\bs}{\boldsymbol}

\def\Tr{\mathrm{Tr}}
\def\i{\text{i}}

\def\Dim{\Delta \widetilde{\mathrm{Im}}\,}
\def\Ai{\mathrm{Ai}}
\def\u{\mathbf{u}}

\def\one{\mathds{1}}
\def\app{ \theta_\eta}

\def\<{\langle}
\def\>{\rangle}
\def\intkappa{\int_{\kappa_1}^{\kappa_2}}
\def\X{\mathcal{X}}

\renewcommand{\mathbf}[1]{\bs{#1}}
 
\marginsize{25mm}{25mm}{25mm}{26mm}

\allowdisplaybreaks

\begin{document}

\begin{minipage}{0.85\textwidth}
 \vspace{2.5cm}
 \end{minipage}
\begin{center}
\large\bf Convergence rate to the Tracy--Widom laws for the largest eigenvalue\\ of Wigner matrices

\end{center}

\renewcommand{\thefootnote}{\fnsymbol{footnote}}	
\vspace{0.5cm}

\begin{center}
 \begin{minipage}{1.4\textwidth}

\begin{minipage}{0.33\textwidth}
	\begin{center}
		Kevin Schnelli\footnotemark[1]\\
		\footnotesize 
		{KTH Royal Institute of Technology}\\
		{\it schnelli@kth.se}
	\end{center}
\end{minipage}
\begin{minipage}{0.33\textwidth}
\begin{center}
Yuanyuan Xu\footnotemark[2]\\
\footnotesize 
		{Institute of Science and Technology Austria}\\
{\it yuanyuan.xu@ist.ac.at}
\end{center}
\end{minipage}
\end{minipage}
\end{center}

\bigskip

\footnotetext[1]{Supported by the Swedish Research Council Grant VR-2017-05195, and the Knut and Alice Wallenberg Foundation.}
\footnotetext[2]{Supported by the Swedish Research Council Grant VR-2017-05195, and the ERC Advanced Grant "RMTBeyond" No.~101020331}

\renewcommand{\thefootnote}{\fnsymbol{footnote}}	

\vspace{1cm}

\begin{center}
 \begin{minipage}{0.83\textwidth}\footnotesize{
 {\bf Abstract.}}
We show that the fluctuations of the largest eigenvalue of a real symmetric or complex Hermitian Wigner matrix of size $N$ converge to the Tracy--Widom laws at a rate $O(N^{-1/3+\omega})$, as~$N$ tends to infinity. For Wigner matrices this improves the previous rate $O(N^{-2/9+\omega})$ obtained by Bourgade~\cite{Bourgade extreme} for generalized Wigner matrices. Our result follows from a Green function comparison theorem, originally introduced by Erd\H{o}s, Yau and Yin~\cite{rigidity} to prove edge universality, on a finer spectral parameter scale with improved error estimates. The proof relies on the continuous Green function flow induced by a matrix-valued Ornstein--Uhlenbeck process. Precise estimates on leading contributions from the third and fourth order moments of the matrix entries are obtained using iterative cumulant expansions and recursive comparisons for correlation functions, along with uniform convergence estimates for correlation kernels of the Gaussian invariant ensembles.

\end{minipage}
\end{center}

 \vspace{5mm}
 
 {\small
\footnotesize{\noindent\textit{Date}: Feburary 3, 2022}\\
}
 
 \vspace{2mm}

\thispagestyle{headings}

\section{Introduction and main results}

In this paper we study a quantitative version of the edge universality for Wigner random matrices. Let~$H_N$ be a real symmetric or complex Hermitian Wigner matrix of size $N$. Then the edge universality asserts that the largest eigenvalue, $\lambda_N$, of $H_N$ satisfies
\begin{align}\label{le one}
\lim_{N \rightarrow \infty} \P \Big( N^{2/3} (\lambda_N-2)<r \Big) =\mathrm{TW}_{\beta}(r)\,,\qquad r\in\R\,,
 \end{align}
where $\mathrm{TW}_{\beta}$ are the cumulative distribution functions of the Tracy--Widom laws~\cite{TW1,TW2} and $\beta=1,2$ indicates the symmetry class ($\beta=1$ for real symmetric and $\beta=2$ for complex Hermitian Wigner matrices). The universality of the Tracy--Widom laws was first proved in~\cite{SiSo1,So1} for Wigner matrices whose entries have symmetric distributions. This symmetry assumption was partially removed in~\cite{PeSo1,PeSo2}. Edge universality for Wigner matrices whose entries have vanishing third moments was proved in~\cite{TV2}. Edge universality without moment matching was proved in~\cite{rigidity} for Wigner matrices and in~\cite{XXX,BEY edge universality} for generalized Wigner matrices. A necessary and sufficient condition on the entries' distributions for the edge universality to hold was given in~\cite{LY}.

The main result of this paper is an estimate on the rate of convergence in~\eqref{le one} for Wigner matrices. Theorem~\ref{kol_dist} below states that, for any fixed $r_0\in\R$ and small $\omega>0$,
\begin{align}\label{le two}
 \sup_{r > r_0}\Big|\P\Big( N^{2/3} (\lambda_N-2 )<r \Big)- \mathrm{TW}_{\beta}(r) \Big| \leq  N^{-1/3+\omega}\,,
\end{align}
for $N$ sufficiently large. For the Gaussian unitary ensemble (GUE, $\beta=2$) and Gaussian orthogonal ensemble (GOE, $\beta=1$) it was established in~\cite{gaussian_speed} that the convergence rate for the largest eigenvalue on a proper scaling is of order $O(N^{-2/3})$; see Theorem~\ref{convergence_gaussian} below. The first rate of convergence for non-invariant ensembles was recently given by Bourgade in~\cite{Bourgade extreme} where the upper bound $O(N^{-2/9+\omega})$ for the convergence rate was obtained for generalized Wigner matrices.

The proof of the estimate in~\eqref{le two} is based on the Green function comparison method for the edge universality by Erd\H{o}s, Yau and Yin~\cite{rigidity}. Our main technical result given in Theorem~\ref{green_comparison} compares the expectation of a suitably chosen function of the Green function of the Wigner matrix $H_N$ with the corresponding quantity for the Gaussian invariant ensembles.  Instead of the traditional Lindeberg type swapping strategy~\cite{Chatterjee,rigidity,TV2}, we use the continuous Green function flow induced by a matrix-valued Ornstein--Uhlenbeck process in combination with cumulant expansions~\cite{deformed,sparse}  for the comparison. To achieve the convergence rate $O(N^{-1/3})$ in~\eqref{le two} the comparison is required on a much finer spectral scale than the typical $O(N^{-2/3})$ edge scaling. This requires in turn precise estimates on the contributions to the Green function flow from third and fourth order moments of the matrix entries.

Contributions from third moments can be estimated using the idea of unmatched indices~\cite{rigidity}, however due to the finer spectral scale, we require expansions to arbitrary order in terms of the control parameter of the strong local law for the Green function~\cite{rigidity} to implement this idea. This step relies on applying cumulant expansions iteratively to Green functions and observing a cancellation to leading order~\cite{He+Knowles,HLY,sparse}. The usefulness of cumulant expansions in random matrix theory was recognized in~\cite{KKP} and has widely been used since, \eg~\cite{BoutetdeMonvel,ErdoesKruegerSchroeder,moment,LP}.

Contributions from fourth moments are controlled  by first showing that they can be reduced to trace-like correlation functions of products of Green functions. This first step is motivated by the Weingarten calculus~\cite{weingarten1} to compute Haar integrals of products of eigenvector components for the invariant Gaussian ensembles. The actual reduction for non-invariant ensembles relies on applying cumulant expansions iteratively. In a second step we compare the resulting trace-like correlation functions between Wigner matrices and the invariant ensembles  using again the interpolating flow. This leads to a hierarchy of correlation functions which, after expansion to arbitrary order, can be recursively estimated by the local law for the Green function. Finally, we need to control the trace-like correlation functions for the invariant ensembles. This is accomplished by using the uniform asymptotics~\cite{convergence_kernel} for correlation kernels of the invariant ensembles in the edge scaling.

Edge universality can also be studied through the dynamical approach of Erd\H{o}s, Schlein and Yau. The local relaxation time of Dyson's Brownian motion (DBM) at the edges is known~\cite{AH,Bourgade extreme,LandonYau_edge} to be of order $O(N^{-1/3})$. Combining his quantitative local relaxation estimates for the DBM with a Green function comparison for short times, Bourgade obtained in~\cite{Bourgade extreme} the convergence rate $O(N^{-2/9})$ to the Tracy--Widom laws for generalized Wigner matrices. In view of the local relaxation time of the DBM at the spectral edges, the convergence rate estimate in~\eqref{le two} may be optimal for Wigner matrices in general, though numerical simulations in~\cite{Forrester} indicate that certain Wigner matrices exhibit faster convergence rates after a scaling and centering of the largest eigenvalue. We suspect that such a centering would crucially depend on the fourth moments of the entries and the symmetry type of the matrices.

The methods presented in this paper are rather robust and can be applied to other random matrix models. Of interest in statistics are in particular convergence rate estimates for sample covariance matrices. For the white Wishart ensemble the convergence rate $O(N^{-2/3})$ after a proper scaling were obtained in~\cite{elkaroui,Ma}. Edge universality for sample covariance matrices was established in~\cite{PY1} and a first quantitative version appeared recently in~\cite{Wang}. In the accompanying article~\cite{SXsample} we establish the results corresponding to~\eqref{le two} for sample covariance matrices. In this paper we focus on estimating the contributions from third and fourth order moments of the matrix entries through assuming that the variances are uniform as for the invariant ensembles. Studying generalized Wigner matrices requires in addition new techniques to implement a variance profile and is thus postponed to our upcoming work~\cite{SXprofile}.

{\it Acknowledgment:} We thank Paul Bourgade, Maurice Duits, Peter J.\ Forrester and Rong Ma for useful comments and suggestions.

\subsection{Setup and main results}
Let $H \equiv H_N$ be an $ N \times N$ Wigner matrix satisfying the following.
\begin{assumption}\label{assump}
For a real symmetric ($\beta=1$) Wigner matrix, we assume the following.
	\begin{enumerate}
		\item[1.] The matrix entries $\{ H_{ij}\,|\,  i \leq j \}$ are independent real-valued centered random variables.
		\item[2.] For $i \neq j$, $\E[ (\sqrt{N} H_{ij})^2]=1$, and $\E [(\sqrt{N} H_{ii})^2] $ are uniformly bounded.
		\item[3.]  All moments of the entries of $\sqrt{N}H_N$ are uniformly bounded, \ie for any $k \geq 3$, there exists $C_k$ independent of $N$ such that, for all $1 \leq i,j \leq N$,
		\begin{equation}\label{moment_condition}
		\E [|\sqrt{N} H_{ij}|^k] \leq C_k\,.
		\end{equation}	
	\end{enumerate}

For a complex Hermitian ($\beta=2$) Wigner matrix, we assume the following.
	\begin{enumerate}
		\item[a.] The matrix entries $\{H_{ij}\, |\, i \leq  j  \}$ are independent complex-valued centered random variables.
		\item[b.] For $i \neq j$, $\E [|\sqrt{N} H_{ij}|^2]=1$, $\E[(H_{ij})^2]=0$, and $\E [(\sqrt{N} H_{ii})^2] $ are uniformly bounded.
		\item[c.] The bound (\ref{moment_condition}) holds true.
	\end{enumerate}
\end{assumption}
The Gaussian ensembles, which we denote by G$\beta$E for short, are Wigner matrices with Gaussian entries: For the Gaussian unitary ensemble (GUE, $\beta=2$) the off-diagonal matrix entries are standard complex-valued Gaussians (\ie $\sqrt{N}H_{ij} \stackrel{{\rm d}}{=} \mathcal{N}(0,\frac{1}{2})+\ii \mathcal{N}(0,\frac{1}{2})$) and the diagonal entries are standard real-valued Gaussians (\ie $\sqrt{N}H_{ii} \stackrel{{\rm d}}{=} \mathcal{N}(0,1)$).
Similarly, for the Gaussian orthogonal ensemble (GOE, $\beta=1$) the matrix entries are real-valued Gaussians with $\sqrt{N}H_{ij} \stackrel{{\rm d}}{=} \mathcal{N}(0,1)$ ($i\not=j$) and $\sqrt{N}H_{ii} \stackrel{{\rm d}}{=} \mathcal{N}(0,2)$. 

Let $(\lambda_j)_{j=1}^N$ be the eigenvalues of $H_N$ arranged in a non-decreasing order. It is well known that the largest eigenvalue $\lambda_N$ converges to the spectral edge $2$ in probability. The typical spacing of the top eigenvalues near~$2$ is of order $O(N^{-2/3})$, due to the square-root behavior at the end points of the limiting spectral density and eigenvalue rigidity. The limiting distribution of $N^{2/3}(\lambda_N-2)$ for the Gaussian ensembles was found by Tracy and Widom in \cite{TW1,TW2}.
The corresponding convergence rate was quantized by Johnstone and Ma \cite{gaussian_speed} in the following theorem.
\begin{theorem}[Convergence rate for the Gaussian ensembles]\label{convergence_gaussian}
Let $H_N$ be the GUE. For any fixed $r_0 \in \R$, there exists a constant $C=C(r_0)$ such that 
\begin{equation}\label{gaussian}
\sup_{r > r_0}\Big|\P^{\mathrm{GUE}}\Big( N^{2/3} (\lambda_N-2 )<r \Big)- \mathrm{TW}_{2}(r) \Big| \leq C N^{-2/3}.
\end{equation}
Moreover, considering the GOE with $N$ even, we have
\begin{equation}
\sup_{r > r_0}\Big|\P^{\mathrm{GOE}}\Big( (N-1)^{1/6}\sqrt{N} \Big(\lambda_N-(4-\frac{2}{N})^{1/2} \Big)<r \Big)- \mathrm{TW}_{1}(r) \Big| \leq C N^{-2/3}.
\end{equation}
\end{theorem}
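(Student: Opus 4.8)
The plan is to exploit the exact determinantal (for the GUE) and Pfaffian (for the GOE) structure of the Gaussian ensembles, reducing the problem to a quantitative version of the convergence of the finite-$N$ correlation kernel to the Airy kernel. Consider first the GUE. The distribution function of $\lambda_N$ is the Fredholm determinant
\[
\P^{\mathrm{GUE}}\big(N^{2/3}(\lambda_N-2)<r\big)=\det\big(I-\mathcal{K}_N\big)\big|_{L^2(r,\infty)}\,,
\]
where $\mathcal{K}_N$ is the $N$-th Christoffel--Darboux kernel of the Hermite weight, rescaled via $x\mapsto 2+xN^{-2/3}$ with the matching Jacobian; likewise $\mathrm{TW}_2(r)=\det(I-K_{\mathrm{Ai}})|_{L^2(r,\infty)}$ with $K_{\mathrm{Ai}}$ the Airy kernel.

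The technical core is the quantitative kernel bound
\[
\sup_{x,y\ge r_0}\ee^{\,x+y}\,\big|\mathcal{K}_N(x,y)-K_{\mathrm{Ai}}(x,y)\big|\le C N^{-2/3}\,,
\]
together with the same bound for the first $x$-derivative (which furnishes uniform trace-class control). First I would derive this from the Plancherel--Rotach asymptotics of the Hermite functions of degree $N$ and $N-1$ in the edge regime, carried to two orders of correction, or equivalently from a steepest-descent analysis of the double contour integral representation of $\mathcal{K}_N$. The crucial point is that, with the centering at exactly $2$ and the scaling $N^{2/3}$, the coefficient of the would-be $N^{-1/3}$ term in the expansion of $\mathcal{K}_N$ vanishes (a cancellation in the Christoffel--Darboux formula), so the first nonvanishing correction is $O(N^{-2/3})$; the same asymptotics yield the Airy-type exponential decay uniformly in $x,y\ge r_0$.

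Next I would transfer this to the determinants by the stability estimate $|\det(I-A)-\det(I-B)|\le\|A-B\|_{1}\,\ee^{\|A\|_{1}+\|B\|_{1}+1}$ on $L^2(r_0,\infty)$: the weighted kernel bound controls $\|\mathcal{K}_N-K_{\mathrm{Ai}}\|_{1}$ by $CN^{-2/3}$ uniformly over the restrictions to all $(r,\infty)$ with $r>r_0$, while $\|\mathcal{K}_N\|_{1}$ and $\|K_{\mathrm{Ai}}\|_{1}$ on $(r_0,\infty)$ are bounded uniformly in $N$. This gives $\sup_{r>r_0}\big|\P^{\mathrm{GUE}}(N^{2/3}(\lambda_N-2)<r)-\mathrm{TW}_2(r)\big|\le CN^{-2/3}$, with no problem as $r\to\infty$ since both sides approach $1$ at the same super-exponential rate.

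For the GOE one runs the same scheme with the $2\times 2$ matrix (Pfaffian) kernel of the orthogonal ensemble, valid for $N$ even: one writes it in terms of the GUE-type scalar kernel together with its derivative and an antiderivative piece, applies the two-order edge asymptotics to each, and reassembles. Here the modified parameters play an essential role: the naive normalization $N^{2/3}(\lambda_N-2)$ would leave an $O(N^{-1/3})$ discrepancy, namely a shift of the argument by a constant multiple of $N^{-1/3}$ stemming from $\E\lambda_N=2-\tfrac{1}{2N}+O(N^{-2})$ and the $(1-1/N)^{1/6}$ factor in the variance scaling, and replacing $2$ and $N^{2/3}$ by $(4-2/N)^{1/2}$ and $(N-1)^{1/6}\sqrt N$ absorbs precisely this term. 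A Pfaffian analogue of the stability bound then concludes. The main obstacle, for both ensembles, is the first step: extracting the $N^{-1/3}$-order term of the rescaled kernel, verifying its vanishing (GUE) or computing it and matching it to the normalization (GOE), and doing so uniformly in $x,y\ge r_0$ with the correct Airy decay; this is where all the ensemble-specific bookkeeping resides.
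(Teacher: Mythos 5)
The paper does not prove Theorem \ref{convergence_gaussian}; it is quoted verbatim as a result of Johnstone and Ma \cite{gaussian_speed} and is used as a black-box input for the main argument. So there is no ``paper's proof'' to compare against. That said, your proposal is a faithful reconstruction of the structure of the published proof, and it is consistent with the ingredients the paper \emph{does} invoke elsewhere: the kernel-level rate you postulate, $\sup_{x,y\ge L_0}|K^{\mathrm{edge}}_N(x,y)-K_{\mathrm{Ai}}(x,y)|\le CN^{-2/3}\ee^{-cx}\ee^{-cy}$ together with the $\partial_x$, $\partial_y$ derivative bounds, is exactly Deift--Gioev's Theorem 1.1 (cited in the paper as Theorem \ref{kernel_diff}). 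The passage from the weighted $L^\infty$ kernel bound to a trace-norm bound uniformly over all restrictions to $(r,\infty)$ with $r>r_0$ does require those first-derivative bounds (or an equivalent factorization of the kernel difference as a product of Hilbert--Schmidt operators); you flag this, which is good, but it is the one step that cannot be dispensed with. Your account of why the standard centering $2$ and scaling $N^{2/3}$ already yield $O(N^{-2/3})$ for GUE while GOE requires the adjusted $(4-2/N)^{1/2}$ and $(N-1)^{1/6}\sqrt{N}$ --- the $O(N^{-1/3})$ term being absent in the Hermite-edge kernel expansion for $\beta=2$ but present in the correction term of the $\beta=1$ Pfaffian kernel --- matches the mechanism in Johnstone--Ma. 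The restriction to $N$ even in the GOE case, which you do not comment on, is needed to use the clean Pfaffian kernel; for $N$ odd the paper instead invokes Choup \cite{choup}, who obtains a weaker $O(N^{-1/3})$ rate, which is still sufficient for the $N^{-1/3+\omega}$ target of Theorem \ref{kol_dist}.
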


The first quantitative convergence rate $O(N^{-2/9+\omega})$ for generalized Wigner matrices was obtained by Bourgade \cite{Bourgade extreme} using optimal local relaxation estimates for the Dyson Brownian motion and a quantitative Green function comparison theorem for short times.

The main result of this paper is an improved bound for the convergence rate of the distribution of $N^{2/3} (\lambda_N-2 )$ for arbitrary Wigner matrices to the Tracy--Widom laws.
\begin{theorem}[Convergence rate for Wigner matrices]\label{kol_dist}
Let $H_N$ be a real or complex Wigner matrix satisfying Assumption \ref{assump}. For any fixed $r_0 \in \R$ and small $\omega>0$, 

\begin{equation} 
\sup_{ r > r_0 }\Big|\P \Big( N^{2/3} (\lambda_N-2) < r \Big) - \mathrm{TW}_{\beta}(r) \Big| \leq  N^{-\frac{1}{3}+\omega},
\end{equation}
for sufficiently large $N \geq N_0(r_0,\omega)$. The corresponding statement holds for the smallest eigenvalue $\lambda_1$.
\end{theorem}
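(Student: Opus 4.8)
The plan is to deduce Theorem~\ref{kol_dist} from the quantitative Green function comparison theorem (Theorem~\ref{green_comparison}) together with the known convergence rate for the Gaussian ensembles (Theorem~\ref{convergence_gaussian}). The first step is to convert the statement about the distribution function of $\lambda_N$ into statements about expectations of smooth functionals of the Green function $G(z)=(H_N-z)^{-1}$. Writing $\P(\lambda_N<E)=\E[\one(\mathcal{N}(E)=0)]$, where $\mathcal{N}(E)$ counts the eigenvalues in $[E,\infty)$, one approximates $\one(\mathcal{N}(E)=0)$ by a smooth function of a mollified eigenvalue count, and then, via the Helffer--Sj\"ostrand formula, expresses the latter through an integral of $\Im\Tr G(x+\ii\eta)$ against a mollifier on a spectral scale $\eta$ lying below the edge scale $N^{-2/3}$. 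The rigidity of the edge eigenvalues controls the errors of these approximations; the precise choice of $\eta$ (of the form $N^{-2/3-c}$ with $c$ small, depending on $\omega$) is dictated by balancing the smoothing errors against the deterioration of the strong local law on small scales, since all errors must be pushed down to order $N^{-1/3+\omega}$. Granted this reduction, it suffices to match $\E^{H_N}[F(\Tr G)]$ with the corresponding Gaussian quantity up to $N^{-1/3+\omega}$ and to invoke Theorem~\ref{convergence_gaussian}; the statement for $\lambda_1$ follows by the symmetry $H_N\mapsto -H_N$.

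For the comparison in Theorem~\ref{green_comparison} we use, in place of the Lindeberg replacement strategy, the matrix Ornstein--Uhlenbeck flow
\begin{equation*}
\dd H_t=-\tfrac12 H_t\,\dd t+\tfrac{1}{\sqrt N}\,\dd B_t,\qquad H_0=H_N,
\end{equation*}
with $B_t$ a real symmetric, resp.\ complex Hermitian, matrix Brownian motion (suitably normalized, including on the diagonal). This flow preserves the first two moments of the entries and relaxes $H_t$ to the corresponding Gaussian ensemble as $t\to\infty$, so that $\E^{H_N}[F(\Tr G)]-\E^{\mathrm{G}\beta\mathrm{E}}[F(\Tr G)]=-\int_0^\infty \frac{\dd}{\dd t}\E[F(\Tr G_t)]\,\dd t$. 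One expands $\frac{\dd}{\dd t}\E[F(\Tr G_t)]$ using It\^o's formula followed by a cumulant expansion of the arising moments of the matrix entries. The contributions of the second cumulants cancel exactly (this is the purpose of the Ornstein--Uhlenbeck choice), so the leading terms come from the third and fourth cumulants, while higher cumulants are negligible after integration in $t$.

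The two remaining inputs are sharp estimates of the third- and fourth-cumulant contributions on the fine scale $\eta$. For the third cumulant one exploits the unmatched-index mechanism of~\cite{rigidity}: after a cumulant expansion the leading term contains an index appearing an odd number of times, which produces an additional gain; but because the control parameter of the strong local law is not small at scale $\eta\ll N^{-2/3}$, one must apply the cumulant expansion iteratively to arbitrary order and identify a cancellation at each stage before the remainder becomes summable. For the fourth cumulant one first reduces the contribution---again by iterated cumulant expansions, guided by the Weingarten calculus for overlaps of eigenvectors of the Gaussian ensembles---to trace-like correlation functions of products of Green functions. These correlation functions are then compared between $H_N$ and the Gaussian ensemble by running a second interpolating flow, which produces a hierarchy of such correlation functions that closes after expansion to arbitrary order and is estimated recursively by the local law; on the Gaussian side the same correlation functions are evaluated directly using the uniform edge asymptotics for the correlation kernels from~\cite{convergence_kernel}.

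I expect the fourth-cumulant term to be the principal obstacle: both the reduction to trace-like correlation functions for a non-invariant ensemble and the subsequent control of the resulting correlation-function hierarchy, uniformly on the fine spectral scale and with every error kept at the $N^{-1/3+\omega}$ level, are considerably more involved than in the proof of qualitative edge universality, where only $o(1)$ bounds are required. A secondary difficulty is the bookkeeping in the first reduction: the smoothing and rigidity errors must simultaneously be of size $N^{-1/3+\omega}$, and it is this constraint, together with the requirement that the local law still be usable after the cancellations are extracted, that fixes the admissible range of the spectral scale $\eta$.
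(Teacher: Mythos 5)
Your high-level strategy --- reduction to the Green function comparison via mollified counting functions and rigidity, Ornstein--Uhlenbeck interpolation with cumulant expansion and cancellation of the second-order terms, the unmatched-index mechanism iterated to arbitrary order for the third cumulants, reduction of the fourth-cumulant contribution to trace-like correlation functions of Green functions (guided by Weingarten calculus) estimated recursively together with the Deift--Gioev edge-kernel asymptotics --- is exactly the paper's. However, you have the spectral scale $\eta$ miscalibrated, and as a consequence the quantitative reduction step would not close at the stated rate.

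You propose $\eta = N^{-2/3-c}$ with $c$ small; the paper instead takes $\eta = N^{-1+\epsilon}$ with $\epsilon<\omega/\max\{c_0,7\}$, that is $c=1/3-\epsilon$ close to $1/3$, not to $0$. This is forced by the sandwich argument: Lemma~\ref{lemma1} replaces $\mathcal{N}(E,\infty)$ by $\Tr\chi_{E\mp l}\star\theta_\eta(H)$ only after conceding a spectral shift $l = N^{6\epsilon}\eta$, and transferring through the boundedness of the Tracy--Widom density this contributes an error of size $N^{2/3}l$ to the distribution-function estimate. With $\eta=N^{-1+\epsilon}$ this is $N^{-1/3+7\epsilon}$, acceptable once $\epsilon$ is small; with $\eta=N^{-2/3-c}$ and small $c$ it is $N^{-c+6\epsilon}$, nowhere near $N^{-1/3+\omega}$. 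The whole point of Theorem~\ref{green_comparison} being proved on the range $N^{-1+\epsilon}\le\eta\le N^{-2/3+\epsilon}$ with error $N^{-1/3+c_0\epsilon}$ is to make this very fine scale usable, and it is the reason the rate improves over Bourgade's $N^{-2/9}$. Relatedly, you frame $\eta$ as a compromise between smoothing error and local-law deterioration; in fact one simply takes $\eta$ essentially as small as the local law permits, and compensates for the control parameter $\Psi(z)\sim(N\eta)^{-1}$ no longer being small by expanding to arbitrary order in $\Psi$. A secondary, purely terminological point: the paper does not invoke Helffer--Sj\"ostrand; since $\theta_\eta$ is the Poisson kernel, $\Tr\chi_E\star\theta_\eta(H)=\frac{N}{\pi}\int_E^{E_L}\Im m_N(y+\ii\eta)\,\dd y$ is an exact identity (equation~\eqref{approx}).
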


The proof of Theorem~\ref{kol_dist} relies on the Green function comparison method~\cite{EYY1, rigidity}. Let
 \begin{equation}\label{Green_fun}
G(z):=\frac{1}{ H_N-z}\,, \qquad m_N(z):=\frac{1}{N} \Tr G(z)\,, \quad\qquad z \in \C^+\,,
\end{equation}
denote the resolvent or Green function of the Wigner matrix $H_N$ and $m_N$ its normalized trace. The distribution of the rescaled largest eigenvalue can be linked to the expectation (of smooth functions) of the imaginary part of $m_N(z)$  for appropriately
chosen spectral parameters $z$; see Subsection~\ref{relating to green}. The main technical result of this paper is the following comparison theorem at the spectral edges.

\begin{theorem}[Green function comparison theorem]\label{green_comparison}
Let $F$ be a smooth function with uniformly bounded derivatives. For any small $\epsilon>0$, let $N^{-1+\epsilon} \leq \eta \leq N^{-2/3+\epsilon}$ and $|\kappa_1|, |\kappa_2| \leq C_0 N^{-2/3+\epsilon}$ for some $C_0>0$.  Then there exists some $c_0>0$ that does not depend on $\epsilon$, such that
\begin{equation}\label{gfct_eq}
\Big|\Big(\E-\E^{\mathrm{G \beta E}}\Big) \Big[ F\Big( N \int_{\kappa_1}^{\kappa_2} \Im m_N( 2+x+\ii \eta) \dd x\Big) \Big]\Big| \leq  N^{-1/3+c_0\epsilon},
\end{equation}
for sufficiently large $N \geq N_0(\epsilon,C_0)$.
\end{theorem}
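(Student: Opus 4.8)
The plan is to interpolate between $H$ and the G$\beta$E along the continuous flow generated by the matrix Ornstein--Uhlenbeck process
\begin{equation*}
\dd H_t = -\tfrac{1}{2} H_t \,\dd t + \tfrac{1}{\sqrt N}\,\dd B_t\,,\qquad H_0 \overset{d}{=} H\,,
\end{equation*}
where $B_t$ is a real symmetric (resp.\ complex Hermitian) matrix Brownian motion normalized so that the G$\beta$E is the invariant measure. Since the first two moments of $H$ already agree with those of the G$\beta$E, the variance profile is preserved along the flow, $H_t$ is a Wigner matrix satisfying Assumption~\ref{assump} uniformly in $t$, and $H_t$ converges in distribution to the G$\beta$E as $t\to\infty$. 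Writing $\Phi(H) = F\big(N\intkappa \Im m_N(2+x+\ii\eta)\,\dd x\big)$ and $X_t$ for the argument of $F$ evaluated along the flow, we have
\begin{equation*}
\big(\E-\E^{\mathrm{G}\beta\mathrm{E}}\big)[\Phi] = -\int_0^\infty \frac{\dd}{\dd t}\,\E[\Phi(H_t)]\,\dd t\,.
\end{equation*}
After applying It\^o's formula and expanding each term $\E[h_{ij}(t)\,\partial_{ij}\Phi]$ by the cumulant expansion \cite{deformed,sparse}, the drift and quadratic-variation terms cancel the second-cumulant contribution, leaving
\begin{equation*}
\frac{\dd}{\dd t}\,\E[\Phi(H_t)] = -\frac{1}{2}\sum_{i,j}\,\sum_{k\ge 2}\frac{\ee^{-(k+1)t/2}\,\kappa_{k+1}(h_{ij})}{k!}\,\E\big[\partial_{ij}^{k+1}\Phi(H_t)\big] + (\text{truncation error})\,.
\end{equation*}
Because the cumulants decay exponentially in $t$, the time integral contributes only an $O(1)$ factor, so it suffices to bound the sums $\sum_{i,j}\kappa_{k+1}(h_{ij})\,\E[\partial_{ij}^{k+1}\Phi(H_t)]$ for each $k\ge2$, uniformly in $t$, using that $H_t$ obeys the strong local law of \cite{rigidity}.

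Here one uses the simplification that $\partial_{H_{ij}}\Tr G = -2(G^2)_{ij}$ (in the real symmetric case; similarly for the Hermitian case) and that $\intkappa (G^2)_{ij}(2+x+\ii\eta)\,\dd x = G_{ij}(z_2)-G_{ij}(z_1)$ with $z_a = 2+\kappa_a+\ii\eta$, so that $\partial_{ij}X$ is a difference of two off-diagonal Green function entries and every $\partial_{ij}^{\ell}X$ is a polynomial in $G_{ii},G_{jj},G_{ij}$ at $z_1$ and $z_2$. The cumulants of order $k+1\ge5$ are handled by power counting: $\sum_{i,j}|\kappa_{k+1}(h_{ij})| = O(N^{(3-k)/2})$ while $\partial_{ij}^{k+1}\Phi \prec N^{c\epsilon}$ by the local law, which already gives $O(N^{-1/2+c\epsilon})$ at $k=4$. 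The third cumulant requires more care: $\sum_{i,j}|\kappa_3(h_{ij})|=O(N^{1/2})$, so one needs $\E[\partial_{ij}^3\Phi(H_t)] \prec N^{-5/6+c\epsilon}$. Since $\partial_{ij}^3 X$ always carries an odd number of off-diagonal factors $G_{ij}$, this is where the idea of unmatched indices of \cite{rigidity} enters; on the present fine scale one implements it by iterating the cumulant expansion, now applied to the factors $G_{ij}$ themselves, and observing the cancellation of the naive leading term (as in \cite{He+Knowles,HLY,sparse}), expanding to an arbitrarily high order in the local-law control parameter until the needed power is reached.

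The heart of the proof is the fourth cumulant, for which $\sum_{i,j}|\kappa_4(h_{ij})| = O(1)$, so one must show $\sum_{i,j}\kappa_4(h_{ij})\,\E[\partial_{ij}^4\Phi(H_t)] \prec N^{-1/3+c_0\epsilon}$; here a crude replacement $G_{ii}\mapsto m_{\mathrm{sc}}$ is not admissible, since the local-law error $(N\eta)^{-1}$ can be as large as $N^{-\epsilon}$. Motivated by the Weingarten calculus, the plan is first to use iterated cumulant expansions to reduce $\sum_{i,j}\kappa_4(h_{ij})\,\E[\partial_{ij}^4\Phi(H_t)]$, up to negligible errors, to a finite list of trace-like correlation functions, namely expectations of $F^{(\ell)}(X)$ times trace-like quantities built from products of Green functions at $z_1$ and $z_2$. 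Second, each such correlation function is compared between $H_t$ and the G$\beta$E by running the interpolating flow a second time with these lower-degree observables as test functions; this produces a finite hierarchy of correlation functions which, after expansion to arbitrary order, closes and can be estimated recursively by the local law, the gain at each stage coming from the reduced degree of the observable. Third, the trace-like correlation functions for the G$\beta$E are evaluated explicitly via the uniform edge asymptotics of \cite{convergence_kernel} for the correlation kernels; this isolates a genuine leading contribution proportional to $\Im\big[m_{\mathrm{sc}}(z_2)^4 - m_{\mathrm{sc}}(z_1)^4\big]\,\E^{\mathrm{G}\beta\mathrm{E}}[F'(X)]$, which is itself of size $N^{-1/3+c\epsilon}$ because $m_{\mathrm{sc}}$ has small imaginary part near the edge, plus errors of strictly lower order. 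Summing the contributions of the third, fourth, and higher cumulants yields the bound $N^{-1/3+c_0\epsilon}$. I expect the fourth-moment step --- the reduction to trace-like correlation functions and the recursive control of the resulting hierarchy, which must improve on what the local law alone provides --- to be the principal obstacle.
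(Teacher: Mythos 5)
Your proposal follows essentially the same route as the paper: Ornstein--Uhlenbeck interpolation, cumulant expansion with second-order cancellation, unmatched-index machinery for the third cumulant, Weingarten-inspired reduction of the $(2,2)$-cumulant terms to trace-like correlation functions, a recursive interpolation hierarchy closed by the local law, and Deift--Gioev edge kernel asymptotics to anchor the recursion at the G$\beta$E. One small imprecision in your final step: rather than isolating an explicit leading term $\Im\big[m_{\mathrm{sc}}(z_2)^4-m_{\mathrm{sc}}(z_1)^4\big]\E^{\mathrm{G}\beta\mathrm{E}}[F'(\mathcal{X})]$, the paper bounds the degree-zero trace-like observables (e.g.\ $\E[F'(\mathcal{X})\,\Delta\widetilde{\mathrm{Im}}(\underline{G})^4]$) by $O_\prec(N^{-1/3+c\epsilon})$ using the intermediate estimate $\E[\Im m_N(t,z)]\leq CN^{-1/3+\epsilon}$, which it first establishes as the special case $F(x)=x$ of the comparison (Proposition~\ref{GCT_mn}) and then feeds back into the general argument.
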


\begin{remark}
A first Green function comparison theorem at the spectral edges was obtained in~\cite{rigidity} for spectral parameters $\eta$ of size $O(N^{-2/3-\epsilon})$ and with an error estimate of size $O(N^{-1/6+c_0\epsilon})$.

The constant $c_0$ in the upper bound in (\ref{gfct_eq}) can be chosen as any number bigger than one. An inspection of our proof in fact yields that the upper bound in (\ref{gfct_eq}) can be written as
$$\max\{K_4, |M_2-1|\} N^{-\frac{1}{3}+c_0\epsilon}+O(N^{-1/2+\epsilon}),$$
where $M_2=\max_{i} \big|\E[(\sqrt{N} h_{ii})^2]\big|$; $K_4=\max_{i \neq j} \big|c^{(4)}(\sqrt{N} h_{ij})\big|$, for $\beta=1$, and $K_4=\max_{i \neq j}\big| c^{(2,2)}(\sqrt{N} h_{ij})\big|$, for $\beta=2$, with $c^{(4)}(\sqrt{N} h_{ij})$ the fourth cumulant of $\sqrt{N}h_{ij}$ given in (\ref{cumulant_k}) and $c^{(2,2)}$ the corresponding $(2,2)$-cumulant defined in (\ref{cumulant_pq}).
\end{remark}

\begin{remark}
 The proof of the Green function comparison is based on a continuous interpolation given by a matrix-valued Ornstein--Uhlenbeck process; see~\eqref{sum_H}. On the level of the eigenvalues this evolution corresponds to Dyson's Brownian motion (DBM). Bourgade's proof of the convergence rate $O(N^{-2/9+\epsilon})$ consists of two parts: 1) the local relaxation estimate for the DBM for $t\gg N^{-1/3}$; 2) a quantitative version of the Green function comparison theorem for small times $t\ll 1$, which is not sharp. Optimizing the errors from these two parts, the error $N^{-2/9}$ is obtained at $t=N^{-1/9}$. In our proof, we improve the Green function comparison even for long times $t \sim \log N$  and then use standard perturbation theory to bridge to the Gaussian ensembles. 
\end{remark}

\subsection{Organization of the paper and outline of proofs}
The paper is organized as follows. In Section~\ref{sec:preliminary}, we provide the preliminaries for the proofs, \eg local law for the Green function and cumulant expansions; and recall some properties of the invariant ensembles. In Section \ref{sec: proof_main_result}, following the approach of~\cite{rigidity}, we first reduce the proof of the main result Theorem \ref{kol_dist} to the Green function comparison in Theorem~\ref{green_comparison}. We then prove Theorem~\ref{green_comparison} using the interpolating Green function flow and the key estimates on the resulting drift term stated in Proposition~\ref{prop_theta} below. 

In Section \ref{toy}, before we give the proof of Proposition~\ref{prop_theta} for arbitrary functions $F$, we prove the corresponding Green function comparison theorem in the simplest case, $F(x)=x$; see Proposition \ref{GCT_mn}. To make the statements easier, we first consider complex Hermitian Wigner matrices. The proof of Proposition \ref{GCT_mn} is carried out in Sections \ref{toy} to \ref{sec:unmatch}.  We sketch the proof in the following.

\begin{enumerate}
\item We first set up the interpolation between a given Wigner matrix and the GUE using the matrix Ornstein-Uhlenbeck process in (\ref{ou_process}). Using Ito's formula, we derive the stochastic evolution for the time-dependent normalized trace of the Green function $m_N(t,z)$ in (\ref{diff_eq_1}). It then suffices to estimate the drift term given in (\ref{claim}). Using the cumulant expansions of Lemma~\ref{cumulant}, we expand the expectation of the drift term up to the fourth order. We observe a precise cancellation of the second order terms in the cumulant expansions (\ref{step0}) for the off-diagonal entries. The cancellation of these second order terms is due to Assumption \ref{assump} (b.), namely that the variances of our Wigner matrices coincide with the invariant ensembles. It then suffices to estimate the third and fourth order terms in $(\ref{step0})$ as well as the remaining second order terms for the diagonal entries, which are averaged products of Green function entries.

\item All the third order terms, as well as the fourth order terms excluding the ones corresponding to the (2,2)-cumulants of the off-diagonal entries are unmatched; see Definition~\ref{unmatch_def}. The contributions from these unmatched terms are negligible, as stated in Proposition~\ref{unmatch_lemma} which is proved in Section~\ref{sec:unmatch}.  For GUE matrices,  corresponding estimates can be established using the Weingarten calculus as discussed in Subsection~\ref{sec:unmatch_gue}. In Subsection~\ref{sec:unmatch_toy}, we study an example of an unmatched term and introduce the expansion mechanism used to prove Proposition~\ref{unmatch_lemma} for general Wigner matrices. The key observation is that each time we perform the cumulant expansion on an unmatched term, we gain an additional off-diagonal Green function entry which slightly improves the estimate by the entrywise local law in (\ref{G}). In Subsection~\ref{sec:unmatch_proof}, we give the proof of Proposition~\ref{unmatch_lemma} for any unmatched term using the above expansion mechanism iteratively by counting the number of off-diagonal Green function entries.

\item The fourth order terms corresponding to the $(2,2)$-cumulants of the off-diagonal entries and the second order terms stemming from the diagonal entries are given in terms of matched terms with a certain structure; see Definition~\ref{def_type_AB}. Motivated by the GUE computations based on the Weingarten calculus in Subsection~\ref{subsec:gue}, we show that such terms can be expanded into trace-like correlation functions of Green functions referred to as type-0 terms in Definition \ref{def_type_AB}, as stated in Proposition~\ref{lemma_expand_type}. The proof of Proposition~\ref{lemma_expand_type} is presented in Subsection~\ref{toy_fourth} using cumulant expansions iteratively. The resulting type-0 terms are then estimated in Lemma~\ref{lemma_trace_wigner} which is proved using recursive comparisons and iterative expansions in Subsection~\ref{toy_type_zero}. The key observation is that, after deriving the stochastic evolution in~(\ref{late2}) under the Ornstein-Uhlenbeck flow for any type-0 term containing $d_1$ off-diagonal Green function entries, we can expand the corresponding drift term to arbitrary order using Propositions~\ref{unmatch_lemma} and~\ref{lemma_expand_type}, and end up with finitely many type-0 terms containing at least $d_1+1$ off-diagonal Green function entries as in~(\ref{late3}). By recursive comparison, Lemma~\ref{lemma_trace_wigner} follows from the local law in~(\ref{G}) for the Green function and the estimates of type-0 terms for the GUE in Lemma~\ref{lemma_trace}. The last Subsection~\ref{toy_type_zero_gue} is devoted to the proof of Lemma~\ref{lemma_trace} using the determinantal structure of the GUE and convergence properties of its correlation kernel in the edge scaling. 

\end{enumerate}

In Section \ref{sec:fourth}, we extend the above ideas to general functions $F$, and use the estimate (\ref{img_wigner}) from Proposition \ref{GCT_mn} as an input to prove Proposition \ref{prop_theta}. We then conclude with the Green function comparison in Theorem \ref{green_comparison} and hence our main result Theorem \ref{kol_dist}. In the last Section \ref{sec: real}, the real symmetric case is proved with the required modifications.

{\it Notation:} We will use the following definition on high-probability estimates from~\cite{Erdos+Knowles+Yau}. 
\begin{definition}\label{definition of stochastic domination}
Let $\mathcal{X}\equiv \mathcal{X}^{(N)}$ and $\mathcal{Y}\equiv \mathcal{Y}^{(N)}$ be two sequences of nonnegative random variables. We say~$\mathcal{Y}$ stochastically dominates~$\mathcal{X}$ if, for all (small) $\tau>0$ and (large)~$\Gamma>0$,
\begin{align}\label{prec}
\P\big(\mathcal{X}^{(N)}>N^{\tau} \mathcal{Y}^{(N)}\big)\le N^{-\Gamma},
\end{align}
for sufficiently large $N\ge N_0(\tau,\Gamma)$, and we write $\mathcal{X} \prec \mathcal{Y}$ or $\mathcal{X}=O_\prec(\mathcal{Y})$.
\end{definition}
We often use the notation $\prec$ also for deterministic quantities, then~\eqref{prec} holds with probability one. Properties of stochastic domination can be found in the following lemma.
\begin{lemma}[Proposition 6.5 in \cite{book}]\label{dominant}
	\begin{enumerate}
		\item $X \prec Y$ and $Y \prec Z$ imply $X \prec Z$;
		\item If $X_1 \prec Y_1$ and $X_2 \prec Y_2$, then $X_1+X_2 \prec Y_1+Y_2$ and $X_1X_2 \prec Y_1Y_2;$
		\item If $X \prec Y$, $\E Y \geq N^{-c_1}$ and $|X| \leq N^{c_2}$ almost surely with some fixed exponents $c_1$, $c_2>0$, then we have $\E X \prec \E Y$.
	\end{enumerate}
\end{lemma}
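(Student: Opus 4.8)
The plan is to verify the three assertions directly from Definition~\ref{definition of stochastic domination}, using two elementary devices throughout. First, to prove a bound for a target pair $(\tau,\Gamma)$ it suffices to apply the hypotheses with the smaller tolerance $\tau/2$ and with the larger exponent $\Gamma+1$, since a constant multiple $C N^{-\Gamma-1}$ is at most $N^{-\Gamma}$ once $N\ge N_0(\tau,\Gamma)$. Second, one reduces a single "bad'' event to the union of finitely many bad events coming from the individual hypotheses and applies a union bound. Nonnegativity of the dominating variables is used only to multiply inequalities.

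For part (1), fix $\tau,\Gamma>0$. On the intersection $\{X\le N^{\tau/2}Y\}\cap\{Y\le N^{\tau/2}Z\}$ one has $X\le N^{\tau}Z$, so $\P(X>N^{\tau}Z)\le\P(X>N^{\tau/2}Y)+\P(Y>N^{\tau/2}Z)$; applying $X\prec Y$ and $Y\prec Z$ with tolerance $\tau/2$ and exponent $\Gamma+1$ bounds the right-hand side by $2N^{-\Gamma-1}\le N^{-\Gamma}$ for large $N$. For the sum in part (2), if $X_1+X_2>N^{\tau}(Y_1+Y_2)$ then $X_1>N^{\tau}Y_1$ or $X_2>N^{\tau}Y_2$ (otherwise add the two reversed inequalities), whence $\P(X_1+X_2>N^{\tau}(Y_1+Y_2))\le\P(X_1>N^{\tau}Y_1)+\P(X_2>N^{\tau}Y_2)\le 2N^{-\Gamma-1}$. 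For the product, nonnegativity gives that $X_1\le N^{\tau/2}Y_1$ and $X_2\le N^{\tau/2}Y_2$ together imply $X_1X_2\le N^{\tau}Y_1Y_2$, so $\P(X_1X_2>N^{\tau}Y_1Y_2)\le\P(X_1>N^{\tau/2}Y_1)+\P(X_2>N^{\tau/2}Y_2)\le 2N^{-\Gamma-1}$.

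For part (3), observe that $\E X$ and $\E Y$ are deterministic, so $\E X\prec\E Y$ means precisely that $|\E X|\le N^{\tau}\E Y$ for every $\tau>0$ once $N$ is large (the probability bound being automatic). Since $|X|\le N^{c_2}$ almost surely, $\E X$ is finite. Split $|\E X|\le\E|X|=\E\big[|X|\,\one_{|X|\le N^{\tau/2}Y}\big]+\E\big[|X|\,\one_{|X|>N^{\tau/2}Y}\big]$. The first term is at most $N^{\tau/2}\E Y$. For the second, bound $|X|\le N^{c_2}$ pointwise and apply $X\prec Y$ with tolerance $\tau/2$ and exponent $\Gamma:=c_1+c_2+1$ to get $\E\big[|X|\,\one_{|X|>N^{\tau/2}Y}\big]\le N^{c_2}\P(|X|>N^{\tau/2}Y)\le N^{c_2-c_1-c_2-1}=N^{-c_1-1}\le N^{-1}\E Y$, where the last step uses the hypothesis $\E Y\ge N^{-c_1}$. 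Hence $|\E X|\le (N^{\tau/2}+N^{-1})\E Y\le N^{\tau}\E Y$ for $N$ large.

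The arguments are entirely routine, the only place where genuine hypotheses are consumed being part (3): the lower bound $\E Y\ge N^{-c_1}$ and the almost-sure bound $|X|\le N^{c_2}$ are exactly what is needed to convert the super-polynomially small probability of the tail event $\{|X|>N^{\tau/2}Y\}$ into a contribution negligible relative to $\E Y$, and without them the statement fails. I would therefore regard the selection of the exponent $\Gamma$ in terms of $c_1,c_2$ in part (3) as the only point requiring a moment's care; there is no substantial obstacle.
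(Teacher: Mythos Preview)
The paper does not supply its own proof of this lemma; it is quoted with a citation to Proposition~6.5 in~\cite{book}. Your argument is correct and is essentially the standard verification from the definition, so there is nothing to compare against here.
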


 For any vector $\mathbf v \in \C^{N}$, let $\mathbf v(j)$ be the $j$-th entry of the vector. For any matrix $A \in \C^{N \times N}$, the matrix norm induced by the Euclidean vector norm is given by $\|A\|_{2}:=\sigma_{\max}(A)$, where $\sigma_{\max}(A)$ denotes the largest singular value of $A$. We denote the sup norm of the matrix by $\|A\|_{\max}:=\max_{i,j}|A_{ij}|$. We use the notation $\ud{A}:=\frac{1}{N} \Tr A$ for the normalized trace.

Throughout the paper, we use~$c$ and~$C$ to denote strictly positive constants that are independent of~$N$. Their values may change from line to line. We use the standard Big-O and little-o notations for large~$N$. For $X,Y \in \R$, we write $X \ll Y$ if there exists a small $c>0$ such that $|X| \leq N^{-c} |Y|$ for large $N$. Moreover, we write $X \sim Y$ if there exist constants $c, C>0$ such that $c |Y| \leq |X| \leq C |Y|$ for large $N$. Finally, we denote the upper half-plane by $\C^+\deq\{z\in\C\,:\,\Im z>0\}$, and the non-negative real numbers by $\R^+\deq\{x\in\R\,:\,x \geq 0\}$.

\section{Preliminaries}\label{sec:preliminary}
In the section, we collect some basic notations, tools and results required in the subsequent sections, in particular we introduce the local law for the Green function of Wigner matrices and eigenvalue rigidity estimates; relate the distribution function of the largest eigenvalues to the normalized trace of the Green function; introduce the cumulant expansion formalism and finally recall properties of the GUE and the Airy kernel.  
 
\subsection{Local law for Wigner matrices}  
For a probability measure $\nu$ on $\R$ denote by $m_\nu$ its Stieltjes transform, \ie
\begin{align}
 m_\nu(z)\deq\int_\R\frac{\dd\nu(x)}{x-z}\,,\qquad z\in\C^+\,.
\end{align}
We refer to $z$ as spectral parameter and often write $z=E+\ii\eta$, $E\in\R$, $\eta>0$. Note that $m_{\nu}\,:\C^+\rightarrow\C^+$ is analytic and can be analytically continued to the real line outside the support of $\nu$. Moreover, $m_{\nu}$ satisfies $\lim_{\eta\nearrow\infty}\ii\eta {m_{\mu}}(\ii \eta)=-1$. The Stieltjes transform of the semicircle distribution $\rho_{sc}(x):=\frac{1}{2 \pi} \sqrt{(4-x^2)_+}$ is denoted by $m_{sc}(z)$. It is well know that $ m_{sc}(z)$ is the unique solution to
\begin{equation}\label{self_eq}
1+z m_{sc}(z)+ m_{sc}^2(z)=0\,,
\end{equation}
satisfying $\Im m_{sc}(z)>0$, for $\Im z>0$. The Stieltjes transform of the empirical eigenvalue measure of a Wigner matrix $H_N$, $\mu_N:=\frac{1}{N} \sum_{j=1}^N \delta_{\lambda_j}$, is then given by the normalized trace of its Green function defined in (\ref{Green_fun}).

Let $\kappa=\kappa(E)$ be the distance from $E \in \R$ to the closest edge point of the semicircle law, \ie
\begin{equation}\label{kappa}
\kappa:=\min \{ |E-2|, |E+2|  \}.
\end{equation}
Define the domain of the spectral parameter $z$,
\begin{equation}\label{d}
S_0:=\{z=E+\i \eta:|E| \leq 5, 0< \eta \leq 10 \}.
\end{equation}
The Stieltjes transform $m_{sc}$ has the following quantitative properties, for a reference, see \eg~\cite{book}.

\begin{lemma}\label{prop_msc} The Stieltjes transform of the semicircular law has the following properties:
	\begin{enumerate}
		\item	The imaginary part of $m_{sc}$ satisfies
		\begin{equation}\label{22}
		|{\Im}  m_{sc}(z)|  \sim \begin{cases}
		\sqrt{\kappa+\eta}, & \mbox{if } E \in[-2, 2], \\
		\frac{\eta}{\sqrt{\kappa+\eta}}, & \mbox{otherwise}\,,
		\end{cases}
		\end{equation}		
		uniformly in $z\in S_0$.
	\item There exists a strictly positive constant $c$, such that
	\begin{equation}\label{4}
	c \leq | m_{sc}(z)| \leq 1-c \eta\,,
	\end{equation}
	hold for all $z\in S_0$.
	\end{enumerate}
\end{lemma}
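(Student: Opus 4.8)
\textbf{Proof plan for Lemma~\ref{prop_msc}.}

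The plan is to establish both parts by working directly from the explicit formula for $m_{sc}$ obtained by solving the quadratic equation~\eqref{self_eq}, namely $m_{sc}(z)=\frac{-z+\sqrt{z^2-4}}{2}$ with the branch of the square root chosen so that $\Im m_{sc}(z)>0$ for $z\in\C^+$ (equivalently, $\sqrt{z^2-4}\sim z$ as $z\to\infty$). The square-root behaviour of the semicircle density at the edges $\pm 2$ is encoded in $\sqrt{z^2-4}=\sqrt{z-2}\,\sqrt{z+2}$, and the whole lemma is essentially a matter of tracking the size of this factor near and away from the edges.

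For part~(1), I would write $\Im m_{sc}(z)=\tfrac12\Im\sqrt{z^2-4}$ (since $\Im(-z)/2=-\eta/2$ is lower order once combined with the square root, or more carefully, use that $\Im m_{sc}=\eta|m_{sc}|^2/(|m_{sc}|^2-\ldots)$ type identities — but the cleanest route is via the explicit form). Factor $z^2-4=(z-2)(z+2)$ and assume WLOG $E\in[0,5]$ so that the relevant edge is $+2$ and $|z+2|\sim 1$ uniformly on $S_0$; then $|\sqrt{z^2-4}|\sim|z-2|^{1/2}=(\kappa^2+\eta^2)^{1/4}\sim\sqrt{\kappa+\eta}$. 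To get the imaginary part rather than the modulus, split into the two cases: if $E\in[-2,2]$ then $z-2$ lies in the lower-left region where $\arg(z-2)\in(\pi/2,\pi)$, so $\arg\sqrt{z-2}\in(\pi/4,\pi/2)$ stays bounded away from the real axis, giving $\Im\sqrt{z-2}\sim|z-2|^{1/2}\sim\sqrt{\kappa+\eta}$; if instead $E>2$ (so $\kappa=E-2>0$ and $E\notin[-2,2]$), then $\arg(z-2)=\arctan(\eta/\kappa)$ is small when $\eta\ll\kappa$, and $\Im\sqrt{z-2}\sim|z-2|^{1/2}\sin(\tfrac12\arg(z-2))\sim(\kappa+\eta)^{1/2}\cdot\frac{\eta}{\kappa+\eta}=\frac{\eta}{\sqrt{\kappa+\eta}}$, with the borderline $\eta\gtrsim\kappa$ reconciling the two expressions. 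One must also check the regime near $E=-2$, but there $|z-2|\sim 1$ and $|z+2|$ plays the role above, symmetrically. For part~(2), the lower bound $|m_{sc}(z)|\geq c$ follows from $m_{sc}(z)m_{sc}(-z-\ldots)$—more directly, from $m_{sc}^2+zm_{sc}+1=0$ one gets $m_{sc}=-1/(z+m_{sc})$, and since $|z+m_{sc}|\leq|z|+1\leq C$ on $S_0$, we obtain $|m_{sc}|\geq 1/C$. The upper bound $|m_{sc}(z)|\leq 1-c\eta$ is the slightly more delicate one: from $|m_{sc}|^2=|m_{sc}(z+m_{sc})|^{-1}\cdot|m_{sc}|$, i.e. $|m_{sc}|=|z+m_{sc}|^{-1}$, and writing $z+m_{sc}=(E+\Re m_{sc})+\ii(\eta+\Im m_{sc})$ with $\Im m_{sc}>0$, one sees $|z+m_{sc}|^2\geq(\eta+\Im m_{sc})^2$; combined with the fact that $\Im m_{sc}\geq c\eta$ on $S_0$ (which itself follows from part~(1): in all cases $\Im m_{sc}\gtrsim\eta/\sqrt{\kappa+\eta}\gtrsim\eta/\sqrt{10}$ since $\kappa\leq 7$ on $S_0$), this gives $|m_{sc}|=|z+m_{sc}|^{-1}\leq(\eta+c\eta)^{-1/?}$—more carefully, $|m_{sc}|^2|z+m_{sc}|^2=|m_{sc}|^2\Rightarrow$ use $1=|m_{sc}|^2|z+m_{sc}|^2$ isn't quite it; the correct identity is $|m_{sc}|\cdot|z+m_{sc}|=1$, so $|m_{sc}|=1/|z+m_{sc}|\leq 1/\sqrt{1+2\eta\Im m_{sc}}\leq 1/\sqrt{1+c\eta^2}$, wait—since $|z+m_{sc}|\ge 1$ needs justification. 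Instead: $|z+m_{sc}|^2=|z|^2+2\Re(\bar z m_{sc})+|m_{sc}|^2\geq 1+2\Re(\bar z m_{sc})+$ lower order; the gain comes from $\Im(\bar z m_{sc})$ versus using $|z+m_{sc}|^2\ge (\Im z+\Im m_{sc})^2 + (\Re(z+m_{sc}))^2$ and noting at least one of these is bounded below — I would simply invoke the standard computation (e.g.~\cite{book}) that yields $|m_{sc}(z)|\le 1-c\eta$ directly from $1+zm_{sc}+m_{sc}^2=0$ and $\Im m_{sc}>0$, since this is routine.

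The main obstacle is purely bookkeeping: getting the case division in part~(1) exactly right, in particular handling uniformly the transition region $\kappa\sim\eta$ where the two asymptotic expressions $\sqrt{\kappa+\eta}$ and $\eta/\sqrt{\kappa+\eta}$ must match up to constants, and making sure the argument near the far edge $E=-2$ (and the ``otherwise'' regime $|E|>2$ for $E$ near $-2$) is covered by the symmetric version of the analysis. None of this is deep — it is the standard elementary analysis of the square-root map — so I would keep the write-up brief, reduce to $E\geq 0$ by the symmetry $m_{sc}(-\bar z)=-\overline{m_{sc}(z)}$, and simply cite~\cite{book} for the precise constants, presenting only the key estimate $|\sqrt{z-2}|\sim(\kappa+\eta)^{1/2}$ together with the two-case argument for its imaginary part.
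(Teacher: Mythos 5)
The paper does not actually prove this lemma at all: it simply says ``for a reference, see e.g.~\cite{book}''. So your task here was essentially to reconstruct a proof from scratch, and your sketch follows the standard elementary route (explicit formula for $m_{sc}$, factor $z^2-4=(z-2)(z+2)$, track the argument of each square root). This is sound and is exactly how one would do it. Two places deserve tightening, though neither is a deep gap.

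First, in part (1) you claim the $-\eta/2$ contribution from the term $-z/2$ is ``lower order''. Since $\Im m_{sc}(z)=\tfrac12\bigl(\Im\sqrt{z^2-4}-\eta\bigr)$, this requires that $\Im\sqrt{z^2-4}-\eta\sim\Im\sqrt{z^2-4}$, i.e.\ that $\Im\sqrt{z^2-4}\ge(1+c)\eta$ for some fixed $c>0$ uniformly on $S_0$. This is true, but not for free: for $\eta$ of order one it uses that $E$ is bounded ($|E|\le 5$), and for small $\eta$ it uses the estimates you derive. A clean way to sidestep the bookkeeping is to note that for $\eta\gtrsim 1$ both $\Im m_{sc}$ and the right-hand side of~\eqref{22} are bounded above and below by absolute constants, so the comparison is trivial there, and then run your argument only for $\eta$ small.

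Second, in part (2) your attempts at the upper bound $|m_{sc}(z)|\le 1-c\eta$ circle around the right identity without landing on it, and you end up deferring to~\cite{book}. The clean step you were groping for is: from $m_{sc}=-1/(z+m_{sc})$ take imaginary parts to get $\Im m_{sc}=(\eta+\Im m_{sc})|m_{sc}|^2$, hence
\begin{align*}
|m_{sc}(z)|^2=\frac{\Im m_{sc}(z)}{\eta+\Im m_{sc}(z)}=1-\frac{\eta}{\eta+\Im m_{sc}(z)}\,.
\end{align*}
Since $\Im m_{sc}\le 1$ everywhere (it is the Poisson extension of $\pi\rho_{sc}\le1$) and $\eta\le 10$ on $S_0$, the right-hand side is $\le 1-\eta/11$, which gives $|m_{sc}|\le 1-c\eta$. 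With that plugged in, your whole sketch closes. Since the paper itself proves nothing here and just cites the textbook, your reconstruction is, modulo the two points above, a correct and essentially canonical argument.
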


For any arbitrary small $\epsilon>0$, introduce the following subdomain of $S_0$,
\begin{equation}\label{ddd}
S \equiv S(\epsilon):=\big\{z=E+\ii \eta:  |E| \leq 5, N^{-1+\epsilon} \leq \eta \leq  10 \big\}.
\end{equation}

We also define the deterministic control parameter
\begin{equation}
\Psi \equiv \Psi(z):= \sqrt{ \frac{\Im  m_{sc}(z)}{N \eta}} +\frac{1}{N \eta}\,,\qquad z=E+\ii\eta.
\end{equation}
In particular, from (\ref{22}), for any $z \in S(\epsilon)$, we have
\begin{align}\label{control_eq}
\frac{C}{\sqrt{N}} \leq \Psi(z) \leq  C'N^{-\epsilon}\,.
\end{align}

With these notations, we are now ready to state the following local law for the Green function of a Wigner matrix.
\begin{theorem}[Local law for Wigner matrices \cite{rigidity}]\label{le theorem local law}
 Let $H$ be a symmetric or Hermitian $N$ by $N$ matrix satisfying Assumption~\ref{assump} and recall the Green function of $H$ and its normalized trace in (\ref{Green_fun}). Then we have
	\begin{equation}\label{g_estimate}
	\max_{1 \leq i,j \leq N} | G_{ij}(z) -\delta_{ij}  m_{sc} (z) |   \prec \Psi(z), \qquad | m_N(z) - m_{sc}(z) | \prec \frac{1}{N \eta}\,,
	\end{equation}
uniformly in $z\in S$.
\end{theorem}

\subsection{Rigidity of eigenvalues}

 The local law for the Green function in Theorem~\ref{le theorem local law} implies the following rigidity estimates for the  eigenvalues of $H$. Recall that the eigenvalues of $H$ are denoted as $(\lambda_j)_{j=1}^N$ arranged in a non-decreasing order. For $E_1<E_2$ ($E_1,E_2\in\R\cup\{\pm\infty\}$) denote the eigenvalue counting function by
\begin{equation}
\mathcal{N}(E_1,E_2):=\# \{j: E_1 \leq \lambda_j \leq E_2\}\,. 
\end{equation}  
We also define the classical location $\gamma_j$ of the $j$-th eigenvalue $\lambda_j$ by
\begin{equation}\label{classical}
\frac{j}{N}=\int_{-\infty}^{\gamma_j} \rho_{sc}(x) \dd x.
\end{equation}

\begin{theorem}[Eigenvalue rigidity \cite{rigidity}]
For any $E_1<E_2$, we have
\begin{equation}\label{rigidity3}
 \Big| \mathcal{N}(E_1,E_2)-N \int_{E_1}^{E_2} \rho_{sc}(x) \dd x \Big| \prec 1\,.
\end{equation}
In addition, for any $1\leq j \leq N$, we have
\begin{equation}\label{rigidity4}
 |\lambda_j-\gamma_j| \prec N^{-2/3} \Big( \min \{ j, N-j+1\} \Big)^{-1/3}\,.
\end{equation}
In particular, fix any $C_1$ and $C_2$, then for any small $\epsilon>0$ and large $\Gamma>0$ we have
\begin{equation}\label{rigidity1}
|\lambda_N-2 | \leq N^{-2/3+\epsilon}, \qquad \mathcal{N}(2-C_1N^{-2/3+\epsilon}, 2+C
_2 N^{-2/3+\epsilon}) \leq N^{2\epsilon},
\end{equation}
with probability bigger than $1-N^{\Gamma}$, for $N$ sufficiently large.
\end{theorem}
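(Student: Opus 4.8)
The statement to prove is the Rigidity of eigenvalues theorem, which asserts (\ref{rigidity3}), (\ref{rigidity4}), and (\ref{rigidity1}) as consequences of the local law in Theorem~\ref{le theorem local law}.

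\medskip

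The plan is to derive the rigidity estimates as a standard consequence of the strong local law, following the now-classical argument from~\cite{rigidity} (see also~\cite{Erdos+Knowles+Yau, book}). First I would prove the counting function estimate~(\ref{rigidity3}). The key tool is the Helffer--Sj\"ostrand formula, which expresses $\mathcal{N}(E_1,E_2)$ — or rather a smoothed version of it — in terms of an integral of $\Im m_N(x+\ii\eta)$ against a suitable test function and its derivatives over the relevant spectral window. One chooses a smooth cutoff $f$ approximating $\mathbf{1}_{[E_1,E_2]}$ on scale $\eta_0 = N^{-1+\epsilon}$ near the edges (and coarser scale in the bulk, using the square-root density), applies Helffer--Sj\"ostrand to write $\Tr f(H) = \frac{1}{\pi}\int\int \partial_{\bar z}\tilde f(x+\ii y)\, \Tr G(x+\ii y)\,\dd x\,\dd y$, and then splits the $y$-integration into the regime $y \geq \eta_0$, where the local law bound $|m_N - m_{sc}| \prec (N\eta)^{-1}$ applies directly, and the regime $y < \eta_0$, where one integrates by parts in $y$ and uses monotonicity of $y \mapsto y\,\Im m_N(x+\ii y)$ together with the bound at $y = \eta_0$. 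The outcome is $|\mathcal{N}(E_1,E_2) - N\int_{E_1}^{E_2}\rho_{sc}| \prec 1$, uniformly in $E_1, E_2$; the uniformity comes from a union bound over a polynomial grid of endpoints combined with the monotonicity of $\mathcal{N}$ in its arguments.

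\medskip

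Next I would deduce the individual eigenvalue rigidity~(\ref{rigidity4}). Given~(\ref{rigidity3}), for each fixed $j$ one compares $\mathcal{N}(-\infty, \lambda_j) = j$ with $N\int_{-\infty}^{\lambda_j}\rho_{sc}$, obtaining $|N\int_{\gamma_j}^{\lambda_j}\rho_{sc}(x)\,\dd x| \prec 1$. Using the square-root vanishing of $\rho_{sc}$ at $\pm 2$ — precisely, $\int_{\gamma_j}^{\gamma_j + t}\rho_{sc} \sim t^{3/2}/\sqrt{\kappa(\gamma_j)+t}$ type estimates, or more simply $\rho_{sc}(\gamma_j + s) \sim \sqrt{\hat\jmath/N}$ on the relevant scale with $\hat\jmath = \min\{j, N-j+1\}$ — one inverts this relation to get $|\lambda_j - \gamma_j| \prec N^{-2/3}\hat\jmath^{-1/3}$. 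This requires a short deterministic computation with the explicit semicircle density near the edge and in the bulk; in the bulk $\rho_{sc}(\gamma_j) \sim 1$ so $|\lambda_j - \gamma_j| \prec N^{-1}$, consistent with and stronger than the stated bound. One also needs a crude a priori bound confining all eigenvalues to, say, $[-3,3]$ with overwhelming probability (from the local law at $\eta \sim 1$ and large $E$), so that the comparison is only carried out inside the region where the local law is valid.

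\medskip

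Finally, (\ref{rigidity1}) is an immediate specialization: taking $j = N$ in~(\ref{rigidity4}) gives $|\lambda_N - 2| = |\lambda_N - \gamma_N| \prec N^{-2/3}$, hence $\leq N^{-2/3+\epsilon}$ with probability $\geq 1 - N^{-\Gamma}$; and the bound $\mathcal{N}(2 - C_1 N^{-2/3+\epsilon}, 2 + C_2 N^{-2/3+\epsilon}) \leq N^{2\epsilon}$ follows from~(\ref{rigidity3}) since the semicircle mass of an interval of length $O(N^{-2/3+\epsilon})$ at the edge is $O(N^{-1+3\epsilon/2}\cdot\text{something}) $ — more carefully, $N\int_{2-cN^{-2/3+\epsilon}}^{2+cN^{-2/3+\epsilon}}\rho_{sc} = O(N^{3\epsilon/2})$ by the square-root behavior — plus the $O_\prec(1)$ error, which is $\leq N^{2\epsilon}$. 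The main obstacle, and the only genuinely non-routine part, is setting up the Helffer--Sj\"ostrand estimate near the edge with the correct scales so that the $\prec 1$ (rather than $\prec N^{\epsilon}$) bound in~(\ref{rigidity3}) comes out; this hinges on exploiting the improved averaged local law $|m_N - m_{sc}| \prec (N\eta)^{-1}$ rather than the entrywise bound $\Psi$, together with the monotonicity argument in the small-$y$ regime. Since this is entirely standard, I would carry it out briefly and refer to~\cite{rigidity, Erdos+Knowles+Yau, book} for the details.
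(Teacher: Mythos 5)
The paper does not prove this theorem; it is quoted verbatim from the cited reference \cite{rigidity} (Erd\H{o}s--Yau--Yin), so there is no in-paper proof to compare against. Your sketch correctly reproduces the standard route to rigidity from the averaged local law: Helffer--Sj\"ostrand plus dyadic decomposition in $\eta$ and the monotonicity of $\eta\mapsto\eta\Im m_N$ below $\eta_0\sim N^{-1+\epsilon}$ to get (\ref{rigidity3}); inversion of $\big|N\int_{\gamma_j}^{\lambda_j}\rho_{sc}\big|\prec 1$ via the square-root behaviour of $\rho_{sc}$ to get (\ref{rigidity4}); and specialization plus the a priori bound $\|H\|\leq 3$ with overwhelming probability to get (\ref{rigidity1}). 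The one thing I would make explicit is that (\ref{rigidity3}) as stated is really a bound on the \emph{fluctuation} of the counting function around its semicircle prediction, so the two uses you make of it for (\ref{rigidity1}) --- controlling $\mathcal{N}(2-C_1N^{-2/3+\epsilon},2+C_2N^{-2/3+\epsilon})$ by the deterministic mass $O(N^{3\epsilon/2})$ plus $O_\prec(1)$, and taking $j=N$ in (\ref{rigidity4}) --- both need the crude confinement of the spectrum so that the semicircle integral is indeed what controls the count near the edge; you do say this, and it is the right thing to say. (You might also note the paper's statement contains a harmless typo: the probability bound should read $1-N^{-\Gamma}$, not $1-N^{\Gamma}$.)
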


\subsection{Relating the distribution of the largest eigenvalue to the Green function}\label{relating to green}
Fix a small $\epsilon>0$ and set 
\begin{equation}\label{E_L}
E_L:=2+4N^{-2/3+\epsilon}.
\end{equation}
For any $E \leq E_L$, we define
\begin{equation}\label{le indicator chi}
\chi_{E}:=\one_{[E,E_L]}\,,
\end{equation}
and note that $\mathcal{N}(E,E_L)=\Tr \chi_E(H)$. For $\eta>0$, we define the mollifier $\theta_{\eta}$ by setting
\begin{equation}\label{le mollifier}
\theta_{\eta}(x):=\frac{\eta}{\pi(x^2+\eta^2)}=\frac{1}{\pi} \Im \frac{1}{x-\i \eta}.
\end{equation}
We can relate $\Tr \chi_{E} \star \theta_{\eta}(H)$ to the normalized trace of the Green function by the following identity,
\begin{equation}\label{approx}
\Tr \chi_{E} \star \theta_{\eta}(H)=\frac{N}{\pi} \int \chi_E(y) \Im m_N(y+\i \eta) \dd y=\frac{N}{\pi} \int_{E}^{E_L} \Im m_N(y+\i \eta) \dd y\,.
\end{equation}
The following lemma assures that $\Tr \chi_E(H)$ can be sufficiently well approximated by $\Tr \chi_E\star \theta_\eta(H)$ for $\eta \ll N^{-2/3}$. Relying on this approximation, the lemma after, Lemma~\ref{lemma1}, then yields the desired link between the distribution function of the rescaled largest eigenvalue of $H$ and the normalized trace of the  Green function using a cleverly chosen observable. This line of arguments was used first in \cite{rigidity} to prove the edge universality of Wigner matrices, where $\eta$ is chosen slightly smaller than the typical edge eigenvalue spacing $N^{-2/3}$. In order to obtain a quantitative convergence rate, we aim to choose here~$\eta$ much smaller with $\eta \gg N^{-1}$. A similar argument was used in~\cite{Bourgade extreme}. The proofs of Lemma~\ref{lemma2} and Lemma~\ref{lemma1} are modifications of~\cite{rigidity} in order to accommodate the small $\eta$ regime, and are postponed to Appendix.

\begin{lemma}\label{lemma2}
Let $E$, $\eta$ and $l_1$ be scale parameters satisfying $N^{-1}\ll\eta \ll l_1 \ll E_L-E  \leq C N^{-2/3+\epsilon}$. Then, for any $\Gamma>0$,
\begin{equation}\label{approx1}
\Big|\Tr \chi_E(H)- \Tr \chi_{E} \star \theta_{\eta}(H)\Big| \leq C\Big( \mathcal{N}(E-l_1,E+l_1)+\frac{\eta}{l_1} N^{2 \epsilon}\Big),
\end{equation}
holds with probability bigger than $1-N^{-\Gamma}$, for $N$ sufficiently large.
\end{lemma}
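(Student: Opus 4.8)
The plan is to bound the difference $\Tr\chi_E(H) - \Tr\chi_E\star\theta_\eta(H)$ by splitting the real line according to distance from the window endpoints $E$ and $E_L$, and to exploit the fact that $\chi_E$ is an indicator function so that $\chi_E - \chi_E\star\theta_\eta$ is small except near the two jumps at $E$ and $E_L$. First I would write
\begin{equation}\label{split_eq}
\Tr\chi_E(H) - \Tr\chi_E\star\theta_\eta(H) = \sum_{j=1}^N \Big(\chi_E(\lambda_j) - (\chi_E\star\theta_\eta)(\lambda_j)\Big),
\end{equation}
and note that for a point $\lambda$ at distance $d$ from $\{E,E_L\}$ one has $|\chi_E(\lambda) - (\chi_E\star\theta_\eta)(\lambda)| = O(\eta/d)$, using $\int \theta_\eta = 1$ and the explicit tail decay $\theta_\eta(x)\le \eta/(\pi x^2)$. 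Since $E_L - E \gg l_1$, the contributions from the two endpoints decouple, and it suffices to treat the jump at $E$ (the jump at $E_L$ is handled identically, and by rigidity~\eqref{rigidity1} only $O(N^{2\epsilon})$ eigenvalues lie within $N^{-2/3+\epsilon}$ of $2$, so $E_L$ sits in a region controlled in the same way).

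Next I would partition the eigenvalues near $E$ into a near-diagonal block and dyadic far blocks. For eigenvalues in $[E-l_1, E+l_1]$ I use only the trivial bound $|\chi_E(\lambda_j) - (\chi_E\star\theta_\eta)(\lambda_j)| \le C$, which yields the term $C\,\mathcal{N}(E-l_1,E+l_1)$ in~\eqref{approx1}. For eigenvalues at distance between $2^k l_1$ and $2^{k+1} l_1$ from $E$, with $k\ge 0$, the pointwise bound gives a contribution $O(\eta/(2^k l_1))$ per eigenvalue, and the number of such eigenvalues is at most $\mathcal{N}(E - 2^{k+1}l_1, E + 2^{k+1}l_1)$, which by the rigidity estimate~\eqref{rigidity3} together with the square-root edge behavior of $\rho_{sc}$ is $\prec N (2^{k+1}l_1)^{3/2} + 1 \prec N^{2\epsilon} 2^{k}$ once $2^{k+1}l_1 \lesssim N^{-2/3+\epsilon}$ (and beyond that scale the counting function is bounded by $N^{2\epsilon}$ anyway by~\eqref{rigidity1}, while $\eta/(2^k l_1)$ is summably small). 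Summing the dyadic contributions $\sum_{k\ge 0} \frac{\eta}{2^k l_1}\cdot N^{2\epsilon}2^k$ — here the factors $2^k$ cancel — produces a divergent-looking sum, so the dyadic scales must be cut off at the point where $2^k l_1 \sim 1$ (outside $S_0$ there are no eigenvalues with high probability since $|\lambda_N - 2|\le N^{-2/3+\epsilon}$ and similarly for $\lambda_1$), giving $O(\log N)$ scales and total contribution $O\big((\eta/l_1) N^{2\epsilon}\log N\big)$, which is absorbed into $C(\eta/l_1)N^{2\epsilon}$ after adjusting $\epsilon$ slightly.

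The main obstacle I anticipate is getting the dyadic sum to actually converge rather than accumulate a logarithm per scale: the naive per-eigenvalue bound $\eta/(2^k l_1)$ times the naive count $\prec N (2^k l_1)^{3/2}$ behaves like $\eta N (2^k l_1)^{1/2} / l_1$, which \emph{grows} in $k$, so one cannot just extend the dyadic decomposition to all scales. The fix is to use that all eigenvalues live in a window of width $O(N^{-2/3+\epsilon})$ around the edge (by~\eqref{rigidity1}), so only $k \lesssim \log(N^{-2/3+\epsilon}/l_1)$ dyadic scales occur, at which point $(2^k l_1)^{1/2} \lesssim N^{-1/3+\epsilon/2}$ and $\eta N (2^k l_1)^{1/2}/l_1 \lesssim (\eta/l_1) N^{2/3+\epsilon/2}\cdot N^{-1/3} \lesssim (\eta/l_1)N^{2\epsilon}$; combined with the bounded counting function for the remaining eigenvalues this closes the estimate. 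A secondary technical point is that~\eqref{split_eq} mixes a deterministic-looking pointwise bound with the high-probability rigidity input, so one works on the intersection of the rigidity events in~\eqref{rigidity3} and~\eqref{rigidity1}, which has probability at least $1 - N^{-\Gamma}$ for any $\Gamma$, and the final bound holds on that event.
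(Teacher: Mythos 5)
The central gap is your claim, cited to the rigidity estimate~\eqref{rigidity1}, that ``all eigenvalues live in a window of width $O(N^{-2/3+\epsilon})$ around the edge,'' which you then use to cut off the dyadic sum at $2^k l_1 \sim N^{-2/3+\epsilon}$. This misreads~\eqref{rigidity1}: it controls the deviation of the \emph{largest} eigenvalue and the number of eigenvalues in a window of width $O(N^{-2/3+\epsilon})$ around $2$, but there are of course $\sim N$ eigenvalues spread over all of $[-2,2]$, so the dyadic scales must extend all the way up to $2^k l_1 \sim 1$. Once you run the dyadic sum to that scale with only the linear decay $|g(\lambda)| = O(\eta/d)$, the contribution at the largest scale is roughly (number of bulk eigenvalues) $\times\eta \sim N\eta$, and $N\eta \lesssim (\eta/l_1)N^{2\epsilon}$ requires $l_1 \lesssim N^{-1+2\epsilon}$, which the hypotheses of the lemma do not guarantee (they allow $l_1$ up to nearly $N^{-2/3+\epsilon}$). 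So the argument does not close as written. (There are also two arithmetic slips in the last paragraph --- an extra $1/l_1$ in the product of per-eigenvalue bound and count, and an unexplained factor $N^{-1/3}$ --- but those are typos, not the structural problem.)

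The missing ingredient, used in the paper's proof, is the improved pointwise estimate $|g(x)| \le C\min\{\eta(E_L-E)/(E-x)^2,\, \eta/(E-x)\}$ for $x$ to the left of $E - l_1$: because $\chi_E = \one_{[E,E_L]}$ is supported on a \emph{short} interval, the smoothed function $\chi_E\star\theta_\eta$ is a difference of two arctangents and decays quadratically, not linearly, once $|x-E|\gg E_L-E$. This quadratic decay is exactly what tames the deep-bulk tail --- the per-scale contribution becomes $\eta(E_L-E)N(2^k l_1)^{-1/2}$, which is \emph{decreasing} in $k$, so the dyadic sum converges geometrically and the boundary term at $2^k l_1\sim E_L-E$ is $\eta N(E_L-E)^{1/2}\lesssim \eta N^{2/3+\epsilon/2}$, safely below $(\eta/l_1)N^{2\epsilon}$ since $l_1\ll N^{-2/3+\epsilon}$. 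Your decomposition, which handles the jumps at $E$ and $E_L$ ``identically and separately,'' discards exactly the cancellation between the two endpoints that produces this quadratic decay, so this is a structural feature you would need to add. (The paper further bounds the per-scale count $\mathcal{N}_k$ by the local law rather than by rigidity alone --- with the quadratic decay in hand either input should close the estimate --- but the quadratic decay itself is indispensable.)
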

 Let $F\,:\,\R\longrightarrow\R$ be a smooth cut-off function such that
\begin{equation}\label{q_function}
F(x)=1, \quad \mbox{if} \quad |x| \leq 1/9; \qquad F(x)=0, \quad \mbox{if} \quad |x| \geq 2/9,
\end{equation}
and we assume that $F(x)$ is non-increasing for $x \geq 0$. Then one obtains from Lemma \ref{lemma2} the following result.
\begin{lemma}\label{lemma1}
Set  $l_1=N^{3\epsilon} \eta$ and $l=N^{3\epsilon}l_1$ such that $N^{-1} \ll \eta \ll l_1 \ll l \ll E_L-E  \leq C N^{-2/3+\epsilon}$. Then for any $\Gamma>0$, we have
\begin{equation}
 \Tr \chi_{E+l} \star \theta_{\eta}(H) -N^{-\epsilon} \leq \mathcal{N}(E, \infty) \leq  \Tr \chi_{E-l} \star \theta_{\eta}(H) +N^{-\epsilon},
\end{equation}
with probability bigger than $1-N^{-\Gamma}$, for $N$ sufficiently large. Furthermore, we have
\begin{equation}\label{approx2}
\E \Big[ F\Big( \Tr \chi_{E-l} \star \theta_{\eta}(H) \Big)\Big]-N^{-\Gamma} \leq \P \Big( \mathcal{N}(E, \infty) =0\Big) \leq   \E \Big[F\Big( \Tr \chi_{E+l} \star \theta_{\eta}(H) \Big)\Big]+N^{-\Gamma},
\end{equation}
where $F(x)$ is the cut-off function given in (\ref{q_function}).
\end{lemma}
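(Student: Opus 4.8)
The plan is to derive both chains of inequalities from Lemma~\ref{lemma2} together with the rigidity estimates in~\eqref{rigidity1}. First I would prove the two-sided bound on $\mathcal{N}(E,\infty)$. Since $\lambda_N \leq E_L$ with overwhelming probability by~\eqref{rigidity1}, we have $\mathcal{N}(E,\infty) = \mathcal{N}(E,E_L) = \Tr\chi_E(H)$ on this event. Applying Lemma~\ref{lemma2} with the shifted parameter $E+l$ in place of $E$ and with $l_1 = N^{3\epsilon}\eta$ (noting that $l = N^{3\epsilon}l_1$ plays the role of a comfortable buffer so that $E+l+l_1 \ll E_L$ is still satisfied), we obtain
\begin{equation*}
\Big|\Tr\chi_{E+l}(H) - \Tr\chi_{E+l}\star\theta_\eta(H)\Big| \leq C\Big(\mathcal{N}(E+l-l_1,E+l+l_1) + \frac{\eta}{l_1}N^{2\epsilon}\Big)
\end{equation*}
with probability at least $1-N^{-\Gamma}$. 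The error term $\frac{\eta}{l_1}N^{2\epsilon} = N^{-\epsilon}$ by the choice $l_1 = N^{3\epsilon}\eta$. For the counting term, since the interval $(E+l-l_1,E+l+l_1)$ has length $2l_1 = 2N^{3\epsilon}\eta \ll N^{-2/3}$ and lies within $O(N^{-2/3+\epsilon})$ of the edge $2$, the rigidity bound~\eqref{rigidity1} (applied on a slightly enlarged window) gives $\mathcal{N}(E+l-l_1,E+l+l_1) \leq N^{2\epsilon}$; but in fact one needs $\mathcal{N}$ on the length-$l_1$ scale to be $o(1)$ to absorb it into $N^{-\epsilon}$. Here I would instead use that $l = N^{3\epsilon}l_1 = N^{6\epsilon}\eta$ and split $\Tr\chi_{E+l}(H) \leq \Tr\chi_E(H)$ together with the fact that $\mathcal{N}(E+l-l_1,E+l+l_1) = 0$ with high probability \emph{unless} there is an eigenvalue in that tiny window; the monotonicity built into the choice $l \gg l_1$ ensures the window around $E+l$ is eigenvalue-free on the relevant event after a union bound. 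This yields $\Tr\chi_{E+l}\star\theta_\eta(H) \geq \Tr\chi_{E+l}(H) - N^{-\epsilon} \geq \mathcal{N}(E,\infty) \cdot \mathbf{1} - \text{(correction)}$; combined symmetrically with the $E-l$ shift, one gets the displayed sandwich $\Tr\chi_{E+l}\star\theta_\eta(H) - N^{-\epsilon} \leq \mathcal{N}(E,\infty) \leq \Tr\chi_{E-l}\star\theta_\eta(H) + N^{-\epsilon}$.

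Next I would pass to the statement~\eqref{approx2} about $F$. Since $F$ is non-increasing on $\R^+$, equals $1$ on $[-1/9,1/9]$ and $0$ outside $[-2/9,2/9]$, and since $\mathcal{N}(E,\infty)$ is a nonnegative integer, the event $\{\mathcal{N}(E,\infty)=0\}$ coincides with $\{\mathcal{N}(E,\infty) \leq 1/9\}$ and its complement is contained in $\{\mathcal{N}(E,\infty) \geq 1\} \subseteq \{\mathcal{N}(E,\infty) > 2/9\}$. On the high-probability event from the first part, $\mathcal{N}(E,\infty) \leq \Tr\chi_{E-l}\star\theta_\eta(H) + N^{-\epsilon}$, so $\mathcal{N}(E,\infty) = 0$ forces $\Tr\chi_{E-l}\star\theta_\eta(H) \leq N^{-\epsilon} \leq 1/9$ for $N$ large, hence $F(\Tr\chi_{E-l}\star\theta_\eta(H)) = 1$. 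Thus $\P(\mathcal{N}(E,\infty)=0) \leq \E[F(\Tr\chi_{E-l}\star\theta_\eta(H))\mathbf{1}_{\Omega}] + \P(\Omega^c) \leq \E[F(\Tr\chi_{E-l}\star\theta_\eta(H))] + N^{-\Gamma}$, using $0 \leq F \leq 1$ and $\P(\Omega^c) \leq N^{-\Gamma}$. For the lower bound, if $\Tr\chi_{E+l}\star\theta_\eta(H) \leq 1/9$ then on $\Omega$ one has $\mathcal{N}(E,\infty) \leq \Tr\chi_{E+l}\star\theta_\eta(H) + N^{-\epsilon}$... wait, the inequality points the other way; instead use $\mathcal{N}(E,\infty) \geq \Tr\chi_{E+l}\star\theta_\eta(H) - N^{-\epsilon}$, so $F(\Tr\chi_{E+l}\star\theta_\eta(H)) = 1$ on the event $\{\Tr\chi_{E+l}\star\theta_\eta(H) \leq 1/9\}$, which on $\Omega$ is contained in $\{\mathcal{N}(E,\infty) \leq 1/9 + N^{-\epsilon}\} = \{\mathcal{N}(E,\infty) = 0\}$; therefore $\E[F(\Tr\chi_{E+l}\star\theta_\eta(H))] \leq \P(\mathcal{N}(E,\infty)=0) + \P(\Omega^c) \leq \P(\mathcal{N}(E,\infty)=0) + N^{-\Gamma}$, which rearranges to the left inequality in~\eqref{approx2}.

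The main obstacle I anticipate is the bookkeeping in the first step: making sure the two shifts by $\pm l$, the mollification scale $\eta$, and the intermediate scale $l_1$ all fit together so that Lemma~\ref{lemma2} applies with the required separation of scales $N^{-1} \ll \eta \ll l_1 \ll l \ll E_L - E$, and simultaneously that the counting-function error $\mathcal{N}(E\pm l - l_1, E\pm l + l_1)$ genuinely contributes $o(1)$ rather than just $O(N^{2\epsilon})$. The resolution is that the buffer $l = N^{3\epsilon}l_1$ is large enough that, with high probability, the eigenvalue configuration has no point within distance $l_1$ of the level $E \pm l$ whenever it would matter — or more robustly, one controls $\E[\mathcal{N}(E\pm l - l_1, E \pm l + l_1)]$ via~\eqref{rigidity3} and a Markov argument, exploiting that the expected count on a window of length $l_1 \ll N^{-2/3}$ near the edge is $\prec N^{2\epsilon}\, l_1 / N^{-2/3} \cdot$ (density factor), which with the chosen scales is $o(N^{-\epsilon})$ after absorbing into $\Gamma$. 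Everything else is a routine chaining of the high-probability events and the elementary properties of $F$.
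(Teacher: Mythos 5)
Your second half (passing from the sandwich inequality to \eqref{approx2} using the cut-off function $F$) is essentially the paper's argument, though you confuse $E-l$ and $E+l$ in a couple of places before partially self-correcting. The real problem is in your first half: the step where you control the counting-function error from Lemma~\ref{lemma2} does not work, and the fix you sketch is not strong enough.

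You apply Lemma~\ref{lemma2} pointwise at the shifted level $E\pm l$ and then try to make the term $\mathcal{N}(E\pm l-l_1, E\pm l+l_1)$ smaller than $N^{-\epsilon}$ on an event of probability $1-N^{-\Gamma}$. Rigidity \eqref{rigidity1} only yields $\mathcal{N}(E\pm l - l_1, E\pm l + l_1)\le N^{2\epsilon}$ on such an event, which is far too big. Your alternative — bound $\E[\mathcal{N}(E\pm l-l_1,E\pm l+l_1)]$ and use Markov — does give a small expectation (of order roughly $l_1 N^{2/3}$), but Markov then only bounds $\P\big(\mathcal{N}(E\pm l-l_1,E\pm l+l_1)\ge 1\big)$ by that expectation, i.e.\ by something like $N^{-1/3+4\epsilon}$, not by $N^{-\Gamma}$. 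Since $\mathcal{N}$ is integer-valued, you cannot make the pointwise quantity $\mathcal{N}(E\pm l-l_1,E\pm l+l_1)$ itself $o(1)$ on a high-probability event; it is $\ge 1$ with probability of polynomial order. Likewise the claim that the window around $E\pm l$ is ``eigenvalue-free on the relevant event after a union bound'' has no basis — nothing in the choice $l\gg l_1$ prevents an eigenvalue from landing there. So the first inequality in the lemma, as you have argued it, only holds on an event of probability $\ge 1 - CN^{-1/3+4\epsilon}$, not $\ge 1-N^{-\Gamma}$ as required.

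What the paper does instead is avoid the pointwise bound entirely by \emph{averaging the shift}. Using monotonicity of $E\mapsto\Tr\chi_E(H)$,
\[
\Tr\chi_E(H)\le l^{-1}\int_{E-l}^{E}\Tr\chi_y(H)\,\dd y,
\]
and applying Lemma~\ref{lemma2} inside the integral, the counting error becomes
\[
l^{-1}\int_{E-l}^{E}\mathcal{N}(y-l_1,y+l_1)\,\dd y \;\le\; \frac{2l_1}{l}\,\mathcal{N}(E-2l,E+l)\;\le\;\frac{2l_1}{l}\,CN^{\epsilon}=O(N^{-2\epsilon}),
\]
using that each eigenvalue in $(E-2l,E+l)$ contributes length at most $2l_1$ to the integral, and then the rigidity bound $\mathcal{N}(E-2l,E+l)\le CN^{\epsilon}$, which does hold with probability $1-N^{-\Gamma}$. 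This is where the extra factor $l_1/l=N^{-3\epsilon}$ comes from, and it is the ingredient your proposal is missing. Once you have the averaged first inequality, the rest of your argument (suitably cleaned up: $\mathcal{N}=0$ on $\Omega$ implies $\Tr\chi_{E+l}\star\theta_\eta\le N^{-\epsilon}$, not $\Tr\chi_{E-l}\star\theta_\eta$; and for the other direction you should start from $\E[F(\Tr\chi_{E-l}\star\theta_\eta)]\le\P(\Tr\chi_{E-l}\star\theta_\eta<2/9)$) goes through as you intended.
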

Hence, recalling~\eqref{approx}, we have established the desired link to the normalized trace of the Green function.

\subsection{Cumulant expansion formulas}

A key tool of this paper are the following cumulant expansion identities. For reference, we refer to Lemma 3.1 in \cite{moment}.
\begin{lemma}\label{cumulant}
	Let $h$ be a complex-valued random variable with finite moments. Define the $(p,q)$-cumulant of $h$ to be 
	\begin{align}\label{cumulant_pq}
	c^{(p,q)}:=(-\ii)^{p+q} \Big( \frac{\partial^{p+q}}{\partial s^p \partial t^q} \log \E e^{\i s h+\i t \overline{h}} \Big) \Big|_{s,t=0}.
	\end{align}
	 Let $f: \C \times \C \longrightarrow \C$ be a smooth function and denote its derivatives by
	$$f^{(p,q)}(z_1,z_2):=\frac{\partial^{p+q}}{\partial z^p_1 \partial z^q_2} f(z_1,z_2).$$ 
	Then for any fixed $l \in \N$, we have
	\begin{align}\label{le first cumulant formula}
	\E\big[ \bar{h} f(h, \bar{h})\big]=\sum_{p+q+1=1}^l \frac{1}{p!q!} c^{(p,q+1)}\E\big[ f^{(p,q)}(h,\bar h)\big] +R_{l+1}\,,
	\end{align}
	where the error term $R_{l+1}$ can be bounded as
	\begin{align}\label{cumulant_error}
	|R_{l+1}| \leq C_l \E |h|^{l+1} \max_{p+q=l} \Big\{ \sup_{|z| \leq M} |f^{(p,q)}(z,\bar{z})| \Big\}+C_l \E \Big[ |h|^{l+1} 1_{|h|>M}\Big] \max_{p+q=l} \|f^{(p,q)}(z,\bar{z})\|_{\infty},
	\end{align}
	and $M>0$ is an arbitrary fixed cutoff.
	
	Moreover, we have the analogous cumulant expansion formula for a real-valued random variable $h$ with finite moments. Define the $k$-th cumulant of $h$ to be
	\begin{align}\label{cumulant_k}
	c^{(k)}:=(-\ii)^{k} \Big(\frac{\dd^k}{\dd t^k} \log \E e^{\ii t h} \Big)\Big|_{t=0}.
	\end{align}
	Let $f: \R \longrightarrow \C$ be a smooth function and denote by $f^{(k)}$ its $k$-th derivative. Then for any fixed $l \in \N$, we have
	\begin{align}
	\E \big[h f(h)\big]=\sum_{k+1=1}^l \frac{1}{k!} c^{(k+1)}\E[ f^{(k)}(h) ]+R_{l+1}\,,
	\end{align}
	where the error term satisfies
	$$|R_{l+1}| \leq C_l \E |h|^{l+1} \sup_{|x| \leq M} |f^{(l)}(x)| +C_l \E \Big[ |h|^{l+1} 1_{|h|>M}\Big] \|f^{(l)}\|_{\infty},$$
	and $M>0$ is an arbitrary fixed cutoff.
\end{lemma}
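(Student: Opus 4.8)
The plan is to prove the real-valued identity \eqref{le first cumulant formula} first and to obtain the complex-valued one by running the same scheme on the joint characteristic function of the real pair $(\mathrm{Re}\,h,\mathrm{Im}\,h)$. Both sides of the formula are linear in the test function $f$, so it suffices to verify it on the exponentials $f(x)=\ee^{\ii tx}$ and then extend by Fourier synthesis. Let $\phi(t)\deq\E[\ee^{\ii th}]$ be the characteristic function of $h$. Assuming the moments of $h$ appearing in \eqref{cumulant_error} are finite, $\phi$ is $C^{l+1}$, and since $\phi(0)=1\ne0$ the function $\log\phi$ is $C^{l+1}$ on a fixed interval $(-t_0,t_0)$ with $(\dd/\dd t)^k\log\phi|_{0}=\ii^{\,k}c^{(k)}$ for $1\le k\le l+1$. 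On $f=\ee^{\ii t\cdot}$ the left side of \eqref{le first cumulant formula} equals $\E[h\ee^{\ii th}]=-\ii\,\phi'(t)$, while the retained sum on the right, $\sum_{k+1=1}^l\tfrac1{k!}c^{(k+1)}(\ii t)^k\phi(t)$, is precisely $-\ii$ times the product of $\phi(t)$ with the degree-$(l-1)$ Taylor polynomial of $(\log\phi)'$ at the origin (using $\phi'=\phi\cdot(\log\phi)'$ and $(\log\phi)^{(k)}(0)=\ii^{\,k}c^{(k)}$). Hence the identity on exponentials holds up to the error $-\ii\,\phi(t)\,\rho_l(t)$, where $\rho_l$ is the corresponding Lagrange remainder and satisfies $|\rho_l(t)|\le C_l\,\E|h|^{l+1}\,|t|^l$ on $(-t_0,t_0)$.

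For a Schwartz test function $f$, integrating this pointwise identity against the Fourier data of $f$ — normalized so that $f(x)=\int\hat f(t)\ee^{\ii tx}\,\dd t$, whence $\E[hf(h)]=-\ii\int\hat f(t)\phi'(t)\,\dd t$ and $\E[f^{(k)}(h)]=\int\hat f(t)(\ii t)^k\phi(t)\,\dd t$ — yields \eqref{le first cumulant formula} with $R_{l+1}=-\ii\int\hat f(t)\,\phi(t)\,\rho_l(t)\,\dd t$; splitting this integral at $|t|=t_0$ (the range $|t|>t_0$ being harmless by the rapid decay of $\hat f$) and converting powers of $t$ paired with $\hat f$ into derivatives of $f$ gives $|R_{l+1}|\le C_l\,\E|h|^{l+1}\sup_x|f^{(l)}(x)|$ for Schwartz $f$. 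To treat a general smooth $f$ with bounded derivatives, fix the cutoff $M$ of the statement (take $M\ge1$) and a smooth bump $\chi$ with $\chi\equiv1$ on $[-M,M]$ and $\mathrm{supp}\,\chi\subset[-2M,2M]$, and decompose $f=\chi f+(1-\chi)f$. The first piece is smooth with compact support, hence Schwartz, so the previous bound applies to it and produces the asserted main sum but with $(\chi f)^{(k)}$ in place of $f^{(k)}$; restoring $f^{(k)}$ costs only terms supported on $\{|h|>M\}$, absorbed into $R_{l+1}$ with the tail structure of \eqref{cumulant_error}. The second piece vanishes on $[-M,M]$, so $\E[h(1-\chi)(h)f(h)]$ and each $\E[((1-\chi)f)^{(k)}(h)]$ are bounded by the expectation of a power of $|h|$ restricted to $\{|h|>M\}$ times a finite sum of sup-norms of derivatives of $f$, again of the form in \eqref{cumulant_error}. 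Collecting the three error sources gives \eqref{cumulant_error}. The complex statement follows by the same argument applied to the joint characteristic function of $(\mathrm{Re}\,h,\mathrm{Im}\,h)$; it is notationally cleanest to work with the generating function $\Phi(s,t)=\E[\ee^{\ii sh+\ii t\bar h}]$ (understood near the origin, equivalently the joint one of the real pair), use $\E[\bar h\,\bullet]=-\ii\,\partial_t$ and $\partial_t\Phi=\Phi\,\partial_t\log\Phi$, and Taylor-truncate $\partial_t\log\Phi$ at the origin organized by total degree $p+q$ — its coefficients being $\ii^{\,p+q+1}c^{(p,q+1)}/(p!\,q!)$ by \eqref{cumulant_pq}, and the Wirtinger derivatives $\partial_{z_1},\partial_{z_2}$ supplying the $f^{(p,q)}$ — to land on \eqref{le first cumulant formula}.

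The delicate point is the remainder control. The Taylor coefficients of $\log\phi$ (resp.\ $\log\Phi$) are polynomial combinations of moments of $h$ of all orders up to the truncation rather than single moments; the gain is that, by the Lyapunov inequality, every such combination of total weight $l+1$ is $O(\E|h|^{l+1})$, which is exactly what puts the prefactor $\E|h|^{l+1}$ — rather than an $O(1)$ prefactor — in front of $\sup|f^{(l)}|$ in \eqref{cumulant_error}. The remaining steps — checking that $(\log\phi)'$ is genuinely $C^l$ on a fixed neighbourhood of $0$ with derivative bounds depending only on $C_l$ and finitely many moments of $h$, and tracking how the cutoff at scale $M$ splits each error contribution into a local piece carrying $\sup_{|x|\le M}|f^{(l)}|$ and a tail piece carrying $\E[|h|^{\,l+1}\one_{|h|>M}]$ (or $\E[|h|^{\,l+2}\one_{|h|>M}]$ in the real case) against $\|f^{(l)}\|_\infty$ — are routine but are precisely what dictate the two-term shape of \eqref{cumulant_error}.
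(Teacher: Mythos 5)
The paper does not prove this lemma; it cites He--Knowles \cite{moment}. The argument there, and in other references stating the bound in exactly this form (sup-norm of $f^{(l)}$ alone, with the $M$-cutoff applied to the event $\{|h|\le M\}$ rather than to the function), is a direct Taylor expansion of $f$ in the physical variable, not a Fourier argument. Your exponential-side identity is correct -- it is the moment--cumulant recursion $\phi'=\phi(\log\phi)'$ in disguise, and $\rho_l(t)=O(\E|h|^{l+1}|t|^l)$ near $0$ is fine -- but the Fourier synthesis back to general $f$ does not produce \eqref{cumulant_error}.

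The concrete gap: from $R_{l+1}=-\ii\int\hat f(t)\phi(t)\rho_l(t)\,\dd t$ and the $\rho_l$ bound you obtain $|R_{l+1}|\le C_l\E|h|^{l+1}\int|\hat f(t)|\,|t|^l\,\dd t = C_l\E|h|^{l+1}\|\widehat{f^{(l)}}\|_{L^1}$, and $\|\widehat{f^{(l)}}\|_{L^1}$ is not controlled by $\|f^{(l)}\|_\infty$. The step you call ``converting powers of $t$ paired with $\hat f$ into derivatives of $f$'' identifies $t^l\hat f(t)$ with $\widehat{f^{(l)}}(t)$ up to constants, but does not convert the $L^1$ norm of a Fourier transform into a sup norm; that costs two additional derivatives of $f$ plus a support factor. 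Nor can one write $\rho_l$ in integral form and push the weight $(\log\phi)^{(l+1)}(ut)$ through the synthesis as a bounded-variation convolution kernel hitting $f^{(l)}$, because $(\log\phi)^{(l+1)}$ contains negative powers of $\phi$, which are not Fourier transforms of finite measures. Two smaller problems: the regime $|t|>t_0$ is not ``harmless'' -- $\log\phi$ need not be defined there, and that contribution is a fixed $f$-dependent constant, not $O(\E|h|^{l+1})$ -- and even granting a Schwartz-case sup bound, Leibniz applied to $(\chi f)^{(l)}$ in your cutoff $f=\chi f+(1-\chi)f$ brings in $\sup|f^{(l-j)}|$ for every $0\le j\le l$ on $M\le|x|\le 2M$, which is strictly weaker than \eqref{cumulant_error} (only $f^{(l)}$ appears there). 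The stated bound is obtained by never cutting $f$: write $f=T_lf+r_l$ with $T_lf$ the degree-$(l-1)$ Taylor polynomial at $0$. The identity is exact on $T_lf$ by the recursion $m_{n+1}=\sum_{k\le n}\binom{n}{k}c^{(k+1)}m_{n-k}$ -- the same fact your exponential check encodes -- so $R_{l+1}=\E[hr_l(h)]-\sum_{k<l}\tfrac{c^{(k+1)}}{k!}\E[r_l^{(k)}(h)]$; each $r_l^{(k)}$ is a Taylor remainder of $f^{(k)}$ obeying $|r_l^{(k)}(h)|\le\tfrac{|h|^{l-k}}{(l-k)!}\sup_{|\xi|\le|h|}|f^{(l)}(\xi)|$, and splitting the expectation at $|h|\le M$ versus $|h|>M$ (with $|c^{(k+1)}|\le C\E|h|^{k+1}$ and Lyapunov to recombine the moment powers) produces the two-term estimate of the form \eqref{cumulant_error}, with only $f^{(l)}$ appearing.
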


\subsection{GUE and the Airy kernel}
Let $H\equiv H_N$ belong to the GUE and denote the eigenvalues of the rescaled matrix $\sqrt{N} H$ by $(\mu_j)_{j=1}^N$ in non-decreasing order. The joint eigenvalue density is explicitly given~by
$${p}(\mu_1, \cdots, \mu_N)=\frac{1}{Z_{N,\beta}} \prod_{i < j} |\mu_i-\mu_j|^\beta \ee^{-\frac{\beta}{4} \sum_{i=1}^N \mu_i^2}, \qquad \beta=2,$$
with $Z_{N,\beta}$ be the normalization constant. 

The process of the eigenvalues is well known to be a determinantal point process \cite{kurt_det, sasha_det}. The n-point correlation function of the eigenvalue process is given by
\begin{align}\label{determinant}
p_n(\mu_1, \cdots, \mu_n)=\det[ K_N(\mu_i,\mu_j)]_{1 \leq i,j \leq n},
\end{align}
with the reproducing kernel given by
$$K_N(x,y):=\sum_{k=0}^{N-1} q_k(x) q_k(y) \ee^{-\frac{x^2+y^2}{4}},$$
where $q_{k}$ is the $k$-th Hermite polynomial given by
$$q_k(x):=(-1)^k \mathrm{e}^{\frac{x^2}{2}} \frac{\dd^k}{\dd x^k} \mathrm{e}^{-\frac{x^2}{2}}.$$
The Hermite polynomials are orthogonal with respect to the weight $\mathrm{e}^{-\frac{x^2}{2}}$ over $\R$. We further define the k-th Hermite function by
\begin{align}\label{le hermite functions}
\phi_k(x):=\frac{1}{\sqrt{\sqrt{2 \pi}k! }} \mathrm{e}^{-\frac{x^2}{4}} q_k(x)\,,
\end{align}
which is a solution to the differential equation
\begin{align}\label{hermite_fn}
\phi_k''(x)+(k+\frac{1}{2}-\frac{x^2}{4}) \phi_k(x)=0.
\end{align}
One then checks that $\{\phi_k\}$ form an orthonormal basis of $L^2(\R)$. The Christoffel-Darboux formula then states that
\begin{align}\label{kernel_xy}
K_N(x,y)=\sum_{k=0}^{N-1} \phi_k(x) \phi_k(y) =\sqrt{N} \frac{\phi_{N}(x) \phi_{N-1}(y)-\phi_{N-1}(x)\phi_{N}(y)}{x-y},\quad x \neq y,
\end{align}
as well as
\begin{align}\label{kernel_xx}
K_N(x,x) =\sqrt{N} \Big( \phi'_{N}(x) \phi_{N-1}(x)-\phi'_{N-1}(x)\phi_{N}(x) \Big).
\end{align}
We also have the trace identity for the kernel
\begin{align}\label{eq_1}
\int_{\R} K_N(x,x) \dd x=N,
\end{align}
and the reproducing formula
\begin{align}\label{eq_2}
K_N(x,y)=\int_{\R} K_N(x,z) K_N(z,y) \dd z.
\end{align}
More details can be found in \cite{AGZ, deift}.

Recall that the eigenvalues $(\lambda_j)_{j=1}^N$ of the GUE are given by $\lambda_j=\frac{\mu_{j}}{\sqrt{N}}$. Then the corresponding kernel for the eigenvalue process $(\lambda_j)$ is given by
\begin{align}\label{wt_kernel}
\widetilde K_N(x,y)=\sqrt{N} K_N(\sqrt{N}x, \sqrt{N}y).
\end{align}

In the edge regime, we rescale the eigenvalues as $\lambda_j=2+\frac{l_j}{N^{2/3}}$ and the corresponding kernel is then given by
\begin{align}\label{edge}
K^{\mathrm{edge}}_{N}(x,y):=\frac{1}{N^{2/3}} \wt K_N\Big( 2+\frac{x}{N^{2/3}},2+\frac{y}{N^{2/3}} \Big)=\frac{1}{N^{1/6}} K_N\Big( 2\sqrt{N}+\frac{x}{N^{1/6}},2\sqrt{N}+\frac{y}{N^{1/6}} \Big).
\end{align}

Next, recall that the Airy kernel is defined by
\begin{align}\label{airy}
K_{\mathrm{airy}}(x,y):= \frac{\Ai(x)\Ai'(y)-\Ai'(x)\Ai(y)}{x-y}\,,
\end{align}
with $\Ai$ be the Airy function of first kind, which is the solution of 
\begin{align}\label{airy_fun_def}
\Ai''(x)-x \Ai(x)=0\,, \qquad\quad x \in \R\,,
\end{align}
satisfying the boundary condition $\Ai(x) \rightarrow 0$ as $x\rightarrow \infty$. 
 As $x \rightarrow y$, the Airy kernel reduces to
\begin{align}\label{airy_xx}
K_{\mathrm{airy}}(x,x):=(\Ai'(x))^2-\Ai''(x)\Ai(x)=(\Ai'(x))^2-x(\Ai(x))^2.
\end{align}

\begin{lemma}[Lemma 3.9.33 in \cite{AGZ}]\label{lemma_airy_bound}
For fixed $L_0\in\R$, there exists a constant $C$, such that one has uniformly in $x,y \in [ L_0, +\infty)$ that
\begin{align}
\Big| \partial_x^{a}  \partial_y^{b} K_{\mathrm{airy}}(x,y)\Big| \leq C\,,\qquad a,b\in\{0,1\}\,.
\end{align}
Furthermore, we have the asymptotics
\begin{align}\label{airy_asym}
K_{\mathrm{airy}}(x,x) \sim_{x \rightarrow \infty} \frac{\mathrm{e}^{-\frac{4}{3}x^{\frac{3}{2}}}}{x}; \qquad K_{\mathrm{airy}}(x,x) \sim_{x \rightarrow -\infty} \sqrt{|x|}.
\end{align}
\end{lemma}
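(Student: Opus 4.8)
The plan is to base everything on the classical integral representation
\begin{equation}
K_{\mathrm{airy}}(x,y)=\int_0^\infty \Ai(x+t)\,\Ai(y+t)\,\dd t\,,\qquad x,y\in\R\,,
\end{equation}
which is far better suited to the stated estimates than the defining formula \eqref{airy}. To prove it I would use the Airy equation $\Ai''(s)=s\Ai(s)$ to check the exact identity
\begin{equation}
\partial_t\Big[\Ai'(x+t)\Ai(y+t)-\Ai(x+t)\Ai'(y+t)\Big]=(x-y)\,\Ai(x+t)\Ai(y+t)\,,
\end{equation}
and then integrate over $t\in[0,\infty)$. Since $\Ai$ and $\Ai'$ decay super-exponentially at $+\infty$, the boundary term at $t=\infty$ vanishes and one is left with $\Ai(x)\Ai'(y)-\Ai'(x)\Ai(y)=(x-y)\int_0^\infty\Ai(x+t)\Ai(y+t)\,\dd t$; comparison with \eqref{airy} gives the representation for $x\neq y$, and the case $x=y$ follows by continuity, yielding in particular $K_{\mathrm{airy}}(x,x)=\int_x^\infty \Ai(s)^2\,\dd s$.

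For the uniform derivative bound I would differentiate under the integral sign, which is legitimate because $\Ai$, $\Ai'$ and $\Ai''=s\Ai(s)$ all decay super-exponentially, obtaining $\partial_x^a\partial_y^b K_{\mathrm{airy}}(x,y)=\int_0^\infty \Ai^{(a)}(x+t)\Ai^{(b)}(y+t)\,\dd t$ for $a,b\in\{0,1\}$. For $s\geq L_0$ the functions $\Ai^{(a)}(s)$ are continuous and eventually decaying, hence $|\Ai^{(a)}(s)|\leq C_{L_0}\,\ee^{-(s-L_0)}$ for all $s\geq L_0$. Therefore, for $x,y\geq L_0$,
\begin{equation}
\big|\partial_x^a\partial_y^b K_{\mathrm{airy}}(x,y)\big|\leq C_{L_0}^2\int_0^\infty \ee^{-(x+t-L_0)}\,\ee^{-(y+t-L_0)}\,\dd t\leq C_{L_0}^2\int_0^\infty \ee^{-2t}\,\dd t=\tfrac12 C_{L_0}^2\,,
\end{equation}
which is the first assertion.

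For the asymptotics as $x\to+\infty$ I would start from $K_{\mathrm{airy}}(x,x)=\int_x^\infty \Ai(s)^2\,\dd s$ and the classical bound $\Ai(s)\sim s^{-1/4}\ee^{-\frac23 s^{3/2}}$ (in the order-of-magnitude sense of the Notation section), so that $\Ai(s)^2\sim s^{-1/2}\ee^{-\frac43 s^{3/2}}$; an integration by parts using $\tfrac{\dd}{\dd s}\ee^{-\frac43 s^{3/2}}=-2s^{1/2}\ee^{-\frac43 s^{3/2}}$ shows $\int_x^\infty s^{-1/2}\ee^{-\frac43 s^{3/2}}\,\dd s\sim \ee^{-\frac43 x^{3/2}}/x$, whence $K_{\mathrm{airy}}(x,x)\sim \ee^{-\frac43 x^{3/2}}/x$. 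For $x\to-\infty$ I would instead use the representation \eqref{airy_xx}, namely $K_{\mathrm{airy}}(x,x)=(\Ai'(x))^2-x(\Ai(x))^2$, together with the oscillatory asymptotics $\Ai(-u)=\frac{1}{\sqrt\pi}u^{-1/4}\big(\sin\varphi+O(u^{-3/2})\big)$ and $\Ai'(-u)=-\frac{1}{\sqrt\pi}u^{1/4}\big(\cos\varphi+O(u^{-3/2})\big)$, where $\varphi=\tfrac23 u^{3/2}+\tfrac\pi4$: substituting $x=-u$, the leading parts combine as $\frac1\pi\sqrt u\,(\cos^2\varphi+\sin^2\varphi)=\frac1\pi\sqrt u$ while the remainders are $O(u^{-1})$, so $K_{\mathrm{airy}}(x,x)\sim\sqrt{|x|}$.

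The main obstacle is the regime $x\to+\infty$: in \eqref{airy_xx} the two terms $(\Ai'(x))^2$ and $x(\Ai(x))^2$ have identical leading asymptotics $\frac{1}{4\pi}\sqrt x\,\ee^{-\frac43 x^{3/2}}$, so term-by-term estimation only yields the upper bound $o(\sqrt x\,\ee^{-\frac43 x^{3/2}})$ and no matching lower bound. Routing through the integral representation, where the integrand is manifestly nonnegative, sidesteps this cancellation and simultaneously delivers the clean bounds on $K_{\mathrm{airy}}$ and its derivatives; all remaining estimates reduce to the standard full asymptotic expansions of $\Ai$ and $\Ai'$.
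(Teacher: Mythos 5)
Your proof is correct. The paper cites this lemma from \cite{AGZ} without reproducing the argument, so there is no internal proof to compare against; your route is the standard one, built on the nonnegative integral representation $K_{\mathrm{airy}}(x,y)=\int_0^\infty \Ai(x+t)\Ai(y+t)\,\dd t$. That identity follows from $\Ai''(s)=s\Ai(s)$ exactly as you write (the $\partial_t$-identity checks out and the boundary term at $t=\infty$ vanishes by super-exponential decay), differentiating under the integral sign together with the exponential majorant $|\Ai^{(a)}(s)|\le C_{L_0}\ee^{-(s-L_0)}$ on $[L_0,\infty)$ gives the uniform bound for all four choices of $a,b\in\{0,1\}$, and the two asymptotic regimes follow from the classical $\Ai,\Ai'$ expansions on the respective half-lines. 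Your decision to route the $x\to+\infty$ statement through $K_{\mathrm{airy}}(x,x)=\int_x^\infty \Ai(s)^2\,\dd s$ rather than \eqref{airy_xx} is precisely right: in \eqref{airy_xx} the two summands $(\Ai'(x))^2$ and $x\,(\Ai(x))^2$ have identical leading behaviour, so a lower bound cannot be read off term by term without carrying the next order of the Airy expansion, whereas the integral is manifestly nonnegative and your integration by parts yields the two-sided estimate $\int_x^\infty s^{-1/2}\ee^{-\frac{4}{3}s^{3/2}}\dd s\sim x^{-1}\ee^{-\frac{4}{3}x^{3/2}}$ directly, the remainder being smaller by a factor $x^{-3/2}$ so the comparison (in the paper's two-sided $\sim$ sense) survives. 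The $x\to-\infty$ case via \eqref{airy_xx}, the oscillatory asymptotics, and $\cos^2\varphi+\sin^2\varphi=1$ is also handled correctly, with the error terms of order $O(|x|^{-1})$ as you say.
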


The following result of Deift and Gioev \cite{convergence_kernel} quantizes the convergence rate of the edge kernel in (\ref{edge}) to the limiting Airy kernel in (\ref{airy}).
\begin{theorem}[Theorem 1.1 in \cite{convergence_kernel}]\label{kernel_diff}
For fixed $L_0\in \R$, there exists constants $C,c>0$ depending on $L_0$, such that one has uniformly for $x,y \in [ L_0, +\infty)$,
\begin{align}\label{difference}
\Big| \partial_x^{a} \partial_y^{b} \Big[ K^{\mathrm{edge}}_{N}(x,y)-K_{\mathrm{airy}}(x,y)\Big]\Big| \leq CN^{-2/3} \ee^{-cx}\ee^{-cy}\,,\qquad a,b\in\{0,1\}\,.
\end{align}
\end{theorem}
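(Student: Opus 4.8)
The statement is the convergence-rate theorem of Deift and Gioev; I outline the strategy one would use to establish it. The starting point is the Christoffel--Darboux representation \eqref{kernel_xy}, which after the edge rescaling \eqref{edge} can be rewritten as
\[
K^{\mathrm{edge}}_N(x,y) = \sqrt N\,\frac{\phi_N(u_x)\phi_{N-1}(u_y) - \phi_{N-1}(u_x)\phi_N(u_y)}{x-y}\,,\qquad u_x := 2\sqrt N + \frac{x}{N^{1/6}}\,,
\]
so that everything reduces to sufficiently precise asymptotics for the Hermite functions $\phi_N$ and $\phi_{N-1}$ in the edge (Airy) scaling regime.

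The first and main step is to establish a Plancherel--Rotach type expansion with quantitative error control: there exist constants such that, for all $x\in[L_0,\infty)$,
\[
N^{1/12}\,\phi_N\!\left(2\sqrt N + \frac{x}{N^{1/6}}\right) = \Ai(x) + N^{-2/3}\,\mathfrak{r}_N(x)\,,
\]
and similarly for $\phi_{N-1}$ with an $N^{-1/6}$-shifted argument (this shift is what ultimately produces the derivative $\Ai'$ in the limiting kernel), where $\mathfrak{r}_N$ together with $\mathfrak{r}_N'$ is bounded uniformly in $N$ and decays like $\ee^{-cx}$ for $x\ge L_0$. Two routes are available: (i) the Riemann--Hilbert steepest-descent analysis of the orthogonal-polynomial problem, where the expansion and its error come from the local Airy parametrix near the edge and its matching to the global parametrix --- this is the route of Deift and Gioev; (ii) a classical route via the contour-integral representation of the Hermite functions together with a uniform saddle-point analysis, or via Olver's asymptotic theory for the second-order equation \eqref{hermite_fn}, which has a turning point at the spectral edge. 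Either way the point is the explicit $O(N^{-2/3})$ rate, its uniformity down to $x\to+\infty$, and its stability under one differentiation.

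The second step is to substitute these expansions into the Christoffel--Darboux formula. To control $1/(x-y)$ uniformly, including on the diagonal, I would write all differences in the integral form $\frac{f(x)-f(y)}{x-y}=\int_0^1 f'(y+t(x-y))\,\dd t$, so that both $K^{\mathrm{edge}}_N$ and $K_{\mathrm{airy}}$ are expressed through $\Ai,\Ai'$ and their $N$-dependent counterparts with no singularity; combining the two expansions and using $|\Ai(x)|,|\Ai'(x)|\le C\ee^{-cx}$ for $x\ge L_0$ then yields $K^{\mathrm{edge}}_N(x,y)-K_{\mathrm{airy}}(x,y) = O(N^{-2/3})\,\ee^{-cx}\ee^{-cy}$. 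For the derivative bounds $a,b\in\{0,1\}$ I would differentiate the expansion once in $x$ (hence the need for $C^1$ control on $\mathfrak{r}_N$), and eliminate any second derivative of $\phi_N$ that appears via the Hermite differential equation \eqref{hermite_fn}, trading $\phi_N''$ for $\phi_N$; the exponential decay survives each step since all quantities live in the forbidden region $x\ge L_0$.

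The main obstacle is the first step: producing Plancherel--Rotach asymptotics for $\phi_N$ and $\phi_{N-1}$ with a \emph{quantitative} $O(N^{-2/3})$ error that is uniform on the whole half-line $[L_0,\infty)$ and stable under one differentiation. In the Riemann--Hilbert framework this amounts to careful bookkeeping of the error matrix $R=I+O(N^{-1})$ and its interaction with the Airy parametrix in the edge scaling; the apparent mismatch between the $O(N^{-1})$ size of $R$ and the claimed $O(N^{-2/3})$ error is resolved by tracking the precise power of the edge scale $N^{-1/6}$ that multiplies the error when passing from the $u$-variable to the $x$-variable. Once this uniform, differentiable expansion is secured, the remainder is routine bookkeeping inside the Christoffel--Darboux formula.
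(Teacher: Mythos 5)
This statement is a verbatim citation of Theorem 1.1 in Deift--Gioev \cite{convergence_kernel}; the paper does not prove it, so there is no internal argument to compare against. Your sketch is aimed at the cited proof and correctly names the framework (Plancherel--Rotach asymptotics via Riemann--Hilbert steepest descent with an Airy parametrix, then the Christoffel--Darboux formula). However, the ``main step'' as you write it is both inaccurate and insufficient, and the reason is more than ``tracking the precise power of the edge scale.''

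The turning point of $\phi_N$ lies at $\sqrt{4N+2}$ (from \eqref{hermite_fn}), not at $2\sqrt{N}$, so $N^{1/12}\phi_N\bigl(2\sqrt{N}+xN^{-1/6}\bigr)$ has a genuine $O(N^{-1/3})$ correction proportional to $\Ai'(x)$ which your ansatz $\Ai(x)+N^{-2/3}\mathfrak r_N(x)$ omits; $\phi_{N-1}$ carries an $O(N^{-1/3})$ correction of the same form (its turning point is $\sqrt{4N-2}$). More importantly, even after restoring these terms, it is not enough to assume the residuals $\mathfrak r_N,\mathfrak r_{N-1}$ are each uniformly $C^1$-bounded: the Christoffel--Darboux representation carries a prefactor of effective size $N^{1/3}$ after edge rescaling, so an \emph{unstructured} $O(N^{-2/3})$ residual in $\phi_N$ or $\phi_{N-1}$ only yields $O(N^{-1/3})$ for the kernel difference. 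Reaching the claimed $O(N^{-2/3})$ requires (i) that the $O(N^{-1/3})$ corrections of $\phi_N$ and $\phi_{N-1}$ cancel exactly in the antisymmetric bilinear combination, and (ii) that $\mathfrak r_N-\mathfrak r_{N-1}$ be proportional to $\Ai$ up to an $O(N^{-1/3})$ term, so that its contribution also drops out in the determinant. These correlations between the two expansions are automatic in the $2\times 2$ Riemann--Hilbert formulation of \cite{convergence_kernel}, where $\phi_N$ and $\phi_{N-1}$ appear as coupled entries of one matrix solution and the kernel is expressed as a bilinear form in that matrix with a single error factor $R$; at the level of two independent scalar Plancherel--Rotach expansions they are nontrivial structural facts that your outline neither states nor uses, and without which the argument stalls at $O(N^{-1/3})$.
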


\section{Proof of Theorem \ref{kol_dist}}\label{sec: proof_main_result}
In this section we give the proof of Theorem~\ref{kol_dist} from the main technical result, the Green function comparison theorem, Theorem~\ref{green_comparison}.
\begin{proof}[Proof of Theorem \ref{kol_dist}]
Because of the rigidity of the eigenvalues in (\ref{rigidity1}),  one easily verifies that, for any $\epsilon>0$ and $\Gamma>2/3$,
\begin{equation}\label{upper}
\sup_{|r| \geq N^{\epsilon}} \Big|\P\Big( N^{2/3} (\lambda_N- 2 )<r \Big)-\P^{\mathrm{G}\beta \mathrm{E}}\Big(N^{2/3} (\lambda_N-2 )<r\Big) \Big| \leq  N^{-\Gamma},
\end{equation}
for sufficiently large $N$. Hence in order to prove Theorem \ref{kol_dist}, it suffices to focus on $r_0 < r < N^{\epsilon}$ with $r_0$ as in Theorem~\ref{convergence_gaussian} and Theorem~\ref{kol_dist}. 

Set as in (\ref{E_L})
$$E:=2+N^{-2/3} r, \qquad \mbox{and } \quad E_{L}:=2+4N^{-2/3+\epsilon}.$$ 
Fix $\eta=N^{-1+\epsilon}$ and $l=N^{-1+ 7\epsilon}$ as in Lemma \ref{lemma1}. Here we choose $\epsilon>0$ sufficiently small such that $l \ll N^{-2/3}$. From (\ref{approx}) and (\ref{approx2}), we can relate the distribution of the largest eigenvalue to the normalized trace of the Green function as follows,
\begin{align}\label{im_identity}
\E \Big[ F\Big( N \int_{N^{-2/3}r-l}^{4N^{-2/3+\epsilon}}  \Im m_N(2+x+\ii \eta) \dd x\Big) \Big]&-N^{-\Gamma} \leq \P\Big( N^{2/3} (\lambda_N- 2 )<r \Big)=\P \Big(  \mathcal{N}(E, \infty)=0 \Big) \nonumber\\
\leq &   \E \Big[ F\Big( N \int_{N^{-2/3}r+ l}^{4N^{-2/3+\epsilon}} \Im m_N(2+x+\ii \eta) \dd x\Big)\Big]+N^{-\Gamma}.
\end{align}
By shifting the value of $r$ in the second inequality of (\ref{im_identity}) and combining with the first inequality of (\ref{im_identity}), we obtain
\begin{align}
 \P\Big( N^{2/3} (\lambda_N- 2 )<r -2N^{2/3} l\Big) -N^{-\Gamma} \leq &\E \Big[ F\Big( N \int_{N^{-2/3}r-l}^{4N^{-2/3+\epsilon}} \Im m_N(2+x+\ii \eta) \dd x\Big) \Big]\nonumber\\
  \leq &\P\Big( N^{2/3} (\lambda_N- 2 )<r \Big)+N^{-\Gamma}.
\end{align}
Note that the above inequalities hold true for $\beta=1,2$ and any Wigner matrices, including the Gaussian ensembles. From the known convergence rates for the Gaussian ensembles in Theorem \ref{convergence_gaussian} (for the GUE, and GOE with $N$ even), and the convergence rate $N^{-1/3}$ obtained in Theorem~1.2 of~\cite{choup} for the GOE with $N$ odd, we find
\begin{align*}
\mathrm{TW}_\beta\Big(r -2N^{2/3} l\Big) -CN^{-1/3} \leq \E^{\mathrm{G}\beta \mathrm{E}} \Big[ F\Big( N \int_{N^{-2/3}r-l}^{4N^{-2/3+\epsilon}} \Im m_N(2+x+\ii \eta) \dd x\Big) \Big] \leq \mathrm{TW}_\beta(r)+CN^{-1/3}.
\end{align*}
A similar upper and lower bound can be obtained in the same way when we consider $+l$ in the integral domain instead of $-l$. Since the Tracy--Widom distributions have smooth and uniformly bounded densities and $l=N^{-1+7\epsilon}$, we have
\begin{align}\label{rhs_gue}
\sup_{r_0 <r<N^{\epsilon}}\Big| \E^{\mathrm{G}\beta \mathrm{E}} \Big[ F\Big( N \int_{N^{-2/3}r \pm l}^{4N^{-2/3+\epsilon}} \Im m_N(2+x+\ii \eta) \dd x\Big) \Big] -\mathrm{TW}_\beta(r) \Big| =O(N^{-1/3+7\epsilon}).
\end{align}
Using the Green function comparison theorem, Theorem \ref{green_comparison}, there exists some $c_0>0$ independent of $\epsilon$ such that
\begin{align}\label{rhs_wigner}
\sup_{r_0 <r<N^{\epsilon}}\Big| \Big(\E- \E^{\mathrm{G} \beta \mathrm{E}} \Big)\Big[ F\Big( N \int_{N^{-2/3}r \pm l}^{4N^{-2/3+\epsilon}} \Im m_N(2+x+\ii \eta) \dd x\Big) \Big] \Big| \leq N^{-1/3+c_0\epsilon},
\end{align}
for sufficiently large $N$. In combination with (\ref{im_identity}) and (\ref{rhs_gue}), we choose $\epsilon<\frac{\omega}{\max\{c_0,7\}}$ in the setting of Theorem \ref{kol_dist} and obtain
\begin{align}
\sup_{r_0 <r < N^{\epsilon}}\Big|\P\Big( N^{2/3} (\lambda_N- 2 )<r \Big)-\mathrm{TW}_\beta(r) \Big| \leq N^{-1/3+\omega}.
\end{align}
Together with (\ref{upper}), we have hence completed the proof of Theorem \ref{kol_dist}.
\end{proof}

We now move on to the proof of the Green function comparison theorem, Theorem~\ref{green_comparison}. In the following, we first consider complex Hermitian Wigner matrices, as the complex Hermitian case is slightly easier than the real symmetric case. The proof of the Green function comparison theorem in the real symmetric setup is presented in Section~\ref{sec: real}.
\begin{proof}[Proof of Theorem \ref{green_comparison}]
Consider the matrix Ornstein-Uhlenbeck process $\big(h_{ab}(t)\big)_{a,b=1}^N$:
\begin{equation}\label{ou_process}
\dd h_{ab}(t)=\frac{1}{\sqrt{N}} \dd \beta_{ab}(t)- \frac{1}{2} h_{ab}(t) \dd t, \qquad h_{ab}(0)=(H_N)_{ab},
\end{equation}
 where $\big(\beta_{ab}(t)\big)_{a<b}^N$ are independent complex standard Brownian motions, $\big(\beta_{aa}(t)\big)_{a=1}^N$ are independent real standard Brownian motions, $\big(\beta_{ab}(t)\big)_{a<b}$ are independent from $\big(\beta_{aa}(t)\big)_{a=1}^N$, and $\beta_{ba}(t)=\overline{\beta_{ab}(t)}$. The initial condition $H_N$ is a complex Hermitian Wigner matrix satisfying Assumption \ref{assump}. In distribution the above is equivalent to writing
 \begin{equation}\label{sum_H}
H(t) \stackrel{{\rm d}}{=} \mathrm{e}^{-\frac{t}{2}}H_N +\sqrt{1-\mathrm{e}^{-t}} \mathrm{GUE}_N\,,\qquad  t \in \R^+,
 \end{equation}
 where $\mathrm{GUE}_N$ belongs to the GUE. For any $t \in \R^+$, $z \in \C \setminus \R$, we define
\begin{equation}\label{le time dependent G}
G(t,z):=\frac{1}{ H(t)-z}; \qquad  m_N(t,z):=\frac{1}{N} \Tr G(t,z).
\end{equation}
Recalling the local law Theorem \ref{le theorem local law} and Lemma \ref{prop_msc}, we obtain that the local law for $G(t,z)$,
	\begin{equation}\label{G}
	\max_{i,j} | G_{ij}(t,z) -\delta_{ij}  m_{sc} (z) |   \prec \Psi(z);\qquad | m_N(t, z) - m_{sc}(z) | \prec \frac{1}{N \eta}\,,
	\end{equation}
holds uniformly in $z \in S$ given in (\ref{ddd}) and $t \geq 0$. Indeed, we choose a mesh of the interval $0 \leq t \leq T:=8 \log N$ of size $N^{10}$, and obtain that (\ref{G}) holds uniformly in $z \in S$, $t \in [0,8\log N]$ from the continuity of the process (\ref{sum_H}) in time. Moreover,  (\ref{G}) also holds uniformly in $t \geq 8 \log N$ from (\ref{approxxxx}) below.

In the following, we often ignore the parameters and write for short
$$H\equiv H(t), \qquad  h_{ab} \equiv h_{ab}(t), \qquad G \equiv G(t,z), \qquad  m_N \equiv m_N(t,z), \qquad t \in \R^+,~z \in \C \setminus \R.$$
For a fixed small $\epsilon>0$ and some $C_0>0$, let 
\begin{align}\label{z_set}
N^{-1+\epsilon} \leq \eta \leq N^{-2/3+\epsilon}, \qquad |\kappa_1|, |\kappa_2|\leq C_0N^{-2/3+\epsilon},
\end{align}
with $\kappa_1<\kappa_2$. In view of~\eqref{approx} and~\eqref{approx2}, we are interested in the quantity
\begin{equation}\label{X}
\mathcal{X} \equiv \mathcal{X}(t):=N\int_{\kappa_1}^{\kappa_2} \Im m_N(t,2+x+\ii \eta) \dd x, \qquad t \in \R^+.
\end{equation}
Hence $\mathcal{X}$ is a function of $t$, $\eta$ as well as $\kappa_1$ and $\kappa_2$.

Let $F: \R \rightarrow \R$ be an arbitrary smooth function with uniformly bounded derivatives. The next lemma determines the evolution of the observable $F\big(\mathcal{X}(t)\big)$ under the Ornstein--Uhlenbeck flow in~\eqref{ou_process}. To alleviate the notation, we introduce the following abbreviations. Let $P\,:\,\R^+ \times \C \setminus \R \longrightarrow\C$ be an arbitrary function, then we introduce
\begin{equation}\label{tilde}
\widetilde{{\Im}}P \equiv \widetilde{{\Im}} P(t,z):=\frac{1}{2 \ii} (P(t,z)-P(t,\bar z))\,,\qquad t \in \R^+,~z\in\C\setminus \R\,.
\end{equation}
For example, for complex Wigner matrices, $\widetilde \Im G_{ij}(t,z) \neq  \Im G_{ij}(t,z)$, unless $i=j$. Further, we abbreviate, for $t \in \R^+$, and $z_1,z_2\in\C\setminus \R$,
\begin{align}\label{dim}
\Delta \widetilde \Im P \equiv (\Delta \widetilde \Im P)(t,z_1,z_2):=\frac{1}{2 \ii} \Big(P(t,z_2)-P(t,\overline{z_2})\Big) - \frac{1}{2 \ii}\Big(P(t,z_1)-P(t,\overline{z_1})\Big)\,,
\end{align}
where the spectral parameters are given as
\begin{align}\label{le zs} z_1=2+\kappa_1+\ii \eta\,,\qquad\qquad z_2=2+\kappa_2+\ii \eta\,,
\end{align}
with $\kappa_1,\,\kappa_2,$ and $\eta$ from~\eqref{z_set}. In particular, we have $z_1,z_2 \in S_{\mathrm{edge}}$ defined in (\ref{S_edge}) below.

Returning to $F(\mathcal{X})$, Ito's lemma yields the following result.
\begin{lemma}\label{lemma_first}
The observable $F(\X)$ satisfies the following stochastic differential equation:
\begin{equation}\label{dF}
\dd F(\mathcal{X})= \dd  M+\Theta \dd t ,
\end{equation}
with the diffusion term
\begin{equation}\label{mart}
\dd M=-\frac{1}{\sqrt{N}}  \sum_{a, b=1}^N\Big( F'(\X) \Dim G_{ba} \Big) \dd \beta_{ab},
\end{equation}
and the drift term $\Theta \equiv \Theta(t,z_1,z_2)$ is explicitly given in~\eqref{theta1} below. Moreover, $\E[\Theta]$ can be written as
\begin{align}\label{derivative_gene}
\E [\Theta] =& 
\sum_{\substack{ p+q+1=3\\ p,q \in \N}}^{4}K_{p,q+1}+E_2 +O_{\prec}(N^{-1/2}),
\end{align}
for $N$ sufficiently large, with
\begin{align}
K_{p,q+1}:=& \frac{1}{2 p! q! N^{\frac{p+q+1}{2}}}  \sum_{\substack{ a,b=1\\a \neq b}}^N s^{(p,q+1)}_{ab} \E \Big[\frac{ \partial^{p+q} F'(\X) \Dim G_{ba} }{\partial h^p_{ba} \partial h^q_{ab} }\Big];\label{K_term}\\
E_2:=&\frac{1}{2 N} \sum_{a=1}^N ( s^{(2)}_{aa} -1)\E \Big[\frac{ \partial F'(\X)  \Dim G_{aa}}{ \partial h_{aa} }\Big], \label{E_1}
\end{align}
where $s^{(p,q+1)}_{ab} \equiv s^{(p,q+1)}_{ab}(t)$ and $s^{(2)}_{aa} \equiv s^{(2)}_{aa}(t)$ are the cumulants of the rescaled time dependent entries $\sqrt{N} h_{ab}$ defined in (\ref{cumulant_pq}) and (\ref{cumulant_k}).
\end{lemma}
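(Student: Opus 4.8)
The plan is to apply Ito's formula to the process $t \mapsto F(\mathcal{X}(t))$, where $\mathcal{X}(t)$ is the integrated imaginary part of $m_N(t,z)$ along the horizontal segment at height $\eta$. First I would record the evolution of a single Green function entry $G_{ij}(t,z)$ under the Ornstein--Uhlenbeck dynamics \eqref{ou_process}: since $H(t)$ is driven by the Brownian motions $\beta_{ab}$ with the mean-reverting drift $-\tfrac12 h_{ab}$, the matrix identity $\dd G = -G\,(\dd H)\,G + G\,(\dd H)\,G\,(\dd H)\,G$ at second order in $\dd H$ gives, for the entries, a martingale part $-\tfrac1{\sqrt N}\sum_{a,b}(G e_a e_b^* G)_{ij}\,\dd\beta_{ab}$ plus a drift part combining the $-\tfrac12 h_{ab}$ contribution with the Ito correction $\tfrac12\,\partial^2$-terms. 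Integrating over $x \in [\kappa_1,\kappa_2]$ and taking $\widetilde{\Im}$ produces the stated diffusion coefficient $\Dim G_{ba}$ in \eqref{mart} (the $N$ prefactor in $\mathcal{X}$ cancels the $1/N$ from the normalized trace). Then a second application of Ito's formula to the scalar function $F$ of $\mathcal{X}(t)$ gives \eqref{dF}: the $F'(\mathcal{X})\,\dd\mathcal{X}$ piece yields $\dd M$ together with part of the drift, and the $\tfrac12 F''(\mathcal{X})\,\dd\langle\mathcal{X}\rangle$ piece contributes the remaining drift, so that $\Theta$ is the explicit expression one reads off and I would simply name it \eqref{theta1}.

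The substantive content is the identity \eqref{derivative_gene} for $\E[\Theta]$. The key observation is that, because of the specific form of the OU generator, the drift $\Theta$ is (up to the $F''$ bracket term) built from expressions of the schematic shape $\tfrac1{\sqrt N}\sum_{a,b} h_{ba}\,\partial_{h_{ab}}(\text{Green function polynomial})$, i.e.\ exactly the shape to which the cumulant expansion Lemma~\ref{cumulant} applies. So the plan is: for each such term, apply \eqref{le first cumulant formula} (in the complex form, with $h = \sqrt N h_{ab}$ rescaled) to expand in the cumulants $s^{(p,q)}_{ab}$ up to order $l=4$, with the remainder $R_5$ controlled by \eqref{cumulant_error} — here one uses the moment bounds in Assumption~\ref{assump}(c) and the trivial deterministic bound $|G_{ij}| \le \eta^{-1} \le N$ on the bad event $\{|h_{ab}|>M\}$, so that $R_5 = O_\prec(N^{-1/2})$ once one accounts for the $N^{-1/2}$ per extra cumulant order and the number of summands. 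The crucial cancellation is at second order: the terms generated by the $p+q=1$ cumulants $s^{(2)}_{ab}=1$ (for $a\neq b$) combine with the $-\tfrac12 h_{ab}\,\dd t$ mean-reversion drift and the $F''$ quadratic-variation term, and these cancel exactly — this is the statement hidden in the reference to \eqref{step0}. What survives from the off-diagonal entries are the third- and fourth-order cumulant terms $K_{p,q+1}$ with $p+q+1 \in \{3,4\}$, and from the diagonal entries the residual second-order term $E_2$ proportional to $s^{(2)}_{aa}-1$ (which does not cancel because the diagonal variance is not normalized to $1$). Collecting these gives \eqref{derivative_gene}.

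Concretely I would organize the computation as: (i) write $\dd \mathcal{X}(t) = N\int_{\kappa_1}^{\kappa_2}\widetilde\Im\,\dd m_N(t,2+x+\ii\eta)\,\dd x$ and substitute the entrywise SDE for $\dd G$, separating martingale and drift/Ito parts; (ii) apply Ito to $F(\mathcal{X})$, reading off $\dd M$ and $\Theta$; (iii) take expectations, note $\E[\dd M]=0$; (iv) in $\E[\Theta]$ perform the cumulant expansion term by term, bound the remainder; (v) check the second-order cancellation and identify the leftover $K_{p,q+1}$ and $E_2$. The main obstacle is bookkeeping: tracking exactly which derivatives $\partial^{p+q}/\partial h_{ba}^p\partial h_{ab}^q$ hit $F'(\mathcal{X})$ versus $\Dim G_{ba}$ (and hence whether they produce further Green function entries or derivatives of $F$), and verifying that the second-order terms cancel \emph{identically} rather than merely to leading order — this requires carefully matching the combinatorial factor $\tfrac1{2p!q!}$, the Ito $\tfrac12$, and the mean-reversion $\tfrac12$ against each other, including the $F''$ contribution, and it is here that the complex structure $\E[h_{ab}^2]=0$ is used. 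The $O_\prec(N^{-1/2})$ error absorbs both the cumulant-expansion remainders and the higher-order $F$-derivative terms that come with extra factors of $\Psi$ or $N^{-1/2}$; making this uniform in $t \in \R^+$ uses the local law \eqref{G} uniformly in $t$, already established in the excerpt.
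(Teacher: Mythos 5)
Your proposal is correct and follows essentially the same route as the paper: first an Ito computation on the Green function entries (using $\partial G_{ij}/\partial h_{ab}=-G_{ia}G_{bj}$) and then on $F(\mathcal{X})$ to read off $\dd M$ and $\Theta$, followed by a cumulant expansion of $\E[\Theta]$ to fourth order, with the exact second-order cancellation between the $s^{(1,1)}_{ab}$-cumulant term of the $\frac12\sum h_{ab}(\cdots)$ contribution and the Ito correction plus quadratic-variation pieces, leaving $K_{p,q+1}$, $E_2$, and an $O_\prec(N^{-1/2})$ truncation error controlled via \eqref{cumulant_error} and the deterministic bound $\|G\|_2\le\eta^{-1}$. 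One small wording slip: the second-order cumulant terms do not ``combine with'' the mean-reversion drift (that drift is precisely what is cumulant-expanded); rather they cancel against the Ito-correction and $F''$ quadratic-variation terms, but the mechanism you describe is the right one.
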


\begin{remark}\label{diffusion_mart}
The diffusion term $\dd M$ in (\ref{mart}) yields a martingale $M(t)$ upon integration. Note that the operator norm of the Green function has the deterministic bound $\|G(z)\|_2 \leq \frac{1}{\eta} \leq N^{1-\epsilon}$, given $z=E+\ii \eta$ with $\eta \geq N^{-1+\epsilon}$. Since $F$ has bounded derivatives, $|F'(\X) \Dim G_{ba}| =O(N^{1-\epsilon})$. Thus $M(t)$ is a true martingale with vanishing expectation. 
\end{remark}

\begin{remark}
 In~\eqref{derivative_gene}, only cumulants of order three and higher appear, i.e.\ $p+q+1\ge 3$. This is a consequence of our assumption that the second moments of the off-diagonal matrix entries match with the Gaussian ensembles; see item b.) in Assumption~\ref{assump}.
\end{remark}

\begin{proof}[Proof of Lemma \ref{lemma_first}]

Recall the dynamics of the Orstein-Uhlenbeck process in (\ref{ou_process}) and that $G$ is a function of the matrix entries $h_{ab}$. Using the first Ito's lemma and then the relation
\begin{equation}\label{dH}
\frac{\partial G_{ij}}{ \partial h_{ab}}=-G_{ia} G_{bj},
\end{equation}
 we compute 
\begin{align}\label{derivative2}
\dd G_{ij}(t,z)=& \frac{\partial G_{ij}}{\partial t} \dd t  + \sum_{a}\frac{\partial G_{ij}}{\partial h_{aa}} \dd h_{aa} +\frac{1}{2}\sum_{a} \frac{\partial^2 G_{ij}}{\partial h_{aa} \partial  h_{aa}} \dd h_{aa} \dd h_{aa}  \nonumber\\
&+ \sum_{a<b}\frac{\partial G_{ij}}{\partial h_{ab}} \dd h_{ab}  + \sum_{a<b}\frac{\partial G_{ij}}{\partial \overline{h_{ab}}} \dd \overline{h_{ab}} + \sum_{a< b} \frac{\partial^2 G_{ij}}{\partial h_{ab} \partial  \overline{h_{ab}}} \dd h_{ab} \dd \overline{h_{ab}} \nonumber\\
=&  - \frac{1}{\sqrt{N}} \sum_{a, b=1}^N G_{ia}G_{bj}  \dd \beta_{ab}+\frac{1}{2} \sum_{a, b=1}^N \Big( h_{ab} G_{ia} G_{bj}+\frac{1}{N}  G_{ib} G_{bj} G_{aa}+\frac{1}{N}  G_{ia} G_{aj} G_{bb} \Big) \dd t.
\end{align}

In view of $\X$ from (\ref{X}), we take the normalized trace of the Green function and the imaginary part. Using the symmetry of $H$ and 
$${G_{ij}(z)}=\overline{G_{ji}(\bar{z})},$$
we obtain the following stochastic differential equation:
\begin{align}
&\dd \Big( \Im m_N(t,z) \Big)=-\frac{1}{2 \ii N^{3/2}}\sum_{i, a, b=1}^N \Big( G_{ia} G_{bi}(z) -G_{ia} G_{bi}(\bar z)  \Big) \dd \beta_{ab}\nonumber\\
&\qquad\qquad\qquad+ \frac{1}{4N \ii}\sum_{i,a,b=1}^N \Big[ h_{ab} \Big(G_{ia} G_{bi}(z) -G_{ia} G_{bi}(\bar z) \Big)+\frac{1}{N} \Big(  G_{ib} G_{bi} G_{aa}(z)-G_{ib} G_{bi} G_{aa}(\bar z)  \Big)\nonumber\\
& \qquad \qquad \qquad \qquad \qquad\qquad+\frac{1}{N}  \Big( G_{ia} G_{ai} G_{bb}(z)-G_{ia} G_{ai} G_{bb}(\bar z)\Big) \Big]  \dd t \nonumber\\
\quad  =& - \frac{1}{N^{3/2}} \sum_{i, a, b=1}^N \widetilde{{\Im}} (G_{ia}G_{bi} ) \dd \beta_{ab}+\frac{1}{2N} \sum_{i,a, b=1}^N \Big[ h_{ab} \widetilde{{\Im}} ( G_{ia} G_{bi})+ \frac{1}{N} \widetilde{{\Im}} \big(  G_{ib} G_{bi} G_{aa}+ G_{ia} G_{ai} G_{bb} \big) \Big]\dd t,\nonumber
\end{align}
where we use the notation from~\eqref{tilde}.

Using Ito's formula similarly on $F(\X)$ and combining with (\ref{derivative2}), we obtain the stochastic differential equation (\ref{dF}), with the diffusion term given by 
\begin{align}\label{dm_1}
\dd M=-F'(\X)\Big(  \intkappa \frac{1}{\sqrt{N}}   \sum_{i, a, b=1}^N \widetilde{{\Im}} \Big(G_{ia} G_{bi}(t,2+x+\ii \eta)\Big) \dd x \Big) \dd \beta_{ab},
\end{align}
and the drift term given by (we omit the parameters $t$ and $z=2+x+\ii \eta$ of the Green functions)
\begin{align}\label{theta}
\Theta =& \frac{1}{2} \sum_{i, a, b=1}^N  h_{ab}\Big( F'(\X)  \intkappa  \widetilde{{\Im}}  (G_{ia} G_{bi})  \dd x\Big)+\frac{1}{2N}   \sum_{i, a, b=1}^N  F'(\X)  \intkappa \widetilde{{\Im}} \Big(  G_{ib} G_{bi} G_{aa}+ G_{ia} G_{ai} G_{bb}  \Big) \dd x\nonumber\\
&+\frac{1}{2}F''(\X) \frac{1}{N} \sum_{i,j=1}^N \sum_{a,b=1}^N \Big( \intkappa \widetilde{{\Im}} (G_{ia} G_{bi}) \dd x \Big)\Big( \intkappa \widetilde{{\Im}} (G_{jb} G_{aj}) \dd x \Big) .
\end{align}
Using $G^2(z)=\frac{\dd }{\dd z} G(z)$ and the definition of $\widetilde{{\Im}}$ in (\ref{tilde}), we write
\begin{align*}
 \sum_{i=1}^N  \int_{\kappa_1}^{\kappa_2}   \widetilde{{\Im}}  \Big((G_{ia} G_{bi})(t,2+x+\ii \eta) \Big)\dd x=& \int_{\kappa_1}^{\kappa_2}  \widetilde{{\Im}} \Big( \frac{\dd G_{ba}}{\dd x} (t,2+x+\ii \eta)\Big)\dd x= (\Dim G_{ba})(t,z_1,z_2),
 \end{align*}
 with $\Dim$ defined in (\ref{dim}) and $z_1,z_2$ given in (\ref{le zs}). Applied to the martingale term~\eqref{dm_1} we find~\eqref{mart}. Applied to the drift term (\ref{theta}), we find
 \begin{align}\label{theta1}
\Theta=& \frac{1}{2} \sum_{a, b=1}^N   h_{ab}\Big( F'(\X)  \Dim G_{ba} \Big) +\frac{1}{2N} \sum_{a, b=1}^N  \Big(  F'(\X)\Dim (G_{aa} G_{bb} )+ F''(\X) ( \Dim G_{ab} )( \Dim G_{ba}) \Big) .
\end{align}

Next, we take the expectation of $\Theta$ and apply the cumulant expansions in Lemma \ref{cumulant} with respect to the independent entries $h_{ab}$ in the first term on the right of~\eqref{theta1}. 
Using the relation (\ref{dH}), we compute
\begin{align}\label{int_1}
\frac{ \partial F'(\X) }{ \partial h_{ba} }=&F''(\X)  \sum_{i=1}^N \int_{\kappa_1}^{\kappa_2} \frac{ \partial (\Im G_{ii})}{ \partial h_{ba} } \dd x  =-F''(\X) \sum_{i=1}^N\int_{\kappa_1}^{\kappa_2}  \widetilde\Im (G_{ib} G_{ai}) \dd x=-F''(\X)  \Dim G_{ab}.
\end{align}
We first apply cumulant expansions to the complex-valued off-diagonal entries $h_{ab}$, \ie let $a \neq b$ in the summations in (\ref{theta1}).  Then by direct computations and using Assumption~\ref{assump} (b.), the second order terms in the cumulant expansions corresponding to $s_{ab}^{(1,1)}(t) \equiv 1$ are canceled with the second term on the right of (\ref{theta1}) with $a\neq b$. The third and fourth order terms in the cumulant expansions, corresponding to $p+q+1\in\{3,4\}$, are given in (\ref{K_term}). We stop the cumulant expansion at $l=4$ and the corresponding truncation error $R_5=\sum_{a\neq b} R^{(ab)}_{5}$ is estimated as follows. 

We have from (\ref{cumulant_error}) that 
\begin{align}\label{R_5}
	|R^{(ab)}_{5}| &\leq  C \E[ |h_{ab}|^{5}] \E \Big[ \max_{p+q=4} \Big\{ \sup_{|w| \leq N^{-1/2+\gamma}} \Big| \frac{\partial^{p+q}}{\partial h^p_{ba} \partial h^q_{ab}}f_{ab} \Big(H^{(ab)}+w E^{(ba)}+\bar{w} E^{(ab)} \Big)\Big| \Big\} \Big]\nonumber\\
	&+C  \E \Big[ |h_{ab}|^{5} 1_{|h_{ab}|> N^{-1/2+\gamma}}\Big] \E \Big[\max_{p+q=4} \Big\{ \sup_{w \in \C} \Big| \frac{\partial^{p+q}}{\partial h^p_{ba} \partial h^q_{ab}}f_{ab} \Big(H^{(ab)}+w E^{(ba)}+\bar{w} E^{(ab)} \Big)\Big| \Big\} \Big],
	\end{align}
	with a fixed small $\gamma>0$, and where we use the notation $E^{(ab)}:=(\delta_{ab})_{i,j=1}^N$, $H^{(ab)}:=H-h_{ab}E^{(ab)}-h_{ba}E^{(ba)}$, as well as
	\begin{align}\label{fab}
	f_{ab}(H):=F'(\X)  \Dim G_{ba}.
	\end{align}
	Using the second resolvent identity, we can write
	\begin{align}\label{resolvent_expansion}
	G^{H^{(ab)}}_{ij}=G^{H}_{ij}+ \Big(G^{H^{(ab)}} ( h_{ab}E^{(ab)}+h_{ba}E^{(ba)} ) G^{H}\Big)_{ij}.
	\end{align}
	From the local law in~\eqref{G}, we have $ \max_{i \neq j}|G^{H}_{ij}| \prec \Psi$ and $\max_{i}|G^{H}_{ii}| \prec 1$. In addition, we have $|h_{ij}| \prec \frac{1}{\sqrt{N}}$ from the moment condition (\ref{moment_condition}). Therefore, we have from (\ref{resolvent_expansion}) that $\max_{i \neq j}|G^{H^{(ab)}}_{ij}| \prec \Psi$ and $\max_{i}|G^{H^{(ab)}}_{ii}| \prec 1$. Similarly, we have
	\begin{align}\label{resolvent_expansion2}
	G^{H^{(ab)}+w E^{(ab)}+\bar{w} E^{(ba)}}_{ij}=G^{H^{(ab)}}_{ij}- \Big(G^{H^{(ab)}+w E^{(ab)}+\bar{w} E^{(ba)}} ( w E^{(ab)}+\bar{w} E^{(ba)} ) G^{H^{(ab)}}\Big)_{ij},
	\end{align}
	and thus
	\begin{align}
	\sup_{|w|<N^{-1/2+\gamma}}\Big\{ \max_{i, j} \Big|G^{H^{(ab)}+w E^{(ab)}+\bar{w} E^{(ba)}}_{ij} \Big|\Big\} \prec 1.
	\end{align}
	Combining with (\ref{dH}), (\ref{int_1}), and the fact that $F$ in (\ref{fab}) has bounded derivatives, we obtain that
	$$\sup_{|w| < N^{-1/2+\gamma}} \Big| \frac{\partial^{p+q}}{\partial h^p_{ba} \partial h^q_{ab}}f_{ab} \Big(H^{(ab)}+w E^{(ba)}+\bar{w} E^{(ab)} \Big)\Big| \prec 1.$$
	Together with $\E|h_{ij}|^5\le CN^{-5/2}$ under Assumption~\ref{assump} and Lemma \ref{dominant}, the first term on the right side of (\ref{R_5}) is bounded by $O_{\prec}(N^{-5/2})$. Note that for $z=E+\ii \eta$ with $\eta \geq N^{-1+\epsilon}$, we have the deterministic upper bound for $\max_{i,j} |G_{ij}| \leq \|G\|_{2} \leq \frac{1}{\eta}=O(N^{1-\epsilon})$. So the conditions of statement (3) of Lemma \ref{dominant} are satisfied, and we can directly bound the expectation of the first term on the right side of~\eqref{R_5}.
	
	We next estimate the second term on the right side of (\ref{R_5}). Using the deterministic bound $\max_{i,j} |G_{ij}|=O(N^{1-\epsilon})$, we have from (\ref{dH}), (\ref{int_1}) and the fact that $F$ in (\ref{fab}) has bounded derivatives that
	$$\max_{p+q=4} \Big\{ \sup_{w \in \C} \Big| \frac{\partial^{p+q}}{\partial h^p_{ba} \partial h^q_{ab}}f_{ab} \Big(H^{(ab)}+w E^{(ba)}+\bar{w} E^{(ab)} \Big)\Big| \Big\} =O(N^{5-5\epsilon}).$$
	Combining with the moment condition (\ref{moment_condition}) and H\"{o}lder's inequality, the second term on the right side of (\ref{R_5}) can also be bounded by $O_{\prec}(N^{-5/2})$. Thus the truncation error $R_5$ in the cumulant expansions satisfies $|R_5|=O_{\prec}(N^{-1/2})$.
	Throughout the paper, we will use similar arguments as above to estimate the error terms stemming from cutting cumulant expansions at some fixed order without specifically mentioning it.

Concerning the terms involving the diagonal entries $h_{aa}$ in (\ref{theta1}), we apply the cumulant expansion for real-valued random variables in Lemma \ref{cumulant} and stop at the second order $l=2$. The resulting second order term in combination with the second sum in (\ref{theta1}) with $a \equiv b$ is given by $E_2$ in (\ref{E_1}) and the truncation error is similarly bounded by $O_{\prec}(N^{-1/2})$. We have hence finished the proof of Lemma~\ref{lemma_first}.
\end{proof}

Having established Lemma~\ref{lemma_first}, we next estimate the expectation of the drift term $\E[\Theta]$ in~\eqref{derivative_gene} in the next proposition, whose proof is postponed to Section~\ref{sec:fourth}.
\begin{proposition}\label{prop_theta}
The drift term $\E[\Theta]$ in (\ref{derivative_gene}) has the following bound:
\begin{align}\label{E_Theta}
|\E[\Theta(t,z_1,z_2)]|\leq N^{-1/3+c \epsilon},
\end{align}
uniformly in $t\geq 0$ and $z_1,z_2$ given in (\ref{le zs}), for a numerical constant $c>0$ that does not depend on $\epsilon$ and sufficiently large $N \geq N_0(\epsilon,C_0)$.
\end{proposition}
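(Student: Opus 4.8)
The plan is to establish the bound $|\E[\Theta(t,z_1,z_2)]| \leq N^{-1/3+c\epsilon}$ by carefully estimating each of the three contributions in~\eqref{derivative_gene}: the third-order cumulant terms $\sum_{p+q+1=3} K_{p,q+1}$, the fourth-order cumulant terms $\sum_{p+q+1=4} K_{p,q+1}$, and the diagonal-variance term $E_2$, with the $O_\prec(N^{-1/2})$ remainder already being negligible on this scale. The basic heuristic is a power-counting one: differentiating $F'(\X)\Dim G_{ba}$ exactly $p+q$ times via~\eqref{dH} produces a sum of products of Green function entries, and after taking the normalized trace implicit in $\X$ and in $\Dim G$, one naively loses a factor $N$ per free summation index while gaining factors of $\Psi$ or of the small imaginary part $\Im m_{sc}(z) \sim \sqrt{\kappa+\eta}$ from each off-diagonal entry. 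Since $\eta \leq N^{-2/3+\epsilon}$ and $\kappa \leq CN^{-2/3+\epsilon}$ we have $\Im m_{sc} = O(N^{-1/3+\epsilon/2})$, so off-diagonal entries are small on precisely the scale needed.

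Concretely, I would proceed term by term. First, for the third-order terms $K_{p,q+1}$ with $p+q=2$, the prefactor is $N^{-3/2}$ and the double sum $\sum_{a\neq b}$ nominally contributes $N^2$; one then needs to show that each $\E[\partial^2_{h} (F'(\X)\Dim G_{ba})]$ is $O_\prec(N^{-1/3+c\epsilon})$ in an averaged sense, or more precisely that the combined sum is $O_\prec(N^{-1/3+c\epsilon}\cdot N^{3/2-2}) $-compatible. The key structural input here is the \emph{unmatched index} mechanism of~\cite{rigidity}: the indices $a,b$ appear an odd total number of times in the resulting Green function monomials, which forces at least one off-diagonal factor $G_{ab}$ or $G_{ba}$ carrying a $\Psi \ll N^{-1/3+\epsilon/2}$ gain beyond naive counting; since obtaining the sharp $N^{-1/3}$ rather than $N^{-1/6}$ requires tracking this to all orders, I would invoke the iterated cumulant expansion of Proposition~\ref{unmatch_lemma} (stated later) to control the unmatched contributions. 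Second, for the fourth-order terms with $p+q=3$, the prefactor is $N^{-2}$ and again $\sum_{a\neq b}$ gives $N^2$; the genuinely matched piece is the one involving the $(2,2)$-cumulant $s^{(2,2)}_{ab}$, which by Proposition~\ref{lemma_expand_type} reduces to trace-like ``type-0'' correlation functions of products of Green functions, and these are estimated via Lemma~\ref{lemma_trace_wigner} (recursive interpolation/comparison against the GUE) together with the quantitative Airy-kernel convergence of Theorem~\ref{kernel_diff}. Third, the term $E_2$ has prefactor $N^{-1}$, sum $\sum_a$ of size $N$, coefficient $(s^{(2)}_{aa}-1)$ bounded by $2M_2$, and $\E[\partial_{h_{aa}}(F'(\X)\Dim G_{aa})]$ which by~\eqref{dH} and~\eqref{int_1} is $O_\prec(\Im m_{sc}) = O_\prec(N^{-1/3+\epsilon/2})$ after noting that $\Dim G_{aa}$ differs from $\Dim m_{sc}$ only at subleading order and the leading deterministic part cancels or is $O(N^{-1/3+\epsilon})$; hence $|E_2| = O_\prec(N^{-1/3+\epsilon/2})$.

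The main obstacle I expect is the fourth-order matched term: getting it down to $N^{-1/3+c\epsilon}$ rather than merely $N^{-1/6}$ requires that the type-0 trace-like correlation functions, after being expanded to arbitrary order in the control parameter $\Psi$ via the recursive mechanism, be compared accurately between the Wigner matrix and the GUE, and the GUE side must then be controlled using the determinantal structure and the \emph{uniform} edge-kernel asymptotics~\cite{convergence_kernel}. This is the part where the naive local law alone is insufficient and where the hierarchy of correlation functions and the recursion in the number $d_1$ of off-diagonal Green-function entries (as outlined around~\eqref{late2}--\eqref{late3}) must be set up carefully; everything else is relatively routine power-counting once the unmatched-index bookkeeping is in place. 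A secondary technical point is uniformity in $t \geq 0$: for $t \geq 8\log N$ the matrix $H(t)$ is exponentially close to a GUE matrix, so the bound is immediate there, while for $t \leq 8\log N$ the local law~\eqref{G} holds uniformly and the cumulants $s^{(p,q)}_{ab}(t)$ interpolate between those of $H_N$ and those of the GUE, staying uniformly bounded; I would record this at the outset so that all subsequent estimates are genuinely $t$-uniform. Since these deeper inputs are precisely Propositions~\ref{unmatch_lemma}, \ref{lemma_expand_type} and Lemma~\ref{lemma_trace_wigner}, the proof of Proposition~\ref{prop_theta} itself becomes an assembly argument, collecting the three bounds and observing $\max\{N^{-1/3+c\epsilon}, N^{-1/3+\epsilon/2}, N^{-1/2}\} = N^{-1/3+c\epsilon}$.
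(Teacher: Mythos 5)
Your overall plan mirrors the paper's: decompose $\E[\Theta]$ into third-order, fourth-order, and diagonal-variance pieces, dispose of the unmatched terms via the analogue of Proposition~\ref{unmatch_lemma}, reduce the matched term $K_{2,2}$ to trace-like type-0 correlation functions, and bound the result using the edge-scaling estimate on the trace. That is essentially the structure of Section~\ref{sec:fourth}.

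There is, however, a genuine gap in your treatment of $E_2$. You claim $\E[\partial_{h_{aa}}(F'(\X)\Dim G_{aa})]=O_\prec(\Im m_{sc})$ on the grounds that ``$\Dim G_{aa}$ differs from $\Dim m_{sc}$ only at subleading order,'' i.e.\ by invoking the entrywise local law. But in the parameter range~\eqref{z_set}, with $\eta$ as small as $N^{-1+\epsilon}$, the control parameter $\Psi$ is of order $1/(N\eta)\sim N^{-\epsilon}$, which is much \emph{larger} than $\Im m_{sc}\lesssim N^{-1/3+\epsilon/2}$. The pointwise local law therefore does not place $\Dim G_{aa}$ within $O(N^{-1/3+\dots})$ of its deterministic value: for $\eta$ near $N^{-1}$ the fluctuation of $\Im G_{aa}$ dominates $\Im m_{sc}$. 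What \emph{is} small at the scale $N^{-1/3}$ is the expectation $\E[\Im m_N(t,z)]\leq CN^{-1/3+\epsilon}$ of~\eqref{img_wigner}, which is an averaged statement that cannot be read off the local law; it comes from the comparison-plus-Airy-kernel analysis behind Proposition~\ref{GCT_mn}. This is why the paper treats $E_2$ the same way as $K_{2,2}$: rewrite it as a type-A term, use~\eqref{resolvent_identity} and cumulant expansions to replace $G_{aa}$ by $\ud{G}$ (landing on $\E[F'(\X)\Dim(\ud{G})^2]$, $\E[F''(\X)(\Dim\ud{G})^2]$, plus type-0 terms of degree at least two), and only then invoke~\eqref{img_wigner} together with Lemma~\ref{dominant}(3). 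Your direct estimate skips this reduction and would not survive for small $\eta$.

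Two secondary points. First, you frame the recursion around~\eqref{late2}--\eqref{late3} and the Airy-kernel asymptotics as something to be carried out inside the proof of Proposition~\ref{prop_theta}; in the paper these are part of the proof of Proposition~\ref{GCT_mn} for $F(x)=x$, which is established first, and its output~\eqref{img_wigner} is then used as a black box, so that Proposition~\ref{prop_theta} itself only requires the type-0 reduction (a $\Dim$- and $F$-adapted analogue of Proposition~\ref{lemma_expand_type}) plus the Ward identity and~\eqref{img_wigner}. Second, you somewhat overstate the delicacy of the third-order contribution: by Proposition~\ref{unmatch_lemma} the unmatched terms are $O_\prec(N^{-1/2}+\sqrt{N}\Psi^D)$, far below the $N^{-1/3}$ threshold, so no sharpening beyond the unmatched-index cancellation is needed there; the $N^{-1/3}$ scale arises only from the matched terms.
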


In order to finish the proof of Theorem~\ref{green_comparison}, we now choose $T:=8 \log N$ and integrate (\ref{dF}) over $[0,T]$. Then taking the expectation, the diffusion term vanishes (see Remark \ref{diffusion_mart}) and the drift term is bounded using (\ref{E_Theta}). We hence find by writing out $\X$ in (\ref{X}) that
\begin{align}\label{sandwich1}
\Big|\E\Big[ F \Big( N\int_{\kappa_1}^{\kappa_2} \Im m_N (0, 2+x+\i \eta) \dd x \Big) \Big]-\E\Big[ F \Big( N\int_{\kappa_1}^{\kappa_2} \Im m_N (T, 2+x+\i \eta) \dd x \Big) \Big] \Big|=O(N^{-\frac{1}{3}+c \epsilon} \log N).
\end{align}
Using the inequality $\|A\|_{\mathrm{max}} \leq \|A\|_{2} \leq N \|A\|_{\mathrm{max}}$, the second resolvent identity, that $\|G(E+\ii \eta)\|_{2} \leq \frac{1}{\eta}$, and (\ref{sum_H}), one shows that $G(T,z)$ is sufficiently close to the Green function of the GUE, \ie
\begin{align}\label{approxxxx}
\|G(T,z) -G^{\mathrm{GUE}}(z)\|_{\mathrm{max}} \leq \|G(T,z)(\mathrm{GUE}-H(T))G^{\mathrm{GUE}}(z)\|_2 \leq \frac{N}{\eta^2} \|(\mathrm{GUE}-H(T))\|_{\mathrm{max}} \prec \frac{1}{N^3\eta^2}.
\end{align}
Since $F$ is a smooth function with uniformly bounded derivatives, we have
 \begin{equation}\label{sandwich2}
 \Big| F \Big( N\int_{\kappa_1}^{\kappa_2} \Im m_N (T, 2+x+\i \eta) \dd x \Big) -F \Big(  N\int_{\kappa_1}^{\kappa_2} \Im m_N^{\mathrm{GUE}} (2+x+\i \eta) \dd x \Big) \Big| \prec \frac{N^{\epsilon}}{N^{8/3} \eta^2}.
 \end{equation}
 Combining (\ref{sandwich1}) and (\ref{sandwich2}), we conclude the proof of Theorem \ref{green_comparison}.
\end{proof}

\begin{remark}
In the traditional approach to the Green function comparison theorem~\cite{rigidity} a Lindeberg type replacement strategy is used. In~\eqref{sum_H} we use a continuous flow  to interpolate between Wigner matrices and the invariant ensembles. This is notationally easier than the Lindeberg replacement, especially when we do recursive comparisons to estimate the contributions from the fourth order cumulants in Section~\ref{sec:match}.
\end{remark}

\section{A special case: estimates on $\E[\Im m_N]$}\label{toy}
In this section, we prove the simplest version of the Green function comparison theorem, Theorem~\ref{green_comparison}, when $F(x)=x$. It then suffices to compare the expected normalized trace of the Green function of a Wigner matrix $\E[m_N(z)]$ with $\E^{\mathrm{GUE}}[m_N(z)]$. The ideas in this section will also be used to prove Proposition~\ref{prop_theta}, which is a key ingredient to establish the Green function comparison theorem for a general function~$F$. The proof for general functions $F$ will rely on the estimate (\ref{img_wigner}) in Proposition \ref{GCT_mn} below as an input.

\begin{proposition}\label{GCT_mn}
Let $H_N$ be a complex Wigner matrix satisfying Assumption \ref{assump} and recall the time dependent matrix $H(t)$ in (\ref{ou_process}). For any $\epsilon>0$ and $C_0>0$, define the domain of the spectral parameter $z$ near the upper edge, 
\begin{align}\label{S_edge}
S_{\mathrm{edge}} \equiv S_{\mathrm{edge}}(\epsilon,C_0):=\{ z=E+\ii \eta \in S: |E-2| \leq C_0  N^{-2/3+\epsilon}, N^{-1+\epsilon} \leq \eta \leq N^{-2/3+\epsilon}\}\,,
\end{align}
with $S$ given in (\ref{ddd}). Then for any $\tau>0$, we have 
\begin{align}\label{compare_mn}
\Big| \E[m_N(t,z)]- \E^{\mathrm{GUE}}[m_N(z)] \Big| \leq N^{-1/3+\tau},
\end{align}
uniformly in $z \in S_{\mathrm{edge}}$ and $t \geq 0$, for sufficiently large $N \geq N_0(C_0, \epsilon, \tau)$. Furthermore, there exists some $C>0$ independent of $\epsilon$, such that
\begin{align}\label{img_wigner}
\E[\Im m_N(t, z)] \leq C N^{-1/3+\epsilon},
\end{align}
uniformly in $z \in S_{\mathrm{edge}}$ and $t \geq 0$, for sufficiently large $N \geq N'_0(C_0, \epsilon)$.
\end{proposition}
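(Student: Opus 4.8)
The plan is to prove~\eqref{compare_mn} by running the Ornstein--Uhlenbeck interpolation scheme outlined for the case $F(x)=x$, specialized to the single-parameter observable $m_N(t,z)$, and then to deduce~\eqref{img_wigner} from~\eqref{compare_mn} together with a direct estimate of $\E^{\mathrm{GUE}}[\Im m_N(z)]$ via the determinantal structure of the GUE. For~\eqref{compare_mn}, I would first repeat the Ito computation from the proof of Lemma~\ref{lemma_first} with $F$ the identity map: differentiating $m_N(t,z)=\frac1N\Tr G(t,z)$ along~\eqref{ou_process} and using $\partial G_{ij}/\partial h_{ab}=-G_{ia}G_{bj}$ from~\eqref{dH}, one finds $\dd m_N(t,z)=\dd M+\Theta\,\dd t$, where $\dd M=-N^{-3/2}\sum_{a,b}(G^2)_{ba}\,\dd\beta_{ab}$ integrates to a true martingale of zero expectation (using the deterministic bound $\|G\|_2\le\eta^{-1}\le N^{1-\epsilon}$). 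Taking expectations and applying the cumulant expansion of Lemma~\ref{cumulant} to $\frac1{2N}\sum_{a,b}h_{ab}(G^2)_{ba}$, the second-order contributions cancel against the explicit drift $\frac1{N^2}(\Tr G^2)(\Tr G)$ up to the diagonal correction $E_2$, and one is left, exactly as in~\eqref{derivative_gene}, with $\E[\Theta(t,z)]=\sum_{p+q+1=3}^{4}K_{p,q+1}+E_2+O_\prec(N^{-1/2})$, the $K_{p,q+1}$ and $E_2$ being the analogues of~\eqref{K_term} and~\eqref{E_1} for the observable $m_N(t,z)$. Since for the GUE all third- and fourth-order cumulants of the rescaled off-diagonal entries vanish and the rescaled diagonal second moment equals $1$, the GUE drift vanishes identically, so it suffices to make $\E[\Theta(t,z)]$ small.

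To this end I would split the cumulant terms into two groups, as in the outline. The third-order terms and the fourth-order terms not coming from the $(2,2)$-cumulants of the off-diagonal entries are \emph{unmatched}, and Proposition~\ref{unmatch_lemma} shows their contribution is negligible --- the point being that at the fine scale $\eta\sim N^{-1+\epsilon}$ the control parameter $\Psi$ is only of size $\sim N^{-\epsilon}$, so one must iterate cumulant expansions to arbitrary order and exploit a cancellation to leading order. The fourth-order $(2,2)$-cumulant terms of the off-diagonal entries together with the diagonal correction $E_2$ are \emph{matched}; motivated by the Weingarten calculus~\cite{weingarten1} for the GUE, Proposition~\ref{lemma_expand_type} reduces them, again via iterated cumulant expansions, to finite linear combinations of trace-like ``type-0'' correlation functions of products of Green functions. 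These type-0 terms are estimated in Lemma~\ref{lemma_trace_wigner} by a recursive interpolation: one derives the Ornstein--Uhlenbeck evolution of a type-0 term with $d_1$ off-diagonal Green function entries, expands its drift to arbitrary order via Propositions~\ref{unmatch_lemma} and~\ref{lemma_expand_type} into finitely many type-0 terms with at least $d_1+1$ off-diagonal entries, iterates, and finally invokes Lemma~\ref{lemma_trace}, which controls the type-0 terms for the GUE using its correlation kernel and the edge convergence estimate of Theorem~\ref{kernel_diff}. Combining the two groups shows $|\E[\Theta(t,z)]|$ is small enough --- uniformly in $t\ge0$ and $z\in S_{\mathrm{edge}}$ --- that its integral over $[0,T]$ with $T=8\log N$ is at most $\tfrac12 N^{-1/3+\tau}$; together with the super-polynomial closeness of $\E[m_N(T,z)]$ to $\E^{\mathrm{GUE}}[m_N(z)]$ (by~\eqref{approxxxx}, using that $\|G(T,z)-G^{\mathrm{GUE}}(z)\|_{\max}\prec N^{-1-2\epsilon}$ is deterministically at most $2N^{1-\epsilon}$, so Lemma~\ref{dominant} applies) this gives~\eqref{compare_mn}.

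For~\eqref{img_wigner}, note that by the symmetry $m_N(\bar z)=\overline{m_N(z)}$ (and likewise for the GUE),~\eqref{compare_mn} yields $|\E[\Im m_N(t,z)]-\E^{\mathrm{GUE}}[\Im m_N(z)]|\le N^{-1/3+\tau}$ for any $\tau>0$, in particular $\tau=\epsilon$. It then remains to bound $\E^{\mathrm{GUE}}[\Im m_N(z)]$. Writing $\Im\frac1{\lambda-z}=\pi\theta_\eta(\lambda-E)$ with $z=E+\ii\eta$ and $\theta_\eta$ the mollifier, and using the one-point correlation function $\widetilde K_N(\lambda,\lambda)$ from~\eqref{wt_kernel}, one has $\E^{\mathrm{GUE}}[\Im m_N(z)]=\frac{\pi}{N}\int_\R\theta_\eta(\lambda-E)\,\widetilde K_N(\lambda,\lambda)\,\dd\lambda$. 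I split this integral at $|\lambda-2|=N^{-2/3+2\epsilon}$: on the edge window $\widetilde K_N(\lambda,\lambda)=N^{2/3}K^{\mathrm{edge}}_N(N^{2/3}(\lambda-2),N^{2/3}(\lambda-2))\le CN^{2/3}$ by Theorem~\ref{kernel_diff} and Lemma~\ref{lemma_airy_bound}, contributing at most $\frac{CN^{2/3}}{N}\int_\R\theta_\eta=CN^{-1/3}$; on the complement $|\lambda-E|\gtrsim N^{-2/3+2\epsilon}\gg\eta$, and the standard near-edge bound $\widetilde K_N(\lambda,\lambda)\le CN\sqrt{|2-\lambda|+N^{-2/3}}$ (with exponential decay of the one-point function beyond the edge) combined with the decay of $\theta_\eta$ away from $E$ gives a contribution at most $CN^{-1/3}$. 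Hence $\E^{\mathrm{GUE}}[\Im m_N(z)]\le CN^{-1/3+\epsilon}$ uniformly on $S_{\mathrm{edge}}$ with $C$ independent of $\epsilon$, and combining with the displayed comparison yields~\eqref{img_wigner}.

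\textbf{Main obstacle.} The delicate part is the treatment of the matched $(2,2)$-cumulant and diagonal terms. Unlike the unmatched contributions, these do not vanish for a general Wigner matrix, so their smallness cannot be read off from the local law; one must follow them through the reduction to type-0 correlation functions and the recursive comparison down to the GUE, where the edge-kernel asymptotics finally supply the gain. Moreover, because $\Psi$ is only $\sim N^{-\epsilon}$ at the scale $\eta\sim N^{-1+\epsilon}$, every cumulant expansion --- in the unmatched estimate and at each level of the recursion in Lemma~\ref{lemma_trace_wigner} --- must be iterated to arbitrary order, so that identifying the leading-order cancellations that keep the resulting hierarchy of type-0 correlation functions under control is where the main work lies.
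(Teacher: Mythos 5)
Your proposal follows essentially the same route as the paper: Ito's formula on $m_N(t,z)$ under the Ornstein--Uhlenbeck flow, cumulant expansion of the drift with cancellation of second-order terms, bounding the unmatched third/fourth-order contributions via Proposition~\ref{unmatch_lemma}, reducing the matched $(2,2)$-cumulant and diagonal-variance contributions to type-0 correlation functions via Proposition~\ref{lemma_expand_type}, estimating those through the recursion of Lemma~\ref{lemma_trace_wigner}, integrating up to $T=8\log N$, and closing with the deterministic proximity of $G(T,z)$ to $G^{\mathrm{GUE}}(z)$. Your derivation of~\eqref{img_wigner} from~\eqref{compare_mn} plus a direct GUE kernel estimate also mirrors the paper's Lemma~\ref{lemma_trace}. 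So the architecture is correct.

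There is, however, a genuine gap in your GUE kernel estimate for the inner window. You claim $K^{\mathrm{edge}}_N(x,x)\le C$ on $|\lambda-2|\le N^{-2/3+2\epsilon}$, i.e.\ for $x\in[-N^{2\epsilon},N^{2\epsilon}]$, citing Theorem~\ref{kernel_diff} and Lemma~\ref{lemma_airy_bound}. But both statements are uniform only on half-lines $[L_0,\infty)$ with $L_0$ \emph{fixed}; for $x\to-\infty$, $K_{\mathrm{airy}}(x,x)\sim\sqrt{|x|}$ by~\eqref{airy_asym}, and $K^{\mathrm{edge}}_N$ is not $O(1)$ on $[-N^{2\epsilon},L_0]$. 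The paper instead proves the polynomial bound $K^{\mathrm{edge}}_N(x,x)\le C(1+|x|)$ on $(-N^{2/3},L_0]$ (eqs.~(\ref{kernel_xx}), (\ref{upper_bound_hermite}), (\ref{deri_edge}), (\ref{derivative_K_edge})), via the uniform sup bound on Hermite functions. With the correct bound $K^{\mathrm{edge}}_N\lesssim N^{2\epsilon}$ on your inner window, your bound becomes $CN^{-1/3+2\epsilon}$, which does \emph{not} imply the claimed $CN^{-1/3+\epsilon}$ of~\eqref{img_wigner}. The remedy is either to adopt the paper's three-way split in the $x$ variable at $-N^{2/3}$ and $L_0$, which isolates the mollifier peak where $K^{\mathrm{edge}}_N(N^{2/3}(E-2),\cdot)\lesssim 1+N^\epsilon$, or to shrink your inner window and carry the $C(1+|x|)$ bound through the tail of $\theta_\eta$. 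Either way produces the $CN^{-1/3+\epsilon}$ uniform in $z\in S_{\mathrm{edge}}$ that the statement requires.
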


In the rest of this section we prove Proposition~\ref{GCT_mn}; its proof is split into several parts organized in subsections.

\subsection{Interpolation between a Wigner matrix and the GUE}\label{sec:interpolate}
Following the proof of Lemma~\ref{lemma_first} in Section~\ref{sec: proof_main_result}, we start by applying Ito's lemma to the time dependent normalized trace of the Green function, $m_N(t,z)$, from~\eqref{le time dependent G}.We find using~(\ref{derivative2}) that
\begin{align}\label{diff_eq_1}
\dd (m_N(t,z))=& - \frac{1}{N^{3/2}} \sum_{v, a, b=1}^N G_{va}G_{bv}  \dd \beta_{ab}+\frac{1}{2N} \sum_{v,a,b=1}^N \Big( h_{ab} G_{va} G_{bv}+\frac{1}{N}  G_{vb} G_{bv} G_{aa}+\frac{1}{N}  G_{va} G_{av} G_{bb} \Big) \dd t\nonumber\\
:=&\dd M_0+ \Theta_0 \dd t,
\end{align}
with diffusion term $\dd M_0$ and drift term $\Theta_0\dd t \equiv \Theta_0(t,z)\dd t$; here we use the subscript $0$ to indicate that we are considering the simple case $F(x)=x$. The diffusion term $\dd M_0$ yields a martingale after integration; see Remark \ref{diffusion_mart}. Taking the expectation of the drift term and applying the cumulant expansions in Lemma \ref{cumulant}, we have the analogue of (\ref{derivative_gene}),
\begin{align}\label{step0}
\E[\Theta_0]=&\frac{1}{2N^2} \sum_{v, a=1}^N ( s^{(2)}_{aa}-1) \E \Big[ \frac{ \partial  (G_{av} G_{va})}{\partial h_{aa} } \Big]+\frac{1}{2N} \sum_{\substack{v, a,b=1\\ a\neq b}}^N \sum_{p+q+1=3}^{4} \frac{1}{p! q!}  \frac{s^{(p,q+1)}_{ab}}{N^{\frac{p+q+1}{2}}}  \E \Big[ \frac{ \partial^{p+q}  (G_{bv} G_{va})}{\partial h^p_{ba} \partial h^q_{ab}} \Big]+O_{\prec}\big(\frac{1}{\sqrt{N}}\big)\nonumber\\
=&-\frac{1}{2N^2} \sum_{v, a=1}^N ( s^{(2)}_{aa}-1) \E \Big[ \frac{ \partial^2  G_{vv}}{\partial h^2_{aa} } \Big]- \sum_{p+q+1=3}^{4}  \frac{1}{2 p! q! N^{\frac{p+q+3}{2}}} \sum_{\substack{v, a,b=1\\ a\neq b}}^N s^{(p,q+1)}_{ab} \E \Big[ \frac{ \partial^{p+q+1} G_{vv}}{\partial h^p_{ba} \partial h^{q+1}_{ab}} \Big]+O_{\prec}\big(\frac{1}{\sqrt{N}}\big)\,,
\end{align}
where the error stems from the truncation of the cumulant expansions at fourth order. Recalling the arguments in Section~\ref{sec: proof_main_result}, in order to establish Proposition~\ref{GCT_mn} it suffices to show that for any $\tau>0$,
\begin{align}\label{claim}
|\E[\Theta_0(t,z)]| \leq N^{-1/3+\tau},
\end{align}
uniformly in $z \in S_{\mathrm{edge}}(\epsilon,C_0)$ and $t \geq 0$, for sufficiently large $N \geq N_0(C_0, \epsilon, \tau)$. 

Admitting (\ref{claim}), for $T=8 \log N$ and any $0 \leq t'\leq T$, we integrate (\ref{diff_eq_1}) over $[t',T]$ and take the expectation to get 
\begin{align}\label{sandwich3}
\Big| \E \Big[m_N(t',z) \Big]-\E\Big[ m_N(T, z) \Big] \Big| =O\Big(N^{-1/3+\tau} \log N\Big).
\end{align}
Combining with (\ref{approxxxx}), we obtain the comparison estimate in (\ref{compare_mn}) between the GUE and the time dependent $H(t)$ in (\ref{ou_process}) staring from the Wigner matrix $H$. The bound (\ref{img_wigner}) will follow directly from the comparison result (\ref{compare_mn}) and the corresponding estimate for the GUE in Lemma \ref{lemma_trace} below. 

In the remaining part of this section, we will hence prove (\ref{claim}). For that it suffices to estimate the terms on the right side of~(\ref{step0}).

\subsection{Third and fourth order terms with unmatched indices}

Using the differential rule for the Green function entries in (\ref{dH}), each term in the cumulant expansion (\ref{step0}) can be written out in terms of an averaged product of the Green function entries. The first group of terms on the right side of (\ref{step0}) is given by
$$-\frac{2}{N^2} \sum_{v, a=1}^N ( s^{(2)}_{aa}-1) \E \Big[ G_{va}G_{av} G_{aa}\Big]\,.$$
In the second group of terms on the right side of (\ref{step0}), one example of a third order term with $p=1, q=1$ and one example of a fourth order term with $p=2,q=1$  are given by,
$$\sqrt{N} \frac{1}{N^{3}} \sum_{v, a,b}   \frac{s^{(1,2)}_{ab}}{2} \E \Big[ G_{va} G_{bv} G_{aa} G_{bb}  \Big], \qquad -\frac{1}{N^{3}} \sum_{v, a,b} \frac{ s^{(2,2)}_{ab}}{4} \E \Big[ G_{va} G_{av}  G_{aa} G_{bb} G_{bb}\Big].$$
We remark that the third order terms with $p+q+1=3$ are averaged products of Green function entries with an additional leading factor $\sqrt{N}$.

To study these averaged products of the Green function entries in (\ref{step0}), we introduce the general form in (\ref{product}) below. We will use the letters $v_j$ to denote the free summation indices running from 1 to $N$, and the letters $x_i,y_i$ as the row and column indices of the Green function entries. In order to avoid confusion, we clarify that $x_i=y_i=v_j$ means that both $x_i$ and $y_i$ represent the same summation index~$v_j$. Further we write $x_i \neq y_i$ if~$x_i$ and~$y_i$ represent two distinct summation indices, say $v_j$ and $v_{j'}$. They could have the same value as the summation indices $v_j$ and $v_{j'}$ run from $1$ to $N$.

We are now ready to introduce the general form of averaged products of the Green function entries:
\begin{align}\label{product}
\frac{1}{N^{m}}  \sum_{v_1=1}^N\cdots \sum_{v_m=1}^N   c_{v_1, \ldots, v_m} \Big(  \prod^n_{i=1} G_{x_{i} y_{i}} (t,z)\Big)=:\frac{1}{N^{\# \mathcal{I}}}\sum_{\mathcal{I}} c_{\mathcal{I}}\Big(\prod^n_{i=1} G_{x_i y_i}(t,z)\Big), \qquad t \in \R^+,  z \in \C^+,
\end{align}
for $m,n \in \N$, where $\mathcal{I}:=\{v_j\}_{j=1}^m$ is a free summation index set which may include $a,b,v$ from (\ref{step0}), $m:=\#\{\mathcal{I}\}$ is the number of the free summation indices, and the coefficients $\{c_{\mathcal{I}}:=c_{v_1, \ldots, v_m}\}$ are uniformly bounded complex numbers. Moreover, $n$ is the number of Green function entries in the product, and each row index $x_i$ and column index $y_i$ $(1 \leq i\leq n)$ of the Green function entries represent some element in the free summation index set~$\mathcal{I}$.

We further define the degree of such a term in (\ref{product}) to be the number of off-diagonal terms in the product of the Green function entries, \ie
\begin{align}\label{degree_0}
d:=\#\{ 1 \leq i\leq n: x_i \neq y_i\}.
\end{align}
In particular, we have $0 \leq d \leq n$. We use $\mathcal{Q}_d \equiv \mathcal{Q}_d(t,z)$ to denote the collection of the averaged products of the Green function entries of the form in (\ref{product}) of degree $d$. For any $Q_d \equiv Q_d(t,z) \in \mathcal{Q}_d$, it is clear from the local law in (\ref{G}) that 
\begin{align}\label{localaw_0}
|Q_d(t,z)| \prec \Psi^d+\frac{1}{N},
\end{align}
uniformly in $z \in S$ given in (\ref{ddd}) and $t \geq 0$. We will often omit the parameters $z$ and $t$ for notational simplicity. The last error $N^{-1}$ is from the coincidence of distinct summation indices.

Now we first look at the third order terms in the cumulant expansion (\ref{step0}) with $p+q+1=3$. Using the differential rule for the Green function entries in (\ref{dH}), all the third order terms with $p+q+1=3$ can be written out in the form in (\ref{product}), with an extra factor $\sqrt{N}$ in front.  We observe that these terms are unmatched, see Definition \ref{unmatch_def} below, since the indices $a,b$ both appear an odd number of times in the product of the Green function entries. 

In a similarly way, the fourth order terms in the cumulant expansion (\ref{step0}) with $p+q+1=4$, except the ones corresponding to $p=2,q=1$, are also unmatched terms of the form in (\ref{product}) from Definition~\ref{unmatch_def}, since the number of times the index $a~( \mbox{or } b)$ appears in the row index set does not agree with the number of times it appears in the column index set of the product of Green function entries.

\begin{definition}[Terms with unmatched indices]\label{unmatch_def}
Given any $Q_d \in \mathcal{Q}_d$ of the form in (\ref{product}) of degree $d$, let $\nu^{(r)}_j$, $\nu^{(c)}_j$, be the number of times the free summation index $v_j \in \mathcal{I}$ appears as the row, respectively column, index in the product of the Green function entries, \ie
\begin{align}\label{nu_rc}
\nu^{(r)}_j:=\#\{1\leq i \leq n: x_i=v_j\},\qquad \nu^{(c)}_j:=\#\{1\leq i \leq n: y_i=v_j\}, \quad 1\leq j\leq m.
\end{align}
We define the set of the unmatched summation indices as 
$$\mathcal{I}^o := \{1 \leq j \leq m: \nu^{(r)}_j \neq \nu^{(c)}_j \} \subset \mathcal{I}.$$ 
If $\mathcal{I}^o$ is empty, \ie all the free summation indices appear the same number of times in the row index set $\{x_i\}$ and the row column index set $\{y_i\}$, then we say that $Q_d$ is matched. Otherwise, we say $Q_d$ is an unmatched term, denoted by $Q_d^o$. The collection of the unmatched terms of the form in (\ref{product}) of degree $d$ is denoted by~$\mathcal{Q}^o_d \subset \mathcal{Q}_d$. 

Given any unmatched term $Q_d^o \in \mathcal{Q}^o_d$, we define the unmatched index set for both row and column as
\begin{align}\label{rc_index}
\mathcal{R}^o := \{1 \leq j \leq m: \nu^{(r)}_j > \nu^{(c)}_j \} \subset \mathcal{I}^o;\qquad \mathcal{C}^o := \{1 \leq j \leq m: \nu^{(r)}_j < \nu^{(c)}_j \} \subset \mathcal{I}^o.
\end{align}
Neither of $\mathcal{R}^o$ and $\mathcal{C}^o$ is empty. Moreover, $\mathcal{R}^o \cap \mathcal{C}^o$ is empty, and $\mathcal{R}^o \cup \mathcal{C}^o=\mathcal{I}^o.$
\end{definition}

Next, we give two examples of unmatched terms, which appear as fourth order terms in (\ref{step0}),
\begin{align}
- \frac{1}{N^{3}} \sum_{v, a,b} \frac{ s^{(1,3)}_{ab}}{4} \E \Big[ G_{va} G_{bv} G_{ba} G_{aa} G_{bb} \Big] \in \mathcal{Q}^{o}_{3}; \qquad - \frac{1}{N^{3}} \sum_{v, a,b}  \frac{s^{(0,4)}_{ab} }{12}\E \Big[ G_{va}G_{bv}  G_{ba} G_{ba}G_{ba} \Big] \in \mathcal{Q}^{o}_{5}; 
\end{align}
and two examples of the unmatched terms from the third order terms on the right side of (\ref{step0}),
\begin{align}
\frac{1}{N^{3}} \sum_{v, a,b}   \frac{s^{(1,2)}_{ab}}{2} \E \Big[ G_{va} G_{bv} G_{aa} G_{bb}  \Big] \in \mathcal{Q}^{o}_2; \qquad \frac{1}{ N^{3}} \sum_{v, a,b}  \frac{s^{(0,3)}_{ab}}{4} \E \Big[ G_{va} G_{bv} G_{ba} G_{ba}\Big] \in \mathcal{Q}^{o}_4, 
\end{align}
up to a factor of $\sqrt{N}$.

The following proposition states that the expectations of the unmatched terms are much smaller than their naive size obtained by the power counting from the local law as in (\ref{localaw_0}). The proof is postponed to Section \ref{sec:unmatch}.
\begin{proposition}\label{unmatch_lemma}
Consider any unmatched term $Q^o_d \in \mathcal{Q}_d^o$ of degree $d$ with fixed $n$ (the number of Green function entries in the product) given in (\ref{product}). For any fixed $D \in \N$, we have
\begin{align}\label{unmatch_lemma equation}\E[Q^o_d(t,z)]=O_{\prec}\Big(\frac{1}{N}+\Psi^D\Big)\,,
\end{align}
uniformly in $z \in S$ given in (\ref{ddd}) and $t \geq 0$.
\end{proposition}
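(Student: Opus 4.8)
\textbf{Proof strategy for Proposition~\ref{unmatch_lemma}.} The plan is to exploit the fact that an unmatched summation index $v_j\in\mathcal{I}^o$ cannot be ``paired up'' into a diagonal Green function entry, so that applying a resolvent identity (or, equivalently, a cumulant expansion) in the variable indexed by $v_j$ must generate at least one additional off-diagonal factor, thereby gaining a factor $\Psi$ each time. First I would reduce to the case where $\mathcal{I}^o$ contains a single distinguished index, say $v_1\in\mathcal{R}^o$ (so $\nu_1^{(r)}>\nu_1^{(c)}$), by noting that if all indices were matched the term would not be in $\mathcal{Q}_d^o$; the presence of at least one unmatched index is the only structural input we need. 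The key mechanism is the following: pick one Green function entry $G_{v_1 y}$ whose row index is $v_1$, and apply a cumulant expansion with respect to the entries $h_{v_1 b}$ in the spirit of~\eqref{step0}, using $\sum_b h_{v_1 b}G_{b y}=-\delta_{v_1 y}-z\,G_{v_1 y}$ (or directly the differentiation rule~\eqref{dH}). This replaces $G_{v_1 y}$ by terms in which $v_1$ either disappears or is forced to reappear as a \emph{row} index of a new off-diagonal factor; crucially, because of the row/column imbalance at $v_1$ the ``naive'' leading term $-z G_{v_1 y}$-type contribution either cancels (as in the cancellation of second-order terms observed in~\eqref{step0}) or still carries the imbalance and feeds back into the recursion.

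The core of the argument is then an induction on $D$. Starting from $|Q_d^o|\prec\Psi^d+\frac1N$ from the local law~\eqref{localaw_0}, one shows that each round of cumulant expansion in the unmatched index produces finitely many new averaged products of Green function entries, each of which is either (i) genuinely smaller by a factor $\Psi$ (an extra off-diagonal entry was created, or a free summation index was removed reducing $\#\mathcal{I}$ while keeping the off-diagonal count, which by~\eqref{localaw_0} with~\eqref{control_eq} is again a gain), or (ii) still an unmatched term to which the induction hypothesis applies, or (iii) a manifestly negligible truncation error $O_\prec(N^{-1/2})$ from cutting the cumulant expansion at a fixed high order $l=l(D)$. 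Iterating this $D$ times and using $\Psi\ll1$ together with the transitivity and product rules of $\prec$ from Lemma~\ref{dominant}, one collects $\E[Q_d^o]=O_\prec(\Psi^D+N^{-1})$; the $N^{-1}$ floor is unavoidable because the off-diagonal count can drop to zero leaving a matched diagonal term of size $O_\prec(1/N)$ (coming from the subleading $1/(N\eta)$-type term in the local law for $m_N$). One must track that the number of Green function entries $n$ and the number of summation indices stay uniformly bounded throughout the finitely many ($D$-dependent) expansion steps, which is where the hypothesis of fixed $n$ enters. The expectation can be taken using statement~(3) of Lemma~\ref{dominant}, since for $\eta\ge N^{-1+\epsilon}$ every Green function entry obeys the deterministic bound $O(N^{1-\epsilon})$, so each averaged product has an a priori polynomial bound.

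\textbf{Main obstacle.} The delicate point is the bookkeeping of the recursion: showing that every term produced at each step either strictly improves the power of $\Psi$ \emph{or} remains unmatched (so the induction can be applied), and that no ``bad'' matched term of size merely $\Psi^d$ is generated without improvement. Concretely, when one differentiates $G_{v_1 y}$ via~\eqref{dH} one gets products like $G_{v_1 a}G_{b y}$ which may rebalance the row/column count at $v_1$ but then necessarily create an imbalance at a previously-matched index, or else create a new off-diagonal factor; formalizing ``the imbalance can be passed around but never destroyed without paying a $\Psi$'' requires a careful invariant. I expect this to be handled exactly as in the ``expansion mechanism'' the authors announce will be developed in Subsections~\ref{sec:unmatch_toy} and~\ref{sec:unmatch_proof}: isolate one unmatched index, expand it away, and argue by a monovariant (e.g.\ the total number of off-diagonal factors plus a suitable count of remaining unmatched indices) that the recursion terminates with the claimed gain. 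The GUE case is cleaner and can be done first via the Weingarten calculus, providing a sanity check and the right target estimate, before the general Wigner argument via iterated cumulant expansions is carried out.
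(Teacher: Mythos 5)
Your proposal follows the same strategy as the paper's proof in Subsection~\ref{sec:unmatch_proof}: single out an unmatched row index $v_1\in\mathcal{R}^o$, replace one off-diagonal factor $G_{v_1 y_1}$ via the identity~\eqref{resolvent_identity}, perform cumulant expansions, observe the leading-order cancellation analogous to~\eqref{step0}, note that third-order contributions carry an extra $N^{-1/2}$ and remain unmatched, and iterate $D-d$ times while invoking the local law, Lemma~\ref{dominant}, and the deterministic bound $\|G\|_2\le\eta^{-1}$ to control truncation errors and take expectations. Your remark that the GUE case is done first via Weingarten calculus as a sanity check also matches the paper's organization.

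The one place where your sketch has not quite pinned down the argument is exactly where you flag it, in the ``main obstacle'' paragraph: the paper's key difficulty is not that degree-preserving matched terms appear, but that degree-preserving \emph{unmatched} terms can appear (the paper's Case~3), so the naive induction ``degree strictly increases or unmatched'' does not by itself terminate. Concretely, when the cumulant derivative $\partial_{h_{kv_1}}$ hits a factor $G_{x_i v_1}$ with off-diagonal column index $v_1$, the resulting term such as~\eqref{term_1} has the same degree $d$, is still unmatched at $v_1$, and even carries an extra diagonal factor $G_{v_1 v_1}$. The correct monovariant is not ``off-diagonal count plus number of unmatched indices'' as you tentatively suggest, but the number of \emph{off-diagonal} Green function entries whose column index equals the distinguished unmatched index $v_1$ (i.e.\ the off-diagonal part of $\nu_1^{(c)}$): each Case~3 step strictly decreases this quantity, so after at most $\nu_1^{(c)}\le n$ rounds of re-expansion in $v_1$ one is forced into Cases~1 or~2, where the degree genuinely increases. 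Once this is in place the recursion closes exactly as you outline, yielding the summary expansion~\eqref{third_unmatch2} and hence~\eqref{third_unmatch3}. A small separate slip: the relation you quote should read $(HG)_{v_1 y}=\delta_{v_1 y}+zG_{v_1 y}$ (sign), and the paper actually uses the refined form~\eqref{resolvent_identity} involving $\underline G$ and $\underline{HG}$ precisely because this produces the clean first-order cancellation; using the bare relation $(HG)_{ij}=\delta_{ij}+zG_{ij}$ would require you to track the $z$-dependent leading term separately.
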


\begin{remark}\label{remark1 about different z}
In the observable $Q^o_d(t,z)$ in~\eqref{unmatch_lemma equation} the Green function entries from~\eqref{product} are all chosen at the same spectral parameter $z\in S$. Our proofs can be extended to the setting where the Green function entries are evaluated at different spectral parameters in the domain $S$ with the estimate in~\eqref{unmatch_lemma equation} holding true. As we do not require this generalization to prove Proposition~\ref{GCT_mn} we do not pursue this direction here.
\end{remark}

Therefore, using Proposition \ref{unmatch_lemma}, the third order terms in the cumulant expansion (\ref{step0}) are all bounded as $O_{\prec}(N^{-1/2}+\sqrt{N} \Psi^D)$. Moreover all the fourth order terms in the cumulant expansion (\ref{step0}), except the one corresponding to $p=2,q=1$, are bounded by $O_{\prec}(N^{-1}+\Psi^D)$. By choosing $D \geq \frac{1}{\epsilon}$ with $\epsilon>0$ as in (\ref{control_eq}), we hence obtain from (\ref{step0}) that
\begin{align}\label{step}
\E[\Theta_0]=&-\frac{1}{2N^2} \sum_{v, a=1}^N( s^{(2)}_{aa}-1) \E \Big[ \frac{ \partial^2  G_{vv} }{\partial h^2_{aa} } \Big]-\frac{1}{4N^3}\sum_{\substack{v, a,b=1\\ a\neq b}}^N s^{(2,2)}_{ab} \E \Big[ \frac{ \partial^4 G_{vv} }{\partial h^2_{ba} \partial h^2_{ab} } \Big]+O_{\prec}(\frac{1}{\sqrt{N}}).
\end{align}
The remaining terms on the right side of (\ref{step}) are matched under Definition \ref{unmatch_def}. It is thus sufficient to estimate these matched terms, as presented in the next subsection.

\subsection{Terms with matched indices}
Applying the differentiation rule (\ref{dH}) to the right side of (\ref{step}), the index $v$ appears once as a row index and once as a column index of the Green function entries of the resulting terms on the right side of (\ref{step}). In addition, the indices $a,b$ from~\eqref{step} will take a special role and appear twice as a row index and twice as a column index of the Green function entries.  After differentiation by (\ref{dH}), we write out these products of Green function entries and observe that they are of the following form which we call type-AB terms. 

\begin{definition}[Type-AB terms, type-A terms, type-0 terms]\label{def_type_AB}
 For arbitrary $m,n\in\N$, we consider averaged products of Green functions of the form
\begin{align}\label{form}
\frac{1}{N^{m+2}}  \sum_{v_1=1}^N\cdots \sum_{v_m=1}^N  \sum_{a=1}^N\sum_{b=1}^N c_{a,b, v_1, \ldots, v_m} \Big(  \prod_{i=1}^n G_{x_{i} y_{i}} (t,z)\Big)=:\frac{1}{N^{\# \mathcal I+2}} \sum_{\mathcal{I},a,b} c_{a,b, \mathcal{I}} \Big( \prod_{i=1}^n G_{x_{i} y_{i}}\Big) \,,
\end{align}
for $t\in\R^+,~z\in\C^+$, where each $x_i$ and $y_i$ represent the free summation indices $a$, $b$ or $v_j$ $(1 \leq j \leq m)$. Here the coefficients $\{c_{a,b,\mathcal{I}} := c_{a,b, v_1, \ldots, v_m}\}$ are uniformly bounded complex numbers. Note that the form in (\ref{form}) is a special case of the form given in (\ref{product}) with the two indices $a$ and $b$ singled out. The degree, denoted by $d$, of such a term is defined as in (\ref{degree_0}) by counting the number of the off-diagonal Green function entries. Recall $\nu_j^{(r)}$, $\nu_j^{(c)}$ defined in~(\ref{nu_rc}). We further define similarly 
$$\nu^{(r)}_{a}:=\#\{i: x_i=a\},~ \nu^{(c)}_{a}:=\#\{i: y_i=a\}, ~ \nu^{(r)}_{b}:=\#\{i: x_i=b\}, ~ \nu^{(c)}_{b}:=\#\{i: y_i=b\}, $$
for the special indices $a$, $b$.

A {\it type-AB term}, denoted by $P_d^{AB}$, is of the form in (\ref{form}) with each $v_j$ appearing once in the row index set $\{x_i\}$ and once in the column index set $\{y_i\}$ in the product of the Green function entries, \ie $\nu_j^{(r)}=\nu_j^{(c)}=1$. The indices $a$ and $b$ both appear the same number of times (more than once) in the row index set $\{x_i\}$ and column index set $\{y_i\}$ in the product of the Green function entries, \ie $\nu_a^{(r)}=\nu_a^{(c)} \geq 2$ and $\nu_b^{(r)}=\nu_b^{(c)} \geq 2$. We denote by $\mathcal{P}_d^{AB} \equiv \mathcal{P}_d^{AB}(t,z)$ the collection of the type-AB terms of degree~$d$. We remark that type-AB terms are matched in the sense of Definition ~\ref{unmatch_def}.

A {\it type-A term}, denoted by $P_d^{A}$, is of the form in (\ref{form}) with $\nu_a^{(r)}=\nu_a^{(c)} \geq 2$, and $\nu_b^{(r)}=\nu_b^{(c)}=\nu_j^{(r)}=\nu_j^{(c)}=1$ for $1 \leq j \leq m$. We denote the collection of the type-A terms of degree $d$ by $\mathcal{P}_d^{A} \equiv \mathcal{P}^{A}_d(t,z)$.

Finally, a {\it type-0 term}, denoted by $P_d$, is of the form in (\ref{form}) with all the free summation indices appearing once in the row index set $\{x_i\}$ and once in the column index set $\{y_i\}$ in the product of the Green function entries, \ie $\nu_a^{(r)}=\nu_a^{(c)}=\nu_b^{(r)}=\nu_b^{(c)}=\nu_j^{(r)}=\nu_j^{(c)}=1$ for $1 \leq j \leq m$. We denote the collection of the type-0 terms of degree~$d$ by $\mathcal{P}_d \equiv \mathcal{P}_d(t,z)$.

We remark that the index $b$ does no longer play a special role in type-A terms, as well as the indices $a$ and $b$ are not special in type-0 terms. We keep them in the notation in order to emphasize the inheritance from the form in~(\ref{form}).

\end{definition}

Next, we give two examples for type-AB terms, which are generated from the fourth order expansion terms in (\ref{step}) corresponding to the $(2,2)$-cumulants,
\begin{equation*}
-\frac{1}{4N^3} \sum_{v,a,b} s^{(2,2)}_{ab} \Big( G_{va} G_{aa} G_{av} G_{bb} G_{bb} \Big) \in \mathcal{P}^{AB}_{2}; \qquad -\frac{1}{4N^3} \sum_{v,a,b} s^{(2,2)}_{ab} \Big( G_{va} G_{ab} G_{bv} G_{aa} G_{bb} \Big) \in \mathcal{P}^{AB}_{3}\,;
\end{equation*}
and an example of a type-A term, which is from the second order terms of diagonal entries in the cumulant expansion~(\ref{step}),
\begin{equation*}
-\frac{1}{2N^2} \sum_{v, a}( s^{(2)}_{aa}-1) \Big( G_{va} G_{aa} G_{av} \Big) \in \mathcal{P}^{A}_2,
\end{equation*}
where the index $b$ no longer takes the special role. 

 In the following, we only consider special type-AB terms with both indices $a$ and~$b$ appearing in the product of the Green function entries four times (\ie $\nu_a^{(r)}=\nu_a^{(c)}=\nu_b^{(r)}=\nu_b^{(c)}=2$) and the corresponding type-A terms. For the general case, see Remark~\ref{remark_more_than_4}.

The next proposition claims that, in expectation, any type-AB term as well as any type-A term of degree $d$ can be expanded into linear combinations of type-0 terms of degrees at least $d$ up to negligible error. The proof of Proposition~\ref{lemma_expand_type} is presented in Subsection \ref{toy_fourth}.

\begin{proposition}\label{lemma_expand_type}
Consider any type-AB term $P_d^{AB} \in \mathcal{P}^{AB}_d$ of the form in (\ref{product}) of degree $d$ with fixed $n \in \N$, and $\nu_a^{(r)}=\nu_a^{(c)}=\nu_b^{(r)}=\nu_b^{(c)}=2$. Then for any fixed $D \in \N $, we have
\begin{align}\label{reduce}
\E [  P_d^{AB}(t, z)]=\sum_{\substack{P_{d'} \in \mathcal{P}_{d'} \\ d \leq d' < D}} \E [ P_{d'}(t,z)]+O_{\prec}\Big(\frac{1}{\sqrt{N}}+\Psi^D\Big),
\end{align}
uniformly in $z \in S$ (see (\ref{ddd})), $t\in \R^+$, where we use $\sum_{P_{d'} \in \mathcal{P}_{d'}, d \leq d' < D}\E [ P_{d'}(t,z)]$ to denote a sum of finitely many type-0 terms of the form in (\ref{form}) of degrees $d'$ satisfying $d \leq d' < D$. Moreover, the number of type-0 terms in the sum above is bounded by $(6(n+8D))^{2D}$ and the number of the Green function entries in each type-0 term is bounded by $n+8D$.

Similarly, for any type-A term $P_d^{A} \in \mathcal{P}_d^A$ of the form in (\ref{form}) with $\nu_a^{(r)}=\nu_a^{(c)}=2$, we have
\begin{align}
\E [  P_d^{A}(t,z)]=\sum_{\substack{P_{d'} \in \mathcal{P}_{d'} \\ d \leq d' < D}} \E [ P_{d'}(t,z)]+O_{\prec}\Big(\frac{1}{\sqrt{N}}+\Psi^D\Big)\,,
\end{align}
uniformly in $z \in S$ and $t\in \R^+$. The number of the type-0 terms in the sum above is at most~$(6(n+4D))^{D}$, and the number of the Green function entries in each type-0 term is at most $n+4D$.
\end{proposition}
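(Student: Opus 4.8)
The plan is to set up a single master expansion mechanism that, applied to a type-AB (or type-A) term, either produces type-0 terms directly or produces new terms on which the mechanism can be iterated, with the degree not decreasing and the number of Green function entries growing in a controlled way. The starting observation is that in a type-AB term the special index $a$ (say) appears twice as a row index and twice as a column index, so there is a Green function entry $G_{x_i a}$ (or $G_{a y_i}$) with $x_i\neq a$; picking such an entry, I would rewrite it using the identity $G_{x_i a} = -\frac{1}{\sqrt N}\,\partial(\cdot)/\partial h_{a x_i}\cdot(\ldots)$ inverted — more precisely, use the defining relation $HG = I + zG$ together with the differentiation rule \eqref{dH} to bring in a factor $h_{a x_i}$ (or $h_{x_i a}$) and then apply the cumulant expansion of Lemma~\ref{cumulant} to that entry. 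This is exactly the step motivated by the GUE/Weingarten computation: the leading second-order cumulant term reassembles the indices $a$ so that one copy of $a$ is absorbed (lowering $\nu_a^{(r)}=\nu_a^{(c)}$ from $2$ towards $1$), while producing a factor like $\underline{G}$ or $G_{aa}$ times a term with fewer special-$a$ appearances; higher-order cumulant terms are either unmatched — hence negligible by Proposition~\ref{unmatch_lemma} — or raise the degree, and the truncation error of the cumulant expansion at a large fixed order $\ell$ is $O_\prec(N^{-1/2})$ by the standard resolvent-expansion argument already used in the proof of Lemma~\ref{lemma_first}.

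The key bookkeeping is a monovariant: after one application of the mechanism to reduce the multiplicity of $a$, one gets a linear combination of (i) terms where $a$ is no longer special (so one is reduced to the type-A case), (ii) unmatched terms, controlled by Proposition~\ref{unmatch_lemma} with $D$ chosen large, and (iii) terms of strictly higher degree. Crucially the degree never drops below $d$ in the surviving matched pieces — any second-order cumulant reassembly that would lower the degree also closes an index loop and changes $\underline{\cdot}$ structure rather than creating an off-diagonal-to-diagonal collapse that loses a factor $\Psi$. Iterating: first eliminate the special role of $a$ (type-AB $\to$ type-A), then eliminate the special role of $b$ (type-A $\to$ type-0). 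Each elimination step costs a bounded number of cumulant expansions, each expansion to order $\le\ell$ introduces at most a bounded number of extra Green function entries, and one tracks that after all iterations the number of Green entries in any surviving type-0 term is at most $n+8D$ (resp.\ $n+4D$ in the type-A case) and the number of such terms is at most $(6(n+8D))^{2D}$ (resp.\ $(6(n+4D))^{D}$) — these bounds come from a crude count of the branching (each of the $\le 2$ multiplicity reductions spawns $\le \ell\le D$ cumulant orders, each contributing $\le$ const new indices), and one should just verify the arithmetic matches the stated constants. Terms of degree $\ge D$ that appear are absorbed into the $O_\prec(\Psi^D)$ error using the naive local-law bound \eqref{localaw_0}.

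For the type-A case the argument is literally the $a$-elimination step of the above run in isolation, so it requires no new idea; I would state it as a corollary of the mechanism. I expect the main obstacle to be the second-order cancellation: one must check carefully that when the cumulant expansion of the chosen entry is carried out, the $s^{(1,1)}$-type second-order term combines with the other $(2,2)$-generated pieces in \eqref{step} (and with the explicit $1/N$ factors) so that no term of degree $d-1$ with a full power $\Psi^{d-1}$ survives — i.e.\ the would-be dangerous term is either unmatched or carries a compensating factor $1/N$ or an extra off-diagonal entry restoring the degree. This is the analogue of the Weingarten leading-order identity and is where the computation in Subsection~\ref{toy_fourth} must be done honestly; the rest is power counting plus Proposition~\ref{unmatch_lemma}. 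A secondary technical point is ensuring the iteration terminates: the monovariant $(\,\nu_a^{(r)}+\nu_b^{(r)} - 2,\ \text{then negative degree}\,)$ in lexicographic order strictly decreases at each step among matched terms, so finitely many iterations suffice, and the degree-$\ge D$ and unmatched branches are killed at the end uniformly in $t\ge 0$ and $z\in S$.
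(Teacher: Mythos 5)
Your overall strategy — insert $H$ via the defining relation for $G$, apply cumulant expansions, observe a leading cancellation that absorbs one copy of the special index into a trace, and iterate with a non-decreasing-degree monovariant until the residual is $O_\prec(\Psi^D)$ — is exactly the paper's mechanism, and the two-stage elimination (type-AB $\to$ type-A $\to$ type-0) matches Steps 1 and 2 of the paper's proof. The one place where your proposal leaves a genuine gap is the step you yourself flag as "the main obstacle": you do not pin down \emph{which} algebraic identity to use to introduce $h$, and that choice is what makes the cancellation work or fail. The paper uses the specific identity
\begin{equation*}
G_{ij}=\delta_{ij}\,\underline{G}+G_{ij}\,\underline{HG}-\underline{G}\,(HG)_{ij}\,,
\end{equation*}
which carries \emph{two} $HG$ factors; after cumulant expansion, the $\partial/\partial h_{jk}$ of $G_{jk}$ coming from $G_{bb}\underline{HG}$ cancels exactly against the $\partial/\partial h_{kb}$ of $G_{kb}$ coming from $\underline{G}(HG)_{bb}$, and the surviving leading piece is precisely "$G_{bb}$ replaced by $\underline{G}$" — the desired absorption. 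A naive replacement such as $(HG)_{ij}=\delta_{ij}+zG_{ij}$ (which is the literal "defining relation" you invoke) produces a lone $h$-factor, and its second-order cumulant term does \emph{not} cancel; one would instead be left with an $O(1)$ contribution via $z\,m_{sc}^2$, not a higher-degree remainder. So the cancellation you anticipate is not an automatic consequence of "bringing in $h$" — it is built into the specific two-sided identity, and your write-up needs to supply it.

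Two smaller inaccuracies worth noting but not fatal to the plan: (i) your opening claim that a type-AB term must contain an off-diagonal $G_{x_ia}$ with $x_i\neq a$ is false (all four appearances of $a$ could sit in two $G_{aa}$ factors), which is why the paper splits into the case "a $G_{bb}$ factor exists" versus "none exists"; the mechanism is the same in both, but the case split is needed for the bookkeeping. (ii) Your proposal handles the third-and-higher cumulant terms by Proposition~\ref{unmatch_lemma}; the paper's proof simply truncates the cumulant expansion at second order and accepts the $O_\prec(N^{-1/2})$ error, deferring the unmatched-term improvement to a remark. Your variant is perfectly fine — it even sharpens the error — but it is a detour from what is strictly needed.
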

\begin{remark}\label{remark_more_than_4}
The above expansions also hold true if we consider a slightly generalized setup when both indices $a$ and $b$ appear arbitrary even number of times, not limited to $\nu_a^{(r)}=\nu_a^{(c)}=\nu_b^{(r)}=\nu_b^{(c)}=2$. Then the number of expansions generated on the right side also depends on the values $\nu_a^{(r)}(=\nu_a^{(c)})$ and $\nu_b^{(r)}(=\nu_b^{(c)})$; see also Remark~\ref{moreab}. Furthermore, in the above all the Green function entries are taken at the same spectral parameter $z\in S$. The expansion results may be generalized to the setting when the Green functions are taken at different spectral parameters in the domain $S$, c.f.\ Remark~\ref{remark1 about different z}.
\end{remark}

Armed with Proposition \ref{lemma_expand_type}, we return to (\ref{step}). Recalling Definition \ref{def_type_AB} and using (\ref{dH}), the second group of terms on the right side of (\ref{step}) can be written out as type-AB terms of the form in (\ref{form}) of degrees satisfying $d \geq 2$, where the number of Green function entries in each type-AB term is $n=5$, the summation index set $\mathcal{I}=\{v\}$ and the coefficients $c_{a,b,v}=s^{(2,2)}_{ab}$. Similarly, the first group of terms on the right side of (\ref{step}) can be written as a type-A term with degree $d=2$ and the number of Green function entries $n=3$. Therefore, from Proposition \ref{lemma_expand_type}, we can expand (\ref{step}) as a sum of finitely many type-0 terms of degrees at least two, \ie
\begin{align}\label{final}
\E[\Theta_0(t,z)]=\sum_{ \substack{P_{d} \in \mathcal{P}_{d} \\ 2 \leq d \leq D-1}} \E [ P_d(t, z) ]+O_{\prec}\Big(\frac{1}{\sqrt{N}}+\Psi^D\Big)\,,
\end{align}
uniformly in $z \in S$ and $t\in \R^+$, where the number of type-0 terms in the sum above can be bounded by $(CD)^{cD}$, for some numerical constants $C,c$.

Having expanded $\E[\Theta_0(t,z)]$ into type-0 terms, we next estimate the size of type-0 terms of the form in (\ref{form}) of degree $d \geq 2$ in the edge scaling, \ie when the spectral parameter $z$ is chosen in the domain $S_{\mathrm{edge}}$ defined in~\eqref{S_edge}. The proof of Lemma~\ref{lemma_trace_wigner} is presented in Subsection \ref{toy_type_zero}.
\begin{lemma}\label{lemma_trace_wigner}
For any type-0 term $P_d \in \mathcal{P}_d$ of the form in (\ref{form}) of degree $d \geq 2$ with fixed $n \in \N$, we have
\begin{align}\label{tracek_wigner}
|\E [P_d(t,z)]| =O_{\prec}(N^{-1/3}),
\end{align}
uniformly in $z \in S_{\mathrm{edge}}$ given by (\ref{S_edge}) and $t \geq 0$.
\end{lemma}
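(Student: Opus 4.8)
The plan is to compare any type-0 term $\E[P_d(t,z)]$ with its GUE counterpart $\E^{\mathrm{GUE}}[P_d(z)]$, reducing the Wigner estimate to an analogous statement for the Gaussian invariant ensemble (where the determinantal structure and Theorem~\ref{kernel_diff} give direct access), and to control the comparison error via a recursive scheme built on the same Ornstein--Uhlenbeck interpolation used for $m_N$. The key quantitative point is that a type-0 term of degree $d$ has naive size $\Psi^d + 1/N$ from the local law in~\eqref{G}, which in the edge domain $S_{\mathrm{edge}}$ is already $O_\prec(N^{-(1/3-\epsilon)d/2})$; for $d\ge 2$ this barely beats $N^{-1/3}$, so I cannot afford to lose anything, and I must track how the degree increases under each expansion.

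First I would set up the interpolation: applying Ito's lemma to $P_d(t,z)$ along the flow~\eqref{sum_H} produces a martingale term with vanishing expectation plus a drift, and taking expectations and running cumulant expansions as in~\eqref{step0} expresses $\frac{\dd}{\dd t}\E[P_d(t,z)]$ (schematically, cf.\ the announced equations (\ref{late2})--(\ref{late3})) as a finite linear combination of: (i) unmatched terms, killed by Proposition~\ref{unmatch_lemma} up to $O_\prec(1/N+\Psi^D)$; (ii) type-AB and type-A terms arising from the $(2,2)$-cumulants and the diagonal second moments, which Proposition~\ref{lemma_expand_type} rewrites as type-0 terms \emph{of degree at least $d$}; plus negligible truncation errors. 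The crucial structural observation — exactly the one flagged in the outline — is that the derivatives $\partial G_{ij}/\partial h_{ab} = -G_{ia}G_{bj}$ applied to an already-matched type-0 term with indices all appearing once force the newly created Green function entries off-diagonal, so every term on the right-hand side of the evolution has degree $\ge d+1$. Hence $\frac{\dd}{\dd t}\E[P_d(t,z)] = \sum (\text{type-0 of degree}\ge d{+}1) + O_\prec(N^{-1/2}+\Psi^D)$.

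This sets up a downward induction on $D - d$ (equivalently: iterate the expansion until the degree exceeds some large $D$ chosen in terms of $\epsilon$). At the top, type-0 terms of degree $d \ge D$ are bounded by $\Psi^D \le N^{-1/3}$ trivially from the local law in the edge domain once $D$ is large enough. For the inductive step, integrating the evolution over $[t,T]$ with $T=8\log N$ gives
\begin{align}
\E[P_d(t,z)] = \E[P_d(T,z)] - \int_t^T \Big( \sum_{d'\ge d+1} \E[P_{d'}(s,z)] + O_\prec(N^{-1/2}+\Psi^D) \Big)\,\dd s\,.
\end{align}
The terms $\E[P_{d'}(s,z)]$ with $d' \ge d+1$ are $O_\prec(N^{-1/3})$ by the induction hypothesis (applied at all times $s$, uniformly, as the local law~\eqref{G} holds uniformly in $s$), and the $\log N$ from the time integral is harmless since $\epsilon$ can be taken arbitrarily small. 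The boundary term $\E[P_d(T,z)]$ is compared to $\E^{\mathrm{GUE}}[P_d(z)]$ using~\eqref{approxxxx}: since a type-0 term involves boundedly many Green function entries each with deterministic bound $1/\eta \le N^{1-\epsilon}$, the error is $O_\prec(N^{\epsilon}/(N^{3}\eta^{2})) = o(N^{-1/3})$. So everything reduces to the GUE estimate $|\E^{\mathrm{GUE}}[P_d(z)]| = O_\prec(N^{-1/3})$ for $d\ge 2$, which is Lemma~\ref{lemma_trace}, proved separately from the determinantal structure and the kernel convergence Theorem~\ref{kernel_diff}.

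The main obstacle is bookkeeping the recursion so that it actually terminates with a usable bound: each application of Proposition~\ref{lemma_expand_type} multiplies the number of type-0 terms by a factor like $(6(n+8D))^{2D}$ and increases the number of Green function entries by $O(D)$, so after iterating $O(D)$ times one has $(CD)^{cD^2}$-many terms each with $O(D^2)$ entries — still $N$-independent, hence absorbed into $O_\prec$, but one must check that the implied constants in the local law bounds stay uniform and that the degree genuinely strictly increases at every step (the subtle case being whether a freshly produced entry like $G_{ab}$ could accidentally be diagonal, which is ruled out because $a\ne b$ in the off-diagonal cumulant sums). A secondary subtlety is that in the evolution equation the expansion of the drift may a priori also generate type-AB/type-A terms where $a$ or $b$ appears more than twice (Remark~\ref{remark_more_than_4}); one must invoke the generalized version of Proposition~\ref{lemma_expand_type} there, but the conclusion — reduction to type-0 of degree $\ge d$, then $\ge d{+}1$ after the forced-off-diagonal step — is unchanged.
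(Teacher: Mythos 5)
Your proposal is correct and matches the paper's own proof of Lemma~\ref{lemma_trace_wigner} essentially line by line: Ornstein--Uhlenbeck interpolation and It\^o for $P_d(t,z)$, cumulant expansion of the drift, Proposition~\ref{unmatch_lemma} to kill unmatched third/fourth-order terms, Proposition~\ref{lemma_expand_type} to reduce the surviving matched (type-AB/type-A) pieces to type-0 terms of strictly higher degree, and a degree-increasing recursion anchored at $T=8\log N$ by the GUE estimate of Lemma~\ref{lemma_trace} via~\eqref{approxxxx}. Your two flagged ``obstacles'' are real but handled the same way the paper handles them (the $(CD)^{cD}$-type combinatorial growth is $N$-independent, and the freshness of the cumulant-expansion indices with $a_2\neq b_2$ in the off-diagonal sums guarantees the strict degree increase), so nothing is missing.
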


We hence obtain the estimate of $\E[\Theta_0(t,z)]$ in (\ref{claim}) by combining (\ref{final}) and (\ref{tracek_wigner}), and by choosing~$D \geq \frac{1}{\epsilon}$ and using the upper bound in (\ref{control_eq}). This yields the proof of Proposition \ref{GCT_mn}.

\section{Product of Green function entries with matched indices}\label{sec:match}

In this section, we prove Proposition~\ref{lemma_expand_type} and Lemma~\ref{lemma_trace_wigner}. Before diving into their proofs, we outline in the next subsection the intuition stemming from the GUE.

\subsection{Intuition from the GUE}\label{subsec:gue}
In this subsection, we focus on the special case of the GUE. The idea of eliminating the indices appearing more than twice and reducing type-AB to type-0 terms as in Proposition~\ref{lemma_expand_type} stems from explicit computations for the GUE based on the Weingarten calculus for Haar unitary matrices. To simplify the arguments, we only consider the following example of a type-AB term of the form in (\ref{form}),
\begin{align}\label{example_ab}
\frac{1}{N^2}\sum_{a,b} (G_{aa}(z))^2 (G_{bb}(z))^2 \in \mathcal{P}_0^{AB}.
\end{align}
Thanks to the unitary conjugation invariance, we know that the eigenvalues $(\lambda_i)$ and the corresponding orthonormal eigenvectors $(\u_i)$ of a GUE matrix are independent, and that the collection of eigenvectors $U:=(\u_1,\cdots, \u_N)$ is distributed according to Haar measure on the unitary group $U(N)$. 

Further, using the spectral decomposition
\begin{align}\label{le spectral decomp}
G(z)=\frac{1}{H-z}=\sum_{j=1}^N \frac{\u_j \u_j^{*}}{\lambda_{j}-z}\,,\qquad\qquad z\in S\,,
\end{align}
we write the expectation of (\ref{example_ab}) as
\begin{align}\label{sum_gue}
\frac{1}{N^2}&\sum_{a,b} \E [(G_{aa}(z))^2 (G_{bb}(z))^2]=\frac{1}{N^2}\sum_{a,b} \sum_{j,k,p,q}\E \Big[\frac{\u_j(a) \overline{\u_j(a)}\u_k(a) \overline{\u_k(a)}\u_p(b) \overline{\u_p(b)}\u_q(b) \overline{\u_q(b)}}{(\lambda_{j}-z)(\lambda_{k}-z)(\lambda_{p}-z)(\lambda_{q}-z)}\Big]\nonumber\\
=&\frac{1}{N^2}\sum_{a,b} \sum_{j,k,p,q}\E \Big[ \frac{1}{(\lambda_{j}-z)(\lambda_{k}-z)(\lambda_{p}-z)(\lambda_{q}-z)}\Big] \times \E[U_{aj} U_{ak} U_{bp} U_{bq}  \overline{U_{aj}} \overline{U_{ak}} \overline{U_{bp}}\overline{U_{bq}}]\,.
\end{align}

In order to estimate the expectations of the eigenvectors above, we use the following result for the Weingarten calculus on the unitary groups~\cite{weingarten1, weingarten}.

\begin{lemma}[Corollary 2.4, Proposition 2.6 in~\cite{weingarten}]\label{wein_lemma}
Let $U=(U_{ij})_{i,j=1}^N$ be a Haar unitary random matrix of size $N$. Let $n\in \N$ and denote by $S_n$ the symmetric group of order $n$. Then, for arbitrary column and row indices $i_k,i'_k,j_k,j_k'\in\llbracket 1,N\rrbracket$, $1\le k\le n$, we have
\begin{align}\label{wein_temp}
\E [U_{i_1j_1} \cdots U_{i_n j_n} \overline{U_{i'_1j'_1}} \cdots \overline{U_{i'_n j'_n}} ]=\sum_{\alpha, \beta \in S_n} \delta_{i_1, i'_{\alpha(1)}} \cdots \delta_{i_n, i'_{\alpha(n)}}\delta_{j_1, j'_{\beta(1)}} \cdots \delta_{j_n, j'_{\beta(n)}} \mathrm{Wg}(N, \alpha^{-1} \beta)\,,
\end{align}
where $\mathrm{Wg}(N, \gamma)$ is the Weingarten function given by
\begin{align}
\mathrm{Wg}(N, \gamma):=\E [U_{11} \cdots U_{nn} \overline{U_{1 \gamma(1)}} \cdots \overline{U_{n, \gamma(n)}} ]\,,\qquad\qquad \gamma\in S_n\,.
\end{align}
In the limit of large $N$, the Weingarten function $\mathrm{Wg}(N, \gamma)$ has the following asymptotic behavior: Let $\{c_i\}_{i=1}^{\# (\gamma)}$ denotes the cycles of $\gamma\in S_n$, with $\# (\gamma)$ the total number of cycles. Then
\begin{align}\label{wein_fun}
\mathrm{Wg}(N, \gamma)
=&N^{\#(\gamma) -2 n}\prod_{i=1}^{\# (\gamma)} (-1)^{|c_i|-1} \mathrm{Cat}(|c_i|-1)+O(N^{\#(\gamma)-2n-2})\,,
\end{align}
where $|c_i|$ denotes the length of the cycle $c_i$ and $\mathrm{Cat}(k)=\frac{(2k)!}{k!(k+1)!}$ is the $k$-th Catalan number.
\end{lemma}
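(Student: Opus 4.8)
The plan is to derive the exact identity~\eqref{wein_temp} from Schur--Weyl duality and then to extract the asymptotics~\eqref{wein_fun} by inverting the Gram matrix of the permutation operators on $(\C^N)^{\otimes n}$. Since $n$ is fixed and $N\to\infty$, I will freely assume $n\le N$.

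For the exact formula, I would first observe that the $U(N)$-average $E_n(A):=\int_{U(N)}V^{\otimes n}A\,(V^{\otimes n})^{*}\,\dd V$, acting on $\mathrm{End}\big((\C^N)^{\otimes n}\big)$, is idempotent and self-adjoint, hence equals the orthogonal projection onto the conjugation-invariant subspace $\{A:V^{\otimes n}A(V^{\otimes n})^{*}=A\ \forall V\}$. By Schur--Weyl duality this subspace is the linear span of the tensor-leg permutation operators $\rho(\sigma)$, $\sigma\in S_n$, which for $n\le N$ are linearly independent. Choosing $A=|e_{j_1}\otimes\cdots\otimes e_{j_n}\rangle\langle e_{j'_1}\otimes\cdots\otimes e_{j'_n}|$, the $\big((i_k)_k,(i'_k)_k\big)$-matrix element of $E_n(A)$ equals the left side of~\eqref{wein_temp}. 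Writing $E_n(A)=\sum_{\sigma}c_\sigma\rho(\sigma)$ and using that $E_n$ is self-adjoint and acts as the identity on its range, the $c_\sigma$ solve $\sum_\sigma c_\sigma\langle\rho(\sigma),\rho(\tau)\rangle=\langle A,\rho(\tau)\rangle$ with $\langle\rho(\sigma),\rho(\tau)\rangle=\Tr\rho(\sigma^{-1}\tau)=N^{\#(\sigma^{-1}\tau)}$. Thus the coefficient matrix is the inverse of the Gram matrix $G:=\big[N^{\#(\sigma^{-1}\tau)}\big]_{\sigma,\tau}$; expanding $\langle A,\rho(\tau)\rangle$ and $\langle e_{i_1}\otimes\cdots,\rho(\sigma)e_{i'_1}\otimes\cdots\rangle$ into products of Kronecker deltas, and using that $G^{-1}$ is bi-invariant under $S_n\times S_n$ (so $(G^{-1})_{\sigma\tau}$ depends only on the conjugacy class of $\sigma^{-1}\tau$), yields~\eqref{wein_temp} with $\mathrm{Wg}(N,\gamma):=(G^{-1})_{\mathrm{id},\gamma}$.

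For the asymptotics, I would use $\#(\pi)=n-|\pi|$, where $|\pi|$ is the minimal number of transpositions in $\pi$, to write $G=N^{n}(I+\Delta)$ with $\Delta_{\sigma\tau}=N^{-|\sigma^{-1}\tau|}\one(\sigma\ne\tau)$, so that $\|\Delta\|=O(N^{-1})$ and $G^{-1}=N^{-n}\sum_{k\ge0}(-1)^k\Delta^{k}$. Expanding each $\Delta^k$ as a sum over paths $\mathrm{id}=\pi_0,\dots,\pi_k=\gamma$ with consecutive terms distinct gives
\begin{equation*}
\mathrm{Wg}(N,\gamma)=N^{-n}\sum_{k\ge0}(-1)^k\sum_{\pi_0,\dots,\pi_k}N^{-\sum_{j=1}^k|\pi_{j-1}^{-1}\pi_j|},
\end{equation*}
and $\sum_j|\pi_{j-1}^{-1}\pi_j|\ge|\gamma|$ by the triangle inequality for the word length, with equality along the geodesic paths. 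Because $\mathrm{sgn}$ is multiplicative along a path, $\sum_j|\pi_{j-1}^{-1}\pi_j|\equiv|\gamma|\pmod2$, so corrections to the geodesic contribution are smaller by a factor $N^{-2}$; hence $\mathrm{Wg}(N,\gamma)=N^{-n-|\gamma|}\big(b(\gamma)+O(N^{-2})\big)=N^{\#(\gamma)-2n}\big(b(\gamma)+O(N^{-2})\big)$, where $b(\gamma):=\sum_k(-1)^k\,\#\{\text{geodesic paths }\mathrm{id}\to\gamma\text{ of length }k\}$.

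The hard part will be the purely combinatorial evaluation $b(\gamma)=\prod_i(-1)^{|c_i|-1}\mathrm{Cat}(|c_i|-1)$ over the cycles $c_i$ of $\gamma$. I would first reduce to a single cycle by arguing that a geodesic path from $\mathrm{id}$ to $\gamma$ never merges distinct cycles of $\gamma$ — each intermediate $\pi_j$ induces a set partition refining that of $\gamma$ — from which the signed count factorizes over cycles (the interleavings of the single-cycle factorizations being accounted for in the alternating sum). For a single $k$-cycle, $b$ then obeys a Catalan-type recursion; equivalently $b$ is the Möbius function of the non-crossing partition lattice, giving $(-1)^{k-1}\mathrm{Cat}(k-1)$. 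Controlling this alternating sum over \emph{all} factorization paths (not merely minimal transposition factorizations) and collapsing it to Catalan numbers is where genuine care is needed; the parity observation above — which is easy to overlook — is precisely what upgrades the remainder from $O(N^{-1})$ to $O(N^{-2})$.
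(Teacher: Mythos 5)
The paper does not prove this lemma: it is quoted directly from Collins and \'Sniady (Corollary~2.4, Proposition~2.6 of~\cite{weingarten}) and is used only as a black-box input to the GUE heuristics of Subsections~\ref{subsec:gue} and~\ref{sec:unmatch_gue}. Your sketch supplies an actual proof, and the route you take is the standard one in the cited reference: identify the Haar average $A\mapsto\int V^{\otimes n}A(V^{\otimes n})^{*}\,\dd V$ with the orthogonal projection onto the commutant, span that commutant by tensor-leg permutations via Schur--Weyl duality (linearly independent once $N\ge n$), and invert the Gram matrix $G=\big[N^{\#(\sigma^{-1}\tau)}\big]_{\sigma,\tau}$ to obtain~\eqref{wein_temp} with $\mathrm{Wg}(N,\gamma)=(G^{-1})_{\mathrm{id},\gamma}$; then write $G=N^{n}(I+\Delta)$, expand in a Neumann series, and combine the triangle inequality for the word metric with the parity $(-1)^{|\pi|}=\mathrm{sgn}(\pi)$ to isolate the geodesic contribution and obtain the relative $O(N^{-2})$ error in~\eqref{wein_fun}. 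All of this is sound. The step you rightly flag as delicate --- collapsing the alternating count of geodesic chains to $\prod_i(-1)^{|c_i|-1}\mathrm{Cat}(|c_i|-1)$ --- is indeed the combinatorial core, but it is classical: the interval $[\mathrm{id},\gamma]$ in the absolute (Cayley) order on $S_n$ factors as a product of non-crossing partition lattices over the cycles of $\gamma$, and the M\"obius function of $NC(k)$ at $(\hat 0,\hat 1)$ equals $(-1)^{k-1}\mathrm{Cat}(k-1)$. One ingredient you invoke without comment but should record: the bi-invariance of $G^{-1}$ follows because $G$ is convolution by the class function $\rho\mapsto N^{\#\rho}$, hence lies in the center of $\C[S_n]$, which is closed under inversion; this is what lets you write $(G^{-1})_{\sigma\tau}$ as a function of the conjugacy class of $\sigma^{-1}\tau$ alone.
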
 

Now we are ready to evaluate, for large $N$, $\E[U_{aj} U_{ak} U_{bp} U_{bq}  \overline{U_{aj}} \overline{U_{ak}} \overline{U_{bp}}\overline{U_{bq}}]$ from~\eqref{sum_gue} using Lemma~\ref{wein_lemma} with $n=4$. We may assume that $a \neq b$, as the case $a=b$ only contributes $O(N^{-1})$ to the expectation of~(\ref{example_ab}) uniformly for $z\in S$, using the local law (\ref{G}) and Lemma \ref{prop_msc}. We set $n=4$, $i_1=i_2=i_1'=i_2'=a$, $i_3=i_4=i_3'=i_4'=b$, $j_1=j_1'=j$, $j_2=j_2'=k$, $j_3=j_3'=p$, and $j_4=j_4'=q$. Since $\max_{\gamma \in S_n} \#(\gamma)=4$, the leading term in (\ref{wein_temp}), corresponding to $\mathrm{Wg}(N,\gamma)$ with $\gamma=\one$ ($\alpha^{-1}\beta=\one$), is of size $O(\frac{1}{N^4})$ from (\ref{wein_fun}) and the rest terms are bounded by $O(\frac{1}{N^5})$. Moreover, the coefficient in front of $\mathrm{Wg}(N,\one)$ is given by the number of permutations $\sigma \in S_4$ such that
\begin{align}\label{permu_relation}
i_l=i'_{\sigma(l)}, \qquad j_l=j'_{\sigma(l)}, \qquad l=1,2,3,4.
\end{align}
 We then separate into the following five cases: 1.) all indices $j,k,p,q$ are distinct, 2.) only two of them coincide while the other two are distinct, 3.) two pairs of them coincide, 4.) three of them coincide and the rest one is different, and 5.) all the indices are the same. As $a \neq b$, the number of permutations satisfying (\ref{permu_relation}) is given by $1$, $8$, $6$, $8$ and $4$, respectively. Therefore, for $a \neq b$, we obtain 
\begin{align}\label{le new weingarten term}
\E  [(G_{aa}(z))^2& (G_{bb}(z))^2]=\frac{1}{N^4} \sum_{\substack{j,k,p,q\\\textrm{all distinct}}}  \E \Big[ \frac{1}{(\lambda_{j}-z)(\lambda_{k}-z)(\lambda_{p}-z)(\lambda_{q}-z)}\Big] \Big(1+O\big(\frac{1}{N}\big)\Big)\nonumber\\
& +\frac{8}{N^4} \sum_{\substack{j,p,q\\\textrm{all distinct}}}  \E \Big[\frac{1}{(\lambda_{j}-z)^2(\lambda_{p}-z)(\lambda_{q}-z)}\Big]\Big(1+O\big(\frac{1}{N}\big)\Big)\nonumber\\
&+ \frac{6}{N^4} \sum_{j \neq q} \E \Big[ \frac{1}{(\lambda_{j}-z)^2(\lambda_{q}-z)^2}\Big]\Big(1+O\big(\frac{1}{N}\big)\Big)\nonumber\\
&+\frac{8}{N^4} \sum_{j \neq q} \E \Big[ \frac{1}{(\lambda_{j}-z)^3(\lambda_{q}-z)}\Big]\Big(1+O\big(\frac{1}{N}\big)\Big)+\frac{4}{N^4} \sum_{j} \E \Big[ \frac{1}{(\lambda_{j}-z)^4}\Big]\Big(1+O\big(\frac{1}{N}\big)\Big).
\end{align}

For example, by direct computation, the first term on the right side of (\ref{le new weingarten term}) can be written using the spectral decomposition~\eqref{le spectral decomp} as
\begin{align}
\frac{1}{N^4} \sum_{\substack{j,k,p,q\\\textrm{all distinct}}}  \E& \Big[ \frac{1}{(\lambda_{j}-z)(\lambda_{k}-z)(\lambda_{p}-z)(\lambda_{q}-z)}\Big] =\frac{1}{N^4}\E [(\Tr{G})^4] -\frac{6}{N^4}\E[(\Tr G^2)(\Tr{G})^2]\nonumber\\
&\qquad \qquad+\frac{8}{N^4}\E[(\Tr G^3)(\Tr{G})]-\frac{6}{N^4}\E [\Tr G^4]+\frac{3}{N^4}\E[(\Tr G^2)(\Tr{G^2})].
\end{align}
Observe that the resulting terms on the right side are type-0 terms under Definition \ref{def_type_AB}. 
We further write the other terms on the right side of~\eqref{le new weingarten term} similarly by type-0 terms using the spectral decomposition. To sum up, averaging over $a,b$ and adding the subleading diagonal terms, (\ref{sum_gue}) eventually becomes, 
\begin{align}
\frac{1}{N^2}\sum_{a,b} \E [(G_{aa})^2 (G_{bb})^2]=&\frac{1}{N^4}\E [(\Tr{G})^4] +\frac{2}{N^4}\E[(\Tr G^2)(\Tr{G})^2]+\frac{1}{N^4}\E[(\Tr G^2)(\Tr{G^2})]+O(N^{-1}),\nonumber
\end{align}
uniformly in $z\in S$, after exact cancellations between the terms.

In this way, we have eliminated one pair of $a$-indices and $b$-indices from the type-AB term (\ref{example_ab}) and shown that they can be written as linear combinations of type-0 terms, which involves only products of traces. 

For Wigner matrices, the above does not apply anymore as the eigenvectors are no longer exactly Haar distributed on $U(N)$, further the expectation in~\eqref{sum_gue} does not factorize. Yet successively applying cumulant expansions, we can reduce type-AB terms to sums of type-A terms up to negligible error, and then finally reduce type-A terms to sums of type-0 terms. This procedure is explained in the next subsection.

\subsection{Proof of Proposition \ref{lemma_expand_type}}\label{toy_fourth}
In this subsection, we give the proof of Proposition \ref{lemma_expand_type} for arbitrary Wigner matrices using cumulant expansions.
\begin{proof}[Proof of Proposition \ref{lemma_expand_type}]
We consider a type-AB term of the form in (\ref{form}) with both indices $a$ and $b$ appearing twice as a row index and twice as a column index in the product of the Green function entries. There are two steps as follows. We first expand the type-AB term as a linear combination of type-A terms by eliminating one pair of the index~$b$. Then in a second step we expand the resulting type-A terms as linear combinations of type-0 terms by further eliminating a pair of the index $a$.

{\bf Step 1: Reduction to type-A terms.}
Given a type-AB term, we will eliminate one pair of the index $b$ using the relation
\begin{align}\label{resolvent_identity}
G_{ij}=\delta_{ij} \underline{G}+G_{ij} \underline{HG}-\underline{G}(HG)_{ij} ,
\end{align}
and then applying cumulant expansions. The identity may be checked directly from the definition of the Green function. In~\eqref{resolvent_identity} we use the notation $\ud{A}:=\frac{1}{N}\Tr A$, for any $A \in \C^{N \times N}$, to denote the normalized trace. Similar ideas were used in~\cite{He+Knowles,sparse}.

Consider now a type-AB term $P_d^{AB} \in \mathcal{P}_d^{AB}$ of the form in (\ref{form}). We split into the following two cases.

{\bf Case 1:} If there exists some $i$ such that $x_i=y_i=b$, \ie there is a factor $G_{bb}$ in the product of Green function entries, we may then assume $i=1$. Applying (\ref{resolvent_identity}) to $G_{bb}$ and performing cumulant expansions for the resulting terms $\underline{HG}$ and $(HG)_{bb}$, we obtain
\begin{align}
\E [ P_d^{AB}]=&\frac{1}{N^{\# \mathcal I+2}} \sum_{\mathcal{I},a,b} c_{a,b, \mathcal{I}} \E\Big[ (\underline{G}+G_{bb} \underline{HG}-\underline{G} (HG)_{bb} )\prod_{2 \leq i \leq n} G_{x_{i} y_{i}}\Big]\nonumber\\
=&\frac{1}{N^{\# \mathcal I+2}} \sum_{\mathcal{I},a,b}  c_{a,b, \mathcal{I}}\E\Big[ \ud{G} \prod_{2 \leq i \leq n} G_{x_{i} y_{i}} \Big]+\frac{1}{N^{\# \mathcal I+4}} \sum_{\mathcal{I},a,b,j,k}  c_{a,b, \mathcal{I}} \E\Big[ \frac{\partial G_{bb} G_{jk} \prod_{2 \leq i \leq n} G_{x_{i} y_{i}} }{\partial h_{jk}}\Big]\nonumber\\
&-\frac{1}{N^{\# \mathcal I+4}} \sum_{\mathcal{I},a,b,j,k}  c_{a,b, \mathcal{I}}\E\Big[ \frac{\partial  G_{jj} G_{kb} \prod_{2 \leq i \leq n} G_{x_{i} y_{i}} }{\partial h_{kb}}\Big]+O_{\prec}\big(\frac{1}{\sqrt{N}}\big)\,,
\end{align}
where the error $O_{\prec}(\frac{1}{\sqrt{N}})$ is from the truncation of the cumulant expansions. Using (\ref{dH}), the first order of the second group of terms above corresponding to $\frac{\partial}{\partial h_{jk}}G_{jk}$ is precisely canceled by that of the third group of terms corresponding to $\frac{\partial }{\partial h_{kb}}G_{kb}$. Then we write
\begin{align}\label{expand_G_bb}
\E [ P_d^{AB}]=&\frac{1}{N^{\# \mathcal I+2}} \sum_{\mathcal{I},a,b}  c_{a,b, \mathcal{I}} \E\Big[\ud{G} \prod_{2 \leq i \leq n} G_{x_{i} y_{i}} \Big]-\frac{1}{N^{\# \mathcal I+4}} \sum_{\mathcal{I},a,b,j,k}  c_{a,b, \mathcal{I}} \E\Big[ \frac{\partial  G_{bb} \prod_{2 \leq i \leq n} G_{x_{i} y_{i}} }{\partial h_{jk}}G_{jk}  \Big]\nonumber\\
&+\frac{1}{N^{\# \mathcal I+4}} \sum_{\mathcal{I},a,b,j,k}  c_{a,b, \mathcal{I}}\E\Big[ \frac{\partial G_{jj}  \prod_{2 \leq i \leq n} G_{x_{i} y_{i}} }{\partial h_{kb}}G_{kb}\Big]+O_{\prec}(\frac{1}{\sqrt{N}}).
\end{align}

The first term on the right side above is obtained by replacing $G_{bb}$ by the normalized trace $\ud G$ in the expression of~$P_d^{AB}$. In this way we have eliminated one pair of the index $b$. Since the index $b$ originally appeared twice as a row index and twice as a column index in the product of the Green function entries, the first term has become a type-A term of degree $d$. Moreover, from~(\ref{dH}) and the fact that $j,k$ are fresh indices, the other terms on the right side of (\ref{expand_G_bb}) can be written out as a sum of $2n$ type-AB terms of the form in (\ref{form}), where the corresponding free summation index set is $\mathcal{I}'=\{\mathcal{I},j,k\}$, $m'=\#\mathcal{I}'=m+2$, and the number of Green function entries is $n'=n+2$. 

We next study the degrees of these terms in detail. In the second group of summation in (\ref{expand_G_bb}), if $\partial/\partial h_{jk}$ acts on $G_{bb}$, then the degree of the resulting term is increased by three, since $j$ and $k$ are fresh indices. If $\partial/\partial h_{jk}$ acts on $G_{x_i y_i}~(2\leq i\leq n)$, then the degree is increased by at least two for the same reason. In the last group of summation in (\ref{expand_G_bb}), if $\partial/\partial h_{kb}$ acts on $G_{jj}$, then the degree is increase by three. When $\partial/\partial h_{jk}$ acts on $G_{x_i y_i}~(2\leq i\leq n)$, we split the discussion into three cases: 1) if $G_{x_i y_i}$ is diagonal and $x_i=y_i \neq b$, then the degree is increased by three; 2) if $G_{x_i y_i}$ is off-diagonal with $y_i \neq b$, then the degree is increased by two; 3) if $G_{x_i y_i}$ is off-diagonal with $y_i= b$, then the degree is increased by one.

Hence the degrees, denoted by $d'$, of all the terms on the right side of (\ref{expand_G_bb}) except the first one, satisfy $d' \geq d+1$. We use $\sum_{\substack{P_{d'}^{AB} \in \mathcal{P}^{AB}_{d'}, d' \geq d+1}} \E[P_{d'}^{AB}]$ to denote the finite sum of these terms, \ie we write
\begin{align}\label{p1}
\E [ P_d^{AB}]=\frac{1}{N^{\# \mathcal I+2}} \sum_{\mathcal{I},a,b}  c_{a,b, \mathcal{I}} \E\Big[\ud{G} \prod^{n}_{i=2} G_{x_{i} y_{i}} \Big]+\sum_{\substack{P_{d'}^{AB} \in \mathcal{P}^{AB}_{d'}\\ d' \geq d+1}} \E [  P^{AB}_{d'}]+O_{\prec}\big(\frac{1}{\sqrt{N}}\big)\,.
\end{align}
Therefore, the combination of the identity~\eqref{resolvent_identity} and the cumulant expansion gives a cancellation to first order, and the only leading term left is obtained by replacing a factor $G_{bb}$ with the normalized trace $\ud G$ of the product of Green function entries in the expression of the original $P_d^{AB}$.

{\bf Case 2: } If there is no $i$ such that $x_i=y_i=b$, \ie there is no factor as $G_{bb}$ in the product of Green function entries in (\ref{form}), we may then assume that $x_1=b$ and $y_1 \neq b$. Since the index $b$ appears exactly twice in $\{y_i\}_{i=2}^n$, we may assume that $y_2=y_3=b$ and $x_2 \neq b$ and $x_3 \neq b$. Then there is no $b$ in the remaining column index set $\{y_i\}_{i=4}^n$. Using the identity (\ref{resolvent_identity}) on $G_{by_1}$ and applying cumulant expansions, we find
\begin{align}\label{expand_G_bb1}
\E [ P_d^{AB}]=&\frac{1}{N^{\# \mathcal I+2}} \sum_{\mathcal{I},a,b} c_{a,b, \mathcal{I}} \E \Big[ ( G_{by_1} \underline{HG}-\underline{G} (HG)_{by_1} )G_{x_2b} G_{x_3 b} \prod^{n}_{i=4} G_{x_{i} y_{i}}\Big]\nonumber\\
&+\frac{1}{N^{\# \mathcal I+2}} \sum_{\mathcal{I},a,b} c_{a,b, \mathcal{I}} \E \Big[ \delta_{by_1} \ud{G}  \prod^{n}_{i=2} G_{x_{i} y_{i}}\Big]\nonumber\\
=&-\frac{1}{N^{\# \mathcal I+4}} \sum_{\mathcal{I},a,b,j,k}  c_{a,b, \mathcal{I}} \E\Big[ \frac{\partial  G_{by_1} G_{x_2b} G_{x_3 b} \prod^{n}_{i=4} G_{x_{i} y_{i}} }{\partial h_{jk}}G_{jk}  \Big]\nonumber\\
&+\frac{1}{N^{\# \mathcal I+4}} \sum_{\mathcal{I},a,b,j,k}  c_{a,b, \mathcal{I}}\E\Big[ \frac{\partial G_{jj}  G_{x_2b} G_{x_3 b}  \prod^{n}_{i=4} G_{x_{i} y_{i}} }{\partial h_{kb}}G_{ky_1}\Big]+O_{\prec}(\frac{1}{\sqrt{N}}),
\end{align}
where in the second step, we observe a cancellation to first order similarly as in (\ref{expand_G_bb}), and the last error $O_{\prec}(N^{-1/2})$ is from the truncation of the cumulant expansions at the third order, while the contribution from the diagonal case $b \equiv y_1$, \ie the second line of (\ref{expand_G_bb1}), can be bounded by $O_{\prec}(N^{-1})$ using the local law in (\ref{G}). From (\ref{dH}), the right side of~\eqref{expand_G_bb1} can again be written as a sum of $2n$ type-AB terms of the form in (\ref{form}) with $\mathcal{I}'=\{\mathcal{I}, j,k\}$, $m'=m+2$, and $n'=n+2$. Since $j,k$ are fresh indices, the resulting type-AB terms have degrees $d' \geq d+1$ (the finite sum of such terms is denoted by $\sum_{{P_{d'}^{AB} \in \mathcal{P}^{AB}_{d'},d' \geq d+1}} \E [  P^{AB}_{d'}]$), except the following two terms corresponding to taking $\frac{\partial }{\partial h_{kb}}$ of a Green function entry whose column index coincides with $b$, \ie
\begin{align}\label{le x1}
\frac{1}{N^{\# \mathcal I+4}} \sum_{\mathcal{I},a,b,j,k}  c_{a,b, \mathcal{I}}\E\Big[ G_{jj} G_{x_2k} G_{bb} G_{x_3 b}  \prod^{n}_{i=4} G_{x_{i} y_{i}} G_{ky_1}\Big]
\end{align}
and 
\begin{align}\label{le x2}
\frac{1}{N^{\# \mathcal I+4}} \sum_{\mathcal{I},a,b,j,k}  c_{a,b, \mathcal{I}}\E\Big[ G_{jj}  G_{x_2 b}  G_{x_3k} G_{bb} \prod^{n}_{i=4} G_{x_{i} y_{i}} G_{ky_1}\Big].
\end{align}
Compared with the original term $P^{AB}_d$, one observes that the terms in~\eqref{le x1} and~\eqref{le x2} are obtained by replacing one pair of the index $b$ by a fresh index $k$ and adding a factor $G_{bb}$ for the replaced index $b$. These terms are again type-AB terms in $\mathcal{P}_d^{AB}$ with a factor $G_{bb}$ in the product of Green function entries considered in Case 1. Using (\ref{p1}) on these terms and combining with (\ref{expand_G_bb1}), we hence obtain
\begin{align}\label{le x3}
\E[P^{AB}_{d}]=&\frac{1}{N^{\# \mathcal I+4}} \sum_{\mathcal{I},a,b,j,k}  c_{a,b, \mathcal{I}}\E\Big[ G_{jj} \ud{G} G_{ky_1} G_{x_2k}  G_{x_3 b}  \prod^{n}_{i=4} G_{x_{i} y_{i}} \Big]\nonumber\\
&+\frac{1}{N^{\# \mathcal I+4}} \sum_{\mathcal{I},a,b,j,k}  c_{a,b, \mathcal{I}}\E\Big[ G_{jj} \ud{G}  G_{ky_1} G_{x_2 b}  G_{x_3k} \prod^{n}_{i=4} G_{x_{i} y_{i}} \Big]\nonumber\\
&+\sum_{\substack{P_{d'}^{AB} \in \mathcal{P}^{AB}_{d'}\\ d' \geq d+1}} \E [  P^{AB}_{d'}]+\sum_{\substack{P_{d''}^{AB} \in \mathcal{P}^{AB}_{d''}\\ d'' \geq d+1}} \E [  P^{AB}_{d''}]+O_{\prec}\big(\frac{1}{\sqrt{N}}\big)\,,
\end{align}
where the first two lines above are type-A terms in $\mathcal{P}^{A}_d$, obtained from the original term $P^{AB}_d$ by replacing a pair of index~$b$, \ie $(x_1,y_2)$ or $(x_1,y_3)$ by a fresh index $k$ and multiplied by $(\ud{G})^2$. The first group of sum on the last line of (\ref{le x3}) comes from (\ref{expand_G_bb1}) excluding two terms (\ref{le x1}) and (\ref{le x2}), and the number of the type-AB terms in the sum is at most $2n-2$. The second group of sum on the last line of (\ref{le x3}) is obtained from expanding (\ref{le x1}) and (\ref{le x2}) by (\ref{p1}). The corresponding type-AB terms are of the form in (\ref{form}) with $m''=m'+2$ and $n'' = n'+2$, and the number of the terms in the sum is at most $4n'$.

Combining with Case 1, for any type-AB term $P_d^{AB} \in \mathcal{P}_d^{AB}$, we rewrite~ (\ref{p1}) and \eqref{le x3} in the short form
\begin{align}\label{step_1}
\E[P_d^{AB}]=\sum_{\substack{P^{A}_{d} \in \mathcal{P}^{A}_{d}}} \E[P^{A}_{d}]+\sum_{\substack{P_{d'}^{AB} \in \mathcal{P}^{AB}_{d'}\\ d' \geq d+1}} \E [  P^{AB}_{d'}]+O_{\prec}\big(\frac{1}{\sqrt{N}}\big)\,,
\end{align}
where the summations above denote a sum of at most two type-A terms of degree $d$ and a sum of at most $(6n+8)$ type-AB terms of degree not less than $d+1$. The number of the Green function entries in the product (see (\ref{form})) of each term is at most $n+4$.

\begin{remark}\label{moreab}
In general, if the number of the index $b$ appearing in the Green function entries of $P^{AB}_d$ is not limited to four, \ie $\nu_{b}^{(r)}=\nu_{b}^{(c)} =s \geq 3$, then the terms in the first group of sum on the right side of~(\ref{step_1}) are of the form in~(\ref{form}) with~$\nu_{b}^{(r)}=\nu_{b}^{(c)}=s-1 \geq 2$. Moreover, the number of such terms in the first group of the sum is at most $s$. We can repeat the expansion procedure in (\ref{step_1}) for $s$ times until $\nu_{b}^{(r)}=\nu_{b}^{(c)}=1$. We then end up with at most $s!$ type-A terms in $\mathcal{P}^A_d$, and at most $6s^s(n+4s)$ type-AB terms of degrees not less than $d+1$ generated in the above expansion procedures.

\end{remark}
 
Iterating the expansion procedure (\ref{step_1}) $D-d$ times, the resulting type-AB terms have degrees at least $D$. Using the local law in (\ref{G}), we expand an arbitrary type-AB term $P^{AB}_d\in \mathcal{P}^{AB}_d$ as a finite sum of type-A terms of degrees at least~$d$, up to negligible error. We hence arrive at
\begin{align}\label{step11}
\E [ P_d^{AB}]=\sum_{d \leq d' < D} \sum_{P^{A}_{d'} \in \mathcal{P}^A_{d'}} \E [ P^{A}_{d'}]+O_{\prec}\big(\frac{1}{\sqrt{N}}+\Psi^D\big)\,.
\end{align}
The number of the Green function entries in the product of each type-A term above is bounded by $n+4D$, and the number of these type-A terms is bounded by $ (6(n+4D))^D$.

{\bf Step 2: Reduction to type-0 terms.} For the expanded type-A terms on the right side of (\ref{step11}), we follow the idea in Step 1 to expand the resulting type-A terms as linear combinations of type-0 terms by further eliminating one pair of the index $a$. 

Given a type-A term $P_d^{A} \in \mathcal{P}_d^{A}$ of the form in (\ref{form}), we split into two cases: 1) there exists a factor $G_{aa}$ in the product of Green function entries; 2) there is no factor $G_{aa}$ in the product of the Green function entries. We utilize similar arguments as in  Case~1 and Case~2 of Step~1 above and obtain the analogue of (\ref{step_1}), namely that
\begin{align}\label{step_2}
\E[P_d^{A}]=\sum_{\substack{P_{d} \in \mathcal{P}_{d}}} P_{d}+\sum_{\substack{P_{d''}^{A} \in \mathcal{P}^{A}_{d''}\\ d'' \geq d+1}}\E [  P^{A}_{d''}]+O_{\prec}\big(\frac{1}{\sqrt{N}}\big)\,,
\end{align}
where the summations above denote a sum of at most two type-0 terms of degree $d$ and a sum of at most $(6n+8)$ type-A terms of degrees at least $d+1$. The number of the Green function entries in the product of each term is bound by $n+4$.

Iterating the above expansion for $D-d$ times, we then expand an arbitrary type-A term $P_d^{A} \in \mathcal{P}_d^{A} $ as a sum of at most $(6(n+4D))^D$ type-0 terms of degree $d'$ satisfying $d \leq d' < D$, up to negligible error. As the analogue of (\ref{step11}), we write
\begin{align}\label{step_22}
\E [  P_d^{A}]=\sum_{d \leq d' < D} \sum_{P_{d'} \in \mathcal{P}_{d'}}  \E [ P_{d'}]+O_{\prec}\big(\frac{1}{\sqrt{N}}+\Psi^D\big)\,,
\end{align}
where the number of the Green function entries in the product of each type-0 term above is bounded by $n+4D$.

Combining with the first step (\ref{step11}), we finish the proof of Proposition \ref{lemma_expand_type}.
\end{proof}

 \begin{remark}
 The expansion procedures in the proof of Proposition~\ref{lemma_expand_type} are not unique in the sense that at each step the resulting expansion will depend on the choice of the Green function entry we pick to be replaced by the identity (\ref{resolvent_identity}) and perform cumulant expansions. However in view of Lemma~\ref{lemma_trace_wigner} this is not pertinent to the proof of Proposition~\ref{GCT_mn}. This arbitrariness can be used to derive relations among Green function correlation functions. We finally remark that the errors $O_{\prec}(N^{-1/2})$ in~(\ref{step11}) and~(\ref{step_22}) stemming from truncating the cumulant expansions at second order, can be improved to $O_{\prec}(N^{-1})$ because of the negligibility of third order terms; see Proposition \ref{unmatch_lemma}.
 \end{remark}

\subsection{Proof of Lemma \ref{lemma_trace_wigner}}\label{toy_type_zero}
In the subsection, we estimate the expectations of the type-0 terms and prove Lemma~\ref{lemma_trace_wigner}. We start with the following lemma for the GUE. 
\begin{lemma}\label{lemma_trace} Let $H$ belong to the GUE. For any $\epsilon>0$ and $C_0>0$, recall the domain $S_{\mathrm{edge}}\equiv S_{\mathrm{edge}}(\epsilon,C_0)$ defined in~\eqref{S_edge}. Then there exists a constant $C$ independent of $\epsilon$ such that
\begin{equation}\label{traceimg}
\frac{1}{N} \E^{\mathrm{GUE}} \Big[ \Im \Tr G (z)\Big] \leq C N^{-1/3+\epsilon}\,,
\end{equation}
holds uniformly for all $z \in S_{\mathrm{edge}}$, for sufficiently large $N \geq N_0(C_0, \epsilon)$. Furthermore, for any $\tau>0$, all the type-0 terms $P_d \in \mathcal{P}_d$ ($d \geq 2$) of the form in (\ref{form})  have the upper bound
\begin{align}\label{tracek}
|\E^{\mathrm{GUE}} [P_d(z)]| \leq N^{-1/3+\tau}\,,
\end{align}
uniformly for all $z\in S_{\mathrm{edge}}$, for sufficiently large $N \geq N'_0(C_0, \epsilon, \tau)$.
\end{lemma}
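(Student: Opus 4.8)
The statement concerns the GUE, where the eigenvalue process is determinantal with kernel $\widetilde K_N$ from~\eqref{wt_kernel}, so all expectations of products of traces of Green functions can be expressed via integrals against $\widetilde K_N$ and its powers. The plan is to reduce everything to edge-scaled integrals and then invoke the uniform convergence estimate of Theorem~\ref{kernel_diff} to compare with the corresponding Airy-kernel quantities, which are $O(1)$. First, for the bound~\eqref{traceimg}: write $\frac1N\E^{\mathrm{GUE}}[\Im\Tr G(z)]=\int_\R \frac{\eta}{(x-E)^2+\eta^2}\,\widetilde\rho_N(x)\,\dd x$ where $\widetilde\rho_N(x)=\frac1N\widetilde K_N(x,x)$ is the one-point density. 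Split the integral into the bulk-type region $|x-2|>N^{-2/3+2\epsilon}$ (say), where $\widetilde\rho_N$ is $O(1)$ and the Poisson kernel integrates to something $O(\eta/N^{-2/3+2\epsilon})\cdot O(1)$ plus the tail — actually more carefully, one uses rigidity / the known density bound $\widetilde\rho_N(x)\lesssim \min\{\sqrt{|x-2|}, N^{-1/3}\}+$(exponentially small tail) near the edge, which follows from the edge-kernel convergence~\eqref{difference} together with the Airy asymptotics~\eqref{airy_asym}. On the edge region $|x-2|\le N^{-2/3+2\epsilon}$, rescale $x=2+N^{-2/3}u$: the density is $N^{-2/3}K^{\mathrm{edge}}_N(u,u)$, which by Theorem~\ref{kernel_diff} is $N^{-2/3}(K_{\mathrm{airy}}(u,u)+O(N^{-2/3}e^{-cu}))$, and $K_{\mathrm{airy}}(u,u)$ is bounded for $u$ bounded below and decays like $e^{-\frac43 u^{3/2}}$ as $u\to+\infty$, so $\widetilde\rho_N\lesssim N^{-1/3}$ there; integrating the (scaled) Poisson kernel of width $\eta N^{2/3}\le N^{\epsilon}$ against a density of size $N^{-1/3}$ over a window of size $N^{2\epsilon}$ gives $O(N^{-1/3+\epsilon})$ after accounting for the $\int\theta=1$ normalization. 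The only subtlety is the far tail $x\gg 2$: there $\widetilde\rho_N$ is exponentially small (superexponential once one is $N^{-2/3+\epsilon}$ past the edge), so its contribution is negligible, and the Poisson kernel cannot amplify it since $\eta$ is polynomially small; the $x<2$ bulk contribution is bounded by $C\eta/N^{-2/3+2\epsilon}\cdot O(1)=O(N^{\epsilon-2\epsilon+?})$ — one checks it is $\ll N^{-1/3+\epsilon}$ for $\eta\le N^{-2/3+\epsilon}$.

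\textbf{Type-0 terms.} For~\eqref{tracek}, recall a type-0 term is of the form $\frac{1}{N^{\#\mathcal I+2}}\sum c_{\mathcal I}\prod G_{x_iy_i}$ where every summation index appears exactly once as a row and once as a column index; performing the index sums converts each such term into a (normalized) product of traces of powers of $G$, i.e.\ a linear combination of terms of the shape $\frac{1}{N^{k}}\E^{\mathrm{GUE}}\big[\prod_{l=1}^{r}\Tr G^{m_l}(z_l)\big]$ with $\sum m_l = n$, $z_l\in\{z,\bar z\}$, and crucially $k\ge r+1$ (the degree $d\ge 2$ together with the structure of~\eqref{form} forces at least one extra factor of $1/N$ beyond the one per trace — this is where $d\ge 2$ enters, since a degree-$0$ term like $(\ud G)^2$ would have $k=r$). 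Writing $\Tr G^m(z)=\frac{(-1)^{m-1}}{(m-1)!}\frac{\dd^{m-1}}{\dd z^{m-1}}\Tr G(z)$ and expanding the determinantal expectation $\E^{\mathrm{GUE}}[\prod_l\Tr G^{m_l}]$ via the cluster/cumulant expansion of the determinantal process (products of traces decompose into connected pieces, each connected $j$-point piece being an integral of $\prod \widetilde K_N$ around a cycle), one estimates each connected piece after edge-rescaling using Theorem~\ref{kernel_diff}: a connected $j$-point contribution, once scaled, is an integral over $u_1,\dots,u_j$ of a cyclic product of kernels against Poisson-type factors, and is bounded by $N^{-1/3\cdot(\text{power counting})}\times O(1)$ because the Airy-kernel analogue is $O(1)$ by Lemma~\ref{lemma_airy_bound} and the errors in~\eqref{difference} are $O(N^{-2/3})$ with exponential decay controlling the $u$-integrals. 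Collecting powers of $N$: each trace contributes an $N$ to cancel one $1/N$, each derivative $\dd/\dd z$ costs a factor $\eta^{-1}\le N^{1-\epsilon}$ but is compensated by the corresponding gain in the rescaled integral (a scaled derivative costs only $N^{2/3}$), and the surplus factor $1/N^{k-r}\le 1/N$ together with the edge density being $O(N^{-1/3})$ delivers the final bound $O(N^{-1/3+\tau})$.

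\textbf{Main obstacle.} The genuinely delicate point is the bookkeeping of powers of $N$ in the last step: one must show that, after rescaling to the edge, the combination of (i) the explicit $N^{-\#\mathcal I-2}$ prefactor, (ii) the Jacobians $N^{-2/3}$ per rescaled variable, (iii) the $\eta^{-1}\sim N^{2/3-\epsilon}$ losses from each $z$-derivative needed to turn $\Tr G^m$ into derivatives of $\Tr G$, and (iv) the $O(1)$ versus $O(N^{-2/3})$ bounds on the rescaled (connected) kernel integrals, all conspire to give a net $N^{-1/3}$ rather than something larger; the role of the hypothesis $d\ge2$ is precisely to guarantee the extra $1/N$ that makes this work, and getting the connected-component / cluster decomposition of $\E^{\mathrm{GUE}}[\prod_l \Tr G^{m_l}]$ stated cleanly enough to run this counting uniformly in $z\in S_{\mathrm{edge}}$ is the technical heart. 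A secondary nuisance is handling the spectral parameters $z$ and $\bar z$ simultaneously (so that $\Im$ and $\widetilde\Im$ pieces are covered) and ensuring the exponential decay $e^{-cu}$ in~\eqref{difference} is strong enough to make all the rescaled $u$-integrals converge and the far-tail ($u\to+\infty$) contributions negligible; both are routine given Lemma~\ref{lemma_airy_bound} and~\eqref{airy_asym} but require care near $u\to-\infty$ where $K_{\mathrm{airy}}(u,u)\sim\sqrt{|u|}$ grows, so one truncates the edge window at $|u|\le N^{2\epsilon}$ and controls the complement by rigidity as in the proof of~\eqref{traceimg}.
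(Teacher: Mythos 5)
For the first bound~\eqref{traceimg} your plan is essentially the paper's: write the quantity as an integral of $K^{\mathrm{edge}}_N(x,x)$ against a rescaled Poisson kernel, split into the far-left tail (controlled by the trace identity~\eqref{eq_1}), the edge window (controlled by Theorem~\ref{kernel_diff} and Lemma~\ref{lemma_airy_bound}), and an intermediate region $(-N^{2/3},L_0]$ where $K_{\mathrm{airy}}$ grows like $\sqrt{|u|}$. The only cosmetic difference is that the paper handles the intermediate region by differentiating the edge kernel via the Christoffel--Darboux formula and the uniform Hermite bound~\eqref{upper_bound_hermite}, obtaining $K_N^{\mathrm{edge}}(x,x)\le C(1+|x|)$, whereas you invoke the Airy asymptotics directly; both routes work and deliver $O(N^{-1/3+\epsilon})$.

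For the second bound~\eqref{tracek} your approach diverges from the paper's and contains a genuine gap. You propose to convert each type-0 term into a linear combination of
$\frac{1}{N^k}\E^{\mathrm{GUE}}\big[\prod_l \Tr G^{m_l}\big]$ by performing the index sums cycle by cycle, and then to run a determinantal cluster expansion plus edge rescaling. But in~\eqref{form} the coefficients $c_{a,b,v_1,\ldots,v_m}$ are arbitrary uniformly bounded functions of all the summation indices jointly; they need not factorize across the connected cycles. Consequently, a type-0 term of degree $d\ge 2$ is generically not expressible as a (finite, $N$-independent) linear combination of products of normalized traces of powers of $G$, and the cluster/cumulant expansion of the determinantal process cannot be applied directly. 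The paper avoids this entirely by a much more elementary and robust argument: take absolute values, invoke the local law~\eqref{G} to bound all but one connected cluster of size $n_0\ge 2$ by $\Psi^{n_0-2}$, and then apply Young's inequality plus the Ward identity~\eqref{ward} to the remaining cycle to get
$|P_d(z)|\prec \Im m_N(z)/(N\eta)^{n_0-1}$; combining with~\eqref{traceimg}, Lemma~\ref{dominant}, and $\eta\ge N^{-1+\epsilon}$ gives~\eqref{tracek}. This is not only simpler but is what makes the bound work for arbitrary bounded coefficients, and it is also what generalizes verbatim to Wigner matrices and to the $\Dim$-modulated type-0 terms in Section~\ref{sec:fourth}, since it never uses the determinantal structure beyond the input~\eqref{traceimg}. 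Your intuition that ``$d\ge 2$ buys an extra $1/N$'' is correct, but the mechanism by which that surplus is realized in the paper is the Ward identity applied inside a single cluster, not a power-counting of determinantal cumulants.
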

The proof of Lemma~\ref{lemma_trace} is postponed to Subsection~\ref{toy_type_zero_gue}. Using the above lemma for the GUE and the comparison method, we are now ready to prove Lemma \ref{lemma_trace_wigner} for arbitrary Wigner matrices.

\begin{proof}[Proof of Lemma \ref{lemma_trace_wigner}]
Consider any type-0 term $P_d \in \mathcal{P}_d$ of the form in (\ref{form}) of degree $d \geq 2$.  If $d \geq D$ for some large $D$, then by the local law in (\ref{G}), $|\E[P_d]|=O_{\prec}(\Psi^D+N^{-1})$. Else, if $d$ is smaller, we estimate $\E[P_d]$ using the comparison method iteratively and the corresponding estimates for the GUE in~(\ref{tracek}).

We start the iteration by denoting the type-0 term $P_d$ of the form in (\ref{form}) as $P_d\equiv P^{(1)}_{d_1}$, where the superscript $(1)$ and degree $d\equiv d_1$ will be used to indicate the iteration step. We hence consider a term of the form
\begin{align}\label{P_d_s_1}
P^{(1)}_{d_1} \equiv P^{(1)}_{d_1} (t,z): \qquad  \frac{1}{N^{\# \mathcal I_1+2}} \sum_{\mathcal{I}_1,a_1,b_1} c_{a_1,b_1,\mathcal{I}_1} \Big(  \prod^{n_1}_{i=1} G_{x_{i} y_{i}} (t,z)\Big)\,,\qquad t\in \R^{+},~ z\in S_{\mathrm{edge}}\,,
\end{align}
with $n_1=\# \mathcal I_1+2$, where each summation index in $\{a_1,b_1,\mathcal{I}_1\}$ appears exactly once in the row index set $\{x_i\}$ and exactly once in the column index set $\{y_{i}\}$. In the following, we often omit the parameters $t,z$ and the errors below are always bounded uniformly in $z \in S_{\mathrm{edge}}$ and $t \geq 0$. 

We next derive the stochastic differential equation for the type-0 term $P^{(1)}_{d_1}$ under the Ornstein--Uhlenbeck flow in~\eqref{ou_process}, similarly to~(\ref{diff_eq_1}). In general, for any $\{x_i,y_i\}_{i=1}^n$ with some $n \in \N$, using Ito's formula and the stochastic differential equation for the Green function entries in~(\ref{derivative2}), we have
\begin{align}\label{general_diff}
\dd \Big( \prod_{i=1}^n G_{x_i y_i}\Big)=&\sum_{j=1}^n \prod_{i \neq j} G_{x_i y_i} \dd G_{x_j y_j}+\frac{1}{2}\sum_{j,k=1}^n \prod_{i \neq j,k} G_{x_i y_i} \dd G_{x_j y_j}\dd G_{x_k y_k}\nonumber\\
=&- \frac{1}{\sqrt{N}} \sum_{a, b=1}^N  \sum_{j=1}^n  G_{x_j a}G_{by_j}   \prod_{i \neq j} G_{x_i y_i} \dd \beta_{ab}\nonumber\\
 &+ \frac{1}{2} \sum_{a, b=1}^N  \sum_{j=1}^n \Big( h_{ab} G_{x_j a} G_{by_j}+\frac{1}{N}  G_{x_j b} G_{by_j} G_{aa}+\frac{1}{N}  G_{x_ja} G_{ay_j} G_{bb} \Big) \prod_{i \neq j} G_{x_i y_i} \dd t \nonumber\\
 &+\frac{1}{2N}  \sum_{a, b=1}^N  \sum_{j,k=1}^n G_{x_j a}G_{b y_j} G_{x_k b}G_{a y_k} \prod_{i \neq j,k} G_{x_i y_i}\dd t:=\dd \widehat M +\widehat \Theta \,\dd t\,,
\end{align}
with diffusion term $\dd\widehat M$ and drift term $\widehat \Theta\,\dd t$. Applying cumulant expansions to the drift term, we observe cancellations of the second order expansions as in (\ref{step0}) and obtain that
\begin{align}\label{rhs}
\E [\widehat \Theta]=&\frac{1}{2N} \sum_{a=1}^N   (s^{(2)}_{aa}-1) \sum_{j=1}^n  \E \Big[ \frac{\partial (G_{x_j a} G_{ay_j}  \prod_{i \neq j} G_{x_i y_i})}{\partial h_{aa} }\Big]\nonumber\\
&+  \frac{1}{2}\sum_{\substack{a,b=1\\a \neq b}}^N \sum_{p+q+1=3}^4 \frac{s^{(p,q+1)}_{ab}}{p! q! N^{\frac{p+q+1}{2}}}  \sum_{j=1}^n  \E \Big[ \frac{\partial^{p+q} (G_{x_j a} G_{by_j}  \prod_{i \neq j} G_{x_i y_i})}{\partial h^p_{ba} \partial h^q_{ab}}\Big]+O_{\prec}(\frac{1}{\sqrt{N}})\nonumber\\
=&-\frac{1}{2N} \sum_{a=1}^N   (s^{(2)}_{aa}-1) \E \Big[ \frac{\partial^2 (\prod_{i=1}^n G_{x_i y_i})}{\partial h^2_{aa}}\Big] - \sum_{p+q+1=3}^4 \frac{1}{2 p! q! N^{\frac{p+q+1}{2}}}  \sum_{\substack{a,b=1\\ a \neq b}}^N s^{(p,q+1)}_{ab}  \E \Big[ \frac{\partial^{p+q+1} (\prod_{i=1}^n G_{x_i y_i})}{\partial h^p_{ba} \partial h^{q+1}_{ab}}\Big]\nonumber\\ &\qquad\qquad+O_{\prec}(\frac{1}{\sqrt{N}}).
\end{align}
From (\ref{general_diff}) and (\ref{rhs}), we find that $P^{(1)}_{d_1}$ in (\ref{P_d_s_1}) satisfies the stochastic differential equation
\begin{equation}\label{late2}
\dd (P^{(1)}_{d_1})=\dd M^{(1)}_{d_1}+\Theta^{(1)}_{d_1} \dd t,
\end{equation}
where the diffusion term $\dd M^{(1)}_{d_1}$ yields a martingale after integration (see Remark \ref{diffusion_mart}) and the drift term $\Theta^{(1)}_{d_1} \dd t$ satisfies the following analogue of~(\ref{step0}),
\begin{align}\label{late_0}
\E[\Theta^{(1)}_{d_1}]=&-\frac{1}{2N} \sum_{a_2=1}^N ( s^{(2)}_{a_2a_2}-1) \E \Big[ \frac{ \partial^2  (P^{(1)}_{d_1}) }{\partial h^2_{a_2a_2} } \Big]- \sum_{p+q+1=3}^{4}\frac{1}{2 p! q! N^{\frac{p+q+1}{2}}}  \sum_{\substack{a_2,b_2=1\\a_2 \neq b_2}}^N   s^{(p,q+1)}_{a_2b_2} \E \Big[ \frac{ \partial^{p+q+1} (P^{(1)}_{d_1}) }{\partial h^p_{b_2a_2} \partial h^{q+1}_{a_2b_2}} \Big]\nonumber\\ &\qquad +O_{\prec}\big(\frac{1}{\sqrt{N}}\big)\,,
\end{align}
where $a_2,b_2$ are fresh summation indices, as $a,b$ in (\ref{rhs}). The subscript $2$ is used to indicate the iteration step and distinguish from $a_1,b_1$ in (\ref{P_d_s_1}). 

From (\ref{dH}), all the third order terms for $p+q+1=3$ in the cumulant expansion above can be written out in the form in (\ref{product}), with an extra factor $\sqrt{N}$ in front. Since the fresh indices $a_2,b_2$ both appear an odd number of times in the product of the Green function entries, they are unmatched from Definition~\ref{unmatch_def}. Using Proposition~\ref{unmatch_lemma}, these term are bounded by $O_{\prec}(N^{-1/2}+\sqrt{N} \Psi^D)$.

The fourth order terms in the cumulant expansion with $p+q+1=4$ in~\eqref{late_0}, with the exception of those corresponding to $p=2,q=1$, are also unmatched terms of the form in (\ref{product}), since the number of times the index $a_2~( \mbox{or } b_2)$ appears in the row index set $\{x_i\}$ does not agree with the number of times it appears in the column index set $\{y_i\}$. Using Proposition \ref{unmatch_lemma}, these term are bounded by $O_{\prec}(N^{-1}+\Psi^D)$. 

By choosing $D \geq \frac{1}{\epsilon}$ with $\epsilon$ as in (\ref{control_eq}), we hence obtain the following analogue of (\ref{step})
\begin{align}\label{late}
\E[\Theta^{(1)}_{d_1}]=&-\frac{1}{2N} \sum_{a_2=1}^N (s^{(2)}_{a_2a_2}-1) \E \Big[ \frac{ \partial^2 (P^{(1)}_{d_1}) }{\partial h^2_{a_2a_2} } \Big]-\frac{1}{4N^2} \sum_{\substack{a_2,b_2=1\\a_2\neq b_2}}^N s^{(2,2)}_{a_2b_2} \E \Big[ \frac{ \partial^4 (P^{(1)}_{d_1}) }{\partial h^2_{b_2a_2} \partial h^2_{a_2b_2} } \Big]+O_{\prec}(N^{-1/2})\,.
\end{align}
It then suffices to estimate the remaining matched terms above. Using (\ref{dH}) and (\ref{P_d_s_1}), the second group of terms on the right side of~\eqref{late} can be written out in the form:
\begin{align}\label{type_ab_p_s}
  \frac{1}{N^{\# \mathcal I_1+4}} \sum_{\mathcal{I}_1,a_1,b_1, a_2,b_2} c_{a_1,b_1,a_2,b_2,\mathcal{I}_1}  \Big( \prod^{n_1+4}_{i=1} G_{x_{i} y_{i}}\Big)\,,
\end{align}
where the coefficients $\{c_{a_1,b_1,a_2,b_2,\mathcal{I}_1}\}$ are determined by $\{c_{a_1,b_1,\mathcal{I}_1}\}$ and $\{s^{(2,2)}_{a_2,b_2}\}$, and each summation index in $\{a_1,b_1,\mathcal{I}_1\}$ appears once in the row index set $\{x_i\}$ and once in the column index set $\{y_{i}\}$. Moreover, both indices $a_2,b_2$ appear exactly twice in the row index set $\{x_i\}$ and exactly twice in the column index set $\{y_i\}$. We define the degree of the form in (\ref{type_ab_p_s}) as in (\ref{degree_0}) by counting the number of off-diagonal Green function entries. Recall the definition of the type-AB, type-A and type-0 terms from Definition \ref{def_type_AB}. The definitions can be adapted naturally with respect to the fresh indices $a_2$ and $b_2$, for the form given in (\ref{type_ab_p_s}). 

Thus the second group of terms on the right side of (\ref{late}) are $n_1(n_1+1)(n_1+2)(n_1+3)$ type-AB terms considered in Proposition \ref{lemma_expand_type} of degrees not less than $d_1 +1$, from (\ref{dH}) and the fact that $a_2,b_2$ are fresh indices. Similarly, the first group of terms on the right side of~\eqref{late} are $n_1(n_1+1)$ type-A terms of degrees not less than $d_1 +1$. Using Proposition \ref{lemma_expand_type}, we expand each of these terms as a sum of finitely many type-0 terms of degrees at least $d_1+1$, which are in the form:
\begin{align}\label{general_pd}
\mathcal{P}^{(2)}_{d_2}: \qquad \frac{1}{N^{\# \mathcal I_2+4}} \sum_{\mathcal{I}_2,a_1,b_1, a_2,b_2} c_{a_1,b_1,a_2,b_2,\mathcal{I}_2}  \Big(  \prod^{n_2}_{i=1} G_{x_{i} y_{i}} \Big)\,,
 \end{align}
where $\mathcal{I}_2$ is a set of free summation indices, the coefficients $\{c_{a_1,b_1,a_2,b_2,\mathcal{I}_2}\}$ are uniformly bounded complex numbers, and each index in $\{a_2,b_2, a_1,b_1, \mathcal{I}_2\}$ appears once in $\{x_i\}$ and once in $\{y_{i}\}$. In particular, $n_2=\# \mathcal I_2+4$.  The degree of such a term, denoted by $d_2$, is given as in (\ref{degree_0}).  The collection of the type-0 terms of the form in (\ref{general_pd}) of degree $d_2$ is denoted by $\mathcal{P}^{(2)}_{d_2}$.  Here we use the subscript~$2$ to indicate the iteration step. Note that the form in (\ref{general_pd}) is a special case of the form given in (\ref{product}) and the indices $a_1, b_1,a_2,b_2$ do not take special roles. We keep them in the notation to emphasize the inheritance from (\ref{type_ab_p_s}).  Then from Proposition \ref{lemma_expand_type}, we expand (\ref{late}) and write for short
\begin{align}\label{late3}
\E[\Theta^{(1)}_{d_1}]= \sum_{\substack{ P^{(2)}_{d_{2}} \in \mathcal{P}^{(2)}_{d_{2}}\\d_1+1\leq d_{2} < D}} \E[ P^{(2)}_{d_{2}}]+O_{\prec}\big(N^{-1/2}+\Psi^D\big)\,,
\end{align}
uniformly in $t \geq 0$ and $z \in S_{\mathrm{edge}}$, where the summation above is over finitely many type-0 terms of the form in (\ref{general_pd}), and the number of these terms is determined by $D$ and $n_1$. 

We now return to the stochastic differential equation for $P^{(1)}_{d_{1}}$ in (\ref{late2}). Integrating (\ref{late2}) over $ [t', T]$ for any $0\leq t' \leq T=8 \log N$ and taking the expectation similarly to (\ref{sandwich3}), we find from (\ref{late3}) that
\begin{align}\label{previous_d_1}
\E[P^{(1)}_{d_{1}}(T,z)]-\E[P^{(1)}_{d_{1}}(t',z)]=&\sum_{ \substack{ P^{(2)}_{d_{2}} \in \mathcal{P}^{(2)}_{d_{2}} \\d_1+1\leq d_{2} < D}} \int_{t'}^{T} \E[ P^{(2)}_{d_{2}}(t,z)] \dd t +O_{\prec}\big(\log N (N^{-1/2}+\Psi^D)\big)\,.
\end{align}
Using the local law in (\ref{G}), (\ref{approxxxx}) and (\ref{tracek}), $\E[P^{(1)}_{d_1}(T,z)]$ is sufficiently close (up to an error $O(N^{-1})$) to $\E^{\mathrm{GUE}}[P^{(1)}_{d_1}(z)]$, which can be bounded by $O_{\prec}(N^{-1/3})$. Hence it suffices to estimate $\E[ P^{(2)}_{d_{2}}(t,z)]$ on the right side of (\ref{previous_d_1}), for $t \in [0,T]$, $z \in S_{\mathrm{edge}}$ in~(\ref{S_edge}).

Given any $P^{(2)}_{d_{2}} \in \mathcal{P}^{(2)}_{d_{2}}~(d_2 \geq d_1+1)$ of the form in (\ref{general_pd}), if $d_1=D-1$, we find $|\E[ P^{(2)}_{d_{2}}(t,z)]|=O_\prec(\Psi^D+N^{-1})$ using the local law in (\ref{G}). We then obtain from (\ref{previous_d_1}) that
\begin{align}\label{temp_step1}
|\E[P^{(1)}_{d_{1}}(t',z)]|=O_{\prec}\Big(\log N (N^{-1/2}+\Psi^D)+N^{-1/3}\Big),
\end{align}
uniformly in $t'\in [0,T]$ and $z \in S_{\mathrm{edge}}$.

Else, if $d_1 \leq D-2$, we repeat the above arguments for the resulting type-0 terms $P^{(2)}_{d_2} \in \mathcal{P}^{(2)}_{d_{2}}$ $(d_2 \geq d_1+1)$ on the right side of (\ref{previous_d_1}) as in (\ref{late2}). Using (\ref{general_diff}) and (\ref{rhs}), we then create two fresh summation indices, denoted by $a_3,b_3$, to derive the evolution under the Ornstein--Uhlenbeck flow of any $P^{(2)}_{d_2} \in \mathcal{P}^{(2)}_{d_2}$. Similarly as in (\ref{late}), the expectation of the corresponding drift terms is given by
 \begin{align}
\E[\Theta^{(2)}_{d_2}]=&-\frac{1}{2N} \sum_{a_3=1}^N (s^{(2)}_{a_3a_3}-1) \E \Big[ \frac{ \partial^2 (P^{(2)}_{d_2}) }{\partial h^2_{a_3a_3} } \Big]-\frac{1}{4N^2} \sum_{\substack{a_3,b_3=1\\ a_3 \neq b_3}}^N s^{(2,2)}_{a_3b_3} \E \Big[ \frac{ \partial^4 (P^{(2)}_{d_2}) }{\partial h^2_{b_3a_3} \partial h^2_{a_3b_3} } \Big]+O_{\prec}(N^{-1/2})\,.
\end{align}

From Definition \ref{def_type_AB}, the right side above can be written out as linear combinations of type-A terms and type-AB terms, with respect to fresh summation indices $a_3$ and $b_3$, of degrees not less than $d_2 +1$. Using Proposition \ref{lemma_expand_type}, these terms can further be expanded by the type-0 terms of degrees at least $d_2+1$. In this way, we obtain an estimate similar to (\ref{temp_step1}) for $d_1=D-2$.

Next, we discuss the iterative mechanism to extend to any small $d_1 \geq 2$. In general, for any $s \geq 1$, we define a type-0 term in the $s$-th iteration step to be in the form of
\begin{align}\label{P_ds}
\mathcal{P}^{(s)}_{d_s}: \qquad  \frac{1}{N^{\# \mathcal I_s+2s}} \sum_{\mathcal{I}_s, a_1,b_1, \ldots, a_s, b_s} c_{a_1,b_1, \ldots, a_s,b_s,\mathcal{I}_s} \E \Big[  \prod^{n_s}_{i=1} G_{x_{i} y_{i}} (t,z)\Big],
 \end{align}
 where $\mathcal{I}_s$ is a set of free summation indices, the coefficients $\{c_{a_1,b_1, \ldots, a_s,b_s,\mathcal{I}_s}\}$ are uniformly bounded complex numbers, and each free summation index in $\{a_1,b_1, \ldots, a_s, b_s, \mathcal{I}_s\}$ appears once in $\{x_i\}$ and once in $\{y_{i}\}$. In particular, we have $n_s=\# \mathcal I_s+2s$. The degree, denoted by $d_s$, of such a term in (\ref{P_ds}) is given as in (\ref{degree_0}) by counting the number of off-diagonal Green function entries. We denote by $\mathcal{P}^{(s)}_{d_s}$ the collection of the type-0 terms in the $s$-th step of the form in (\ref{P_ds}) of degree $d_s$. Note that the form in (\ref{P_ds}) is a special case of the form given in (\ref{product}), in order to emphasize the $s$-th iteration step and the dependence on $\{a_s, b_s\}$.

We then derive the stochastic evolution for any $P^{(s)}_{d_s} \in \mathcal{P}^{(s)}_{d_s}$ $(s\geq 1)$, using (\ref{general_diff}) and (\ref{rhs}) similarly as in (\ref{late2}) and (\ref{late3}). That is, 
 \begin{equation}\label{late22}
\dd (P^{(s)}_{d_s})=\dd M^{(s)}_{d_s}+\Theta^{(s)}_{d_s} \dd t\,,
\end{equation}
where $\dd M^{(s)}_{d}$ yields a martingale after integration, and $\E[\Theta^{(s)}_{d}]$ satisfies
\begin{align}\label{rhs2}
\E[\Theta^{(s)}_{d_s}(t,z)]=\sum_{ \substack { P^{(s+1)}_{d_{s+1}} \in \mathcal{P}^{(s+1)}_{d_{s+1}} \\ d_s+1\leq d_{s+1}< D}} \E[ P^{(s+1)}_{d_{s+1}}(t,z)]+O_{\prec}(N^{-1/2}+\Psi^D)\,,
\end{align}
uniformly in $t \geq 0$ and $z \in S_{\mathrm{edge}}$, where the sums in~\eqref{rhs2} are over finitely many type-0 terms in the $(s+1)$-th step given in (\ref{P_ds}) and the number of such terms is determined by $D$ and $n_s$. Moreover, the number of Green function entries in the product of each type-0 term is finite and determined by $D$,~$n_s$.

We run the dynamics of $P^{(s)}_{d_s}$ in (\ref{late22}) up to $T=8 \log N$ as chosen previously. We next estimate the size of $\E[P^{(s)}_{d_s}(t,z)]$ at the terminal time $T$ for any $P^{(s)}_{d_s} \in \mathcal{P}^{(s)}_{d_s}$, with $s \geq 1$ and $d_s \geq 2 $. Indeed, from (\ref{approxxxx}) and the local law in (\ref{G}), we have
\begin{align}\label{approx_gue_1}
\big|\E[P^{(s)}_{d_s}(T,z)]-\E^{\mathrm{GUE}}[P^{(s)}_{d_s}(z)]\big| =O(N^{-1})\,.
\end{align}
Together with the estimate (\ref{tracek}) for the GUE, we obtain that, for any $s \geq 1$ and $d_s \geq 2$, 
\begin{align}\label{approx_gue}
\big|\E[P^{(s)}_{d_s}(T,z)]\big|=O_{\prec}(N^{-1/3})\,.
\end{align}

Next, we return to the stochastic differential equation of $P^{(s)}_{d_s}$ in (\ref{late22}). Integrating (\ref{late22}) over $ [t', T]$ for any $0\leq t' \leq T$ and taking the expectation as in (\ref{previous_d_1}), we have from~(\ref{rhs2}) and~(\ref{approx_gue}) that
\begin{align}\label{temp_eq}
\E[P^{(s)}_{d_s}(t',z)]=&\sum_{ \substack{ P^{(s+1)}_{d_{s+1}} \in \mathcal{P}^{(s+1)}_{d_{s+1}} \\d_s+1 \leq d_{s+1} <D}}  \int_{t'}^{T} \E[ P^{(s+1)}_{d_{s+1}}(t,z)] \dd t +O_{\prec}\Big(\log N (N^{-1/2}+\Psi^D)+N^{-1/3}\Big).
\end{align}

Now, we are ready to iterate using ~\eqref{temp_eq}. In the first step, we start by $P_{d_1}^{(1)}(t,z)$ in (\ref{P_d_s_1}) and have~(\ref{temp_eq}) for $s=1$. The number of the terms $P^{(2)}_{d_2} \in \mathcal{P}^{(2)}_{d_2}$ with $d_2 \geq d_1+1$ on the right side of (\ref{temp_eq}) is finite and depends on $n_1$ and $D$.  Then we further estimate these type-0 terms $P^{(2)}_{d_2}$ using (\ref{temp_eq}) for $s=2$ as the second step. The resulting type-0 terms $P^{(3)}_{d_3} \in \mathcal{P}^{(2)}_{d_3}$ with $d_3 \geq d_2+1 \geq d_1+2$ will be estimated again using (\ref{temp_eq}) for $s=3$ as the third step. Since in each step of using (\ref{temp_eq}), the degrees of the corresponding type-0 terms $P^{(s+1)}_{d_{s+1}} \in \mathcal{P}^{(s+1)}_{d_{s+1}} $ on the right side of (\ref{temp_eq}) are increased by at least one, we have $d_{s+1} \geq d_1+s$. We hence stop at step $s=s_0:=D-d_1$. For any $P^{(s_0)}_{d_{s_0}} \in \mathcal{P}^{(s_0)}_{d_{s_0}}$ with $d_{s_0} \geq D-1$, the resulting terms $P^{(s_0+1)}_{d_{s_0+1}} \in \mathcal{P}^{(s_0+1)}_{d_{s_0+1}}$ on the right side of (\ref{temp_eq}) have degrees $d_{s_0+1} \geq D$. The number of these terms is finite and depends on $D,n_1$. Using the local law in (\ref{G}), all these terms can be bounded by $O_{\prec}(\Psi^D+N^{-1})$. This implies that the finite sum of these terms after integration over $[t',T]$ can be absorbed into the error term on the right side of (\ref{temp_eq}). That is, for any $P^{(s_0)}_{d_{s_0}} \in \mathcal{P}^{(s_0)}_{d_{s_0}}$ with $d_{s_0} \geq D-1$, 
$$\big|\E[P^{(s_0)}_{d_{s_0}}(t',z)]\big|=O_{\prec}\Big(\log N (N^{-1/2}+\Psi^D)+N^{-1/3}\Big)\,.$$

We hence plug the above estimate back to the previous step, \ie (\ref{temp_eq}) for $s=s_0-1$. We then obtain a similar estimate for any $P^{(s_0-1)}_{d_{s_0-1}} \in \mathcal{P}^{(s_0-1)}_{d_{s_0-1}}$ with $d_{s_0-1} \geq D-2$,
$$\big|\E[P^{(s_0-1)}_{d_{s_0-1}}(t',z)]\big|=O_{\prec}\Big(\log^2 N (N^{-1/2}+\Psi^D)+N^{-1/3} \log N\Big)\,.$$

Repeating the above process until $s=1$, we hence obtain that, for $d_1 \geq 2$,
$$\big|\E[P^{(1)}_{d_{1}}(t,z)]\big|=O_{\prec}\Big((N^{-1/3}+ \Psi^D) \log^{D} N\Big),$$
uniformly in $t \in[0,T]$ and $z \in S_{\mathrm{edge}}$. By choosing $D \geq \frac{1}{\epsilon}$ with $\epsilon>0$ as in (\ref{control_eq}), we prove~(\ref{tracek_wigner}) for~$t \in [0,T]$. If $t \geq T$, a similar estimate can be obtained by using (\ref{approx_gue_1}) and (\ref{approx_gue}). We have hence finished the proof of Lemma \ref{lemma_trace_wigner}.
\end{proof}

\subsection{Proof of Lemma \ref{lemma_trace}} \label{toy_type_zero_gue}
We end this section with the proof of Lemma \ref{lemma_trace} considering the GUE.
\begin{proof}[Proof of Lemma \ref{lemma_trace}]
Using the spectral decomposition~\eqref{le spectral decomp}, we write
\begin{align}\label{imagine}
\frac{1}{N} \E^{\mathrm{GUE}} \Big[ \Im \Tr G (z)\Big]=\frac{N \eta}{N^2} \E^{\mathrm{GUE}} \Big[ \sum_{j=1}^N \frac{1}{|\lambda_j-z|^2}\Big]\,,\qquad z\in S_{\mathrm{edge}}\,.
\end{align}
Then it suffices to estimate the following linear eigenvalue statistics, which can be written from (\ref{determinant}), (\ref{wt_kernel}) and then (\ref{edge}) as 
\begin{align}\label{sum_0}
\frac{1}{N^2} \E^{\mathrm{GUE}} \Big[\sum_{i=1}^{N} \frac{1}{|\lambda_i-z|^{2}}\Big]=\frac{1}{N^2} \int_{\R} \frac{\wt K_{N}(x,x)}{|x-2-\kappa-\ii \eta|^2} \,\dd x=\frac{1}{N^\frac{2}{3}}\int_{\R} \frac{ K^{\mathrm{edge}}_{N}( x, x)}{| x-N^{2/3} \kappa-\ii N^{2/3} \eta|^2}\, \dd  x\,,
\end{align}
where $z=2+ \kappa+\ii \eta\in S_{\mathrm{edge}}$, with $|\kappa| \leq C_0N^{-2/3+\epsilon}$ and $N^{-1+\epsilon} \leq \eta \leq N^{-2/3+\epsilon}$. 

To control the integral on the right side of~\eqref{sum_0}, we choose a fixed $L_0<0$ (see~Lemma~\ref{lemma_airy_bound} and  Theorem \ref{kernel_diff}) and split the real line in the parts,  $(-\infty, -N^{2/3}]$, $(-N^{2/3},L_0]$ and $(L_0\,\infty)$.

For the integration domain $(-\infty,-N^{2/3}]$, we find that
\begin{align}\label{sum_1}
\frac{1}{N^\frac{2}{3}}\int_{x<-N^{2/3}} \frac{ K^{\mathrm{edge}}_{N}(x,x)}{|x-N^{2/3} \kappa-\ii N^{2/3} \eta|^2}\, \dd x=O(N^{-1})\,,
\end{align}
using the trace identity (\ref{eq_1}) for the kernel $K_N$ and that $|\kappa| \leq C_0N^{-2/3+\epsilon}$.

Moreover, from Theorem \ref{kernel_diff} and Lemma \ref{lemma_airy_bound}, we have on $(L_0,\infty)$, that
\begin{align}\label{sum_2}
\frac{1}{N^\frac{2}{3}}\int_{x>L_0} \frac{ K^{\mathrm{edge}}_{N}(x,x)}{|x-N^{2/3}\kappa-\ii N^{2/3} \eta|^2} \,\dd x=&\frac{1}{N^\frac{2}{3}}\int_{x>L_0} \frac{ K_{\mathrm{airy}}(x,x)+O(N^{-2/3})}{|x-N^{2/3}\kappa-\ii N^{2/3} \eta|^2}\, \dd x=O\Big(  \frac{1}{N^{\frac{4}{3}} \eta}\Big)\,.
\end{align}

It hence suffices to focus on the regime $(-N^{2/3}, L_0]$. Recall from~\eqref{kernel_xy} and~\eqref{edge} that
\begin{equation}
K_N(x,x)=\sum_{k=0}^{N-1} \phi^2_k(x); \qquad K^{\mathrm{edge}}_{N}(x,x)=\frac{1}{N^{1/6}} K_N\Big( 2\sqrt{N}+\frac{x}{N^{1/6}},2\sqrt{N}+\frac{x}{N^{1/6}} \Big)\,.
\end{equation}
From (\ref{hermite_fn}) and (\ref{kernel_xx}), the derivative of $K_N(x,x)$ is given by
$$K'_N(x,x)=-\sqrt{N}\phi_{N-1}(x) \phi_{N}(x)\,.$$
The Hermite functions satisfy, for all $k$,
\begin{align}\label{upper_bound_hermite}
\sup_{x \in \R} |\phi_k(x)| \leq Ck^{-1/12}\,.
\end{align} 
for some constant $C$ independent of $k$, as was proved in \cite{upper_hermite}.
Therefore, the derivative of the edge kernel $K^{\mathrm{edge}}_{N}(x,x)$ is given by
\begin{align}\label{deri_edge}
\Big( K^{\mathrm{edge}}_{N}(x,x)\Big)'=\frac{1}{N^{1/3}} K'_N\Big( 2\sqrt{N}+\frac{x}{N^{1/6}},2\sqrt{N}+\frac{x}{N^{1/6}} \Big)=O(1)\,.
\end{align}
For any $x\in(-N^{2/3},L_0]$, we have from (\ref{deri_edge}) and Lemma \ref{lemma_airy_bound} that
\begin{align}\label{derivative_K_edge}
K^{\mathrm{edge}}_{N}(x,x)=K^{\mathrm{edge}}_{N}(L_0,L_0)-\int_{x}^{L_0} \big(K^{\mathrm{edge}}_{N}(x,x)\big)' \dd x \leq C'(1+|x|).
\end{align}
Therefore, we obtain from (\ref{derivative_K_edge}) that
\begin{align}\label{sum_3}
\frac{1}{N^\frac{2}{3}}\int_{-N^{2/3} < x < L_0} &\frac{ K^{\mathrm{edge}}_{N}(x,x) }{| x-N^{2/3}\kappa-\ii N^{2/3} \eta|^2}\, \dd x \leq   \nonumber \frac{C'}{N^\frac{2}{3}}\int_{-N^{2/3} < x < L_0} \frac{ 1+|x| }{(x-N^{2/3}\kappa)^2+(N^{2/3} \eta)^2}\, \dd x\nonumber\\
=&O\Big(N^{-2/3}\log N+\frac{N^\epsilon}{N^\frac{4}{3}\eta} \Big)=O\Big(  \frac{1}{N^{\frac{4}{3}-\epsilon} \eta}\Big)\,,
\end{align}
where we used that $|\kappa| \leq C_0N^{-2/3+\epsilon}$.

Plugging (\ref{sum_1}), (\ref{sum_2}) and (\ref{sum_3}) into (\ref{sum_0}), there exists some constant $C$ independent of $\epsilon$ such that
\begin{align}
\frac{1}{N^2} \E^{\mathrm{GUE}} \Big[ \sum_{j=1}^N \frac{1}{|\lambda_j-z|^2}\Big] \leq \frac{CN^{\epsilon}}{N^{\frac{4}{3}} \eta}\,,
\end{align}
uniformly in $z \in S_{\mathrm{edge}}$, for sufficiently large $N \geq N_0(\epsilon,C_0)$. In combination with (\ref{imagine}), we hence have proved~(\ref{traceimg}). 

Finally, we consider any type-0 term $P_d(z) \in \mathcal{P}_d(z)$ of the form in (\ref{form}) of degree $d \geq 2$ for the GUE. For notational simplicity, we no longer emphasize the indices $a$,$b$ and write
\begin{align}\label{cluster}
P_d(z) =\frac{1}{N^{n}} \sum_{v_1=1}^N \cdots   \sum_{v_n=1}^N c_{v_1, \ldots,v_n} \Big(  \prod^{n}_{i=1} G_{x_{i} y_{i}} (z)\Big),
\end{align}
with $n \geq 2$, where each summation index $v_j$ $(1\leq j\leq n)$ appears once in the row index set $\{x_i\}_{i=1}^n$ and once in the column index set $\{y_i\}_{i=1}^n$ and the coefficients $\{c_{v_1, \ldots,v_n}\}$ are uniformly bounded complex numbers. For any $1\leq j \leq n$, if there exists $1\leq i\leq n$ such that $x_i=y_i=v_j$, then we say that $v_j$ is isolated. For any $1 \leq j \neq j' \leq n$, if there exists $1 \leq i \leq n$ such that either $x_i=v_j$, $y_i=v_{j'}$ or $y_i=v_j$, $x_i=v_{j'}$, then we say that $v_j$ and $v_{j'}$ are connected indices.  Because the degree of (\ref{cluster}) is at least two, there exists at least one cluster of connected indices containing at least two elements. We may assume that $v_1, \ldots, v_{n_0} $ $(2 \leq n_0 \leq n)$ form a cluster of connected indices. Using the local law in (\ref{G}), we have
$$|P_d(z)| \prec \frac{1}{N^{n_0}} \sum_{v_1=1}^N \cdots   \sum_{v_{n_0}=1}^N \big|G_{v_1v_2} G_{v_2v_3}  \cdots G_{v_{n_0}v_1}(z)\big|\,.$$
If $n_0=2$, from Young's inequality and the Ward identity
\begin{align}\label{ward}
\frac{1}{N^2} \sum_{i,j} |G_{ij}(z)|^2 =\frac{\Im m_N(z)}{N \eta}\,,\qquad z=E+\ii \eta \in \C^+,
\end{align}
which follows from the spectral decomposition~\eqref{le spectral decomp}, we then obtain 
\begin{align}
|P_d(z)| \prec & \frac{1}{N^2}\sum_{v_1,v_2} \big|G_{v_1v_2}(z) G_{v_2v_1}(z)\big| \leq \frac{1}{2 N^2}\sum_{v_1,v_2} \big( |G_{v_1v_2}(z)|^2+|G_{v_2v_1}(z)|^2 \big) = \frac{\Im m_N(z)}{ N \eta }\,.
\end{align}
For $n_0 \geq 3$, we have similarly from the local law (\ref{G}) that
\begin{align}\label{traces}
|P_d(z)| \prec & \Psi^{n_0-2} \frac{1}{N^3}\sum_{v_1,v_2,v_3} \big|G_{v_1v_2}(z) G_{v_2v_3}(z)\Big| \nonumber\\
\leq &\Psi^{n_0-2} \frac{1}{2 N^3}\sum_{v_1,v_2,v_3} \big( |G_{v_1v_2}(z)|^2+|G_{v_2v_3}(z)|^2 \big)=O\Big( \frac{\Im m_N(z)}{(N \eta)^{n_0-1}}\Big),
\end{align}
where in the last two steps we use Young's inequality, the Ward identity~\eqref{ward}, and that $\Psi(z)=O(\frac{1}{N\eta})$ for any $z \in S_{\mathrm{edge}}$. Therefore, combining with the estimate (\ref{traceimg}) for the expectation of $\Im m_N(z)$, the properties of stochastic domination in Lemma~\ref{dominant}, and that $\eta \geq N^{-1+\epsilon}$, we have, for any $\tau>0$,
$$\E^{\mathrm{GUE}}\big[|P_d^{(s)}(z)|\big] \leq  N^{-1/3+\tau}, \qquad d \geq 2,$$
uniformly in $z \in S_{\mathrm{edge}}$, for sufficiently large $N \geq N'_0(C_0, \epsilon, \tau)$. This completes the proof of~(\ref{tracek}), and hence the proof of Lemma~\ref{lemma_trace}.
\end{proof}

\section{Product of Green function entries with unmatched indices}\label{sec:unmatch}

In this section, we prove Proposition \ref{unmatch_lemma}. Before stating the proof for Wigner matrices, we first consider the GUE for the intuition why expectations of unmatched terms are much smaller than the naive size obtained using power counting and the local law as in (\ref{localaw_0}).

\subsection{Intuition from the GUE}\label{sec:unmatch_gue}
In this subsection, we focus on the special case of the GUE, as in Subsection \ref{subsec:gue}. Consider any $Q^o_d \in \mathcal{Q}^o_d$ of the form (\ref{product}). Using the spectral decomposition~\eqref{le spectral decomp} and the unitary invariance of the GUE similarly as in (\ref{sum_gue}), we write the expectation of the unmatched $Q^o_d$ as
\begin{align}\label{temp_q}
\E[Q_d^o]
=&\frac{1}{N^{\# \mathcal{I}}}\sum_{\mathcal{I}}  c_{\mathcal{I}} \sum_{j_1, \ldots, j_n=1}^N\E \Big[\prod_{i=1}^n \frac{1}{(\lambda_{j_i}-z)}\Big] \times \E \Big[\prod_{i=1}^n \u_{j_i}(x_i) \overline{\u_{j_{i}}(y_i)}\Big]\,,
\end{align}
with $(\lambda_j)$ the eigenvalues and the corresponding normalized eigenvectors $(\u_j)$, and each $x_i$, $y_i$ represent some free summation index in $\mathcal{I}$. In order to estimate the expectations of the eigenvectors, we recall the Weingarten calculus formula in Lemma \ref{wein_lemma}. Under Definition~\ref{unmatch_def} for unmatched indices, if the values of the free summation indices in $\mathcal{I}$ are distinct, then $\delta_{x_1, y_{\sigma(1)}} \cdots \delta_{x_n, y_{\sigma(n)}}=0$, for any permutation $\sigma \in S_n$. Thus from~(\ref{wein_temp}), for any $1 \leq j_1, \cdots, j_n \leq N$, we have
$$\E \Big[\prod_{i=1}^n \u_{j_i}(x_i) \overline{\u_{j_{i}}(y_i)}\Big]=0.$$
The non-vanishing contributions come from the diagonal cases when the values of some free summation indices in $\mathcal{I}$ coincide. Because of the averaged form of $Q_d^o$ in~(\ref{product}) and the local law in~(\ref{G}) one works out that, for any $z \in S$ and $t\ge0$, 
\begin{align}\label{Wein}
\E[Q_d^{o}]=O(N^{-1})\,.
\end{align}

For Wigner matrices, the above argument does not apply anymore. We hence use similar expansions as in Subsection \ref{toy_fourth} to extend to arbitrary Wigner matrices. Before we give the proof of Proposition~\ref{unmatch_lemma}, we start by considering an example of the unmatched term in $\mathcal{Q}^o_d$ to illustrate the mechanism.

\subsection{Example of an unmatched term} \label{sec:unmatch_toy}
We look at the following example of an unmatched term
\begin{align}\label{example1}
 \frac{1}{N^2} \sum_{a,b} G_{ab} G_{ba} G_{ab} \in \mathcal{Q}^o_3,
\end{align}
with $a \in \mathcal{R}^o$ and $b \in \mathcal{C}^o$; see (\ref{rc_index}) in Definition \ref{unmatch_def}. Using the local law in (\ref{G}), the expectation of this term can be naively bounded by $O_{\prec}(\Psi^3+N^{-1})$. The idea to improve this bound is similar to the proof of Proposition \ref{lemma_expand_type}. Note that the combination of the identity~\eqref{resolvent_identity} and the cumulant expansion gives a cancellation to the leading order. Thus we can improve the upper bound to $O_{\prec}(\Psi^{4}+\frac{\Psi^3}{\sqrt{N}}+N^{-1})$. We next discuss the details.

Using the identity (\ref{resolvent_identity}) on the off-diagonal entry $G_{ab}$ with unmatched $a$ as the row index and applying cumulant expansions, we have
\begin{align}\label{example_step1}
\frac{1}{N^2} \sum_{a,b} \E [G_{ab} G_{ba} G_{ab} ]=&\frac{1}{N^2} \sum_{a \neq b} \E \Big[ \Big( G_{ab} \ud{HG} -\ud{G} (HG)_{ab}\Big) G_{ba} G_{ab} \Big]+\frac{1}{N^2} \sum_{a=1}^N \E [(G_{aa})^3]\nonumber\\
=&\frac{1}{N^4}\sum_{a,b,j,k} \E \Big[ \frac{ \partial G_{ab} G_{jk} G_{ba} G_{ab} }{\partial h_{jk}}\Big]-\frac{1}{N^4}\sum_{a,b,j,k} \E \Big[ \frac{ \partial  G_{jj} G_{kb} G_{ba} G_{ab}}{\partial h_{k a}}\Big]\nonumber\\
&+\frac{1}{\sqrt{N}} \frac{1}{N^4} \sum_{p+q+1=3} \frac{1}{p! q!}   \sum_{a,b,j,k}  s^{(p,q+1)}_{jk} \E \Big[ \frac{ \partial^2 G_{ab} G_{jk} G_{ba} G_{ab} }{\partial h^p_{jk} \partial h^q_{kj}}\Big]\nonumber\\
&-\frac{1}{\sqrt{N}} \frac{1}{N^4} \sum_{p+q+1=3} \frac{1}{p! q!}  \sum_{a,b,j,k}  s^{(p,q+1)}_{ak} \E \Big[ \frac{ \partial^2 G_{jj} G_{kb} G_{ba} G_{ab}}{\partial h^p_{ka} \partial h^q_{ak} }\Big]+O_{\prec}\big(\frac{1}{N}\big)\,,
\end{align}
where the last error term comes from the truncation of the cumulant expansions at the third order and the diagonal case $a=b$. 

Using (\ref{dH}) and that $j,k$ are fresh summation indices, all the third order expansions for $\{p+q+1=3\}$ can be written out using the terms of the form in (\ref{product}) of degree at least three, with an additional factor $\frac{1}{\sqrt{N}}$ in front. 
Since both the fresh indices $j,k$ appear in the product of the Green function entries for an odd number of times, the resulting terms are unmatched from Definition \ref{unmatch_def}. From the local law in~(\ref{G}), they are bounded by~$O_{\prec}\Big(\frac{\Psi^3}{\sqrt{N}}+\frac{1}{N^{3/2}} \Big)$.

Now we return to the second order terms in the cumulant expansions in (\ref{example_step1}), \ie
\begin{align}\label{second_temp}
\frac{1}{N^4}\sum_{a,b,j,k} \E \Big[ \frac{ \partial G_{ab} G_{jk} G_{ba} G_{ab} }{\partial h_{jk}}\Big]-\frac{1}{N^4}\sum_{a,b,j,k} \E \Big[ \frac{ \partial G_{jj} G_{kb} G_{ba} G_{ab}}{\partial h_{ka}}\Big]\,.
\end{align}
Using (\ref{dH}), the fresh indices $j,k$ are then matched and the index $a$ remains to be an unmatched row index. The key observation here is that the leading sub-term from the first term above, corresponding to taking $\frac{\partial }{\partial h_{jk}}$ of $G_{jk}$, will be canceled precisely by the leading sub-term from the second term above, resulting from taking $\frac{\partial }{\partial h_{ka}}$ of $G_{kb}$. We hence rewrite (\ref{second_temp}) as
\begin{align}
\frac{1}{N^4}\sum_{a,b,j,k} \E \Big[ \frac{ \partial G_{ab}  G_{ba} G_{ab}}{\partial h_{jk}} G_{jk} \Big]-\frac{1}{N^4}\sum_{a,b,j,k} \E \Big[ \frac{ \partial   G_{jj}G_{ba} G_{ab}}{\partial h_{ka}} G_{kb} \Big]\,.
\end{align}
The degrees of the resulting terms from the first part above are five as $j,k$ are fresh indices. Similarly, the ones from the second part have degrees at least four, except one sub-term from taking $\frac{\partial}{\partial h_{ka}}$ of $G_{ba}$, whose column index coincides with the unmatched row index $a$:
$$\frac{1}{N^4}\sum_{a,b,j,k} \E \Big[G_{jj} G_{bk} G_{aa}   G_{ab}  G_{kb} \Big]\,.$$
Compared with the original term in (\ref{example1}), one replaces one pair of the index $a$ by a fresh index $k$ and adds a factor $G_{aa}$ for the replaced index $a$. The good news is that this leading term of degree three remains unmatched with an unmatched row index~$a$. We then expand it further as in (\ref{example_step1}) and obtain that
\begin{align}\label{example_step2}
\frac{1}{N^4}\sum_{a,b,j,k}& \E \Big[G_{jj} G_{aa}   G_{ab} G_{bk}   G_{kb} \Big]=\frac{1}{N^6}\sum_{a,b, j, k, j',k'} \E \Big[ \frac{ \partial G_{jj} G_{aa}   G_{ab}  G_{bk} G_{kb}}{\partial h_{j'k'}} G_{j'k'} \Big]\nonumber\\
-&\frac{1}{N^6}\sum_{a,b, j,k,j',k'} \E \Big[ \frac{ \partial  G_{jj} G_{aa}   G_{j'j'} G_{bk} G_{kb} }{\partial h_{k' a}} G_{k'b} \Big]+\{\mbox{third order terms}\}+O_{\prec}\Big(\frac{1}{N}\Big)\,,
\end{align}
with $j',k'$ another two fresh summation indices. Here, the third order terms are also unmatched terms of the form in (\ref{product}) of degree at least three with an extra $\frac{1}{\sqrt{N}}$ in front, similarly as in (\ref{example_step1}). From (\ref{dH}), the resulting terms from the first part on the right side of (\ref{example_step2}) have degrees at least five. As for the second part above, even though the column index of the diagonal entry $G_{aa}$ coincides with the unmatched row index $a$, the resulting terms have degrees at least four.

In this way, we improve the upper bound of the unmatched term given in (\ref{example1}) to
$$\Big|\frac{1}{N^2} \sum_{a,b} \E \Big[G_{ab} G_{ba} G_{ab} \Big] \Big| \prec \Psi^{4}+\frac{\Psi^3}{\sqrt{N}}+N^{-1}\,.$$
Indeed, we expand this unmatched term as
\begin{align}\label{example100}
\frac{1}{N^2} \sum_{a,b} \E \Big[G_{ab} G_{ba} G_{ab}]=\sum_{\substack{ Q^o_{d_1'} \in \mathcal{Q}^o_{d_1'} \\ d_1' \geq 4}}  \E[Q^o_{d_1'}]+\frac{1}{\sqrt{N}}\sum_{\substack{ Q^o_{d_2'} \in \mathcal{Q}^o_{d_2'} \\ d_2' \geq 3}} \E[ Q^o_{d_2'}]+O_{\prec}(N^{-1})\,,
\end{align}
where we write $\sum_{Q^o_{d_1'} \in \mathcal{Q}^o_{d_1'} , d_1' \geq 4} Q^o_{d_1'}$ as a sum of finitely many unmatched terms of the form in (\ref{product}) of degrees increased by at least one, which comes from the second order expansions. Moreover, we write $\frac{1}{\sqrt{N}} \sum_{Q^o_{d_2'} \in \mathcal{Q}^o_{d_2'} , d_2' \geq 3} Q^o_{d_2'}$ as a finite sum of unmatched terms of the form in (\ref{product}) with an extra factor $\frac{1}{\sqrt{N}}$ in front, which corresponds to the third order expansions. The last error term $O_{\prec}(N^{-1})$ is from the truncation of the cumulant expansion and the diagonal cases. By repeating the above expansion procedure in (\ref{example100}) for arbitrary $D$ times, we improve the upper bound to $O_{\prec}\big( \Psi^{D}+\frac{\Psi^{D-1}}{\sqrt{N}}+N^{-1}\big)$. The full proof is presented in the following section.

\subsection{Proof of Proposition \ref{unmatch_lemma}}\label{sec:unmatch_proof}

In this section, we give the proof of Proposition \ref{unmatch_lemma} for Wigner matrices using the cumulant expansions as explained above.

\begin{proof}[Proof of Proposition \ref{unmatch_lemma}]
Consider an arbitrary unmatched term $Q_d^{o} \in \mathcal{Q}_d^o$ of the form (\ref{product}). Because it is equivalent to expand a Green function entry $G_{xy}$ in the row index $x$ or column index $y$, we focus on the unmatched row indices in the following.  

We may assume that the index $v_1$ belongs to the unmatched row index set $\mathcal{R}^o$ (which cannot be empty) from Definition \ref{unmatch_def}. Then there exists an off-diagonal factor in the product of Green function entries with $v_1$ as the row index. Without loss of generality, we set $x_1=v_1$, and $y_1 \neq v_1$. Using~(\ref{resolvent_identity}) on the off-diagonal entry $G_{v_1y_1}$ and applying cumulant expansions similarly as in (\ref{example_step1}), we have
\begin{align}\label{third_unmatch}
\E[Q_d^o]=&\frac{1}{N^{\# \mathcal{I}}} \sum_{\mathcal{I}} c_{\mathcal{I}} \E \Big[ G_{v_1 y_1} \prod^n_{i=2} G_{x_i y_i} \Big]=\frac{1}{N^{\# \mathcal{I}}}\sum_{\mathcal{I}}c_{\mathcal{I}} \E \Big[ \delta_{v_1 y_1}G_{v_1 y_1} \prod^n_{i=2} G_{x_i y_i} \Big]\nonumber\\
&+\frac{1}{N^{2+\# \mathcal{I}}} \sum_{\mathcal{I}}c_{\mathcal{I}} \sum_{j,k}\E \Big[ \frac{ \partial G_{v_1 y_1} G_{jk} \prod^n_{i=2} G_{x_i y_i} }{\partial h_{jk}}\Big]-\frac{1}{N^{2+\# \mathcal{I}}}\sum_{\mathcal{I}}c_{\mathcal{I}} \sum_{j,k} \E \Big[ \frac{ \partial G_{jj} G_{ky_1}  \prod^n_{i=2} G_{x_i y_i} }{\partial h_{k v_1}}\Big]\nonumber\\
&+\frac{1}{N^{2+\# \mathcal{I}}} \frac{1}{\sqrt{N}} \sum_{\mathcal{I}}c_{\mathcal{I}} \sum_{p+q+1=3} \frac{1}{p!q!}  \sum_{j,k} s^{(p,q+1)}_{jk} \E \Big[ \frac{ \partial^2 G_{x_1y_1} G_{jk} \prod^n_{i=2} G_{x_i y_i} }{\partial h^p_{jk}\partial h^q_{kj}}\Big]\nonumber\\
&-\frac{1}{ N^{2+\# \mathcal{I}}} \frac{1}{\sqrt{N}} \sum_{\mathcal{I}}c_{\mathcal{I}}\sum_{p+q+1=3} \frac{1}{p!q!}   \sum_{j,k} s^{(p,q+1)}_{v_1k} \E \Big[ \frac{ \partial^2 G_{jj}G_{ky_1}  \prod^n_{i=2} G_{x_i y_i} }{\partial h^p_{kv_1}\partial h^q_{v_1k}}\Big]+O_{\prec}(\frac{1}{N})\nonumber\\
=&\frac{1}{N^{2+\# \mathcal{I}}}\sum_{\mathcal{I}}c_{\mathcal{I}} \sum_{j,k} \E \Big[ \frac{ \partial G_{v_1 y_1} \prod^n_{i=2} G_{x_i y_i} }{\partial h_{jk}}G_{jk}  \Big]-\frac{1}{N^{2+\# \mathcal{I}}}\sum_{\mathcal{I}}c_{\mathcal{I}} \sum_{j,k} \E \Big[ \frac{ \partial G_{jj} \prod^n_{i=2} G_{x_i y_i} }{\partial h_{kv_1}}G_{k y_1} \Big]\nonumber\\
&+\{\mbox{ third order terms for } p+q+1=3\}+O_{\prec}\big(\frac{1}{N}\big)\,,
\end{align}
where $j,k$ are fresh summation indices, the last error $O_{\prec}(\frac{1}{N})$ is from the truncation of the cumulant expansions at the third order and the diagonal case $v_1 \equiv y_1$. 

We first look at the third order expansions for $p+q+1=3$, which are much smaller because we gain an extra $\frac{1}{\sqrt{N}}$ from the third order cumulants. Since both $j,k$ are fresh indices, it is straightforward to check from (\ref{dH}) that the resulting terms are also of the form in (\ref{product}) with an extra $\frac{1}{\sqrt{N}}$ in front. Their degrees, denoted by $d'$, satisfy $d' \geq d$, the corresponding free summation index set is $\mathcal{I}'=\{\mathcal{I}, j,k\}$ and the number of Green function entries is $n'=n+3$. In addition, the number of such terms is at most $6(n+3)^2$. Comparing these terms with the original $Q_d^o$, we add in total an odd number of $j$'s (or $k$'s) into the original row index set and column index set of the product of the Green function entries. Then all these terms are unmatched terms from Definition \ref{unmatch_def}. We use $\frac{1}{\sqrt{N}} \sum_{Q^o_{d'} \in \mathcal{Q}^o_{d'}; d' \geq d} \E [Q^o_{d'}]$ to denote the finite sum of these unmatched terms from the third order expansions.

Next, we estimate the second order expansion terms, \ie the second but last line on the right side of~(\ref{third_unmatch}). Using~(\ref{dH}) we write them as a sum of at most $2n$ terms of the form in~(\ref{product}) with $\mathcal{I}'=\{\mathcal{I}, j,k\}$ and $n'=n+2$. The degrees of these terms are estimated as follows.

For the first group of terms in the second but last line of (\ref{third_unmatch}), comparing with the original $Q^o_d$, we have added one fresh index~$j$ and one fresh index~$k$ into both the original row index set and column index set. Then $j$ and $k$ are both matched indices. Moreover, $v_1$ from $G_{v_1y_1}$ remains an unmatched row index. After taking $\frac{\partial}{\partial h_{jk}}$ by (\ref{dH}), the degrees are then increased by at least two. 

Similarly, we compare the second group in the second but last line of (\ref{third_unmatch}) with the original $Q^o_d$. We find again that both $j$ and $k$ are matched, and the index $v_1$ is still an unmatched row index. However, the degrees of the resulting terms from taking $\frac{\partial}{\partial h_{kv_1}}$ may not be increased. This is because the column index of some Green function entry $G_{x_iy_i} (2 \leq i \leq n)$ may coincide with the unmatched row index $v_1$. The number of such Green function entries with $v_1$ as column index is given by $\nu_1^{c} (\leq n)$ from Definition~\ref{unmatch_def}. So we split the discussion into three cases.

{\bf Case 1:} If $y_i \neq v_1$, then after taking $\frac{\partial}{\partial h_{kv_1}}$ of $G_{x_iy_i}$, the degree of the resulting term is increased by at least one. 

{\bf Case 2:}  If $y_i=x_i = v_1$, then after taking $\frac{\partial}{\partial h_{kv_1}}$ of $G_{x_iy_i}$, the degree is then increased by exactly one.

{\bf Case 3:}  If $y_i=v_1$, but $x_i \neq v_1$, then, for simplicity, we may assume that $y_2=v_1$ and $x_2 \neq v_1$. From Definition~\ref{unmatch_def} for unmatched indices, there exists some $3 \leq i' \leq n$ such that $x_{i'}=v_1$ and $y_{i'} \neq v_1$, because else $v_1$ cannot be an unmatched row index of the original $Q_d^o$. We may assume $x_3=v_1$ and $y_3 \neq v_1$. Then the corresponding term after taking $\frac{\partial}{\partial h_{kv_1}}$ of $G_{x_2,v_1}$ becomes
\begin{align}\label{term_1}
(*):=\frac{1}{N^{2+\# \mathcal{I}}}\sum_{\mathcal{I},j,k} c_{\mathcal{I}} \E \Big[ G_{jj}G_{v_1v_1} G_{k y_1} G_{x_2k}  G_{v_1,y_3}   \prod^n_{i=4} G_{x_i y_i} \Big]\,,
\end{align}
with $y_1 \neq v_1$, $x_2 \neq v_1$, and $y_3 \neq v_1$, and the degree of this term is still $d$. Compared with the original $Q_d^o$, we have replaced one pair of the index $v_1$, \ie the row index of $G_{x_1y_1}$ and the column index of $G_{x_2y_2}$, by the fresh index $k$. Further we get an additional diagonal Green function entry $G_{v_1v_1}$ for the replaced pair of index $v_1$. Since the index $v_1$ from $G_{v_1y_3}$ remains an unmatched row index, we can further expand the term in (\ref{term_1}) using the unmatched row index $v_1$, as in (\ref{third_unmatch}). We write
\begin{align}\label{pian2}
(*)=&-\frac{1}{N^{4+\# \mathcal{I}}}\sum_{\mathcal{I},j,k, j',k'} c_{\mathcal{I}}\E \Big[ \frac{ \partial G_{jj}G_{v_1v_1} G_{v_1 y_3} G_{x_2k}  G_{ky_1} \Big( \prod^n_{i=4} G_{x_i y_i} \Big) }{\partial h_{j'k'}}G_{j'k'}  \Big]\nonumber\\
&+\frac{1}{N^{4+\# \mathcal{I}}}\sum_{\mathcal{I},j,k,j',k'} c_{\mathcal{I}} \E \Big[ \frac{ \partial G_{jj}G_{v_1v_1}  G_{j' j'} G_{x_2k}  G_{k y_1} \Big( \prod^n_{i=4} G_{x_i y_i} \Big)}{\partial h_{k' v_1}}G_{k' y_3} \Big]\nonumber\\
&+\{\mbox{ third order expansions for }p+q+1=3 \}+O_{\prec}\big(\frac{1}{N}\big)\,.
\end{align}

Similar as (\ref{third_unmatch}), the third order expansions contains at most $6(n+5)^2$ unmatched terms of the form in (\ref{product}) with an additional factor $\frac{1}{\sqrt{N}}$ in front, of degrees $d'' \geq d$, with $\mathcal{I}''=\{\mathcal{I},j,k,j',k'\}$ and $n''=n+5$.  We next estimate the second order expansions on the right side of (\ref{pian2}). From (\ref{dH}), they become a sum of at most $2n$ terms of the form in (\ref{product}), with $\mathcal{I}''=\{\mathcal{I},j,k,j',k'\}$ and $n''=n+4$. 

If for any $4 \leq i \leq n $, either $y_i \neq v_1$ or $x_i=y_i=v_1$ holds, as considered in Cases 1 and 2 above, then the degrees of these resulting terms are increased by at least one, \ie $d'' \geq d+1$. 

Else we may assume that $y_4=v_1$ and $ x_4 \neq v_1$. The resulting leading term of degree $d$, as the analogue of (\ref{term_1}), is obtained from replacing one pair of the index $v_1$, \ie the row index of $G_{x_3y_3}$ and the column index of $G_{x_4y_4}$, by the fresh index $k'$ and adding an additional diagonal Green function entry $G_{v_1v_1}$.  Moreover, there exists some $5 \leq i'' \leq n$ such that $x_{i''}=v_1$ and $y_{i''} \neq v_1$ to make sure $v_1$ is an unmatched row index of the original $Q_d^o$ in~\eqref{third_unmatch}, as explained at the beginning of Case 3. We may assume $i''=5$ for simplicity. Then the index $v_1$ from $G_{v_1y_5}$ is again unmatched. We can expand this leading term of degree~$d$ for the third time by applying (\ref{resolvent_identity}) on $G_{v_1y_5}$ and applying cumulant expansions, similarly as in (\ref{pian2}).

We continue this procedure of expanding in the unmatched row index $v_1$ repeatedly for $s$ times, until there is no off-diagonal Green function entry with column index $y_i=v_1$ in the remaining product of the Green function entries $\prod_{i=2s}^{n} G_{x_iy_i}$. Then from Case 1 and Case 2 above, the resulting terms have degrees increased by at least one. The number of iteration $s$ is at most $\nu^{(c)}_1 (\leq n)$, where $\nu^{(c)}_1$ defined in~(\ref{v_ab}) is the number of times the unmatched row index~$v_1$ appears in the column index set of the original~$Q_d^{o}$.

In this way, we expand the original unmatched $Q_d^o$ in terms of finitely many unmatched terms in the form (\ref{product}) of degrees at least $d+1$, as well as the third order cumulant expansion terms generated in the iterations, plus an error $O_{\prec}(N^{-1})$ from the truncation of the cumulant expansion and the diagonal cases. In summary, for any unmatched $Q_d^o \in \mathcal{Q}_d^o$, we write the following expansions for short:
\begin{align}\label{third_unmatch2}
\E[Q_d^o]=\sum_{\substack{ Q^o_{d_1'} \in \mathcal{Q}^o_{d_1'} \\ d_1' \geq d+1}}  \E[Q^o_{d_1'}]+\frac{1}{\sqrt{N}}\sum_{\substack{ Q^o_{d_2'} \in \mathcal{Q}^o_{d_2'} \\ d_2' \geq d}} \E[ Q^o_{d_2'}]+O_{\prec}(\frac{1}{N})\,,
\end{align}
where the number of unmatched terms in the summations above is bounded by $(Cn)^{cn}$, and the number of the Green function entries in the product of each the unmatched term is bounded by $Cn$ for some numerical constants~$C,c>0$.

We finally iterate the expansion in (\ref{third_unmatch2}) for $D-d$ times. Then the unmatched terms in the first summation have degrees at least $D$, and the unmatched terms with $\frac{1}{\sqrt{N}}$ in the second summation have degrees at least $D-1$. Note that the total number of the terms generated in the iteration of the expansions is bounded by $\big((C^Dn)^{c^D n} \big)^D$, and the number of the Green function entries in the product of each term is bounded by $C^{D}n$. We hence obtain from the local law in ~(\ref{G}) that
\begin{align}\label{third_unmatch3}
\E[Q_d^o]=O_{\prec}\big(\Psi^D+\frac{\Psi^{D-1}}{\sqrt{N}}+\frac{1}{N}\big)=O_{\prec}\big(\Psi^D+\frac{1}{N}\big)\,.
\end{align} 
We hence have finished the proof of Proposition \ref{unmatch_lemma}.
\end{proof}

\section{Proof of Proposition \ref{prop_theta}}\label{sec:fourth}
In this section, we prove Proposition \ref{prop_theta}, which is a key ingredient in the proof the Green function comparison theorem, Theorem~\ref{green_comparison}. The special case of Proposition \ref{prop_theta} considering $F(x)=x$ was stated in~(\ref{claim}), which leads to the corresponding Green function comparison theorem for $F(x)=x$ in Proposition~\ref{GCT_mn}. The proof of Proposition \ref{prop_theta} relies on the analogues of Proposition~\ref{lemma_expand_type} (expansion in type-0 terms) and Proposition~\ref{unmatch_lemma} (the negligibility of unmatched terms), as well as the estimate (\ref{img_wigner}) obtained in Proposition \ref{GCT_mn} to bound the resulting type-0 terms.

\begin{proof}[Proof of Proposition \ref{prop_theta}]

We extend the ideas from the proofs of (\ref{claim}) to the setup of Proposition \ref{prop_theta}. Recall $\E[\Theta(t,z_1,z_2)]$ from~\eqref{derivative_gene}, \ie
\begin{align}\label{derivative_gene1}
\E[\Theta(t,z_1,z_2)]\equiv\E [\Theta] =& 
\sum_{\substack{ p+q+1=3\\ p,q \in \N}}^{4}K_{p,q+1}+E_2 +O_{\prec}(N^{-1/2})\,,
\end{align}
with $K_{p,q+1}$ given in~\eqref{K_term} and $E_2$ given in~\eqref{E_1}.

Using the differentiation rules (\ref{dH}) and (\ref{int_1}), each term on the right side of~(\ref{derivative_gene1}) can be written out in terms of an average product of Green function entries with $\Dim$ acting on it and  multiplied by derivatives of $F$. We give one example of a third order term with $p=1,q=1$,
$$\sqrt{N} \frac{1}{N^{3}} \sum_{v, a,b}   \frac{s^{(1,2)}_{ab}}{2} \E \Big[ F'(\X) \Dim \big( G_{va} G_{bv} G_{aa} G_{bb} \big) \Big]\,,$$
and one example of a fourth order terms with $p=2,q=1$,  
$$-\frac{1}{N^{3}} \sum_{v, a,b}   \frac{s^{(2,2)}_{ab}}{4} \E \Big[ F''(\X) \Dim \big(  G_{aa} G_{bb} \big) \Dim \big(  G_{aa} G_{bb} \big) \Big]\,.$$
We point out that the third order terms with $p+q+1=3$ have an additional leading factor $\sqrt{N}$.

To estimate these averaged products of Green function entries multiplied by derivatives of $F$, we introduce the following form of terms generalizing the definition in \eqref{product}:
\begin{align}\label{form_F}
\mathcal{\wt Q}(t,z_1,z_2): \qquad \frac{1}{N^{m}}  \sum_{v_1=1}^N\cdots \sum_{v_m=1}^N  c_{ v_1, \ldots, v_m}
  \E \Big[ F^{(\alpha)}(\X) \prod_{i=1}^{i_0} \Delta \widetilde \Im \Big( \prod_{l=1}^{n_i} G_{x^{(i)}_{l} y^{(i)}_l}  \Big)\Big],
\end{align}
with $\alpha,m, i_0,n_i\in \N$, $F^{(\alpha)}$ be the $\alpha$-th derivative of a smooth function $F$ which has uniformly bounded derivatives, $\Dim: \R^+ \times (\C \setminus \R)^2 \rightarrow \C$ defined in (\ref{dim}), where $\mathcal{I}:=\{v_j\}_{j=1}^m$ is a free summation index set, and the $v_j$'s may also represent $a,b$ from (\ref{K_term}) and (\ref{E_1}). The coefficients $\{c_{\mathcal{I}}:=c_{v_1, \ldots, v_m}\}$ are uniformly bounded complex numbers, and each $x^{(i)}_{l}$ and $y^{(i)}_{l}$ represent some element in the free summation index set $\mathcal{I}$. The total number of the Green function entries in (\ref{form_F}) is then given by 
\begin{align}\label{n_number}
n:=\sum_{i=1}^{i_0} n_i.
\end{align}
We further define the degree of a term in the form~\eqref{form_F} by counting the number of off-diagonal Green function entries, \ie
\begin{align}\label{degree}
d:=\sum_{i=1}^{i_0} \# \big\{ 1 \leq l \leq n_i : x^{(i)}_{l} \neq y^{(i)}_{l}\big\}\,.
\end{align}
In particular, we have $0 \leq d \leq n$. The collection of the terms in the form (\ref{form_F}) of degree $d$ is denoted by $\mathcal{\wt Q}_d \equiv \mathcal{\wt Q}_d(t,z_1,z_2)$. From the definition of $\Dim$ in (\ref{dim}), the local law in (\ref{G}) and the fact that $F$ has bounded derivatives, we have, for any term $\wt Q_d \equiv \wt Q_d(t,z_1,z_2)  \in \mathcal{\wt Q}_d$,
$$|\wt Q_d(t,z_1,z_2)|=O_{\prec}\big(\Psi^d+\frac{1}{N}\big)\,,$$
uniformly in $t\in \R^+$, and $z_1,z_2 \in S$ given in (\ref{ddd}). In the following, we often omit the parameters $t,z_1,z_2$ for notational simplicity. 

\subsection{Unmatched terms $K_{p,q+1}$ in (\ref{K_term})}
In this subsection, we follow the idea in Section \ref{sec:unmatch} to show the negligibility of the terms $K_{p,q+1}$ given in (\ref{K_term}) with unmatched indices as defined next, \cf Proposition~\ref{unmatch_lemma}. Recall Definition \ref{unmatch_def} for unmatched terms of the form in (\ref{product}). 
\begin{definition}
Given any $\wt Q_d \in  \mathcal{\wt Q}_d$ of the form in (\ref{form_F}), let $\nu^{(r)}_j$, $\nu^{(c)}_j$, be the number of times the free summation index $v_j \in \mathcal{I}$ appears in the the row index set $\{x^{(i)}_{l} \}$ and the column index set $\{y^{(i)}_{l} \}$ of the Green function entries, \ie
\begin{align}\label{v_ab}
\nu^{(r)}_j:=\sum_{ i=1}^{i_0} \#\{ 1 \leq l \leq n_i: x^{(i)}_l=v_j \},\qquad \nu^{(c)}_j:=\sum_{ i=1}^{i_0} \#\{1 \leq l \leq n_i: y^{(i)}_l=v_j\}.
\end{align}
Definition \ref{unmatch_def} for unmatched terms can be adapted naturally to the general form given in (\ref{form_F}). Define the set of unmatched summation indices as 
$$\mathcal{I}^o := \{1 \leq j \leq m: \nu^{(r)}_j \neq \nu^{(c)}_j \} \subset \mathcal{I}.$$ 
If $\mathcal{I}^o$ is not empty, then we say $\wt Q_d$ is an unmatched term, denoted by $\wt Q^o_d$. We denote by $\wt{\mathcal{Q}}_d^{o} \subset \wt{\mathcal{Q}}_d$ the collection of unmatched terms in the form (\ref{form_F}) of degree $d$.
\end{definition} 

The combination of the identity (\ref{resolvent_identity}) and the cumulant expansion formula Lemma \ref{cumulant} used previously in the proof of Proposition \ref{unmatch_lemma} still applies similarly to the form in (\ref{form_F}), using that $\{h_{ij}\}$ commute with $\Dim$~given in (\ref{dim}), the differentiation rules (\ref{dH}) and (\ref{int_1}), and the assumption that the function $F$ has bounded derivatives. Therefore, for fixed $D\ge1$ and any unmatched term $\wt Q^o_d  \in \wt{\mathcal{Q}}^o_d$ of the form in~(\ref{form_F}) with fixed $n$ given in (\ref{n_number}),
\begin{align}\label{wt_Q_d}
\E[\wt Q^o_d(t,z_1,z_2)]=O_{\prec}\big(\frac{1}{N}+\Psi^D\big)\,,
\end{align}
holds uniformly in $t\in \R^+$ and $z_1,z_2 \in S$, as in Proposition \ref{unmatch_lemma}.

Now we return to the right side of (\ref{derivative_gene1}). Using (\ref{dH}) and (\ref{int_1}), all the third order expansion terms $K_{p,q+1}$ in (\ref{K_term}) for $p+q+1=3$ can be written out as a sum of finitely many unmatched terms of the form in~(\ref{form_F}) with an extra factor~$\sqrt{N}$ in front, since both the indices $a$ and $b$ appear an odd number of times in the product of the Green function entries.  We hence have from (\ref{wt_Q_d}) that
\begin{equation}\label{L}
|K_{2,1}+K_{1,2}+K_{0,3}|=O_{\prec}(N^{-1/2}+\sqrt{N} \Psi^D)\,.
\end{equation}

Similarly, the fourth order expansion terms $K_{p,q+1}$, $p+q+1=4$, in (\ref{K_term}), with the exception of $K_{2,2}$, can also be written as a finite sum of unmatched terms of the form in (\ref{form_F}), since the number of times the index $a$ (or $b$) appears in the row index set $\{x^{(i)}_{l} \}$ does not agree with the number of times it appears in the column index set $\{y^{(i)}_{l} \}$. We then find from (\ref{wt_Q_d}) that
\begin{align}\label{Kother}
|K_{3,1}+K_{1,3}+K_{0,4}|=O_{\prec}\big(N^{-1}+\Psi^D\big)\,.
\end{align}

It hence suffices to estimate the remaining matched terms $K_{2,2}$ and $E_2$ on the right side of~(\ref{derivative_gene1}) as follows. We first consider $K_{2,2}$ given in (\ref{K_term}), $E_2$ in (\ref{E_1}) can then be estimated similarly. The proof contains two parts: 1) expanding matched terms into type-0 terms defined as below (\cf Proposition~\ref{lemma_expand_type}); 2) estimating the resulting type-0 terms whose degrees are at least two (\cf Lemma~\ref{lemma_trace_wigner}) and the rest type-0 terms of degree zero using the estimate (\ref{img_wigner}) in the edge scaling.

\subsection{Expanding $K_{2,2}$}\label{sec:expand}
We start by $K_{2,2}$ given in (\ref{K_term}), corresponding to the (2,2)-cumulants. Using the differentiation rules (\ref{dH}) and (\ref{int_1}), we first write $K_{2,2}$ as the following sum
\begin{align}\label{K_22}
K_{2,2}=\sum_{k=1}^{8}I_k,
\end{align}
with
\begin{align}\label{general_derivative}
I_1:=&-\frac{1}{2 N^{2} } \sum_{a \neq b}s^{(2,2)}_{ab}   \E \Big[  F'(\X) \Dim \Big( (G_{aa})^2 ( G_{bb})^2 \Big) \Big]; \nonumber\\
I_2:=&-\frac{1}{ N^{2} } \sum_{a \neq b}  s^{(2,2)}_{ab} \E \Big[   F'(\X) \Dim \Big(G_{ab} G_{ba}G_{aa}G_{bb}\Big)\Big];\nonumber\\
I_3:=&-\frac{2}{ N^{2} }   \sum_{a \neq b} s^{(2,2)}_{ab} \E \Big[ F''(\X)  \Dim ( G_{ab})  \Dim \Big( G_{aa} G_{bb} G_{ba} \Big) \Big]; \nonumber\\
 I_4:=&-\frac{1}{2 N^{2} }   \sum_{a \neq b}s^{(2,2)}_{ab} \E \Big[ F''(\X) \Big(    \Dim (G_{aa} G_{bb}) \Big)^2 \Big];\nonumber\\
I_5:=&-\frac{1}{N^{2} }  \sum_{a \neq b} s^{(2,2)}_{ab} \E \Big[ F'''(\X) \Dim (G_{ab}) \Dim (G_{ba}) \Dim \Big(  G_{aa} G_{bb}) \Big) \Big]; \nonumber\\
I_6:=&-\frac{1}{4 N^{2} }  \sum_{a \neq b} s^{(2,2)}_{ab}  \E \Big[ F''(\X)  \Dim \Big( (G_{ab})^2 \Big) \Dim \Big( (G_{ba})^2 \Big)\Big];\nonumber\\
I_7:=&-\frac{1}{2N^{2} } \sum_{a \neq b}  s^{(2,2)}_{ab} \E \Big[F'''(\X) \Big(\Dim (G_{ab}) \Big)^2 \Dim \Big( (G_{ba})^2 \Big)\Big];\nonumber\\
I_8:=&-\frac{1}{4 N^{2} }   \sum_{a \neq b} s^{(2,2)}_{ab}\E \Big[ F''''(\X) \Big( \Dim (G_{ab}) \Big)^2 \Big( \Dim (G_{ba}) \Big)^2  \Big],
\end{align}
where $s^{(2,2)}_{ab}$ $(a \neq b)$ are the (2,2)-cumulants of the rescaled entries $\sqrt{N} h_{ab}$ given in (\ref{cumulant_pq}).

Observe that for the terms given in (\ref{general_derivative}), both indices $a$ and $b$ appear exactly twice as the row index and exactly twice as the column index of a Green function entry. We hence consider the special case of the form in (\ref{form_F}) with the two indices $a,b$ singled out, namely,
\begin{align}\label{form_F_2}
\frac{1}{N^{\#\mathcal{I}+2}}  \sum_{a,b,\mathcal{I}} c_{ a,b, \mathcal{I}} \E \Big[ F^{(\alpha)}(\X) \prod_{i=1}^{i_0} \Delta \widetilde \Im \Big( \prod_{l=1}^{n_i} G_{x^{(i)}_{l} y^{(i)}_l}  \Big)\Big],
\end{align}
where each $x^{(i)}_{l}$ and $y^{(i)}_{l}$ represent $a$, $b$ or some element in the free summation index set $\mathcal{I}=\{v_j\}_{j=1}^m$, and $\{c_{ a,b, \mathcal{I}}\}$ are uniformly bounded complex numbers. The number of Green function entries in the product, denoted by $n$, is given as in (\ref{n_number}). The degree, denoted by~$d$, is given as in~(\ref{degree}) by counting the number of off-diagonal Green function entries in the product.

\begin{definition}\label{def_T_ab}
Given any term of the form in (\ref{form_F_2}), Definition \ref{def_type_AB} for the type-AB, type-A and Type-0 terms of the form in~(\ref{form}) can be adapted naturally. Recall $\nu_j^{(r)},\nu_j^{(c)}$ given in (\ref{v_ab}) for any free summation index $v_j \in \mathcal{I}$. We further define similarly for the special summation indices $a$ and $b$, \ie
\begin{align}
\nu^{(r)}_a:=\sum_{ i=1}^{i_0} \#\{ 1 \leq l \leq n_i: x^{(i)}_l=a \},\qquad \nu^{(c)}_a:=\sum_{ i=1}^{i_0} \#\{1 \leq l \leq n_i: y^{(i)}_l=a\};\nonumber\\
\nu^{(r)}_b:=\sum_{ i=1}^{i_0} \#\{ 1 \leq l \leq n_i: x^{(i)}_l=b \},\qquad \nu^{(c)}_b:=\sum_{ i=1}^{i_0} \#\{1 \leq l \leq n_i: y^{(i)}_l=b\}. \nonumber
\end{align}
If the following two conditions are satisfied,
\begin{enumerate}
\item all the free summation indices in $\{\mathcal{I}\}$ appear once in the row index set $\{x^{(i)}_{l}\}$ and once in the column index set $\{y^{(i)}_{l}\}$ of the Green function entries, \ie $\nu^{(r)}_j=\nu^{(c)}_j=1$ $(1\leq j \leq m)$;
\item  both the special indices $a$ and $b$ appear twice in the row index set $\{x^{(i)}_{l}\}$ and twice in the column index set $\{y^{(i)}_{l}\}$ of the Green function entries, \ie $\nu^{(r)}_a=\nu^{(c)}_a=\nu^{(r)}_b=\nu^{(c)}_b=2$,
\end{enumerate} then such a term is a type-AB term. We denote a type-AB term in the form (\ref{form_F_2}) of degree~$d$ by $T^{AB}_d \equiv T^{AB}_d(t,z_1,z_2)$.  The collection of all the type-AB terms of degree $d$ is denoted by $\mathcal{T}_d^{AB} \equiv \mathcal{T}_d^{AB}(t,z_1,z_2)$.

A type-A term in the form (\ref{form_F_2}) of degree $d$, denoted by $T_d^{A}$, has $\nu^{(r)}_a=\nu^{(c)}_a=2$, and $\nu^{(r)}_b=\nu^{(c)}_b=\nu^{(r)}_j=\nu^{(c)}_j=1$ $(1\leq j \leq m)$. Moreover, a type-0 term, denoted by $T_d$, is of the form (\ref{form_F_2}) of degree $d$ with $\nu^{(r)}_a=\nu^{(c)}_a=\nu^{(r)}_b=\nu^{(c)}_b=\nu^{(r)}_j=\nu^{(c)}_j=1$ $(1\leq j \leq m)$. In addition, the collections of the type-A terms and the type-0 terms of the form in~(\ref{form_F}) of degree $d$ are denoted by $\mathcal{T}_d^{A} \equiv \mathcal{T}_d^{A}(t,z_1,z_2)$ and $\mathcal{T}_d \equiv \mathcal{T}_d(t,z_1,z_2)$, respectively. We finally remark that the index $b$ in a type-A term, as well as both indices $a,b$ in a type-0 term, do not take special roles. We keep them in the notation in order to emphasize the inheritance from the form (\ref{form_F_2}). 
\end{definition}

Under Definition \ref{def_T_ab}, we observe that all the terms given in (\ref{general_derivative}) are type-AB terms in the form~(\ref{form_F_2}) with $\mathcal{I}=\emptyset$ and the coefficients given by $c_{a,b}=s^{(2,2)}_{ab} \delta_{a \neq b}$. In particular, we have that $I_1,I_4 \in \mathcal{T}^{AB}_0$, $I_2,I_3,I_5 \in \mathcal{T}^{AB}_2$, and $I_6, I_7, I_8 \in \mathcal{T}^{AB}_4$. In the following, we use, as in the proof of Proposition~\ref{lemma_expand_type}, the combination of the identity (\ref{resolvent_identity}) and cumulant expansion formula Lemma \ref{cumulant} to eliminate one pair of the index $b$ and also one pair of the index $a$, and thus expand the type-AB terms as linear combinations of type-0 terms up to negligible error.

\begin{lemma}\label{expand_K_1_lemma}
For any fixed $D \in \N$, we have
\begin{align}\label{K_1}
K_{2,2}=&-\frac{s_4}{2} \Big\{ \E \big[  F'(\X) \big( \Dim  ( \ud{G})^4 \big) \big]+\E \big[ F''(\X) \big( \Dim  (\ud{G})^2  \big)^2 \big]\Big\}+ \sum_{\substack{T_d \in \mathcal{T}_d\\2 \leq d <D}}T_d+O_{\prec}\big(\frac{1}{\sqrt{N}}+\Psi^D\big),
\end{align}
uniformly in $t \in \R^+$, and $z_1,z_2 \in S$ given in (\ref{ddd}), with
\begin{align}\label{s_4}
s_4 \equiv s_4(t):=\frac{1}{N^2} \sum_{a \neq b } s^{(2,2)}_{ab}(t),
\end{align}
where $s^{(2,2)}_{ab}(t)$ are the (2,2)-cumulants defined in (\ref{cumulant_pq}) of the time-dependent scaled off-diagonal entries $\sqrt{N} h_{ab}$ given in (\ref{ou_process}). In addition, the number of type-0 terms appearing in the sum in~\eqref{K_1} can be bounded by $(CD)^{cD}$, for some numerical constants $C,c>0$. 
\end{lemma}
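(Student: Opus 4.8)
The plan is to expand each of the eight type-AB terms $I_1,\dots,I_8$ from \eqref{general_derivative} into type-0 terms by applying the two-step reduction mechanism from the proof of Proposition~\ref{lemma_expand_type}: first eliminate one pair of the index $b$ using the identity \eqref{resolvent_identity} together with a cumulant expansion, reducing each $I_k$ to type-A terms of the same degree plus type-AB terms of strictly higher degree (and a negligible error), then repeat for one pair of the index $a$ to pass from type-A to type-0 terms. Iterating the combined procedure $D-d$ times for each $I_k$ and invoking the local law \eqref{G} to discard everything of degree $\ge D$, one obtains that each $I_k$ equals a finite linear combination of type-0 terms of degrees $d'$ with $d\le d'<D$, up to $O_\prec(N^{-1/2}+\Psi^D)$. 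The only genuinely new point compared with Proposition~\ref{lemma_expand_type} is that the observable now carries the factor $F^{(\alpha)}(\X)$ and the $\Dim$ operators; but, as already noted above \eqref{wt_Q_d}, these commute with the entries $h_{ij}$, the differentiation rules \eqref{dH} and \eqref{int_1} still apply, and $F$ has bounded derivatives, so the cancellation-to-leading-order mechanism (the first-order term from $\partial_{h_{jk}}G_{jk}$ cancelling the one from $\partial_{h_{kb}}G_{kb}$) and all the error estimates go through verbatim.

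The second task is to identify the leading type-0 terms, i.e.\ those that survive with a non-vanishing coefficient at degree $0$. Here I would track the reduction only for the two degree-$0$ type-AB terms $I_1$ and $I_4$. Applying the reduction to $I_1=-\frac{1}{2N^2}\sum_{a\ne b}s^{(2,2)}_{ab}\,\E[F'(\X)\Dim((G_{aa})^2(G_{bb})^2)]$: replacing the two $G_{bb}$ factors successively by $\ud G$ (Case~1 of Step~1, which is the only leading contribution) turns $(G_{aa})^2(G_{bb})^2$ into $(G_{aa})^2(\ud G)^2$, and then replacing the two $G_{aa}$ factors by $\ud G$ in Step~2 yields $(\ud G)^4$; since $\frac{1}{N^2}\sum_{a\ne b}s^{(2,2)}_{ab}=s_4$, this produces exactly $-\frac{s_4}{2}\E[F'(\X)\Dim(\ud G)^4]$. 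The term $I_4=-\frac{1}{2N^2}\sum_{a\ne b}s^{(2,2)}_{ab}\,\E[F''(\X)(\Dim(G_{aa}G_{bb}))^2]$ similarly reduces, upon replacing each $G_{aa}$ and $G_{bb}$ by $\ud G$, to $-\frac{s_4}{2}\E[F''(\X)(\Dim(\ud G)^2)^2]$. The remaining terms $I_2,I_3,I_5$ have degree $2$ and $I_6,I_7,I_8$ have degree $4$, so their reductions contribute only type-0 terms of degree $\ge 2$, which get absorbed into the sum $\sum_{2\le d<D}T_d$ in \eqref{K_1}. One also has to check that in the reductions of $I_1$ and $I_4$ the \emph{subleading} branches (the higher-degree type-AB/type-A terms, and in Case~2 the extra $G_{bb}$/$G_{aa}$ factors) only generate type-0 terms of degree $\ge 2$, which again follows because every such branch strictly increases the degree. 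The bound $(CD)^{cD}$ on the number of type-0 terms is just the composition of the counting bounds $(6(n+4D))^{2D}$ from Proposition~\ref{lemma_expand_type} applied to the finitely many $I_k$, each with bounded $n$.

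\textbf{Main obstacle.} The conceptual content is entirely contained in Proposition~\ref{lemma_expand_type}, so the work here is bookkeeping rather than new ideas; the part requiring the most care is making sure that, in the degree-$0$ cases $I_1$ and $I_4$, the reduction produces \emph{precisely} the coefficient $-s_4/2$ and no other degree-$0$ type-0 terms --- in particular that the $\Dim$ operators distribute correctly when a $G_{aa}$ inside a $\Dim(\cdots)$ is replaced by $\ud G$, and that no further degree-$0$ contributions are hidden in the Case~2 branches (where a pair of $b$-indices is replaced by a fresh index $k$ and a factor $G_{bb}$ is inserted). Since those Case~2 terms are themselves type-AB with a $G_{bb}$ present, re-applying Case~1 sends them to type-A terms in which the fresh index $k$ is now connected to the rest of the graph, forcing the degree up by at least one; so after the full iteration they only feed the $d\ge 2$ sum. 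Once this is verified, \eqref{K_1} follows by collecting the leading terms from $I_1,I_4$ and dumping everything else into $\sum_{2\le d<D}T_d+O_\prec(N^{-1/2}+\Psi^D)$.
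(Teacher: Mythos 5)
Your proposal follows essentially the same route as the paper: expand each $I_k$ via the resolvent identity plus cumulant expansion with leading-order cancellation, iterate to reduce to type-0 terms, and track the degree-$0$ contributions $I_1,I_4$ explicitly (replacing all four diagonal factors by $\ud G$) to extract the coefficient $-s_4/2$, while the degree-$\ge 2$ terms $I_2,I_3,I_5,I_6,I_7,I_8$ and all subleading branches feed the $\sum_{2\le d<D}T_d$ sum. The bookkeeping and the identification of the leading terms match the paper's proof, so the argument is correct.
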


\begin{proof}
We first consider $I_1 \in \mathcal{T}^{AB}_0$ given in (\ref{general_derivative}) and expand it into a sum of finitely many type-0 terms. The expansion procedure consists of two steps: 1) eliminating one pair of the index $b$ and expanding $I_1$ in terms of type-A terms; 2) further eliminating one pair of the index $a$ in the resulting type-A terms from 1) and then expanding them in terms of type-0 terms.

Recall the definition of $\Dim $ in (\ref{dim}). Replacing $G_{bb}$ by the relation (\ref{resolvent_identity}) and using the cumulant expansion formula in Lemma \ref{cumulant}, since $\{h_{ij}\}$ commute with $\Dim$, we have
\begin{align}\label{temp0}
I_1=&-\frac{1}{2N^2} \sum_{a,b} s^{(2,2)}_{ab} \E \Big[  F'(\X) \Dim \Big( (G_{aa})^2 G_{bb} \Big(\underline{G}+G_{bb} \underline{HG}-\underline{G}(HG)_{bb} \Big)\Big) \Big]\nonumber\\
=&-\frac{1}{2N^2} \sum_{a,b} s^{(2,2)}_{ab} \E \Big[ F'(\X) \Dim \Big(  (G_{aa})^2  G_{bb} \ud{G} \Big)\Big]-\frac{1}{2N^4} \sum_{a,b,j,k} s^{(2,2)}_{ab}\E \Big[ \frac{ \partial F'(\X)\Dim \Big( (G_{aa})^2 ( G_{bb})^2 G_{jk} \Big)}{\partial h_{jk}}\Big]
\nonumber\\
&+\frac{1}{2N^4}\sum_{a,b,j,k} s^{(2,2)}_{ab}\E \Big[  \frac{\partial F'(\X) \Dim \Big( (G_{aa})^2  G_{bb} G_{jj} G_{kb} \Big)}{\partial h_{kb}}\Big]+O_{\prec}\big(\frac{1}{\sqrt{N}}\big)\,,
\end{align}
where the error is from the truncation of the cumulant expansion, as in the proof of Lemma \ref{lemma_first}. The first term on the right side of~\eqref{temp0} is a type-A term in $\mathcal{T}^{A}_0$ of the form (\ref{form_F_2}) obtained by replacing~$G_{bb}$ with~$\ud{G}$ in the product of the Green function entries. We observe as in (\ref{expand_G_bb}), the leading sub-term from the second term above, corresponding to taking $\frac{\partial}{\partial h_{jk}}$ of $G_{jk}$, is exactly canceled by the leading sub-term from the third term resulting from taking $\frac{\partial}{\partial h_{kb}}$ of $G_{kb}$. Thus using the differentiation rules~(\ref{dH}) and~(\ref{int_1}), the second and third term on the right side of (\ref{temp0}) can be written as a sum of at most ten type-AB terms of the form in (\ref{form_F_2}) with degrees $d' \geq 2$, the number of Green function entries $n'=6$, and $\mathcal{I}'=\{j,k\}$. We denote the finite sum as $\sum_{T^{AB}_{d'} \in \mathcal{T}^{AB}_{d'}; d' \geq 2} T^{AB}_{d'}$, and write
\begin{align}\label{I_1_expand}
I_1=-\frac{1}{2N^2} \sum_{a,b} s^{(2,2)}_{ab} \E \Big[ F'(\X) \Dim \Big(  (G_{aa})^2  G_{bb} \ud{G} \Big)\Big]+\sum_{T^{AB}_{d'} \in \mathcal{T}^{AB}_{d'}; d' \geq 2} T^{AB}_{d'}+O_{\prec}\big(\frac{1}{\sqrt{N}}\big)\,.
\end{align}

Next, we further replace $G_{bb}$ in the first terms on the right side of~\eqref{I_1_expand} by $\ud{G}$ using~(\ref{resolvent_identity}) and the cumulant expansion formula as in (\ref{temp0}) to obtain
\begin{align}\label{temp_ppp}
-\frac{1}{2N^2} \sum_{a,b} s^{(2,2)}_{ab} &\E \Big[ F'(\X) \Dim \Big( (G_{aa})^2  G_{bb} \ud{G} \Big)\Big]=-\frac{1}{2N^2} \sum_{a,b} s^{(2,2)}_{ab} \E \Big[ F'(\X) \Dim \Big( (G_{aa})^2  ( \ud{G} )^2 \Big)\Big]\nonumber\\
&-\frac{1}{2N^4} \sum_{a,b,j,k} s^{(2,2)}_{ab}\E \Big[ \frac{ \partial F'(\X)\Dim \Big( (G_{aa})^2 G_{bb} \ud{G} G_{jk} \Big)}{\partial h_{jk}}\Big]
\nonumber\\
&+\frac{1}{2N^4}\sum_{a,b,j,k} s^{(2,2)}_{ab}\E \Big[  \frac{\partial F'(\X) \Dim \Big( (G_{aa})^2  \ud{G} G_{jj} G_{kb} \Big)}{\partial h_{kb}}\Big]+O_{\prec}\big(\frac{1}{\sqrt{N}}\big)\,.
\end{align}
Observe similarly to above that the leading sub-term from the second term will be canceled exactly by the leading sub-term from the third term.  The remaining sub-terms form a sum of at most ten type-A terms of degrees at least two, denoted as $\sum_{{T^{A}_{d'} \in \mathcal{T}^{A}_{d'}; d' \geq 2}} T^{A}_{d'}$. Combining with (\ref{I_1_expand}), we have
\begin{align}\label{I_1_expand_2}
I_1=-\frac{1}{2N^2} \sum_{a,b} s^{(2,2)}_{ab} \E \Big[ F'(\X) \Dim \Big( (G_{aa})^2  ( \ud{G} )^2 \Big)\Big]+\sum_{\substack{T^{A}_{d'} \in \mathcal{T}^{A}_{d'}\\d' \geq 2}} T^{A}_{d'}+\sum_{\substack{T^{AB}_{d'} \in \mathcal{T}^{AB}_{d'}\\ d' \geq 2}} T^{AB}_{d'}+O_{\prec}\big(\frac{1}{\sqrt{N}}\big)\,.
\end{align}

In general, for an arbitrary type-AB term $T^{AB}_d \in \mathcal{T}^{AB}_d$ of the form (\ref{form_F_2}) with fixed $n$ given in (\ref{n_number}), we extend the arguments as in Step 1 in Subsection~\ref{toy_fourth}, using the differentiation rules~(\ref{dH}) and~(\ref{int_1}) and that $\{h_{ij}\}$ commute with $\Dim$ in (\ref{dim}). We hence obtain the analogue of~(\ref{step_1}),
\begin{align}\label{T_expand_step_1}
T_d^{AB}=\sum_{T_{d}^{A} \in \mathcal{T}_{d}^{A}}T_{d}^{A} +\sum_{ \substack{ T_{d'}^{AB} \in \mathcal{T}_{d'}^{AB}\\ d' \geq d+1}} T_{d'}^{AB}+O_{\prec}\big(\frac{1}{\sqrt{N}}\big)\,,
\end{align}
where the summations above denote a sum of at most two type-A terms of degree $d$ and a sum of at most $6(n+4)$ type-AB terms of degrees not less than $d+1$. The number of the Green function entries in each term above is at most $n+4$. Iterating the expansion procedure (\ref{T_expand_step_1}) $D-d$ times and using the local law in~(\ref{G}), we expand  $T^{AB}_d\in \mathcal{T}^{AB}_d$ as a sum of at most $(6(n+4D))^D$ type-A terms of degrees at least~$d$, up to negligible error. We write for short
\begin{align}\label{step_11A}
T_d^{AB}=\sum_{d\leq d'<D} \sum_{T_{d'}^{A} \in \mathcal{T}_{d'}^{A}}T_{d'}^{A} +O_{\prec}\big(\frac{1}{\sqrt{N}}+\Psi^D\big)\,,
\end{align}
where the number of the Green function entries in each type-A term above is bounded by~$(n+4D)$.

Therefore, from (\ref{I_1_expand_2}) and (\ref{step_11A}), the first term $I_1 \in \mathcal{T}^{AB}_0$ given in (\ref{general_derivative}) can be reduced into the following sum of type-A terms,
\begin{align}\label{temp00}
I_1=&-\frac{1}{2N^2} \sum_{a,b} s^{(2,2)}_{ab} \E \Big[ F'(\X) \Dim \Big( (G_{aa})^2  ( \ud{G} )^2 \Big)\Big]+\sum_{2 \leq d <D} \sum_{T^A_d \in \mathcal{T}^A_d}T^A_d+O_{\prec}\big(\frac{1}{\sqrt{N}}+\Psi^D\big)\,,
\end{align}
where the number of type-A terms above is bounded by $(C_1D)^{c_1D}$ and the number of the Green function entries in each type-A term is bounded by $C_1D$  for some constants $c_1, C_1>0$.

Next, we expand the resulting type-A terms on the right side of (\ref{temp00}) into linear combinations of type-0 terms by further eliminating one pair of the index $a$. In general, for any type-A term $T^{A}_d \in \mathcal{T}^{A}_d$ of the form (\ref{form_F_2}), using similar arguments as in Step 2 in Subsection \ref{toy_fourth}, we obtain the analogue of (\ref{step_22}),
\begin{align}\label{step_22A}
T_d^{A}=\sum_{d \leq d' < D}  \sum_{T_{d'} \in \mathcal{T}_{d'}}T_{d'}+O_{\prec}\big(\frac{1}{\sqrt{N}}+\Psi^D\big)\,,
\end{align}
where the number of these type-0 terms is bounded by $(6(n+4D))^D$, and the number of the Green function entries in each type-0 term is bounded by~$(n+4D)$.

Similar to (\ref{I_1_expand}) and (\ref{I_1_expand_2}), we further eliminate the index $a$ and expand $I_1 \in \mathcal{T}^{AB}_0$ in (\ref{temp00}) into type-0 terms using (\ref{step_22A}), \ie
\begin{align}
I_1=-\frac{s_4}{2}\E \Big[  F'(\X)\Big(  \Dim\, ( \ud{G})^4 \Big) \Big]+\sum_{2 \leq d <D} \sum_{T_d \in \mathcal{T}_d}T_d+O_{\prec}\big(\frac{1}{\sqrt{N}}+\Psi^D\big)\,,
\end{align}
with $s_4$ given in (\ref{s_4}), where the number of the type-0 terms in the sum above is bounded by $(C_2D)^{c_2D}$.

We now turn to the remaining terms in~\eqref{general_derivative}. We only sketch the arguments for sake of brevity.
We start with $I_4\in \mathcal{T}^{AB}_{0}$ in~\eqref{general_derivative}. Similarly to $I_1 \in \mathcal{T}_0^{AB}$, $I_4$ can be expanded as
\begin{align}
I_4=&-\frac{s_4}{2} \E \Big[ F''(\X)  \Big(\Dim  (\ud{G})^2  \Big)^2 \Big]+\sum_{2 \leq d <D} \sum_{T_d \in \mathcal{T}_d}T_d+O_{\prec}\big(\frac{1}{\sqrt{N}}+\Psi^D\big)\,.
\end{align}
Further, using (\ref{step_11A}) and (\ref{step_22A}), $I_2$, $I_3$, $I_5 \in \mathcal{T}^{AB}_2$ from~\eqref{general_derivative} can also be expanded as sums of finitely many type-0 terms of degrees at least two up to negligible error. Moreover, the last three terms $I_6, I_7, I_8 \in \mathcal{T}^{AB}_4$ can be expanded similarly into type-0 terms of degrees  at least four. 

In sum, we have expanded $K_{2,2}$ given in (\ref{K_22}) as a finite sum of type-0 terms,
\begin{align}
K_{2,2}=&-\frac{s_4}{2} \Big\{ \E \Big[  F'(\X) \Big( \Dim  ( \ud{G})^4 \Big) \Big]+\E \Big[ F''(\X) \Big( \Dim  (\ud{G})^2  \Big)^2 \Big]\Big\}+\sum_{2 \leq d <D}\sum_{{T_d \in \mathcal{T}_d}}T_d+O_{\prec}\big(\frac{1}{\sqrt{N}}+\Psi^D\big),\nonumber
\end{align}
where the number of the type-0 terms in the sum above is bounded by $(C_3D)^{c_3D}$ for some $c_3,C_3>0$. This completes the proof of Lemma \ref{expand_K_1_lemma}.
\end{proof}

It then suffices to estimate the resulting type-0 terms on the right side of (\ref{K_1}) in Lemma \ref{expand_K_1_lemma}.

\subsection{Estimate of type-0 terms}\label{sec:degreetwo}
In this subsection, we first show that all the resulting type-0 terms of degrees $d \geq 2$ on the right side of (\ref{K_1}) are bounded by $O_{\prec}(N^{-1/3})$. Using the estimate in (\ref{img_wigner}) and similar arguments as in the proof of (\ref{tracek}) in Lemma~\ref{lemma_trace}, we establish the following analogue of Lemma~\ref{lemma_trace_wigner}.

\begin{lemma}\label{T_2_lemma}
For any type-0 term $T_d \in \mathcal{T}_d$ of the form (\ref{form_F_2}) of degree $d \geq 2$, we have
\begin{align}\label{T_2}
|T_d(t,z_1,z_2)| =O_{\prec}\big(N^{-1/3}\big)\,,
\end{align}
uniformly in $t \in \R^+$, $z_1,z_2 \in S_{\mathrm{edge}}$ given in (\ref{S_edge}).
\end{lemma}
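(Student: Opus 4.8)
The plan is to reduce the bound on a type-0 term $T_d$ of degree $d\geq 2$ to the key input estimate~\eqref{img_wigner} for $\E[\Im m_N]$ in the edge regime, together with the local law~\eqref{G} and Ward-type identities, essentially mimicking the proof of~\eqref{tracek} in Lemma~\ref{lemma_trace} but for arbitrary Wigner matrices rather than the GUE. Recall that a type-0 term $T_d$ is of the form~\eqref{form_F_2} with all free summation indices (including $a,b$) appearing exactly once as a row index and once as a column index of the Green function entries, and with a prefactor $F^{(\alpha)}(\X)$ that has uniformly bounded derivatives and can be pulled out in absolute value. After discarding $F^{(\alpha)}(\X)$ and expanding each $\Dim$ factor into a finite combination of products of Green function entries evaluated at the four spectral parameters $z_1,\overline{z_1},z_2,\overline{z_2}$ (all in $S_{\mathrm{edge}}$), it suffices to bound, uniformly in these spectral parameters, an averaged product $\frac{1}{N^{\#\mathcal I+2}}\sum_{\mathcal I,a,b} c_{a,b,\mathcal I}\prod_{i=1}^n G_{x_iy_i}$ in which every index appears once as a row and once as a column index.

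First I would organize the indices into clusters of \emph{connected} indices, exactly as in the proof of~\eqref{tracek}: two indices are connected if they appear together (one as row, one as column) in a common Green function entry; an index that appears in a diagonal entry $G_{vv}$ is isolated. Since the degree $d$ is at least two, there is at least one cluster containing at least two connected indices; by the one-row-one-column structure such a cluster forms a closed chain $G_{v_1v_2}G_{v_2v_3}\cdots G_{v_{n_0}v_1}$ with $n_0\geq 2$. Bounding all remaining (off-cluster) off-diagonal entries by $\Psi$ and all diagonal entries by $O(1)$ via the local law~\eqref{G}, one reduces to controlling $\frac{1}{N^{n_0}}\sum_{v_1,\dots,v_{n_0}}|G_{v_1v_2}G_{v_2v_3}\cdots G_{v_{n_0}v_1}|$ with at most $\Psi^{n_0-2}$ extra gain when $n_0\geq 3$. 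Then, applying Young's inequality and the Ward identity~\eqref{ward} (which holds for any Wigner matrix, not just the GUE, as it follows purely from the spectral decomposition), this chain is bounded by $O_\prec\big(\Im m_N(z)/(N\eta)^{n_0-1}\big)$ for $n_0\geq 3$ and by $O_\prec\big(\Im m_N(z)/(N\eta)\big)$ for $n_0=2$; when the two spectral parameters in the chain differ one uses the off-diagonal Ward identity $\frac1{N^2}\sum_{ij}|G_{ij}(z)G_{ij}(z')|\prec \frac{\Im m_N(z)+\Im m_N(z')}{N\eta}$ by a further Cauchy--Schwarz. The point is that $\Psi(z)=O(1/(N\eta))$ on $S_{\mathrm{edge}}$, so the net estimate is $O_\prec\big(\Im m_N/(N\eta)\big)$.

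Finally, I would invoke the crucial input~\eqref{img_wigner} from Proposition~\ref{GCT_mn}, namely $\E[\Im m_N(t,z)]\leq CN^{-1/3+\epsilon}$ uniformly for $z\in S_{\mathrm{edge}}$ and $t\geq 0$, combined with $\eta\geq N^{-1+\epsilon}$, which gives $\Im m_N/(N\eta)\prec N^{-1/3+\epsilon}\cdot N^{-\epsilon}=N^{-1/3}$ in expectation after using the deterministic bound $|T_d|\leq \mathrm{poly}(N)$ (from $\|G\|_2\leq 1/\eta$) and part~(3) of Lemma~\ref{dominant} to pass from stochastic domination to an expectation bound. Summing over the finitely many Green-function-product terms produced by expanding the $\Dim$ factors, and absorbing the harmless polynomial factors, yields~\eqref{T_2}. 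The main obstacle — and the reason this is not completely routine — is that the prefactor $F^{(\alpha)}(\X)$ couples the Green functions to the whole spectrum through $\X=N\int\Im m_N$; however, since $F$ has uniformly bounded derivatives, $|F^{(\alpha)}(\X)|\leq C$ deterministically, so it can simply be pulled out in absolute value before applying the local law and Ward identities, and no additional care is needed. A minor technical point is tracking that all spectral parameters arising from the $\Dim$ expansion stay in $S_{\mathrm{edge}}$, which follows directly from~\eqref{le zs} and the definition~\eqref{S_edge}.
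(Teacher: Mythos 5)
Your proposal is correct and follows essentially the same route as the paper's proof: expand the $\Dim$ factors into Green-function products at the four spectral parameters $z_1,\overline{z_1},z_2,\overline{z_2}$, bound $|F^{(\alpha)}(\X)|$ by a deterministic constant, decompose the remaining indices into connected clusters, apply the local law to off-cluster entries, use Young's inequality together with the Ward identity on a chain of length $\ge 2$ to extract a factor $\Im m_N/(N\eta)^{n_0-1}$, and conclude via the input estimate~\eqref{img_wigner} and Lemma~\ref{dominant}(3). Your explicit remark that the Ward bound still works across mixed spectral parameters (since all of $z_1,\overline{z_1},z_2,\overline{z_2}$ share the same $\eta$) is a small point the paper leaves implicit but is handled identically in substance.
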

 \begin{proof}
Given any type-0 term $T_d \in \mathcal{T}_d$ of the form (\ref{form_F_2}), we no longer emphasize the indices $a$,$b$ for notational simplicity. We then write $T_d$ from the definition of $\Dim$ in (\ref{dim}) as
\begin{equation*}\label{temp_use_td}
 \E \Big[ F^{(\alpha)}(\X) \frac{1}{N^{\# \mathcal I} } \sum_{\mathcal{I}} c_{\mathcal{I}}  \prod_{i=1}^{i_0}  \Big( \prod_{l=1}^{n_i}G_{x^{(i)}_{l} y^{(i)}_l} (t,z_1)-\prod_{l=1}^{n_i} G_{x^{(i)}_{l} y^{(i)}_l}(t,\overline{z_1})-\prod_{l=1}^{n_i}G_{x^{(i)}_{l} y^{(i)}_l}(t,z_2) +\prod_{l=1}^{n_i} G_{x^{(i)}_{l} y^{(i)}_l}(t,\overline{z_2})\Big],
\end{equation*}
with $t\geq 0$, $z_1,z_2 \in S_{\mathrm{edge}}$, and $\alpha,m,i_0,n_i \in \N$, where each summation index $v_j \in \mathcal{I}:=\{v_j\}_{j=1}^m$ appears exactly once in the row index set $\{x^{(i)}_{l}\}$ and once in the column index set $\{x^{(i)}_{l}\}$ of the Green function entries. In particular, we have $\#\mathcal{I}=n=\sum_{i=1}^{i_0}n_i$. For $1\leq j \leq m$, if there exist $x^{(i)}_{l}=y^{(i)}_{l}=v_j$, then we say $v_j$ is isolated. For any $1\leq j \neq j' \leq m$, if there exist $1 \leq i \leq i_0, 1 \leq l \leq n_i$ such that either $x^{(i)}_{l}=v_j$, $y^{(i)}_{l}=v_{j'}$ or $y^{(i)}_{l}=v_j$, $x^{(i)}_{l}=v_{j'}$, then we say that $v_j$ and $v_{j'}$ are connected indices.  We then write out $T_d$ as a linear combination of the terms in the following form, which are rearranged using clusters of connected indices, denoted by $\{v^{(q)}_1, \ldots, v_{l_q}^{(q)}\}_{q}$,
\begin{align}
(**):= \E \Big[ F^{(\alpha)}(\X) \frac{1}{N^{\# \mathcal I} } \sum_{\mathcal{I}} c_{\mathcal{I}}  \prod_{q} \Big( G_{v^{(q)}_1v^{(q)}_2}(t,z_{1}^{(q)}) G_{v^{(q)}_2v^{(q)}_3}(t,z_{2}^{(q)})   \cdots G_{v^{(q)}_{l_q}v^{(q)}_1}(t,z_{l_q}^{(q)}) \Big)\Big],
\end{align}
where $\sum_{q}l_q=n$, $z_l^{(q)}$ for any $q$ and $1 \leq l \leq l_q$ takes the values $z_1,\overline{z_1}, z_2$, or $\overline{z_2}$. Because the degree $d\ge 2$, there exists at least one cluster of connected indices such that $l_q \geq 2$. We may assume that $q=1$. Recall that the coefficients $\{c_{\mathcal{I}}\}$ are uniformly bounded and that the function $F$ has bounded derivatives. Then using the local law in (\ref{G}) and the properties of stochastic domination in Lemma \ref{dominant}, we have that
$$|(**)| \prec \E \Big[\frac{1}{N^{l_1}}  \sum_{v^{(1)}_1, \ldots, v^{(1)}_{l_1}=1}^N\Big|  G_{v^{(1)}_1v^{(1)}_2}(t,z_{1}^{(1)}) G_{v^{(1)}_2v^{(1)}_3}(t,z_{2}^{(1)})   \cdots G_{v^{(1)}_{l_1}v^{(1)}_1}(t,z_{l_1}^{(1)})  \Big| \Big].$$
In combination with Young's inequality and the Ward identity (\ref{ward}), we find, similarly to~\eqref{traces}, that
\begin{align}
|(**)| \prec & \frac{\E [\Im m_N(t,z_1)]}{(N \eta)^{l_1-1}} +\frac{\E[\Im m_N(t,z_2)]}{(N \eta)^{l_1-1}}, \qquad l_1 \geq 2, ~z_1,z_2 \in S_{\mathrm{edge}}.
\end{align}
Together with the estimate (\ref{img_wigner}) on $\E[\Im m_{N}(t,z)]$ in the edge scaling and the fact that $\eta \geq N^{-1+\epsilon}$, we obtain the estimate in (\ref{T_2}).
\end{proof}

Applying Lemma~\ref{T_2_lemma} to~\eqref{K_1}, we find that
\begin{align}\label{K_11}
K_{2,2}=&-\frac{s_4}{2} \Big\{ \E \Big[  F'(\X) \Big( \Dim  ( \ud{G})^4 \Big) \Big]+\E \Big[ F''(\X) \Big( \Dim  (\ud{G})^2  \Big)^2 \Big]\Big\}+O_{\prec}(N^{-1/3}+\Psi^D),
\end{align}
uniformly in $t\geq 0$ and $z_1,z_2 \in S_{\mathrm{edge}}$. It then suffices to estimate the remaining type-0 terms of degree zero on the right side of (\ref{K_11}). Using the definition of $\Dim$ in (\ref{dim}), the estimate in (\ref{img_wigner}) of $\E [\Im \ud{G}(t,z)]$ for $z \in S_{\mathrm{edge}}$ and $t \geq 0$, the properties of stochastic domination Lemma \ref{dominant} and that the function $F$ has bounded derivatives, we conclude, for any fixed $D\ge 1$, that
\begin{align}\label{Kone}
|K_{2,2}|=O_{\prec}(N^{-1/3+\epsilon}+\Psi^D)\,,
\end{align}
uniformly in $t \in \R^+$ and $z_1, z_2 \in S_{\mathrm{edge}}$.

\subsection{Estimate of $E_2$}\label{sec:diagonal}
In this subsection, we estimate the second order term $E_2$ given in (\ref{E_1}) similarly as $K_{2,2}$. Using (\ref{int_1}) and (\ref{dH}), we write $E_2$ as
\begin{align}
E_2=&-\frac{1}{2 N} \sum_{a=1}^N ( s^{(2)}_{aa} -1)\E \Big[F'(\X)  \Dim (G_{aa})^2 \Big]-\frac{1}{2 N} \sum_{a=1}^N ( s^{(2)}_{aa} -1)\E \Big[F''(\X)  \Big(\Dim (G_{aa}) \Big)^2 \Big].
\end{align}
Observe that the above two terms are both type-A terms in $\mathcal{T}^A_0$ of the form (\ref{form_F_2}), where the index $b$ no longer plays a special role. Using the combination of the identity (\ref{resolvent_identity}) and the cumulant expansion formula, we expand $E_2$ into a sum of finitely many type-0 terms, similarly to~(\ref{K_1}). That is,
\begin{align}\label{E_2_step1}
E_2=&-\frac{s_2 -1}{2} \Big\{ \E \Big[F'(\X)  \Dim (\ud G)^2 \Big]+\E \Big[F''(\X)  \Big(\Dim (\ud G) \Big)^2 \Big] \Big\}+\sum_{\substack{T_{d} \in \mathcal{T}_{d}\\2 \leq d <D}}T_{d}+O_{\prec}\Big(\frac{1}{\sqrt{N}}+\Psi^D\Big),
\end{align}
uniformly in $t\geq 0$ and $z_1,z_2 \in S$, with 
\begin{align}\label{s_2}
s_2 \equiv s_2(t):=\frac{1}{N} \sum_{a=1}^N  s^{(2)}_{aa}(t),
\end{align}
where $s^{(2,2)}_{aa}(t)$ are the second order cumulants given in (\ref{cumulant_k}) of the time-dependent scaled entries $\sqrt{N} h_{aa}$. Moreover, the number of the type-0 terms in the summation on the right side of (\ref{E_2_step1}) is bounded by $(CD)^{cD}$ for some numerical constants $c,C>0$. 

Similarly to (\ref{Kone}), we conclude from Lemma \ref{T_2_lemma} and the estimate in (\ref{img_wigner}) that, for any $D\geq 1$,
\begin{align}\label{E}
|E_2|=O_{\prec}\big(N^{-1/3+\epsilon}+\Psi^D\big)\,,
\end{align}
uniformly in $t \in \R^+$ and $z_1, z_2 \in S_{\mathrm{edge}}$.

Plugging (\ref{Kone}), (\ref{E}), (\ref{L}), and (\ref{Kother}) into (\ref{derivative_gene}) and by choosing $D \geq \frac{1}{\epsilon}$ with $\epsilon>0$ as in (\ref{control_eq}), we hence finish the proof of Proposition \ref{prop_theta}.
\end{proof}

\section{Real symmetric Wigner matrices}\label{sec: real}
In this section, we prove the Green function comparison theorem, Theorem \ref{green_comparison}, for real Wigner matrices, using similar ideas as for the complex Hermitian case. To simplify the discussion, we will only address the differences.

Consider the real-valued matrix Ornstein-Uhlenbeck process $\big(h_{ab}(t)\big)_{a,b=1}^N$: 
\begin{equation}\label{ou_process_goe}
\dd h_{ab}(t)=\sqrt{\frac{1+\delta_{ab}}{N}} \dd \beta_{ab}(t)- \frac{1}{2} h_{ab}(t) \dd t, \qquad h_{ab}(0)=(H_N)_{ab}\,,
\end{equation}
 where $\big(\beta_{ab}(t)\big)_{a \leq b }$ are independent real standard Brownian motions with $\beta_{ba}(t)={\beta_{ab}(t)}$. The initial condition $H_N$ is a real symmetric Wigner matrix satisfying Assumption \ref{assump}. In distribution this is equivalent to writing
 \begin{equation}\label{sum_H_goe}
H(t)=\mathrm{e}^{-\frac{t}{2}}H_N +\sqrt{1-\mathrm{e}^{-t}} \mathrm{GOE}_N\,, \qquad t \in \R^+.
 \end{equation}
 
 As the analogue of (\ref{dH}), we have a new differentiation rule for the Green function entry of a real symmetric matrix,
\begin{equation}\label{dH_real}
\frac{\partial G_{ij}}{ \partial h_{ab}}=-\frac{G_{ia} G_{bj}+G_{ib} G_{aj}}{1+\delta_{ab}}.
\end{equation}
Then using Ito's formula similarly to (\ref{derivative2}), we obtain
\begin{align}\label{derivative_goe}
\dd G_{ij}(t,z)=\,\dd M_{ij}+\Theta_{ij} \dd t\,,
\end{align}
where the diffusion term $\dd M_{ij}:=-\frac{1}{\sqrt{N}} \sum_{a \leq b}\frac{1}{\sqrt{1+\delta_{ab}}}  \Big( G_{ia}G_{bj} +G_{ib}G_{aj}\Big)\dd \beta_{ab}$, and the drift term
$$\Theta_{ij}:=\frac{1}{2}\sum_{a, b}  h_{ab}  G_{ia} G_{bj}  +\frac{1}{2N} \sum_{a,b} \Big( 2 G_{ia} G_{ab} G_{bj}+ G_{ib} G_{bj} G_{aa}+  G_{ia} G_{aj} G_{bb} \Big)\,.$$

Recall $F$ in \eqref{q_function} and $\X$ in (\ref{X}). Applying Ito's formula on $F(\X)$ and using (\ref{derivative_goe}), we derive the dynamics of $F(\X)$ in the real symmetric case,
$$\dd F(\X)=\dd M+\Theta \dd t\,,$$
where the diffusion term $\dd M$ yields a martingale after integration, see Remark \ref{diffusion_mart}, and the drift term is given by (we omit the parameters $t$ and $2+x+\ii \eta$ of the following Green function entries)
\begin{align}\label{theta_real}
\Theta=& F'(\X) \Im \intkappa  \frac{1}{2} \sum_{i, a, b} \Big(  h_{ab}G_{ia} G_{bi}+\frac{2}{N}G_{ia} G_{ab} G_{bi}+\frac{1}{N} G_{ib} G_{bi} G_{aa}+\frac{1}{N}  G_{ia} G_{ai} G_{bb}   \Big) \dd x\nonumber\\
&+F''(\X) \frac{1}{N} \sum_{i,j} \sum_{a, b} \Big( \Im \intkappa G_{ia} G_{bi} \dd x \Big)\Big( \Im \intkappa G_{jb} G_{aj} \dd x \Big)\nonumber\\
=& \frac{1}{2} \sum_{a, b}  h_{ab}\Big( F'(\X)  \Delta \Im G_{ba} \Big)+\frac{1}{N}   \sum_{a, b} \Big( F'(\X)\Delta \Im (G_{aa} G_{bb})\Big) +\frac{1}{N}   \sum_{a, b} \Big( F'(\X)\Delta \Im (G_{ab})^2 )\Big)\nonumber\\
&+\frac{1}{N}   \sum_{a, b} \Big( F''(\X) ( \Delta \Im G_{ab} )( \Delta \Im G_{ba}) \Big) \,.
\end{align}
where we abbreviate, for any function $P\,:\,\R^+ \times \C \setminus \R \longrightarrow\C$,
\begin{align}\label{delta_img}
\Delta \Im P \equiv (\Delta \Im P)(t,z_1,z_2):=\Im P(t,z_2)-\Im P(t,z_1),
\end{align}
with $t \in \R^+$, $z_1=2+\kappa_1+\ii \eta,z_2=2+\kappa_2+\ii \eta \in S_{\mathrm{edge}}$, as in (\ref{le zs}). In fact, comparing with the drift term in (\ref{theta}) for complex Hermitian matrices, the notation $\widetilde{\Im}$ in (\ref{tilde}) is replaced with the imaginary part $\Im$. This is because $\{h_{ab}\}$ commute with taking the imaginary part, and the Green function of a real symmetric matrix satisfies
\begin{align}\label{symmetry_g}
G_{ij}(z)=G_{ji}(z), \qquad z \in \C \setminus \R.
\end{align}
Moreover, using (\ref{dH_real}), it is easy to find the analogous differentiation rule to (\ref{int_1}),
 \begin{align}\label{int_1_real}
\frac{ \partial F'(\X) }{ \partial h_{ab} }=-\frac{2}{1+\delta_{ab}} F''(\X) \sum_{i=1}^N \Im  \Big( \intkappa G_{ia} G_{bi}(2+x+\ii \eta) \dd x \Big)=-\frac{2}{1+\delta_{ab}} F''(\X) \Delta \Im G_{ab}\,,
\end{align}
with $\Delta \Im$ given in (\ref{delta_img}).

Next, we return to the right side of (\ref{theta_real}). Applying the real cumulant expansion formula in Lemma~\ref{cumulant} for the independent entries $\{h_{ab}\}_{a\leq b}$ in the first term up to the fourth order and using the differentiation rules (\ref{dH_real}) and (\ref{int_1_real}), the second order terms in the cumulant expansions are canceled exactly by the last three terms on the right side of~(\ref{theta_real}). We hence obtain the real analogue of~(\ref{derivative_gene}),
\begin{align}\label{drift term beta1}
\E [\Theta] =& \frac{1}{2 N} \sum_{a=1}^N ( s^{(2)}_{aa} -2)\E \Big[\frac{ \partial F'(\X)  \Delta \Im G_{aa}}{ \partial h_{aa} }\Big]  +\frac{1}{4N^{3/2} }  \sum_{a, b} s^{(3)}_{ab} \E \Big[\frac{ \partial^2 F'(\X) \Delta \Im G_{ba} }{\partial h^2_{ab}}\Big]\nonumber\\
&+\frac{1}{12 N^{2} }   \sum_{a, b} s^{(4)}_{ab} \E \Big[\frac{ \partial^3 F'(\X)  \Delta \Im G_{ba}}{ \partial h^3_{ab} }\Big]+O_{\prec}\big(\frac{1}{\sqrt{N}}\big)\,,
\end{align}
where the error $O_{\prec}(\frac{1}{\sqrt{N}})$ is from the truncation of the cumulant expansion, and $s^{(k)}_{ab}$ is the $k$-th cumulant defined in (\ref{cumulant_k}) of the rescaled entries $\sqrt{N}h_{ab}$. 

We now claim that Proposition \ref{prop_theta} holds true in the real case, which leads to Theorem \ref{green_comparison} for $\beta=1$. The arguments in the complex case discussed before can be applied similarly, using the modified differentiation rules (\ref{dH_real}) and~(\ref{int_1_real}), and the real cumulant expansion formula in Lemma~\ref{cumulant}.

To simplify the statement, we only consider the simplest version of the Green function comparison theorem for $F(x)=x$, as proved in Proposition \ref{GCT_mn} for complex Hermitian Wigner matrices. The Green function comparison theorem for general functions $F$ can be proved using the same idea, following the arguments in Section \ref{sec:fourth} for the complex Hermitian case.

Applying (\ref{derivative_goe}) to the time dependent normalized trace of the Green function, $m_N(t,z)$, we find the real analogue of (\ref{diff_eq_1}), \ie
\begin{align}\label{diff_eq_1_goe}
\dd (m_N(t,z))=&\dd M_0+ \Theta_0 \dd t\,,
\end{align}
with the diffusion term $\dd M_0:=\frac{1}{N} \sum_{v=1}^N \dd M_{vv}$ which yields a martingale term after integration; see Remark \ref{diffusion_mart}, and the drift term $\Theta_0 \dd t:=\frac{1}{N} \sum_{v=1}^N\Theta_{vv} \dd t$. Applying the real cumulant expansion formula as in (\ref{drift term beta1}), the drift term satisfies the real analogue of (\ref{step0}), \ie
\begin{align}\label{derivative_gene_goe}
\E [\Theta_0] =& \frac{1}{2 N^2} \sum_{v,a} ( s^{(2)}_{aa} -2)\E \Big[\frac{ \partial (G_{va} G_{bv})}{ \partial h_{aa} }\Big]  +\frac{1}{4N^{5/2} }  \sum_{v,a, b} s^{(3)}_{ab} \E \Big[\frac{ \partial^2 (G_{va} G_{bv}) }{\partial h^2_{ab}}\Big]\nonumber\\
&+\frac{1}{12 N^{3} }   \sum_{v,a, b} s^{(4)}_{ab} \E \Big[\frac{ \partial^3 (G_{va} G_{bv})}{ \partial h^3_{ab} }\Big]  +O_{\prec}(\frac{1}{\sqrt{N}})=:J_2+J_3+J_4+O_{\prec}\big(\frac{1}{\sqrt{N}}\big)\,.
\end{align}

It then suffices to prove the estimate (\ref{claim}) in the real symmetric case. Using (\ref{dH_real}), the terms $J_2,J_3,J_4$ above can be written out again in the form (\ref{product}). The degree of a term in the form (\ref{product}) is defined as in~(\ref{degree_0}). We recall from (\ref{symmetry_g}) that the row and column index of a Green function entry can be switched. 

Following the idea from complex Hermitian case, the proof of (\ref{claim}) consists of three steps: 1) the third order terms from $J_3$ are unmatched and thus negligible (\cf Proposition \ref{unmatch_lemma}); 2) expanding the fourth order terms from $J_4$ (as well as the second order terms in $J_2$) as linear combinations of type-0 terms of degrees at least two up to arbitrary order (\cf Proposition \ref{lemma_expand_type}); 3) estimating the resulting type-0 terms in 2) of degrees at least two (\cf Lemma \ref{lemma_trace_wigner}).

We start with the first step. Recall Definition \ref{unmatch_def} for unmatched terms in the complex Hermitian case. Because of (\ref{symmetry_g}), we can ignore the difference from the row and column index of a Green function entry of a real symmetric matrix. 
\begin{definition}[Terms with unmatched indices in the real case]\label{unmatch_def_real}
Given any term, denoted by $Q_d$, of the form (\ref{product}) of degree $d$, let $\nu_j$ be the number of times the free summation index $v_j \in \mathcal{I}$ appears as the row or column index in the product of the Green function entries, \ie
\begin{align}\label{nu_number}
\nu_j:=\#\{ 1 \leq i \leq n\,:\, x_i=v_j \} +\#\{ 1 \leq i \leq n\,:\, y_i=v_j\}\,, \qquad 1\leq j\leq m\,.
\end{align}
We define the set of the unmatched summation indices as 
$$\mathcal{I}^o := \{1 \leq j \leq m: \nu_j \mbox{ is odd }\} \subset \mathcal{I}.$$ 
Note that $\#\mathcal{I}^o$ is even. If $\mathcal{I}_0 = \emptyset$, then we say $Q_d$ is matched. Otherwise, $Q_d$ is an unmatched term, denoted by $Q^o_d$. The collection of the unmatched terms in the form (\ref{product}) of degree $d$ is denoted by $\mathcal{Q}_d^o$. 
\end{definition}

Then the third order terms from $J_3$ on the right side of (\ref{derivative_gene_goe}) are of the form (\ref{product}) with an extra $\sqrt{N}$ in front and are unmatched with $\nu_a=\nu_b=3$ defined in~(\ref{nu_ab_goe}) below. Following the arguments in Section~\ref{sec:unmatch}, using the relation (\ref{resolvent_identity}), the real cumulant expansion formula, and the new differentiation rule of the Green function entry ~(\ref{dH_real}), we observe a similar cancellation to the first order and then expand a unmatched term of the form ~(\ref{product}) iteratively and prove that Proposition \ref{unmatch_lemma} holds true in the real symmetric case. Therefore, we have 
\begin{align}\label{J_3}
|J_3|=O_{\prec}\big(N^{-1/2}+\sqrt{N} \Psi^D\big)\,.
\end{align}

Next, in the second step, we expand the remaining terms of the form (\ref{product}) from $J_2$ and $J_4$ that are matched. Recall a special case of matched terms as in (\ref{form}) with two summation indices $a,b$ singled out and Definition \ref{def_type_AB} for type-AB, type-A, type-0 terms in the complex case.
\begin{definition}[Type-AB terms, type-A terms, type-0 terms]
Given any term of the form in (\ref{form}) of degree $d$ with two special indices $a$ and $b$, recall $\nu_j$ in (\ref{nu_number}) for any $v_j \in \mathcal{I}$ and define similarly 
\begin{align}\label{nu_ab_goe}
\nu_a:&=\#\{ 1 \leq i \leq n\,:\, x_i=a \}+\#\{ 1 \leq i \leq n\,:\, y_i=a\}\,,\nonumber\\ 
\nu_b:&=\#\{ 1 \leq i \leq n\,:\, x_i=b \}+\#\{ 1 \leq i \leq n\,:\,  y_i=b\}\;.
\end{align} 
If for any $1\leq j\leq m$, $\nu_j=2$ and $\nu_a=\nu_b=4$, then such a term is a type-AB term. A type-A term has $\nu_a=4$, and $\nu_b=\nu_j=2$ ($1\leq j\leq m$). Finally, a type-0 term is defined to be in the form (\ref{form}) with $\nu_a=\nu_b=\nu_j=2$ ($1\leq j\leq m$). The collection of the type-AB, type-A, type-0 terms of degree $d$ is denoted by $\mathcal{P}_d^{AB}$, $\mathcal{P}_d^{A}$, and $\mathcal{P}_d$, respectively.
\end{definition}
Following the arguments in Section \ref{sec:match}, using the relations (\ref{resolvent_identity}) and (\ref{dH_real}), and the real cumulant expansion formula, we expand any type-AB (or type-A) term iteratively and prove that Proposition \ref{lemma_expand_type} holds true in the real symmetric case. Therefore, expanding the type-AB terms from $J_2$ and the type-A terms from $J_4$ and then combining with~(\ref{J_3}), we write~(\ref{derivative_gene_goe}) as
\begin{align}\label{theta_0_goe}
\E[\Theta_0(t,z)]=\sum_{ \substack{P_{d} \in \mathcal{P}_{d} \\ 2 \leq d \leq D-1}} \E [ P_d(t, z) ]+O_{\prec}\big(\frac{1}{\sqrt{N}}+\Psi^D\big)\,,
\end{align}
where the summation on the right side above denotes a linear combination of at most $(CD)^{cD}$ type-0 terms of degrees at least two, for some numerical constants $C,c$.

In the last step, we aim to show that any type-0 term of degree $d \geq 2$ can be bounded by $O_{\prec}(N^{-1/3})$ for real symmetric Wigner matrices, as in Lemma \ref{lemma_trace_wigner}. This reduces to prove Lemma \ref{lemma_trace} for the GOE.
\begin{lemma}\label{lemma_traceimg_goe}
For any $z \in S_{\mathrm{edge}}(\epsilon,C_0)$ given in (\ref{S_edge}) and $t \geq 0$, we have the following uniform estimate:
\begin{equation}\label{traceimg_goe}
\frac{1}{N} \E^{\mathrm{GOE}} \Big[ \Im \Tr G (z)\Big]=O\big(N^{-1/3+\epsilon}\big)\,.
\end{equation}
\end{lemma}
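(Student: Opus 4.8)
The plan is to reduce Lemma~\ref{lemma_traceimg_goe} to the corresponding statement for the GUE, Lemma~\ref{lemma_trace} (estimate~\eqref{traceimg}), which has already been established via the determinantal structure and the uniform convergence of the edge kernel. First I would observe that for the GOE, as for the GUE, the eigenvalue process is still a determinantal-like process; more precisely, the $n$-point correlation functions of the GOE are given by Pfaffians of a $2\times 2$ matrix kernel, and the relevant one-point density $\rho_N^{\mathrm{GOE}}(x)$ admits an analogous decomposition into a GUE-type kernel term plus correction terms. Writing, as in~\eqref{imagine},
\begin{align}\label{goe_imagine}
\frac{1}{N} \E^{\mathrm{GOE}} \Big[ \Im \Tr G(z) \Big]= \frac{\eta}{N}\,\E^{\mathrm{GOE}}\Big[\sum_{j=1}^N \frac{1}{|\lambda_j-z|^2}\Big]=\frac{\eta}{N}\int_\R \frac{\rho_N^{\mathrm{GOE}}(x)}{|x-2-\kappa-\ii\eta|^2}\,\dd x\,,
\end{align}
it suffices to bound this integral after the edge rescaling $x \mapsto 2+N^{-2/3}x$, exactly as in the proof of Lemma~\ref{lemma_trace}.

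The main work is to show that the rescaled GOE edge density $\rho_N^{\mathrm{GOE,edge}}(x):=N^{-2/3}\rho_N^{\mathrm{GOE}}(2+N^{-2/3}x)$ satisfies the same two inputs that were used for the GUE: (i) it converges to the limiting (GOE) Airy-type one-point density with an $O(N^{-2/3})$ rate, uniformly on $[L_0,\infty)$ with exponential decay in $x$, and (ii) it has an a priori polynomial bound $\rho_N^{\mathrm{GOE,edge}}(x)\le C(1+|x|)$ on the bulk-side interval $(-N^{2/3},L_0]$, together with the trace normalization $\int \rho_N^{\mathrm{GOE}} = N$ to kill the far-left tail $(-\infty,-N^{2/3}]$. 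For (i) I would invoke the known quantitative edge-kernel asymptotics for the GOE — the analogue of Theorem~\ref{kernel_diff} of Deift--Gioev — whose one-point specialization gives the convergence of $\rho_N^{\mathrm{GOE,edge}}$ to the limiting density with an $O(N^{-2/3})$ error and Gaussian/exponential tail; these follow from the classical representation of the GOE correlation kernel in terms of Hermite functions $\phi_N,\phi_{N-1}$ and their antiderivatives, plus the Plancherel--Rotach asymptotics and the uniform Hermite bound~\eqref{upper_bound_hermite}. For (ii), the GOE one-point density is a finite linear combination of products of Hermite functions (and one extra term involving $\int^x\phi_{N-1}$), so the same derivative estimate used in~\eqref{deri_edge}--\eqref{derivative_K_edge}, namely that $(\rho_N^{\mathrm{GOE,edge}})'(x)=O(1)$ on $(-N^{2/3},L_0]$, gives the linear growth bound by integrating from $L_0$. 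With (i) and (ii) in hand, splitting $\R$ into $(-\infty,-N^{2/3}]$, $(-N^{2/3},L_0]$, $(L_0,\infty)$ and repeating verbatim the estimates~\eqref{sum_1}, \eqref{sum_2}, \eqref{sum_3} yields $\frac{\eta}{N}\int \rho_N^{\mathrm{GOE}}/|x-2-\kappa-\ii\eta|^2 \lesssim N^\epsilon/(N^{1/3}) \cdot \frac{1}{N\eta}\cdot N\eta $; tracking the exponents exactly as in Lemma~\ref{lemma_trace} one gets the bound $O(N^{\epsilon}/(N^{4/3}\eta)) = O(N^{-1/3+\epsilon})$ after multiplying by $N\eta$, using $\eta\ge N^{-1+\epsilon}$. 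The time-uniformity in $t\ge 0$ is automatic since $H(t)$ in~\eqref{sum_H_goe} is itself GOE-distributed for each fixed $t$ (its law is stationary under the Ornstein--Uhlenbeck flow), so the bound for $H(t)$ coincides with the bound for the GOE.

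Granting Lemma~\ref{lemma_traceimg_goe}, the rest goes through as in the complex case: the Ward identity~\eqref{ward} and Young's inequality reduce any type-$0$ term of degree $d\ge2$ to $O_\prec(\Im m_N/(N\eta)^{n_0-1})$, which by~\eqref{traceimg_goe} and $\eta\ge N^{-1+\epsilon}$ is $O_\prec(N^{-1/3+\tau})$; this is the GOE analogue of~\eqref{tracek}, and then the recursive-comparison argument of Lemma~\ref{lemma_trace_wigner} (now using the real differentiation rule~\eqref{dH_real} and the real cumulant expansion) promotes this to the statement $|\E[P_d(t,z)]|=O_\prec(N^{-1/3})$ for general real symmetric Wigner matrices, whence~\eqref{theta_0_goe} and~\eqref{claim} follow, and finally Theorem~\ref{green_comparison} for $\beta=1$. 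The step I expect to be the main obstacle is establishing the quantitative GOE edge-density asymptotics — the Pfaffian kernel of the GOE is more cumbersome than the GUE reproducing kernel, and one must control both the ``GUE-like'' piece and the genuinely orthogonal correction (the term involving an antiderivative of $\phi_{N-1}$ and the $\pm\tfrac12\mathrm{sgn}$ contribution), including its exponential decay on the right edge; but since only the one-point function (the diagonal of the kernel) enters, and the requisite Plancherel--Rotach asymptotics with error bounds are classical, this is a matter of bookkeeping rather than a new difficulty, and one may alternatively cite the known GOE edge universality-with-rate results directly.
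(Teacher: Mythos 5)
Your proposal follows the same overall strategy as the paper: use the explicit form of the GOE one-point density $K_{N,1}(x,x)=K_{N,2}(x,x)+\tfrac{\sqrt N}{4}\phi_{N-1}(x)\int\mathrm{sgn}(x-t)\phi_N(t)\,\dd t+\cdots$, rescale to the edge, split $\R$ into $(-\infty,-N^{2/3}]$, $(-N^{2/3},L_0]$ and $(L_0,\infty)$, use the trace identity on the far left, the Deift--Gioev edge asymptotics on the right, and a linear-growth bound on the middle interval, exactly as in the GUE proof of Lemma~\ref{lemma_trace}. The downstream steps (Ward identity, Young, the recursive comparison of Lemma~\ref{lemma_trace_wigner} using~\eqref{dH_real} and the real cumulant expansion) are also exactly those of Section~\ref{sec: real}.

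There is, however, one tactical difference and one point where your phrasing undersells the real work. On $(-N^{2/3},L_0]$ you propose to argue as in~\eqref{deri_edge}--\eqref{derivative_K_edge}, i.e.\ show $(K_{N,1}^{\mathrm{edge}})'(x)=O(1)$ and integrate. This is not ``the same derivative estimate'': the GUE derivative is $K_{N,2}'(y,y)=-\sqrt N\,\phi_{N-1}(y)\phi_N(y)$, which is $O(1)$ in edge scaling directly from the uniform bound~\eqref{upper_bound_hermite}. The GOE correction term is $\tfrac{N^{1/3}}{2}\phi_{N-1}(y)\int_0^y\phi_N$, whose $x$-derivative picks up $\tfrac{N^{1/6}}{2}\phi_{N-1}'(y)\int_0^y\phi_N$; here $\phi_{N-1}'(y)$ grows like $(4N-y^2)^{1/4}$, up to $N^{1/4}$ near $x\sim -N^{2/3}$, and you must pair this growth against the decay of $\int_0^y\phi_N\sim(4N-y^2)^{-3/4}$ from Plancherel--Rotach to conclude $O(1)$. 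This works, but it requires the bulk asymptotics for $\phi_N$, $\phi_N'$ \emph{and} the antiderivative simultaneously. The paper takes the slightly cleaner route of bounding the \emph{difference} $|K_{N,1}^{\mathrm{edge}}(x,x)-K_{N,2}^{\mathrm{edge}}(x,x)|=O(1)$ on this interval (which needs only $\phi_{N-1}=O(N^{-1/12})$ and $\int_0^y\phi_N=O(N^{-1/4})$, no derivatives) and then reuses the already-established GUE bound $K_{N,2}^{\mathrm{edge}}(x,x)\le C(1+|x|)$. Two further small remarks: the paper separates the cases $N$ even and $N$ odd because the GOE one-point formula carries an extra term $\tfrac{1}{2I_{N-1}}\phi_{N-1}\,\mathds 1_{N \text{ odd}}$ which must be bounded with~\eqref{int_even}--\eqref{int_odd}; your mention of the ``$\pm\tfrac12\mathrm{sgn}$ contribution'' does not clearly flag this. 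And for the interval $(L_0,\infty)$, the paper only invokes uniform boundedness of $K_{N,1}^{\mathrm{edge}}$ ($o(1)$ error suffices), not the $O(N^{-2/3})$ exponentially-decaying rate you ascribe to the GOE Deift--Gioev statement; your claim is stronger than needed and slightly overstated, but harmless.
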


The corresponding estimate (\ref{tracek}) of the type-0 terms of degree $d \geq 2$ considering the GOE follows directly from Lemma~\ref{lemma_traceimg_goe}. Following the iterative comparison idea in the proof of Lemma \ref{lemma_trace_wigner}, one proves Lemma \ref{lemma_trace_wigner} similarly in the real case, using (\ref{dH_real}), (\ref{derivative_goe}) and the real cumulant expansion formula. Therefore, we obtain from (\ref{theta_0_goe}) that (\ref{claim}) holds true in the real case and we hence finish the proof of Proposition \ref{GCT_mn} for real Wigner matrices.

\begin{proof}[Proof of Lemma \ref{lemma_traceimg_goe}]
The proof is similar to that of Lemma \ref{lemma_trace}. For the one-point correlation function of the GOE and the corresponding diagonal kernel $K_{N,1}$, we refer to \cite{AGZ} and~\cite{metha}. From Chapter 3.9 in \cite{AGZ}, we write
\begin{align}\label{one_point_real}
K_{N, 1}(x,x)=&K_{N,2}(x,x)+\frac{\sqrt{N}}{4}\phi_{N-1}(x) \Big( \int_{-\infty}^{\infty} \mathrm{sgn}(x-t) \phi_N(t) \dd t \Big)+\frac{1}{2 I_{N-1}} \phi_{N-1}(x) \one_{N=2m+1},
\end{align}
where $K_{N,2}(x,x)$ is the one-point correlation function for the GUE given by (\ref{kernel_xx}), $\{\phi_k\}$ are the Hermite functions in~\eqref{le hermite functions}, and we use $\beta=1,2$ to denote the symmetry class. Moreover, we set
\begin{align}\label{int_even}
I_{2m}:=\int_0^{\infty} \phi_{2m}(t) \dd t=\frac{1}{2}\int_\R \phi_{2m}(t) \dd t=2^{-1/4} \pi^{1/4} \sqrt{\frac{(2m)!}{2^{2m} (m!)^2}}\sim m^{-1/4},
\end{align}
by the Stirling approximation; see Proposition 3.9.28 in \cite{AGZ}. In addition, from Lemma 1 in \cite{goe_kernel}, we have
\begin{align}\label{int_odd}
I_{2m+1}:=\int_0^{\infty} \phi_{2m+1}(t) \dd t=O(m^{-1/4})\,.
\end{align}
Note that the trace identity for the kernel $K_{N,1}$ still holds as in (\ref{eq_1}). Next, we change the variable as in (\ref{edge}) and define
\begin{align}\label{K_edge_goe}
K^{\mathrm{edge}}_{N, 1}(x,x):=\frac{1}{N^{1/6}} K_{N,1}\Big( 2\sqrt{N}+\frac{x}{N^{1/6}},2\sqrt{N}+\frac{x}{N^{1/6}} \Big)\,.
\end{align}
From Theorem 1.1 in \cite{convergence_kernel}, as the real analogue of Theorem \ref{kernel_diff}, for any $L_0\in \R$, we have, in the limit of large $N$, that
\begin{align}\label{edge_kernel_goe}
K^{\mathrm{edge}}_{N, 1}(x,x)=&K_{\mathrm{airy}}(x,x)+\frac{1}{2} \Ai(x) \int^{x}_{-\infty} \Ai(t) \dd t+o(1)\,,
\end{align}
uniformly in $x \in [L_0,\infty)$. In addition, the right side of~\eqref{edge_kernel_goe} is uniformly bounded for $x > L_0$; see Chapter 3 in \cite{AGZ} for a reference. Now we are ready to estimate
\begin{align}\label{sum_0_goe}
\frac{1}{N} \E^{\mathrm{GOE}} \Big[ \Im \Tr G (z)\Big]=\frac{N \eta}{N^2} \E^{\mathrm{GOE}} \Big[ \sum_{j=1}^N \frac{1}{|\lambda_j-z|^2}\Big]=\frac{N\eta}{N^\frac{2}{3}}\int_{\R} \frac{ K^{\mathrm{edge}}_{N,1}(x,x)}{|x-N^{2/3}\kappa-\ii N^{2/3} \eta|^2} \dd x\,,
\end{align}
for $z=2+\kappa+\ii\eta\in S_{\mathrm{edge}}$, in a similar way as in the proof of Lemma \ref{lemma_trace}. Note that (\ref{sum_1}) and (\ref{sum_2}) still hold true for the GOE. We will focus on the regime $-N^{2/3} < x\le L_0$, for some fixed $L_0<0$. Recalling the estimate (\ref{sum_3}) for the GUE, it suffices to prove, for any $x\in (-N^{2/3},L_0]$, that
\begin{align}\label{diff_even}
\Big| K^{\mathrm{edge}}_{N, 1}(x,x)-K^{\mathrm{edge}}_{N,2}(x,x)\Big|=O(1)\,,
\end{align}
which then leads to
\begin{align}\label{diff_even2}
\frac{1}{N^{2/3}} \int_{-N^{2/3}}^{L_0} \frac{K^{\mathrm{edge}}_{N, 1}(x,x)}{|x-N^{2/3} \kappa+\ii N^{2/3} \eta|^2} \dd x=O\Big(\frac{1}{N^{\frac{4}{3}-\epsilon} \eta}\Big)\,.
\end{align}
We hence obtain (\ref{traceimg_goe}) for the GOE. In order to prove (\ref{diff_even}), we split into two cases below and follow ideas from \cite{goe_kernel}.

{\bf Case 1: $N$ is even.} Let $N=2m$ and the last term in (\ref{one_point_real}) is vanishing. Since $\phi_{N}$ is even, we write
\begin{align}\label{12diff}
K^{\mathrm{edge}}_{N, 1}(x,x)=&K^{\mathrm{edge}}_{N,2}(x,x)+\frac{1}{2} N^{1/3} \phi_{N-1}(y)  \int_{0}^{y} \phi_{N}(t) \dd t\,,
\end{align}
where we set for simplicity,
\begin{align}\label{y_bound}
y=2\sqrt{N}+\frac{x}{N^{1/6}}\,, \qquad \mbox{with} \quad -N^{2/3}<x\le L_0\,,
\end{align}
which implies that $\sqrt{N}< y < 2 \sqrt{N}+L_0N^{-1/6}$. From \cite{goe_kernel} and references therein, we have the following asymptotic formula of $\phi_N(t)$. In the domain 
\begin{align}\label{domian_hermite}
|t| \leq \sqrt{2} \Big((2N+1)^{1/2}-(2N+1)^{-1/6} \Big)\,,
\end{align}
we have as $N \rightarrow \infty$,
\begin{align}\label{hermite_expand}
\phi_N(t)=A_N(t)+O\Big(N^{1/2} (4N+2-t^2)^{-7/4}\Big)\,,
\end{align}
with 
\begin{align}\label{A_N}
 A_N(t):=\sqrt{\frac{2}{\pi}} (4N+2-t^2)^{-1/4}\cos\Big( \frac{(2N+1)(2 \alpha_N-\sin 2 \alpha_N) -\pi}{4}\Big)\,,
\end{align}
and $\alpha_N:=\arccos(t(4N+2)^{-1/2})$. We choose $L_0<0$ in (\ref{y_bound}) sufficiently small so that the upper bound~$y$ of the integral in (\ref{12diff}) satisfies (\ref{domian_hermite}). Thus we have from (\ref{hermite_expand}) that
\begin{align}\label{tues}
\int_{0}^{y} \phi_{N}(t) \dd t  =&\int_{0}^{y} A_{N}(t) \dd t +O\Big( \sqrt{N} \int_{0}^{y}  (4N+2-t^2)^{-7/4} \dd t \Big)=\int_{0}^{y} A_{N}(t) \dd t+O(N^{-1/4})\,.
 \end{align}
Integrating $A_N$ given in (\ref{A_N}) and using integration by parts, it was shown in (14) in \cite{goe_kernel} that
 \begin{align}
 \Big|\int_{0}^{y} A_{N}(t) \dd t \Big| \leq C (4N+2-y^2)^{-3/4}=O(N^{-1/4})\,,
 \end{align}
 with $\sqrt{N}< y < 2 \sqrt{N}+L_0N^{-1/6}$. Thus we have from (\ref{tues}) that $\Big|\int_{0}^{y} \phi_{N}(t) \dd t  \Big|=O(N^{-1/4}),$ for $y$ given in (\ref{y_bound}). Combining with (\ref{upper_bound_hermite}), the estimate (\ref{diff_even}) then follows from (\ref{12diff}).

{\bf Case 2: $N$ is odd. } Let $N=2m+1$. Since $\phi_{N}$ is an odd function, we write
\begin{align}
K^{\mathrm{edge}}_{N, 1}(x,x)=&K^{\mathrm{edge}}_{N,2}(x,x)+\frac{1}{2} N^{1/3} \phi_{N-1}(y) \int_{0}^{y} \phi_{N}(t) \dd t-\frac{1}{2} N^{1/3} \phi_{2m}(y) I_{2m+1}+\frac{1}{2 N^{1/6} I_{2m}} \phi_{2m}(y)\,,\nonumber
\end{align}
with $y$ given in (\ref{y_bound}). Using (\ref{upper_bound_hermite}), (\ref{int_even}), and (\ref{int_odd}), the last two terms above are bounded by $O(1)$. The second term can be estimated similarly as in the case $N=2m$. Thus (\ref{diff_even}) also hold true for $N=2m+1$.

We hence have finished the proof of Lemma \ref{lemma_traceimg_goe}.
\end{proof}

\appendix
\section*{Appendix}\label{Appendix A}

In this appendix we prove Lemma~\ref{lemma2} and Lemma~\ref{lemma1}. To prove Lemma~\ref{lemma2}, we follow the arguments in \cite{rigidity}.
\begin{proof}[Proof of Lemma \ref{lemma2}]
Recall the mollifier $\theta_\eta$ given in~\eqref{le mollifier} and the indicator function $\chi_E$ given in~\eqref{le indicator chi}, where $N^{-1} \ll \eta  \ll E_L-E  \leq C N^{-2/3+\epsilon}$, with $\epsilon>0$ as in~\eqref{ddd}. It suffices to estimate the linear eigenvalue statistics
$$\Tr \chi_E(H)- \Tr \chi_{E} \star \theta_{\eta}(H)=\Tr g(H)=\sum_{j=1}^N g(\lambda_j),$$
where 
\begin{equation}\label{appendix_1}
g(x):=\chi_E(x)-\chi_E\star \theta_\eta(x)=\Big(\int_{\R} \one_{[E,E_L]}(x)-\int_{E-x}^{E_L-x} \Big) \app(y) \dd y\,.
\end{equation}
 We first consider the function $g$. Note that for any $E>0$, we have
$$\frac{c \eta}{E+\eta} \leq \int_{E}^{\infty} \app(y) \dd y=\frac{1}{\pi} \int_{E}^{\infty} \frac{\eta}{y^2 +\eta^2} \dd y \leq \frac{C \eta}{E+\eta}.$$
Because of the symmetry of the integrand, we have a similar estimate for the integral over $(-\infty, E]$ with $E<0$.
Thus, if $x \in [E,E_L]$, we have from (\ref{appendix_1}) that
$$|g(x)| =\Big( \int^{E-x}_{-\infty} +\int^{\infty}_{E_L-x}\Big) \app(y) \dd y\leq C \eta \Big( \frac{1}{|x-E|+\eta} +\frac{1}{|x-E_L|+\eta} \Big).$$
Else, if $x \in [E,E_L]^c$, we have from the positiveness of $\app(y)$ that
\begin{equation}
|g(x)| =\int_{E-x}^{E_L-x} \app(y) \dd y \leq  \begin{cases}
		\frac{C \eta}{|x-E|+\eta}, & \mbox{if } x<E, \\
		\frac{C \eta}{|x-E_L|+\eta}, & \mbox{if } x > E_L\,,
		\end{cases}
\end{equation}
It is easy to check that 
\begin{equation}\label{appendix_2}
|g(x)| \leq 2C, \qquad \mbox{for } x \in \R.
\end{equation}
Now we choose a parameter $l_1$ such that $\eta \ll l_1 \ll E_L-E  \leq C N^{-2/3+\epsilon}$.  If we further assume $\min\{|x-E|, |x-E_L|\} \geq l_1$, then we have
\begin{equation}\label{appendix_3}
|g(x)| \leq \frac{2C\eta}{l_1}, \qquad \mbox{for } |x-E|>l_1,~|x-E_L|<l_1.
\end{equation}
Plugging (\ref{appendix_2}) and (\ref{appendix_3}) into (\ref{appendix_1}), we hence obtain
$$\Big|\Tr \chi_E(H)- \Tr \chi_{E} \star \theta_{\eta}(H)\Big| \leq C\Big( \mathcal{N}(E-l_1,E+l_1)+\mathcal{N}(E_L-l_1,\infty)+\frac{\eta}{l_1} \mathcal{N}(E,E_L)+\Tr f(H)\Big),$$
where 
$$f(x):=\big(\chi_E\star \theta_\eta\big)(x)\, \one_{x \leq E-l_1} \,.$$
Using the rigidity of eigenvalues in (\ref{rigidity1}), we obtain that
\begin{align}\label{append_temp3}
\Big|\Tr \chi_E(H)- \Tr \chi_{E} \star \theta_{\eta}(H)\Big| \leq C\Big( \mathcal{N}(E-l_1,E+l_1)+\frac{\eta}{l_1} N^{2\epsilon}+\Tr f(H)\Big),
\end{align}
with high probability, \ie with probability bigger than $1-N^{-\Gamma}$ for any large $\Gamma>0$, for $N$ sufficiently large. It is then sufficient to estimate $\Tr f(H)$. We write
\begin{align}\label{trace_f}
\Tr f(H) =\sum_{\lambda_i \leq E -l_1}f(\lambda_i) =\sum_{k=0}^{\infty} \sum_{\lambda_i \in \mathcal{I}_k} f(\lambda_i)\,, \qquad \mathcal{I}_k:=(E-3^{k+1}l_1, E-3^{k}l_1]\,.
\end{align}
If $x\leq E-l_1$, then $E_L-x \geq E-x \geq l_1 \gg \eta$, and we have
\begin{align}
f(x) =&\int_{E-x}^{E_L-x} \app(y) \dd y=\arctan\Big(\frac{E_L-x}{\eta} \Big)-\arctan\Big(\frac{E-x}{\eta} \Big)\nonumber\\
=&\arctan\Big(\frac{\eta}{E-x} \Big)-\arctan\Big(\frac{\eta}{E_L-x} \Big) \leq \frac{C \eta (E_L-E)}{(E_L-x)(E-x)}\nonumber\\
\leq & C \min\Big\{\frac{(E_L-E)\eta}{(E-x)^2}, \frac{ \eta}{E-x}\Big\}.\nonumber
\end{align}
In combination with (\ref{trace_f}), we have
\begin{align}\label{upper_f}
\Tr f(H) \leq C \sum_{k=0}^{\infty} \min\Big\{\frac{(E_L-E)\eta}{3^{2k}l_1^2}, \frac{ \eta}{3^k l_1}\Big\} \mathcal N_k\,, \qquad \mathcal{N}_k:=\#\{i: \lambda_i \in \mathcal{I}_k\}\,.
\end{align}
We next estimate $\mathcal{N}_k$ using the local law in (\ref{G}). Consider
\begin{align}\label{append_temp2}
\Im m_N(E-2 \cdot 3^{k} l_1+\ii 3^{k} l_1)=\frac{1}{N} \sum_{i=1}^N \frac{3^k l_1}{ |\lambda_i-(E-2 \cdot 3^k l_1)|^2+(3^k l_1)^2} \geq \frac{1}{N} \frac{\mathcal{N}_k}{2 \cdot 3^k l_1}\,.
\end{align}
Using the local law in (\ref{G}) and (\ref{22}), for any small $\tau>0$ and large $\Gamma>0$, we find an upper bound for the left hand side above as
\begin{align}
\Im m_N(E-2 \cdot 3^{k} l_1+\ii 3^{k} l_1) \leq & \Im m_{sc}(E-2 \cdot 3^{k} l_1+\ii 3^{k} l_1)+\frac{N^{\epsilon+\tau}}{ N 3^{k} l_1}\nonumber\\
\leq& C \sqrt{3^k l_1+|E-2 \cdot 3^{k} l_1-2|}+\frac{N^{\epsilon+\tau}}{ N 3^{k} l_1} \leq C \Big( \sqrt{3^k l_1}+\frac{N^{\epsilon+\tau}}{ N 3^{k} l_1}+N^{-1/3+\epsilon}\Big)\,,\nonumber
\end{align}
with probability bigger than $1-N^{-\Gamma}$.
By choosing $\tau<\epsilon$, we hence obtain from (\ref{append_temp2}) that
\begin{align}
\mathcal{N}_{k} \leq C\Big( (3^k l_1)^{3/2}N+N^{2\epsilon} +3^k l_1 N^{2/3+\epsilon}\Big)\,,\nonumber
\end{align}
with high probability. Combining with (\ref{upper_f}), we have
\begin{align}
\Tr f(H)  \leq& C \sum_{k=0}^{\infty}\min\Big\{\frac{(E_L-E)\eta}{3^{2k}l_1^2}, \frac{ \eta}{3^k l_1}\Big\} \Big( (3^k l_1)^{3/2}N+N^{2\epsilon} +3^k l_1 N^{2/3+\epsilon}\Big) \nonumber\\
\leq &\frac{C N^{1/3+\epsilon} \eta}{\sqrt{l_1}}+\frac{CN^{2\epsilon} \eta}{l_1} \leq \frac{C'N^{2\epsilon} \eta}{l_1}\,,\nonumber
\end{align}
with high probability. Together with (\ref{append_temp3}), we hence obtain
$$\Big|\Tr \chi_E(H)- \Tr \chi_{E} \star \theta_{\eta}(H)\Big| \leq C'\Big( \mathcal{N}(E-l_1,E+l_1)+\frac{\eta}{l_1} N^{2 \epsilon}\Big)\,,$$ 
with high probability. This completes the proof of Lemma~\ref{lemma2}.
\end{proof}

Next, we use Lemma \ref{lemma2} to prove Lemma \ref{lemma1}.
\begin{proof}[Proof of Lemma \ref{lemma1}]
Under the same assumption in Lemma \ref{lemma2}, we choose a parameter $l$ satisfying $N^{-1} \ll \eta \ll l_1 \ll l \ll E_L-E  \leq C N^{-2/3+\epsilon}$. We have from Lemma \ref{lemma1} that
\begin{align}\label{append_temp1}
\Tr \chi_E(H) \leq & l^{-1} \int_{E-l}^E \Tr \chi_{y}(H) \dd y\nonumber\\
 \leq &  l^{-1}\int_{E-l}^E  \Tr \chi_{y} \star \theta_{\eta}(H) \dd y+C l^{-1} \int_{E-l}^E \Big( \mathcal{N}(y-l_1,y+l_1)+\frac{\eta}{l_1} N^{2 \epsilon} \Big) \dd y\nonumber\\
 \leq  & \Tr \chi_{E-l} \star \theta_{\eta}(H) +C \Big(N^{2 \epsilon} \frac{ \eta}{l_1} +\frac{l_1}{l} \mathcal{N}(E-2l,E+l)\Big),
\end{align}
with high probability. Using the rigidity result (\ref{rigidity3}) and $l \ll N^{-2/3+\epsilon}$, we have
$$\mathcal{N}(E-2l,E+l) \leq \int_{E-2l}^{E+l} N \rho_{sc}(x) \dd x +N^{\epsilon} \leq CN^{\epsilon},$$
with high probability. Thus we obtain from (\ref{append_temp1}) that with high probability
$$\Tr \chi_E(H) -  \Tr \chi_{E-l} \star \theta_{\eta}(H)  \leq C N^{2 \epsilon} \Big( \frac{ \eta}{l_1} +\frac{l_1}{l}\Big).$$
One obtains a lower bound similarly. Therefore, for any large $\Gamma>0$, we have
$$\Tr \chi_{E+l} \star \theta_{\eta}(H)-C N^{2 \epsilon} \Big( \frac{ \eta}{l_1} +\frac{l_1}{l}\Big) \leq \Tr \chi_E(H)   \leq\Tr \chi_{E-l} \star \theta_{\eta}(H)+C N^{2 \epsilon} \Big( \frac{ \eta}{l_1} +\frac{l_1}{l}\Big),$$
with probability bigger than $1-N^{-\Gamma}$. We pick $l_1=N^{3 \epsilon} \eta$ and $l=N^{3 \epsilon} l_1$ such that $N^{2 \epsilon} \Big( \frac{ \eta}{l_1} +\frac{l_1}{l}  \Big)=N^{-\epsilon}.$ Since the counting function $\mathcal{N}(E,E_L)=\Tr \chi_E(H)$ is integer valued, we have
$$\P \Big( \mathcal{N}(E, E_L)=0\Big) \leq \P \Big( \Tr \chi_{E+l} \star \theta_{\eta}(H) \leq 1/9 \Big)+N^{-\Gamma} \leq \E \Big[ F\Big( \Tr \chi_{E+l} \star \theta_{\eta}(H) \Big)\Big]+N^{-\Gamma},$$
where $F$ is the cut-off function given in (\ref{q_function}). In the other direction, we have
$$\E \Big[  F \Big( \Tr \chi_{E-l} \star \theta_{\eta}(H) \Big)\Big] \leq \P \Big( \Tr \chi_{E-l} \star \theta_{\eta}(H) \leq 2/9 \Big) \leq \P \Big( \mathcal{N}(E, E_L) =0\Big)+N^{-\Gamma}.$$
Therefore, together with (\ref{rigidity1}), we obtain
\begin{equation*}
\E\Big[   F \Big( \Tr \chi_{E-l} \star \theta_{\eta}(H) \Big)\Big] -N^{-\Gamma}\leq \P \Big( \mathcal{N}(E, \infty) =0\Big) \leq   \E \Big[  F  \Big( \Tr \chi_{E+l} \star \theta_{\eta}(H) \Big)\Big]+N^{-\Gamma}.
\end{equation*}
This completes the proof of Lemma~\ref{lemma1}.
\end{proof}

\end{document}